\documentclass[reqno]{amsart}
\usepackage[T1]{fontenc}
\usepackage{amssymb,bbm}
\usepackage{amsmath}
\usepackage{amsfonts}
\usepackage{leftidx}

\usepackage{graphicx}
\usepackage{comment}
\usepackage{xcolor}

\usepackage{booktabs}

\newtheorem{theorem}{Theorem}[section]
\newtheorem{lemma}[theorem]{Lemma}
\newtheorem{proposition}[theorem]{Proposition}

\theoremstyle{definition}
\newtheorem{definition}[theorem]{Definition}
\theoremstyle{remark}
\newtheorem{remark}[theorem]{Remark}
\numberwithin{equation}{section}

\newcommand{\N}{\mathbb{N}}
\newcommand{\Z}{\mathbb{Z}}

\newcommand{\C}{\mathbb{C}}

\newcounter{margnotes}

\newcommand{\M}{\mathbb{M}}

\newcommand{\F}{\mathcal{F}}
\newcommand{\G}{\mathcal{G}}

\newcommand{\CP}{\mathcal{P}}

\newcommand{\p}{\mathsf{p}}
\newcommand{\q}{\mathsf{q}}

\newcommand{\imb}{\boldsymbol{\imath}}
\newcommand{\jmb}{\boldsymbol{\jmath}}

\title[Curves of tangencies of foliation pairs]{Curves of tangencies of foliation pairs and normalizing transformations}

	\author[J.A. Jaurez-Rosas]{J.A. Jaurez-Rosas}
\address[a]{Departamento de Matem\'aticas, Facultad de Ciencias, Universidad Nacional Aut\'onoma de 
M\'exico 	(UNAM),  Circuito 
exterior, Ciudad 	Universitaria, 04510, Ciudad de M\'exico, M\'exico}
\email{jessica.jaurez@ciencias.unam.mx}

\author[L. Ortiz-Bobadilla]{L. Ortiz-Bobadilla}
\address[b]{Instituto de Matem\'aticas, Universidad Nacional Aut\'onoma de 
M\'exico 	(UNAM), 	\'Area de la Investigaci\'on Cient\'ifica, Circuito 
exterior, Ciudad 	Universitaria, 04510, Ciudad de M\'exico, M\'exico}
\email{laura@im.unam.mx}

\author[S.M. Voronin]{S.M. Voronin}
\address[b]{Department of Mathematics, Chelyabinsk State University  
ul. Bratiev Kashirinykh 129, 454021, Chelyabinsk, Russian Federation}
\email{voron@csu.ru}
	\subjclass{32S65, 34M35, 34M45, 34M56, 34M50, 32S05, 32B10}
	
 \keywords{ Non dicritical and  dicritical foliation pairs, normal forms, holonomy, analytic invariants, polar curves}
 
\thanks{This work was supported by  
 Papiit (Dgapa UNAM) IN103123}

\begin{document}

\maketitle
 
\begin{abstract}
In this work we give a complete description of the collection of curves of tangencies induced by germs of foliation pairs --\emph{non dicritical and dicritical}-- given by analytic differential equations with degenerated non dicritical and dicritical singularities, satisfying some genericity assumptions. To this purpose we use local models and analytic normalizing transformations. Moreover, for each natural number $k$ we obtain  \emph{k-normal forms} for the normalizing transformations. These normal forms are used to give parametrizations, up to a finite jet, of the branches of the curves of tangencies.

We also prove that under natural genericity assumptions  any germ of analytic curve having pairwise transversal smooth branches is realized as curve of tangencies of a --\emph{non dicritical and dicritical}-- foliation pair. 
\end{abstract}
\vspace{0.3cm}
\centerline{\today}
 
\section{\textbf{\emph{Introduction}}}\label{sec: Introduction}




The aim of this work is to give, under some genericity assumptions, a complete description of the collection of curves of tangencies induced by germs of foliation pairs --\emph{non dicritical and dicritical}-- given by analytic differential equations with degenerated non dicritical and dicritical singularities. To this purpose we use local models and analytic normalizing transformations. 

Local models are simple and informative models that carry analytic information at singular and tangency points for non dicritical and dicritical foliations, respectively. They can be understood as local analytic representatives of foliation pairs.
Local models and normalizing transformations are precisely described in Definition \ref{def: ML_TN}. 

In \cite {[JOV]} a nonlinear Riemann-Hilbert type problem for --\emph{non dicritical and dicritical}-- foliation pairs is solved. Namely, for given \emph{local models} carring monodromy data and involutions, it is proved that a suitable collection of normalizing transformations can be found such that it is possible to realize the local models in a foliation pair --non dicritical and dicritical--. Namely,

\begin{theorem}[Realization of local models of foliation pairs, \cite {[JOV]}]\label{teo_S: realización de modelos locales y t_norm}
Let $\p= (p_{1}, \ldots, p_{n+1})$ and $\q = (q_{1}, \ldots, q_{m})$ be collections of  $n+1$ and $m$ mutually distinct complex numbers such that for any  $k=1,\dots,n+1$ and $j= 1,\dots,m$, $p_{k}\neq q_{j}$. Any collection of $n+1+m$ pairs of generic local models --\emph{non dicritical, dicritical}--  related to a fixed pair $(\p,\q)$, can be realized, through normalizing transformations, by a corresponding foliation pair $(\F, \G)$,
\begin{equation*}
    \F \in \mathcal{N}_{\p} (\mathbf{h})\quad\text{and}\quad\G \in \mathcal{D}_{\q} ( \mathcal{I})\,,
\end{equation*}
where $\mathcal{N}_{\p} (\mathbf{h})$ is the set of germs of non dicritical foliations having strictly analytically equivalent hidden holonomy $\mathbf{h}$, related to the $n+1$ singular points $\p= (p_{1}, \ldots, p_{n+1})$, and $\mathcal{D}_{\q} ( \mathcal{I})$ is the set of germs of dicritical foliations with simple tangencies at the points $\q = (q_{1}, \ldots, q_{m})$, having associated a collection $\mathcal{I} = (I_{1}, \ldots, I_{m})$ of holomorphic involutions  $I_{j} \colon (\C, q_{j}) \rightarrow (\C, q_{j})$ with fixed points $q_{j}$.
\end{theorem}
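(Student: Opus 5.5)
The plan is to realize the local models by a gluing (surgery) construction on a neighborhood of the exceptional divisor after one blow-up, and to remove the abstractness of the glued manifold by a rigidity argument. The first step is to blow up the origin once, so that the divisor $D\simeq\mathbb{CP}^1$ carries the marked points $\p$ and $\q$. Away from these points both foliations should be nearly trivial: $\F$ leaves $D$ invariant and is transversal to the fibres of the tautological bundle, while $\G$ is transversal to $D$. Take a finite covering of $D$ consisting of small discs $U_k$ around each $p_k$, small discs $V_j$ around each $q_j$, and one large region $W=D\setminus\bigcup$ (smaller discs). Over $W$ take the model pair $(\F_0,\G_0)$: $\F_0$ is the horizontal foliation of the normal bundle $\mathcal{O}(-1)$ twisted by a flat connection, and $\G_0$ is the fibration. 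Over $U_k$ and $V_j$ place the prescribed local models, non dicritical and dicritical respectively.

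The second step is to glue on the annuli $U_k\cap W$ and $V_j\cap W$. On each annulus the local model and the model over $W$ must be conjugated by a fibered biholomorphism sending the first foliation to the first and the second to the second. Over an annulus around $p_k$, both $\F$-foliations are determined by their holonomy along the circle. So the first thing to arrange is that the prescribed monodromy data $\mathbf{h}$ of the models is compatible with the global relation $h_1\circ\cdots\circ h_{n+1}=\mathrm{id}$ on a transversal; this is exactly where the freedom to choose the hidden holonomy within its strict analytic class is used. The conjugacy is then obtained by lifting along leaves, and one checks that it also carries $\G$ to $\G$: the fibration is preserved because the conjugacy is fibered. Near $q_j$ the same is done for $\G$, using the involution $I_j$, which encodes the tangency of $\G$ with the fibres. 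The cocycle of gluing maps produces an abstract complex surface $M$ containing a curve $D'$ with self-intersection $-1$, carrying a pair of foliations.

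The third step is to show that $M$ is a neighborhood of $D'\simeq\mathbb{CP}^1$ with normal bundle of degree $-1$, and hence, by the Grauert and Savelev-type rigidity for negative neighborhoods of rational curves, can be blown down to $(\C^2,0)$. The images of the foliations give the pair $(\F,\G)$, with $\F\in\mathcal{N}_{\p}(\mathbf{h})$ and $\G\in\mathcal{D}_{\q}(\mathcal{I})$. The normalizing transformations are the charts relating $U_k,V_j$ to $M$, which is precisely the realization claim.

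The main obstacle is the gluing compatibility in the second step: the gluing maps must respect both foliations simultaneously, not just one. My first attempt would be to make the gluing maps respect $\F$ and the projection to $D$. Respecting $\F$ comes from holonomy matching. Respecting the projection should be possible because the models are fibered over $D$ away from the tangency points, and a fibered map respecting $\F$ over an annulus is uniquely determined by its restriction to a transversal. Since $\G$ near $p_k$ is the fibration, this gives $\G$ as well. Near $q_j$ the roles are exchanged, and genericity of the local models (simple tangency, hence the involution exists) is what I would use to make the conjugacy fibered. If that fails, I would fall back on solving a Cousin-type problem for vector fields on the annuli, using the fact that $H^1(D,\mathcal{O}(k))=0$ for $k\ge -1$ to absorb the discrepancies.
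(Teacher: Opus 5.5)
Your overall architecture is the right one: reglue local pieces on a neighbourhood of the divisor, then blow down via Grauert's theorem. However, two steps fail as written.

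\textbf{1. Where $\mathbf{h}$ and $\mathbf{H}_{\mathbf{n}}$ come from.} This is the decisive gap.
\begin{itemize}
\item The local models at the singular points are the \emph{linear} foliations $\lambda_i x\partial_x+(u-p_i)\partial_u$. Their monodromy is just $x\mapsto e^{2\pi i\lambda_i}x$, so they carry no ``prescribed monodromy data $\mathbf{h}$''.
\item In Definition~\ref{def: ML_TN} the class $\mathbf{h}$ lives in the model at the regular points, the fixed reference pair $(\tilde{\F}_{\mu},\tilde{\G}_{\mathbf{r}})$. ``Realized through normalizing transformations'' includes a biholomorphism $\mathbf{H}_{\mathbf{n}}=(x+O(x^{2}),u+O(x))$, the identity on $\mathcal{L}$, taking $(\tilde{\F}_{\mu},\tilde{\G}_{\mathbf{r}})$ to $(\tilde{\F},\tilde{\G})$ near $\mathcal{L}\smallsetminus(\p,\q)$. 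Your Step 3 never produces $\mathbf{H}_{\mathbf{n}}$.
\item Every loop of the glued divisor minus $\p$ can be pushed into $W$, where the glued foliation is your $\F_0$. So the hidden holonomy of the result is the holonomy of $\F_0$, whatever gluing maps you use.
\item If $\F_0$ is the horizontal foliation of a flat connection on $\mathcal{O}(-1)|_{W}$, that holonomy is the abelian linear group generated by the maps $x\mapsto e^{2\pi i\lambda_i}x$. Then $\F\notin\mathcal{N}_{\p}(\mathbf{h})$ for general $\mathbf{h}$.
\item If instead you choose a nonlinear $\F_0$ with holonomy $\mathbf{h}$, you still need a tangent-to-identity conjugacy of your $W$-model with $(\tilde{\F}_{\mu},\tilde{\G}_{\mathbf{r}})|_{W}$. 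In effect you must take the reference pair itself.
\item The relation $h_1\circ\cdots\circ h_{n+1}=\mathrm{id}$ holds for every holonomy representation, so there is nothing to arrange there.
\end{itemize}
The missing idea is to take $(\tilde{\F}_{\mu},\tilde{\G}_{\mathbf{r}})$ as the model over $W$, so that $\mathbf{h}$ is inherited automatically. One then glues with $\xi_i\circ\Psi_i$ and $\zeta_j\circ\Phi_j$:
\begin{itemize}
\item $\xi_i=(\varepsilon_i,u)$ is a fibered linearization of $\tilde{\F}_{\mu}$ at $p_i$; this is where genericity at $p_i$ is used.
\item $\zeta_j=(\varsigma_j,u)$ is a fibered rectification of $\tilde{\F}_{\mu}$ at $q_j$.
\end{itemize}
This is exactly what the factorization identities $H_{p_i}\circ\Psi_i^{-1}=\mathbf{H}_{\mathbf{n}}\circ\xi_i$ and $H_{q_j}\circ\Phi_j^{-1}=\mathbf{H}_{\mathbf{n}}\circ\zeta_j$ encode in the precise statement, Theorem~\ref{teo:Construccion de PdF a partir de PL}. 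There $\mathbf{H}_{\mathbf{n}}$ is the Grauert identification restricted to the regular part.

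\textbf{2. The gluing near $p_k$ cannot preserve the projection $(x,u)\mapsto u$.} You justify this by ``$\G$ near $p_k$ is the fibration'', but in the local model $\G_{p_k}=\{u+s_k(x)=\text{cst}\}$.
\begin{itemize}
\item A $u$-preserving map sending $\F^{l}_{p_k}$ to $\F_0$ sends the fibration, not $\G_{p_k}$, to $\G_0$. Your construction would therefore only realize $s_k\equiv 0$, losing precisely the invariant to be realized.
\item The gluing must have second component $u+s_k(x)$. Since $x(u-p_k)^{-\lambda_k}$ is a first integral of $\F^{l}_{p_k}$, the first component is then forced to be $x\bigl(1+s_k(x)/(u-p_k)\bigr)^{\lambda_k}$. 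This is the map $\Psi_k$, and it is defined only off $u=p_k$.
\item At $q_j$ the gluing is $\Phi_j=(x,\phi_j)$ with $g_j\circ\phi_j=g_j+z_j$. It is well defined on the annulus because $g_j^{(1)}\neq 0$ there.
\item No genericity or involution argument is needed at $q_j$. The tangency encoded by $I_j$ is between $\tilde{\G}$ and $\tilde{\F}$ (the divisor direction), not with the fibres. Membership $\G\in\mathcal{D}_{\q}(\mathcal{I})$ is automatic because $g_j$ is built from $I_j$.
\end{itemize}
With these maps every gluing is the identity on the divisor, and its normal derivatives $\varepsilon_{i,1}$, $\varsigma_{j,1}$ extend without zeros over the discs. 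Hence the glued sphere has the normal bundle of $\mathcal{L}$, and Grauert's theorem applies. You must still normalize the resulting identification to be the identity on $\mathcal{L}$, so that the singular and tangency points stay at $\p$ and $\q$.
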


In \cite {[JOV]} it is proved as well that for a given --\emph{non dicritical, dicritical}--  foliation pair, satisfying some genericity assumptions, it is possible to assign a unique collection of local models and normalizing transformations. A short formulation of the result is the following.

\begin{theorem}[Existence and uniqueness of local models and normalizing transformations, \cite {[JOV]}]
\label{teo_S:Teo_Exis y Unici de ML y TN_Intro}
For any pair of foliations $\F \in \mathcal{N}_{\p}(\mathbf{h})$ and $\G \in \mathcal{D}_{\q} (\mathcal{I})$ it is possible to assign in a unique way, for suitable normalizing transformations, a collection of --\emph{non dicritical, dicritical}-- local models of foliation pairs. 
\end{theorem}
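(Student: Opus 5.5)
The plan has three stages: build the models from the geometry of the pair, show they are unique once normalized, and invoke this uniqueness whenever the result is used. The setup is the standard one for $\F \in \mathcal{N}_{\p}(\mathbf{h})$ and $\G \in \mathcal{D}_{\q}(\mathcal{I})$. After one blow-up $\sigma\colon(\M,D)\to(\C^2,0)$ of the origin, the exceptional divisor $D\simeq \C P^1$ carries both foliations:
\begin{itemize}
\item the strict transform of $\F$ leaves $D$ invariant and has simple singular points exactly at $p_1,\dots,p_{n+1}$;
\item the strict transform of $\G$ is generically transverse to $D$ and has simple tangencies with $D$ exactly at $q_1,\dots,q_m$.
\end{itemize}
The local models will be attached to neighbourhoods of these $n+1+m$ points in $\M$. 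The normalizing transformations will be germs of biholomorphisms, tangent to the identity along $D$, that conjugate the pair $(\sigma^*\F,\sigma^*\G)$ near each point to its model.

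\textbf{Existence.} Near each $p_k$ I first use the hidden holonomy $h_k$ (the holonomy of $\sigma^*\F$ around $p_k$ along $D$). Because singular points in the non-dicritical generic class are analytically determined by their holonomy (Mattei--Moussu), $\sigma^*\F$ is analytically conjugate near $p_k$ to a model built from $h_k$ and the eigenvalue data. I then use the remaining freedom, namely the centralizer of the model foliation, to also normalize $\sigma^*\G$. Since $\G$ is transverse to the invariant line of $\F$, it can be rectified to a standard form (e.g. $dx=0$ or a linear pencil), subject to matching the first jets along $D$.

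Near each $q_j$ the roles are reversed. The simple tangency of $\sigma^*\G$ with $D$ is a fold, so the leaf space of $\sigma^*\G$ gives a two-to-one map onto a transversal. This defines the involution $I_j$, and $\sigma^*\G$ is analytically conjugate to the model fold $\{y-(x-q_j)^2=c\}$ composed with the normalization of $I_j$. Here $\F$ is regular and $D$ is a leaf, so I normalize $\F$ to the horizontal fibration $\{y=\text{const}\}$ using a fibered change of coordinates that preserves the fold.

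\textbf{Uniqueness.} Suppose two collections of models and normalizing transformations $\Phi_k$, $\Phi'_k$ arise from the same pair. Then $\Phi'_k\circ\Phi_k^{-1}$ is a symmetry of the model pair that is tangent to the identity on $D$ and respects the normalization conditions. I will show by a jet-by-jet induction in the transverse variable $y$ that such a symmetry is the identity. At each order the homological equation reduces to an equation on $D$ for a holomorphic function or vector field with prescribed poles at the $p_k$ and $q_j$. Fixing the points $(\p,\q)$ and imposing the normalization kills the finite-dimensional kernel, via Liouville on $\C P^1$ together with the genericity (non-resonance) assumptions.

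The main obstacle is the uniqueness step at the singular points $p_k$. The centralizer of a non-linearizable resonant or Siegel-type model with holonomy $h_k$ can be large, for instance containing flows of the model vector field. Unless the normalization fixes a transversal parametrization, the models would be determined only up to these flows. I would first try to fix this by requiring the normalizing maps to be tangent to the identity to a specified order along $D$ and to respect a chosen first integral of $\G$, and then check that the genericity assumptions make the centralizer of the pair (not of $\F$ alone) trivial.
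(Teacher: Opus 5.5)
\textbf{There is a genuine gap: your plan omits the global normalizing transformation and the factorization equations, which are what make the models unique.} The paper does not prove this statement; it quotes it from \cite{[JOV]}. Its precise form, Theorem \ref{CuTan_Teo: Existencia_Unicidad de ML y TN} with Definitions \ref{def: ML_TN} and \ref{Def_ML-TN_Fmu-Gr}, asserts uniqueness \emph{with respect to a fixed reference pair} $(\F_{\scriptscriptstyle \mu},\G_{\scriptscriptstyle \mathbf{r}})$.

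Besides the local maps, the paper has a global normalizing transformation $\mathbf{H}_{\scriptscriptstyle \mathbf{n}}$. It lives on a neighbourhood of $\mathcal{L}\smallsetminus(\p,\q)$, is normalized by $\mathbf{H}_{\scriptscriptstyle \mathbf{n}}=(x+\text{O}(x^{2}),\,u+\text{O}(x))$, and takes $(\tilde{\F}_{\scriptscriptstyle \mu},\tilde{\G}_{\scriptscriptstyle \mathbf{r}})$ to $(\tilde{\F},\tilde{\G})$. The local maps are tied to it on the annuli by $H_{p_{i}}=\mathbf{H}_{\scriptscriptstyle \mathbf{n}}\circ\xi_{i}\circ\Psi_{i}$ and $H_{q_{j}}=\mathbf{H}_{\scriptscriptstyle \mathbf{n}}\circ\zeta_{j}\circ\Phi_{j}$. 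Here $\xi_{i},\zeta_{j}$ depend only on $\F_{\scriptscriptstyle \mu}$, while $\Psi_{i},\Phi_{j}$ are built from $s_{i}$, $z_{j}$.

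You attach models only to the $n+1+m$ points. With purely local normalizations uniqueness is false, so no jet-by-jet argument at one point can give it:
\begin{itemize}
\item At $q_{j}$, the map $T(x,u)=(z_{j}(x),u)$ preserves $(x=\text{cst})$, is the identity on $\{x=0\}$, and turns $\{z_{j}(x)+g_{j}(u)=\text{cst}\}$ into $\{x+g_{j}(u)=\text{cst}\}$. So $H_{q_{j}}\circ T^{-1}$ is an equally admissible normalizing map for a different model.
\item At $p_{i}$, the map $(x,u)\mapsto(cx,u)$ preserves $\F^{l}_{p_{i}}$, fixes $\{x=0\}$ pointwise, and replaces $s_{i}(x)$ by $s_{i}(x/c)$.
\end{itemize}
It is the factorization equations that exclude these; for instance they force $A_{p_{i},1}=\varepsilon_{i,1}$, which the rescaling turns into $\varepsilon_{i,1}/c$.

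Likewise, the ``equation on $D$ with prescribed poles'' and the Liouville argument you invoke only make sense for coefficients of a global object on $\mathcal{L}\smallsetminus(\p,\q)$, i.e.\ of $\mathbf{H}_{\scriptscriptstyle \mathbf{n}}$. Even then, the polynomial kernel they leave is not removed by any normalization along $D$. In the proof of Theorem \ref{Teorema_Limpieza de TransNorm_Intro_1} that kernel is realized by $\mathbf{F}_{k+1}$, $\mathbf{G}_{k+1}$, which are tangent to the identity and whose blow-ups are the identity on $\{x=0\}$. So for a fixed pair you would have to show such maps do not preserve $(\F,\G)$.

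\textbf{The existence step also normalizes the wrong things.} In this class the $\F$-part of the model at $p_{i}$ is the linear field $\lambda_{i}x\partial_{x}+(u-p_{i})\partial_{u}$. So the obstacle you single out, centralizers of non-linearizable resonant or Siegel models, does not arise. What does arise is that the $\G$-parts $\{u+s_{i}(x)=\text{cst}\}$ and $\{z_{j}(x)+g_{j}(u)=\text{cst}\}$ carry functional moduli $s_{i}$, $z_{j}$. These are arbitrary (Theorem \ref{teo:Construccion de PdF a partir de PL}) and classify the pair (Theorem \ref{Teo_Invariantes de clasificacion}).

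\begin{itemize}
\item Rectifying $\G$ at $p_{i}$ to the fibration transverse to $D$ (i.e.\ $s_{i}\equiv0$) fails even locally. The curve of tangencies of the model is $\{u=p_{i}-\lambda_{i}x\,s_{i}^{(1)}(x)\}$. For $s_{i}\equiv0$ this is the separatrix $\{u=p_{i}\}$. For $\lambda_{i}s_{i,1}\neq0$ and $\lambda_{i}\neq1$ it is not $\F^{l}_{p_{i}}$-invariant. An equivalence of pairs maps curves of tangencies to curves of tangencies and invariant curves to invariant curves, so the two models are not equivalent.
\item At $q_{j}$ the involution $I_{j}$ cannot be normalized by a change of coordinate along $D$, since normalizing maps restrict to the identity on $\{x=0\}$. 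That is why $g_{j}=(q_{j}-u)(I_{j}(u)-q_{j})$ enters the model unchanged.
\item Your proposed repair, making normalizing maps tangent to the identity along $D$, is incompatible with existence for these models. $A_{p_{i},1}=\varepsilon_{i,1}$ is determined by $\F_{\scriptscriptstyle \mu}$; it equals $1$ at $u=p_{i}$ but is not constant in general.
\end{itemize}

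A workable plan starts from $(\F_{\scriptscriptstyle \mu},\G_{\scriptscriptstyle \mathbf{r}})$. First build $\mathbf{H}_{\scriptscriptstyle \mathbf{n}}$; this is where the common hidden holonomy $\mathbf{h}$ is needed. Then determine $s_{i}$, $z_{j}$, $H_{p_{i}}$, $H_{q_{j}}$ by requiring that $\mathbf{H}_{\scriptscriptstyle \mathbf{n}}\circ\xi_{i}\circ\Psi_{i}$ and $\mathbf{H}_{\scriptscriptstyle \mathbf{n}}\circ\zeta_{j}\circ\Phi_{j}$ extend holomorphically across the punctures. Finally, derive uniqueness from the rigidity of $\mathbf{H}_{\scriptscriptstyle \mathbf{n}}$ under its normalization.
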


Moreover, the following theorem based on local models is also proved:

\begin{theorem}[Analytic invariants of foliation pairs, \cite {[JOV]}]
\label{Teo_Invariantes de clasificacion_Intro}
A necessary and sufficient condition for the strict analytical equivalence of non dicritical and dicritical foliation pairs in $\mathcal{N}_{\p}(\mathbf{h}) \times \mathcal{D}_{\q} (\mathcal{I})$  is the coincidence of their corresponding collections of local models.
\end{theorem}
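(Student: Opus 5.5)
We need to propose a proof of Theorem \ref{Teo_Invariantes de clasificacion_Intro}: two foliation pairs in $\mathcal{N}_{\p}(\mathbf{h}) \times \mathcal{D}_{\q}(\mathcal{I})$ are strictly analytically equivalent if and only if their collections of local models coincide.

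\emph{Necessity.} Suppose $H$ is a strict analytic equivalence conjugating $(\F,\G)$ to $(\F',\G')$, so that $H$ fixes the points $\p,\q$ on the exceptional divisor after blow-up. Take the normalizing transformations $\Phi_k$ (near the $p_k$) and $\Psi_j$ (near the $q_j$) assigned to $(\F,\G)$ by Theorem \ref{teo_S:Teo_Exis y Unici de ML y TN_Intro}. The compositions $\Phi_k\circ H^{-1}$ and $\Psi_j\circ H^{-1}$ are then normalizing transformations for $(\F',\G')$, taking the pair to the same local models. Since strictness means $H$ acts trivially on the divisor, these compositions satisfy the normalization conditions defining admissible normalizing transformations. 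By the uniqueness part of Theorem \ref{teo_S:Teo_Exis y Unici de ML y TN_Intro}, the local models of $(\F',\G')$ must coincide with those of $(\F,\G)$.

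\emph{Sufficiency.} Assume both pairs have the same local models. Let $\Phi_k,\Phi'_k$ and $\Psi_j,\Psi'_j$ be their normalizing transformations. Near each $p_k$ and $q_j$ define the local biholomorphisms
\[
H_k=(\Phi'_k)^{-1}\circ\Phi_k, \qquad H_j=(\Psi'_j)^{-1}\circ\Psi_j .
\]
Each of these conjugates both foliations of the pair locally, because both sides are carried to the same model. The plan is to glue them into a single biholomorphism on a neighbourhood of the exceptional divisor (after the blow-up $\sigma$):
\begin{enumerate}
\item Away from the special points, both $\F$ and $\F'$ are transverse to the divisor. Use the holonomy, which is the same on both sides because the hidden holonomy $\mathbf{h}$ and the involutions $\mathcal{I}$ are fixed and the local models agree, to extend each $H_k$ by lifting paths along the leaves of $\F$.
\item Check that the extensions from different special points agree on overlaps. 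In the dicritical part, do the same with $\G$, whose leaves are transverse to the divisor off $\q$; the involutions $I_j$ together with the models control the consistency there.
\item Conclude that $H$ is holomorphic on a punctured neighbourhood of the divisor and bounded, so it extends across. Blow down via $\sigma$ to obtain a germ at the origin, and use Hartogs-type extension across the singular point.
\end{enumerate}

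The main obstacle is step 2, the compatibility of the glued local conjugacies. One has to show that the unique normalization forces the transition maps, namely the holonomy correspondences between charts, to coincide for both pairs. This is exactly where the uniqueness in Theorem \ref{teo_S:Teo_Exis y Unici de ML y TN_Intro} and the genericity assumptions are essential. I would first prove that a map commuting with the generic holonomy group and tangent to the identity on the divisor is determined by its germ at a single point. That reduces global consistency to the local equalities that hold by construction.
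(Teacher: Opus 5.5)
Your proof has a genuine gap in the sufficiency part. It sits exactly at the step you flag, and the lemma you propose does not close it.

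\textbf{Why the local conjugacies do not glue.} Your local conjugacies $(\Phi'_k)^{-1}\circ\Phi_k$ at different special points are built independently, and purely local data cannot make them compatible.
\begin{enumerate}
\item The local model of Definition~\ref{def: ML_TN}(b) is not even well defined by itself. Precomposing $H_{p_i}$ with $(x,u)\mapsto(cx,u)$ preserves $\F^{l}_{p_i}$, is the identity on $\{x=0\}$, and replaces $s_i(x)$ by $s_i(cx)$.
\item Whenever the model has symmetries, the local conjugacy is only defined up to those symmetries.
\item A lemma saying that a conjugacy is determined by its germ at one point gives uniqueness of a continuation from $\hat p_1$. It does not give agreement of that continuation with the maps you built independently at $\hat p_k$ and $\hat q_j$.
\item Continuing along leaves of $\tilde{\F}$, with $\tilde{\G}$ as transverse fibration, breaks down at $\hat q_j$, where $\tilde{\G}$ is tangent to $\tilde{\F}$ along $\{u=q_j\}$.
\item No genericity beyond membership in $\mathcal{N}_{\p}(\mathbf{h})\times\mathcal{D}_{\q}(\mathcal{I})$ is involved here. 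The genericity on $(\lambda,\tau)$ is used in the paper only for realizing curves.
\end{enumerate}

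\textbf{The missing idea: factorization through $\mathbf{H}_{\scriptscriptstyle\mathbf n}$.} The local models are taken with respect to $(\F_{\scriptscriptstyle\mu},\G_{\scriptscriptstyle\mathbf r})$ (Definition~\ref{Def_ML-TN_Fmu-Gr}). They therefore come with the regular-part transformation $\mathbf{H}_{\scriptscriptstyle\mathbf n}$ and the factorization equations \eqref{eq:ecuaciones de factorizacion}. In those equations $\xi_i,\zeta_j$ depend only on $\F_{\scriptscriptstyle\mu}$, and $\Psi_i,\Phi_j$ depend only on $(\lambda_i,s_i)$ and $(g_j,z_j)$. This is precisely the gluing data your argument lacks.

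The paper only quotes the theorem from \cite{[JOV]}, but with this structure compatibility is immediate. If two pairs have the same local models, then on the annulus around $\hat p_i$
\[
H'_{p_i}\circ H_{p_i}^{-1}=\mathbf{H}'_{\scriptscriptstyle\mathbf n}\circ\xi_i\circ\Psi_i\circ\Psi_i^{-1}\circ\xi_i^{-1}\circ\mathbf{H}_{\scriptscriptstyle\mathbf n}^{-1}=\mathbf{H}'_{\scriptscriptstyle\mathbf n}\circ\mathbf{H}_{\scriptscriptstyle\mathbf n}^{-1},
\]
and likewise at $\hat q_j$. Hence $\mathbf{H}'_{\scriptscriptstyle\mathbf n}\circ\mathbf{H}_{\scriptscriptstyle\mathbf n}^{-1}$ extends across every special point to a biholomorphism of a neighbourhood of $\mathcal{L}$ in $\M$. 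This map sends $(\tilde{\F},\tilde{\G})$ to $(\tilde{\F}',\tilde{\G}')$, is the identity on $\mathcal{L}$, and has the form $(x+O(x^{2}),\,u+O(x))$. No holonomy continuation is needed.

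That last normalization is what makes the blown-down germ tangent to the identity. Being the identity on $\mathcal{L}$ alone only gives a homothety as linear part, and your sketch does not address this.

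\textbf{Necessity needs the same repair.} Uniqueness in Theorem~\ref{CuTan_Teo: Existencia_Unicidad de ML y TN} is uniqueness among transformations satisfying the factorization equations, not merely the normalization conditions. So you must include $\mathbf{H}_{\scriptscriptstyle\mathbf n}$ in the argument. You then need to check that post-composing $\mathbf{H}_{\scriptscriptstyle\mathbf n}, H_{p_i}, H_{q_j}$ with the blow-up $\tilde{H}$ preserves both the normalizations and \eqref{eq:ecuaciones de factorizacion}.
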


These theorems highlight the relevance of the local models and of the collection normalyzing transformations that accompany them.
 The decomposition of foliation pairs in local models allows the accurate tracking of the elements related to the analytic classification invariants of foliation pairs. In order to achieve this purpose, in this work we will give \emph{Normal Forms} for the normalizing transformations.  As we will show, these normal forms will open the way to obtain clean and informative expressions of the curves of tangencies of foliation pairs that we consider. 

\begin{theorem}[$k$--normal form for normalizing transformations]\label{Teorema_Limpieza de TransNorm_Intro}
For each foliation pair $(\F, \G)$ in $\mathcal{N}_{\p}(\mathbf{h}) \times \mathcal{D}_{\q}(\mathcal{I})$ and any $k\geqslant 1$ there exists a foliation pair strictly analytically equivalent to $(\F, \G)$, whose normalizing transformations, up to its $k$--jet, are normalized in order to have explicit expressions that are clear and informative in terms of the local analytic representatives characterizing the foliation pair $(\F, \G)$.
\end{theorem}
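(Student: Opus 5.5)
We will exploit the freedom left in the construction of Theorem \ref{teo_S:Teo_Exis y Unici de ML y TN_Intro}: the local models attached to $(\F,\G)$ are unique, but the normalizing transformations are determined only up to the action of strict analytic equivalences of the foliation pair and up to symmetries of the local models. Concretely, for each singular point $p_k$ (non dicritical case) and each tangency point $q_j$ (dicritical case) the normalizing transformation $H$ is a germ of biholomorphism at the point, defined on a neighbourhood of the exceptional divisor after one blow up. It conjugates the restriction of $(\F,\G)$ to the corresponding local model. Any germ $\Phi$ tangent to the identity which preserves the line $\{x=0\}$ pointwise (the blow up direction) transforms $(\F,\G)$ into a strictly analytically equivalent pair $(\Phi_*\F,\Phi_*\G)$. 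The normalizing transformations of the new pair are the compositions $H\circ\Phi^{-1}$, again composed with symmetries of the local models. The plan is therefore to choose $\Phi$ and these symmetries so that the $k$-jets of all normalizing transformations acquire a prescribed explicit form.

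The steps, in order, are the following.

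\textbf{Step 1.} Write each normalizing transformation in the chart $y=tx$ of the blow up as $H_k(x,t)=\bigl(x+\sum_{i\ge 2}a_{k,i}(t)x^i,\ t+\sum_{i\ge1}b_{k,i}(t)x^i\bigr)$. Its restriction to the divisor is the identity, since the transformations are strict.

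\textbf{Step 2.} Determine the group of symmetries of each local model. For the non dicritical models these are flows of the vector field tangent to the model, times multiplicative constants commuting with the hidden holonomy $\mathbf h$. For the dicritical models they are maps commuting with the involution $I_j$. Compute their action on the coefficients $a_{k,i},b_{k,i}$ jet by jet.

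\textbf{Step 3.} Proceed by induction on the order $i\le k$. At order $i$, composing with $\Phi_i=\mathrm{id}+(\text{homogeneous of degree } i)$ changes all the coefficients $(a_{k,i},b_{k,i})$ simultaneously by the same polynomial in $t$ of bounded degree. Here we use that $\Phi$ is global near the origin, so it induces a polynomial correction in $t$. The symmetries of the local models change the coefficients independently by terms determined by the model, for instance multiples of the model vector field evaluated at $p_k$ or $q_j$.

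\textbf{Step 4.} Solve the resulting linear homological equation. Choose $\Phi_i$ to kill the part of the coefficients that lies in the image of a Lagrange-type interpolation map at the $n+1+m$ distinct points $p_k,q_j$. This is possible because these points are distinct. Use the local symmetries to normalize the remaining free constants. The residual coefficients, that is, the cokernel, are the normal form at order $i$. Since $\Phi_i$ only affects orders $\ge i$, the induction closes, and after $k$ steps the composition $\Phi=\Phi_k\circ\dots\circ\Phi_2$ gives the desired foliation pair.

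The main obstacle is Step 4. One must verify that the linear map
\[
(\text{degree-}i\ \text{corrections of }\Phi)\times\prod(\text{local symmetry parameters})\longrightarrow \bigoplus_{k,j}(\text{order-}i\ \text{coefficients})
\]
has an explicitly describable cokernel, so that the normalized form is clear and informative. The dimension counts differ between the non dicritical and dicritical points: the involutions $I_j$ and the hidden holonomy impose different symmetry groups. To handle this, we would first treat the case where all symmetries are trivial and describe the image as polynomials of degree $\le i+1$ in $t$ interpolated at the points $\p,\q$. We would then add the symmetry contributions and check genericity, namely that the points are mutually distinct, which is exactly the hypothesis of Theorem \ref{teo_S: realización de modelos locales y t_norm}. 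Genericity ensures that the interpolation is nondegenerate.
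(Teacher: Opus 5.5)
Your proposal has a genuine gap. It leaves out the two ingredients that make the normal form explicit: the global normalizing transformation $\mathbf{H}_{\scriptscriptstyle \mathbf{n}}$, defined near the punctured sphere $\mathcal{L}\smallsetminus(\p,\q)$, and the factorization equations $H_{p_{i}}=\mathbf{H}_{\scriptscriptstyle \mathbf{n}}\circ\xi_{i}\circ\Psi_{i}$, $H_{q_{j}}=\mathbf{H}_{\scriptscriptstyle \mathbf{n}}\circ\zeta_{j}\circ\Phi_{j}$.

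\begin{itemize}
\item You treat $H_{p_{i}}$ and $H_{q_{j}}$ as independent germs, each with its own symmetry freedom. But with respect to $(\F_{\scriptscriptstyle \mu},\G_{\scriptscriptstyle \mathbf{r}})$ the normalizing transformations are unique (Theorem \ref{CuTan_Teo: Existencia_Unicidad de ML y TN}). Composing one of them with a nontrivial symmetry of its local model is therefore incompatible with the factorization equations, so Step 2 relies on a freedom that does not exist.
\item The only freedom is a global tangent-to-the-identity change $\mathcal{H}$. It acts on all normalizing transformations simultaneously as $\tilde{\mathcal{H}}\circ H_{[\centerdot]}$, not as $H\circ\Phi^{-1}$.
\item Step 1 is inaccurate. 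The first coordinates of the local transformations are not $x+\text{O}(x^{2})$, since $A_{p_{i},1}=\varepsilon_{i,1}(u)$ and $A_{q_{j},1}=\varsigma_{j,1}(u)$. Consequently a homogeneous change of order $k+1$ adds $\varepsilon_{i,1}^{k+1}$ (resp. $\varsigma_{j,1}^{k+1}$) times a polynomial, not the same polynomial at every point.
\end{itemize}
Without the factorization equations the local coefficient germs are tied neither to each other nor to the local data. So no choice of $\Phi$ can make them explicit in terms of $\lambda_{i}$, $s_{i,r}$, $z_{j,r}$, $g_{j}$.

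Your remark in Step 3, that a global homogeneous change induces a polynomial correction in $u$, is correct and is used in the paper. What it must be combined with is a Mittag-Leffler/Liouville argument on the sphere, not an interpolation.
\begin{itemize}
\item Expanding the factorization equations (Lemmas \ref{lem: series TN por ecFac en pi} and \ref{lem: series TN por ecFac en qj}) gives, at order $k+1$, relations such as $A_{p_{i},k+1}/\varepsilon_{i,1}^{k+1}-A_{\mathbf{n},k+1}=(\varepsilon_{i,k+1}+\mathbf{A}_{\p_{i},k+1})/\varepsilon_{i,1}^{k+1}$. The right-hand side is determined by the local model and lower-order coefficients; the same holds at $q_{j}$ and for the second coordinate.
\item Since $A_{p_{i},k+1}$ is holomorphic at $p_{i}$, these relations prescribe the principal parts, at every $p_{i}$ and $q_{j}$, of the single unknown function $A_{\mathbf{n},k+1}$. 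Hence $A_{\mathbf{n},k+1}$ differs by an entire function from the canonical solution $a_{\mathbf{n},k+1}$ of Proposition \ref{Prop:soluciones canonicas}, which is the sum of these prescribed principal parts.
\item Because $\mathbf{H}_{\scriptscriptstyle \mathbf{n}}$ preserves the line $\{v=0\}$, $v=x/y$, this coefficient has a pole of order at most $k$ at $u=\infty$ (at most $k+2$ for $B_{\mathbf{n},k+1}$). So the difference is a polynomial $\mathbf{r}_{k}$ (resp. $\mathbf{s}_{k+2}$).
\item These are exactly the polynomials produced by $\mathbf{F}_{k+1}(x,y)=(x+x^{k+1}\mathbf{r}_{k}(y/x),\,y+yx^{k}\mathbf{r}_{k}(y/x))$ and $\mathbf{G}_{k+1}(x,y)=(x,\,y+x^{k+2}\mathbf{s}_{k+2}(y/x))$. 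Their lifts $(x+x^{k+1}\mathbf{r}_{k}(u),u)$ and $(x,u+x^{k+1}\mathbf{s}_{k+2}(u))$ remove them exactly.
\item Through the factorization equations, the same correction normalizes every local coefficient at once.
\end{itemize}
No cokernel, interpolation or genericity appears.

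Your Step 4, which you identify as the main obstacle, is left unproved and, as formulated, is not well posed:
\begin{itemize}
\item A finite-dimensional space of polynomials has no canonical complement in the infinite-dimensional space of coefficient germs.
\item Interpolating values at the $n+1+m$ points by polynomials of bounded degree is over- or underdetermined, depending on how $n+1+m$ compares with the degree.
\item Normalizing values at points would not produce coefficients expressed through the local models.
\end{itemize}
Finally, distinctness of $\p,\q$ is a standing hypothesis; the paper's genericity assumptions enter only in Section \ref{Realizacion de curvas}.
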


 We are interested in the families of curves of tangencies related to each foliation pair $(\F, \G)$ in $\mathcal{N}_{\p}(\mathbf{h}) \times \mathcal{D}_{\q}(\mathcal{I})$. The analytic types of such curves are analytic invariants of foliation pairs. As we will show, the local models and normalizing transformations of foliation pairs, given in Theorems \ref{teo_S:Teo_Exis y Unici de ML y TN_Intro} and \ref{Teorema_Limpieza de TransNorm_Intro}, allow us to describe finite jets of parametrizations of the corresponding curves of tangencies, and in this way, give the possibility of realizing curves as curves of tangencies of foliation pairs in $\mathcal{N}_{\p} (\mathbf{h}) \times \mathcal{D}_{\q} (\mathcal{I})$. These properties are stated in Theorems \ref{Teorema_Parametrizacion_Polar_pi_qj_Intro} and \ref{teo:realizacion curvas como curvas de tangencias}.

\begin{definition}\label{def: curve of tangencies}
    \emph{The curve of tangencies (polar curve) $\mathcal{P}(\F, \G)$ of a given a foliation pair $(\F, \G)$ in $\mathcal{N}_{\p}(\mathbf{h}) \times \mathcal{D}_{\q}(\mathcal{I})$} is a germ of analytic curve $C$ in $(\C^{2}, 0)$ having $m+n+1$ pairwise transversal smooth branches, whose tangent lines are either $y = p_{i}x$ or $y = q_{j} x$. Namely, $C$ is a germ of analytic curve with branches

\begin{equation}
\label{Ec_Curvas_Parametrizaciones_teo_intro}
\begin{split}
C_{p_{i}} & = \{y = p_{i} x + a_{i} x^{2} + \cdots\} \, , 
\hspace{1.3cm}
1 \leqslant i \leqslant n+1 \, ,
\\
C_{q_{j}} & = \{y =  q_{j} x + a_{j + n + 1} x^{2} + \cdots\} \, ,
\hspace{0.5cm}
1 \leqslant j \leqslant m \, .
\end{split}
\end{equation}
\end{definition}
Together with foliation pairs $(\F, \G)$ in $\mathcal{N}_{\p} (\mathbf{h}) \times \mathcal{D}_{\q} (\mathcal{I})$, we consider $(\tilde{\F}, \tilde{\G})$ the corresponding blow-up foliation pair.  

\begin{theorem}[Parametrization of curves of tangencies at singular and tangency points]
\label{Teorema_Parametrizacion_Polar_pi_qj_Intro}
Let $(\F, \G) \in \mathcal{N}_{\p} (\mathbf{h}) \times \mathcal{D}_{\q} (\mathcal{I})$ be a foliation pair whose normalizing transformations are normalized up to its  $k_{0}$-jet  as in Theorem \ref{Teorema_Limpieza de TransNorm_Intro}. 

Let $\pi_{p_{i}}$ and $\pi_{q_{j}}$ be the respective parametrizations by  $x$ of the curves of tangencies $\CP_{p_{i}} (\tilde{\F}, \tilde{\G})$ and $\CP_{q_{j}} (\tilde{\F}, \tilde{\G})$. Then,
\begin{itemize}
    \item [a)]The coefficients $\,\,\mathbf{c}_{\scriptscriptstyle{p_{i}, k}}\, $ of the power series expansion $\pi_{p_{i}} = p_{i} + \textstyle \sum_{r \geqslant 1}  \mathbf{c}_{\scriptscriptstyle{p_{i}, r}}\,x^{r}$\,  of the parametrizations by $x$ of the curve of tangencies $\CP_{p_{i}} (\tilde{\F}, \tilde{\G})$ depend, for $1 \leqslant k \leqslant k_{0}$, 
on the normalizing transformations (up to the $k-1$ derivative) and on the local analytic representatives at all the singular and tangency points.
\item[b)] The coefficients  $\,\,\mathbf{c}_{\scriptscriptstyle{q_{j}, k}}\,$ of the power series expansion $\pi_{q_{j}} = q_{j} + \textstyle \sum_{r \geqslant 1}  \mathbf{c}_{\scriptscriptstyle{q_{j}, r}}\, x^{r}$ of the parametrizations by $x$ of the curve of tangencies $\CP_{q_{j}} (\tilde{\F}, \tilde{\G})$ depend, for $1 \leqslant k \leqslant k_{0}$,  
on the normalizing transformations (up to the $k-1$ derivative) and on the local analytic representatives at all the singular and tangency points.  

 \end{itemize}
 \end{theorem}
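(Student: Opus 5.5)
We work in the blow-up chart $(x,t)$, $y=tx$, where the exceptional divisor is $\{x=0\}$. In this chart the strict transforms $\tilde\F$ and $\tilde\G$ are given, near each point $t=p_i$ (resp.\ $t=q_j$), by the pull-back of the corresponding local model under the normalizing transformation $H_{p_i}$ (resp.\ $H_{q_j}$). Theorem \ref{teo_S:Teo_Exis y Unici de ML y TN_Intro} supplies these data, and Theorem \ref{Teorema_Limpieza de TransNorm_Intro} lets us assume that their $k_0$-jets have the normalized form. We write $\omega_{\F}$ and $\omega_{\G}$ for holomorphic $1$-forms defining $\tilde\F$ and $\tilde\G$ near the point under study. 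The curve of tangencies $\CP_{p_i}(\tilde\F,\tilde\G)$ is then the zero set of $T=\omega_{\F}\wedge\omega_{\G}/(dx\wedge dt)$, after dividing out the appropriate power of $x$ coming from the exceptional divisor. Near $p_i$ the form $\omega_{\F}$ is obtained explicitly from the non dicritical local model through $H_{p_i}$. The form $\omega_{\G}$ is not given by a local model at $p_i$; it is obtained by transporting the dicritical data from the points $q_j$ along the divisor, which in the construction of \cite{[JOV]} is done by gluing with the normalizing transformations. This is why every local representative enters, and the same holds at $q_j$ with the roles exchanged.

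\textbf{Step 1.} Write $\omega_{\F}=A(x,t)\,dx+B(x,t)\,dt$ and $\omega_{\G}=C\,dx+D\,dt$ as power series in $x$ whose coefficients are holomorphic in $t$:
\[
A=\textstyle\sum_r A_r(t)x^r,\qquad B=\sum_r B_r(t)x^r,
\]
and similarly for $C$ and $D$. The coefficients $A_r,B_r$ depend on the $r$-jet in $x$ of $H_{p_i}$ and on the local model at $p_i$. The coefficients $C_r,D_r$ depend on the analogous jets at all the $q_j$, and, through the global compatibility on the divisor, on the holonomy data at all the $p_l$.

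\textbf{Step 2.} Form $T(x,t)=T_0(t)+T_1(t)x+\cdots$. The genericity assumptions and the definition of the curve of tangencies say that $T_0$ has a simple zero at $t=p_i$, since the branch is smooth and transversal to the divisor. The implicit function theorem then gives a parametrization $t=\pi_{p_i}(x)$.

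\textbf{Step 3.} Substitute the ansatz $t=p_i+\sum_r \mathbf c_{p_i,r}x^r$ into $T$ and compare coefficients of $x^k$. This yields a triangular recursion
\[
T_0'(p_i)\,\mathbf c_{p_i,k}=-\Phi_k\bigl(\mathbf c_{p_i,1},\dots,\mathbf c_{p_i,k-1};\,T_r^{(s)}(p_i),\ r+s\le k\bigr).
\]
Counting orders in $x$ shows that $T_r$ involves only the derivatives of the normalizing transformations up to order $r$. The explicit factor $x$ produced by the blow-up, in $y=tx$ and $dy=x\,dt+t\,dx$, lowers this index by one. Hence $\mathbf c_{p_i,k}$ depends only on the jets of the normalizing transformations up to derivative $k-1$ and on the local representatives. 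An induction on $k\le k_0$, using the normalized form from Theorem \ref{Teorema_Limpieza de TransNorm_Intro}, makes this dependence explicit. Part b) is identical: at $q_j$, $\omega_{\G}$ comes from the dicritical model with its involution $I_j$, and $\omega_{\F}$ is transported from the $p_i$.

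\textbf{Main obstacle.} The hardest point is the precise bookkeeping of Step 3. One must show that the order-$k$ coefficient really needs only $(k-1)$-jets, which requires tracking exactly how the power of $x$ factored out from $\omega_{\F}\wedge\omega_{\G}$ shifts the indices. One must also show that the transported form, for instance $\omega_{\G}$ near $p_i$, involves the local models at all points only through finite jets of the normalizing transformations. I would handle this by first checking the lowest case $k=1$ explicitly in the normalized coordinates, and then arguing inductively on the filtration by powers of $x$.
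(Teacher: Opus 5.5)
Your proposal has a genuine gap, and it starts from a misreading of the local models. At $(0,p_i)$ the local model describes the \emph{pair}: it is $(\F^{l}_{p_i},\G_{p_i})$ with $\G_{p_i}=\{u+s_i(x)=\text{cst}\}$. A single biholomorphism $H_{p_i}$ carries both foliations to $(\tilde\F,\tilde\G)$. Likewise, $H_{q_j}$ carries $((x=\text{cst}),\G_{q_j})$ to $(\tilde\F,\tilde\G)$ near $(0,q_j)$. So nothing has to be ``transported along the divisor,'' and no wedge of forms is needed, because a biholomorphism sends tangency curves to tangency curves. The model tangency curves are explicitly $\{u=p_i-\lambda_i x s_i^{(1)}(x)\}$ and $\{u=q_j\}$; this, not genericity, is why your $T_0$ has a simple zero. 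Hence $\CP_{p_i}(\tilde\F,\tilde\G)=H_{p_i}(x,p_i+\tilde s_i(x))$ with $\tilde s_i(x)=-\lambda_i x s_i^{(1)}(x)$, and $\CP_{q_j}(\tilde\F,\tilde\G)=H_{q_j}(x,q_j)$. The paper reads the implicit parametrization $(\alpha,p+\beta)$ off the normalized jets and then computes $\beta\circ\alpha^{-1}$ with Fa\`a di Bruno.

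The more serious problem is Step 3: there is no index shift. $H_{p_i}$ already lives in the chart $(x,u)$. The coefficient of $x^k$ in $\pi_{p_i}=p_i+\beta_{p_i}\circ\alpha_{p_i}^{-1}$ contains $b_{p_i,k}(p_i)$, and also $a_{p_i,k}(p_i)$ through $\alpha_{p_i}^{-1}$. These are genuinely $k$-th order coefficients of the normalizing transformation.

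The theorem holds only because, in the normal form of Theorem~\ref{Teorema_Limpieza de TransNorm_Intro_1}, these coefficients are not free. The factorization equations and the canonical solutions of Proposition~\ref{Prop:soluciones canonicas} give
$b_{p_i,k}=s_{i,k}+\varepsilon_{i,k}\,b_{\mathbf{n},1}+\mathbf{B}_{\mathbf{\p}_i,k}+(\varepsilon_{i,1})^{k}\,b_{\mathbf{n},k}$.
Here $\mathbf{B}_{\mathbf{\p}_i,k}$ involves only lower-order data. The term $b_{\mathbf{n},k}$ is, up to lower-order terms, minus the sum of the principal parts at \emph{all} the points $p_{\imb}$, $q_{\jmb}$; the polynomial ambiguity is absorbed by the tangent-to-identity changes in the proof of that theorem.

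This is the actual mechanism through which every local representative enters. Evaluating at $p_i$ gives $-\sum_{\jmb} z_{\jmb,k}/(2(p_i-q_{\jmb}))$. At $q_j$, the regular part of $z_{j,k}/((\varsigma_{j,1})^{k}g_j^{(1)})$ gives $-\tfrac{z_{j,k}}{2}\bigl(k(\varsigma_{j,1})^{(1)}(q_j)+g_j^{(3)}(q_j)/4\bigr)$ (Remark~\ref{Obs_PartePrincipal y ParteRegular en cocientes}, Lemma~\ref{Lema_Descripcion de bAst}). So the $k$-th order data enter only through the explicit terms in $s_{\imb,k}$, $z_{\jmb,k}$ (and through $\varepsilon_{i,k}$, $\varsigma_{j,k}$, which are fixed by $\F_{\mu}$). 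Everything else depends on $(k-1)$-jets.

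Your sketch defers exactly this step to ``an induction using the normalized form,'' while attributing the $k-1$ to a spurious blow-up factor. As a result, neither the $(k-1)$-jet claim nor the dependence on all singular and tangency points is actually established.
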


 In the study of foliation pairs and their corresponding tangency curves a central question is to determine when it is possible to realize a germ of analytic curve in $(\C^{2}, 0)$ having $m+n+1$ pairwise transversal smooth branches satisfying \eqref{Ec_Curvas_Parametrizaciones_teo_intro}, as curve of tangencies of a foliation pair in $\mathcal{N}_{\p}(\mathbf{h}) \times \mathcal{D}_{\mathsf{q}}(\mathcal{I})$.

To answer this question we state the following theorem.
\begin{theorem}[Realization of curves as curves of tangencies of foliation pairs]
\label{teo:realizacion curvas como curvas de tangencias} 
Under genericity assumptions, any germ of analytic curve $C$ having $m+n+1$ pairwise transversal smooth branches as in \eqref{Ec_Curvas_Parametrizaciones_teo_intro} is realized as a curve of tangencies of a foliation pair in $\mathcal{N}_{\p} (\mathbf{h}) \times \mathcal{D}_{\q} (\mathcal{I})$. 

The genericity assumptions rely on the collection of Camacho--Sad indices of the non dicritical foliations in $\mathcal{N}_{\p} (\mathbf{h})$, and on the collection of quadratic coefficients of the involutions associated with the dicritical foliations in $\mathcal{D}_{\q} (\mathcal{I})$.
\end{theorem}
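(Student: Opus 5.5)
The plan is to realize $C$ jet by jet and then pass to the limit, using Theorem \ref{teo_S: realización de modelos locales y t_norm} for existence and Theorem \ref{Teorema_Parametrizacion_Polar_pi_qj_Intro} for the computation of coefficients. First fix the tangent data: the branches of $C$ have tangent lines $y=p_ix$ and $y=q_jx$, so $(\p,\q)$ is prescribed. In the blow-up chart $y=tx$, the branch $C_{p_i}$ becomes the graph $t=\pi_{p_i}(x)=p_i+a_ix+\cdots$, and similarly $C_{q_j}$ becomes $t=q_j+a_{j+n+1}x+\cdots$. The task is therefore to choose local models and normalizing transformations so that the parametrizations $\pi_{p_i},\pi_{q_j}$ of $\CP_{p_i}(\tilde\F,\tilde\G)$ and $\CP_{q_j}(\tilde\F,\tilde\G)$ coincide with these prescribed series.

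The first step is to make Theorem \ref{Teorema_Parametrizacion_Polar_pi_qj_Intro} quantitative. Work in the $k$-normal form of Theorem \ref{Teorema_Limpieza de TransNorm_Intro}, and expand the tangency condition between $\tilde\F$ and $\tilde\G$ near each point $p_i$ and $q_j$ of the exceptional divisor. I expect the coefficient $\mathbf c_{p_i,k}$ to have the form
\[
\mathbf c_{p_i,k}=\Lambda_{i,k}\,\tau_{i,k}+R_{i,k},
\]
where $\tau_{i,k}$ is a free coefficient of the $k$-th order part of the normalizing transformation at $p_i$ (or of the local representative), $R_{i,k}$ depends only on lower-order data, and $\Lambda_{i,k}$ is an explicit factor. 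At a singular point this factor should be built from the Camacho--Sad index $\lambda_i$, something like $k-\lambda_i$ or $k\lambda_i-1$. At $q_j$ the analogous factor $\Lambda_{j,k}$ should involve the quadratic coefficient of the involution $I_j$. The genericity assumption is exactly that all these factors are nonzero, that is, the indices avoid an explicit countable exceptional set. Verifying this triangular structure, and identifying the factors, is the step I expect to be the main obstacle. The difficulty is that part a) of Theorem \ref{Teorema_Parametrizacion_Polar_pi_qj_Intro} says the coefficients depend on data at all points, so there is global coupling. I would first try to show that this coupling enters only through lower-order terms, so that the system remains triangular.

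With triangularity established, solve recursively in $k$. At each order, choose the free $k$-th coefficients of the normalizing transformations to match $\mathbf c_{p_i,k}=a_{p_i,k}$ and $\mathbf c_{q_j,k}=a_{q_j,k}$. Throughout, keep the local models, and hence $\mathbf h$ and $\mathcal I$, fixed; only the transformations vary. Realizability of the resulting collection of local models together with these transformations is then guaranteed by Theorem \ref{teo_S: realización de modelos locales y t_norm}. This is the point where the remaining degrees of freedom in the construction of \cite{[JOV]} have to be used.

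Finally, pass from formal to analytic. The recursion gives a formal family of transformations whose coefficients are bounded by the $a$'s divided by the factors $\Lambda_{i,k},\Lambda_{j,k}$. Under a Brjuno- or Siegel-type nonresonance condition, or simply if the factors stay away from $0$ (for instance when $\lambda_i\notin\mathbb{Q}_{>0}$ is not approached), a majorant series argument gives convergence. A cleaner alternative avoids the limit altogether. Realize a finite jet of $C$ of sufficiently high order $N$, then use finite determinacy: branches that are transversal and smooth are analytically determined by their $N$-jets up to a diffeomorphism tangent to the identity to high order. Conjugating the constructed pair by that diffeomorphism transports its polar curve exactly onto $C$. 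It also keeps the pair in $\mathcal N_{\p}(\mathbf h)\times\mathcal D_{\q}(\mathcal I)$, since these classes are invariant under strict analytic equivalence.
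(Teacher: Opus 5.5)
Your decisive step fails. You mention the local representative in passing, but you then keep the local models fixed and look for the free parameters in the normalizing transformations, and those parameters do not exist. By Theorem~\ref{CuTan_Teo: Existencia_Unicidad de ML y TN} the normalizing transformations of a pair are unique. By Theorem~\ref{Teo_Invariantes de clasificacion}, two pairs with the same local models are strictly analytically equivalent. So with fixed models the only remaining freedom is composition with the blow-up $\tilde H$ of a tangent-to-identity map $H$, which moves the curve of tangencies by $H$ and leaves its strict analytic type unchanged. At the level of coefficients, the factorization equations of Lemmas~\ref{lem: series TN por ecFac en pi} and~\ref{lem: series TN por ecFac en qj} fix the jets of $H_{p_i}$ and $H_{q_j}$ once $\mathbf{H}_{\mathbf{n}}$ is known. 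The only freedom in $\mathbf{H}_{\mathbf{n}}$ is the polynomials $\mathbf{r}_k$, $\mathbf{s}_{k+2}$ of the proof of Theorem~\ref{Teorema_Limpieza de TransNorm_Intro_1}, i.e.\ coordinate changes. By Remark~\ref{rem: esp afin coeficientes imagen}, once $m+n+1\geq 5$ the quadratic coefficients of curves strictly equivalent to a given one fill only the proper affine subspace $a+\mathrm{Im}(\mathbf{V})$, so your scheme cannot reach an arbitrary $C$. Theorem~\ref{teo:Construccion de PdF a partir de PL} also does not let you prescribe transformations; it produces them from the models. Fixing $\mathbf{h}$ and $\mathcal{I}$ fixes only the $\lambda_i$ and $g_j$. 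The genuine unknowns are the coefficients $s_{i,k}$, $z_{j,k}$ of the model functions $s_i$, $z_j$. The paper chooses these order by order, realizes the models by Theorem~\ref{teo:Construccion de PdF a partir de PL}, normalizes by Theorem~\ref{Teorema_Limpieza de TransNorm_Intro_1}, and reads off the coefficients of the curve of tangencies from Theorem~\ref{Teorema_Parametrizacion_Polar_pi_qj}.

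Two further steps fail as stated.

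First, the coupling between points does not enter only through lower-order terms. $\mathbf{c}_{p_i,k}$ contains $-\sum_{j}z_{j,k}/(2(p_i-q_j))$, and $\mathbf{c}_{q_j,k}$ contains $-\sum_{l\neq j}z_{l,k}/(2(q_j-q_l))$; both come from the principal parts of $b_{\mathbf{n},k}$ at all the $q_l$. The order-$k$ step is therefore the full linear system $\mathbf{A}_k\bigl((s_{i,k})_i,(-\tfrac12 z_{j,k})_j\bigr)=\bigl(\mathbf{c}_{p_i,k}-\boldsymbol{\pi}_{p_i,k}(p_i),\,\mathbf{c}_{q_j,k}-\boldsymbol{\pi}_{q_j,k}(q_j)\bigr)$. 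This system is block upper triangular, with diagonal block $\mathrm{diag}(1-k\lambda_i)$ (your guess $k\lambda_i-1$ is right up to sign) and an $m\times m$ Cauchy-type block with diagonal entries $\theta_{j,k}=-\tfrac32\tau_j+k\,\varsigma_{j,1}'(q_j)$. The genericity condition is invertibility of that whole block. You must check that its determinant is a nonzero polynomial in $\tau$; nonvanishing of the diagonal factors is not enough.

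Second, you never impose $z_{j,1}\neq 0$, which is needed for $\G_{q_j}$ to be an admissible model. Solving $\mathbf{A}_1(\mathsf{s},\mathsf{z})=(a_1,\dots,a_{m+n+1})$ can give $\mathsf{z}_j=0$. The paper first moves the quadratic coefficients of $C$ inside $a+\mathrm{Im}(\mathbf{V})$ by a tangent-to-identity change (Proposition~\ref{Propo_Cuadratico}). This is where the second genericity condition of Lemma~\ref{lem: abierto Zariski A} and the determinant computation of Lemma~\ref{Lema_Antisimetrica_Vandermonde} enter.

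Your closing finite-determinacy argument is the one the paper uses. It makes the majorant-series option unnecessary and reduces genericity to finitely many $k\leq k_0$, hence to a Zariski-open condition on $(\lambda,\tau)$.
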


\subsection{Structure of the work}

In Section \ref{Preliminares} several definitions and statements about local models and normalizing transformations are introduced. In that section,  the accurate statements of Theorems \ref{teo_S: realización de modelos locales y t_norm}, 
\ref{teo_S:Teo_Exis y Unici de ML y TN_Intro} and \ref{Teo_Invariantes de clasificacion_Intro}, proved in \cite {[JOV]} (see Subsection \ref{Resultados de JOV}), are given as well.
 
Section \ref{Transformaciones normalizantes y curvas polares de parejas de foliaciones} is devoted to the proof of a detailed formulation of Theorem \ref{Teorema_Limpieza de TransNorm_Intro} on the $k$--normal forms for normalizing transformations. In Section \ref{Parametrizaciones de polares}  the local analytic representatives as well Theorem \ref{Teorema_Limpieza de TransNorm_Intro} are used in order to prove a precise formulation of Theorem \ref{Teorema_Parametrizacion_Polar_pi_qj_Intro} about the parametrizations (up to a finite jet) of the branches of the curves of tangencies,
 in terms of the analytic invariants of foliation pairs in $\mathcal{N}_{\p} (\mathbf{h}) \times \mathcal{D}_{\q} (\mathcal{I})$. Finally, Section \ref{Realizacion de curvas} is devoted to prove Theorem \ref{teo:realizacion curvas como curvas de tangencias} on the realization of curves as curves of tangencies.


\section{\textbf{\emph{Definitions and statements about local models and normalizing transformations}}} \label{Preliminares}

\setcounter{figure}{0}
\setcounter{equation}{0}

\subsection{Basic definitions}

 We consider differential equations defined by germs of vector fields with degenerate singularity at the origin. Such germs are described in $(\C^2,0)$ by analytic vector fields
\begin{equation}\label{eq: campo vectorial v}
\mathbf{v}(x,y)= P(x,y)\frac{\partial}{\partial{x}}+ Q(x,y)\frac{\partial}{\partial{x}}\,,
\end{equation}
where $P(x, y) = P_{k}(x, y) + P_{k+1}(x, y) + \cdots$, $Q(x, y) = Q_{k}(x, y) + Q_{k+1}(x, y) + \cdots$, being $P_{j}(x,y)$, $Q_{j}(x,y)$ homogeneous polynomials of order $j$.

We consider the germs of foliations $\mathcal{F}$ induced by germs of vector fields $\mathbf{v}$ as in \eqref{eq: campo vectorial v}, i.e., $\mathcal{F} = \mathcal{F}_{\mathbf{v}}$. Two such germs of foliations \textit{$\mathcal{F}_{\mathbf{v}}$, $\mathcal{F}_{\mathbf{w}}$ are strictly analytically equivalent} if there exists a germ of biholomorphism $H$ in $(\C^{2}, 0)$ tangent to the identity (i.e., $\text{D}_{0} H = I$) sending the leaves of the foliation $\mathcal{F}_{\mathbf{v}}$ to the leaves of the foliation $\mathcal{F}_{\mathbf{w}}$, i.e., $H (\mathcal{F}_{\mathbf{v}}) = \mathcal{F}_{\mathbf{w}}$.

We denote by $\mathbf{v_0}$ \textit{the principal part  of $\mathbf{v}$},  $\mathbf{v_0} := (P_k (x,y),Q_k (x,y))$, and define the $k+1$ homogeneous polynomial $R_{k+1}(x,y) := xQ_k (x,y) -yP_k(x,y)$. We distinguish two cases: either $R_{k+1}(x,y)\not\equiv 0$ or $R_{k+1}(x,y)\equiv 0$. The first case, $R_{k+1}(x,y)\not\equiv 0$, is known as \emph{non dicritical case}, while $R_{k+1}(x,y)\equiv 0$ is known as \emph{dicritical case}. We consider as well the corresponding \emph{non dicritical} and \emph{dicritical} foliations, which we  denote by $\mathcal{F}$ and $\mathcal{G}$, respectively.

Given two foliation pairs -- non dicritical and dicritical -- $(\mathcal{F},\mathcal{G})$, $(\mathcal{\hat{F}},\mathcal{\hat{G}})$, we say that \textit{these pairs are strictly analytically equivalent} if there exists a tangent to the identity germ of biholomorphism $H$ in $(\C^{2}, 0)$  such that it sends simultaneously the leaves of the foliation $\mathcal{F}$ to the leaves of the foliation $\mathcal{\hat{F}}$, and the leaves of $\mathcal{G}$ to the leaves of $\mathcal{\hat{G}}$, i.e., 
\begin{equation*}
    H(\mathcal{F},\mathcal{G}) = (\mathcal{\hat{F}}, \mathcal{\hat{G}})\,.
\end{equation*}

 \subsection{Blow--up}

 We recall that \textit{the blow-up of $(0,0)$ in $\C^2$} is the $2$--dimensional complex manifold $\M$ obtained by gluing two copies of $\C^2$ with coordinates (known as standard charts) $(x,u)$, $u=\frac{y}{x}$ for $x\neq 0$ and $(v,y)$, $v=\frac{x}{y}$ for $y\neq 0$, by means of a map $\phi:(x,u)\longrightarrow (v,y)=(u^{-1},xu)$. The projection $\pi: \M\longrightarrow (\C^2,(0,0))$ given in the first chart by $\pi:(x,u)\longrightarrow(x,ux)$ and in the second chart     
 by $\pi:(v,y)\longrightarrow(yv,y)$  The projection $\pi$ thus defined is called the \textit{standard projection}. The Riemann sphere $\mathcal{L}:=\pi^{-1}(0,0)\equiv\C\mathbb{P}^{1}$ obtained by gluing the regions $\{ 0\}\times\C$ and $\C\times\{ 0\}$ by means of the restriction map $\phi|_{\{ 0\}\times\C^{*}}$ is called \textit{the pasted sphere or exceptional divisor of the blow-up} (for more details see, for example, \cite{[V]} or \cite{[ORV 2]}). Throughout this work we will denote by $\M$ the complex manifold obtained by the described procedure.

 Let $\mathbf{v}$ be a germ of vector field  as in (\ref{eq: campo vectorial v}). Since, outside the exceptional divisor $\mathcal{L}$, the projection $\pi$ is a biholomorphism on its image $\C^{2} \smallsetminus \{0\}$, then it is possible to consider the \textit{lifting} of $\mathbf{v}$ to a vector field $\mathbf{\hat{v}}$ in $\mathbb{M}$, outside the exceptional divisor. By multiplying the field $\mathbf{\hat{v}}$ by a suitable power of $x$ the resulting vector field is holomorphically extended around the exceptional divisor $\mathcal{L}$ by a field of directions $\mathbf{\tilde{v}}$ in $\mathbb{M}$, called \textit{the blow-up of $\mathbf{v}$}. 

Being $\mathcal{F}$ the foliation related to the vector field $\mathbf v$, we denote by $\tilde {\mathcal{F}}$ the foliation corresponding to its blow-up $\mathbf {\tilde v}$ in $\mathbb{M}$; it will be called \textit{the blow-up of the foliation $\mathcal{F}$}.


\subsection{Local analytic representatives --local models-- and normalizing transformations} \label{Resultados de JOV}
In what follows, we fix the collections $\p= (p_{1}, \ldots, p_{n+1})$ and $\q = (q_{1}, \ldots, q_{m})$ of $n+1$ and $m$ mutually distinct complex numbers such that for any  $i=1,\dots,n+1$ and $j= 1,\dots,m$, $p_{i}\neq q_{j}$. We will denote $\hat{p}_{i}$, $\hat{q}_{j}$ the points in the sphere $\mathcal{L}$, which in coordinates $(x, u)$ are $(0, p_{i})$ and $(0, q_{j})$, respectively. The punctured sphere $\mathcal{L} \setminus\{\hat{p}_{1}, \ldots,\hat{p}_{n+1}, \hat{q}_{1},\ldots,\hat{q}_{m}\}$ will be denoted by $\mathcal{L}\smallsetminus (\p, \q)$.

There are three different types of local models together with their corresponding normalizing transformations (see \cite {[JOV]}). We start by defining the local model and the normalizing transformations at the punctured sphere $\mathcal{L}\smallsetminus (\p, \q)$. After that, we will give the corresponding local models and normalizing transformations in neighborhoods of singular and tangency points, respectively. 


\begin{definition}[Local analytic representatives (local models) and normalizing transformations]\label{def: ML_TN}
Let $(\mathcal{F}, \mathcal{G})$ be a foliation pair in $\mathcal{N}_{\mathsf{p}} (\mathbf{h}) \times \mathcal{D}_{\mathsf{q}} (\mathcal{I})$; we denote by  $(\tilde{\mathcal{F}}, \tilde{\mathcal{G}})$ the pair of their corresponding blow--ups. Let  $\mathcal{F}_{\scriptscriptstyle \mathbf{\mu}} \in \mathcal{N}_{\mathsf{p}} (\mathbf{h})$ be a non dicritical foliation whose separatrices are given by straight lines $\{y = p_{i} x\}$, and $\mathcal{G}_{\scriptscriptstyle \mathbf{r}}$ be the radial foliation. 

\begin{itemize} 

    \item[a)] \emph{A local model and a normalizing transformation of the foliation pair 
$(\mathcal{F}, \mathcal{G})$ and its corresponding blow--up $(\tilde{\mathcal{F}}, \tilde{\mathcal{G}})$, with respect to the regular points in $\mathcal{L} \smallsetminus (\mathsf{p}, \mathsf{q})$}, is given by 
\begin{equation*}
    \{(\mathcal{F}_{\scriptscriptstyle \mathbf{\mu}}\, ,\mathcal{G}_{\scriptscriptstyle \mathbf{r}})\,, \mathbf{H}_{\scriptscriptstyle \mathbf{n}}\}\,; \quad\mathbf{H}_{\scriptscriptstyle \mathbf{n}}(\mathcal{F}_{\scriptscriptstyle \mathbf{\mu}}\, ,\mathcal{G}_{\scriptscriptstyle \mathbf{r}})=(\tilde{\mathcal{F}}, \tilde{\mathcal{G})}\,,
\end{equation*}
where  $\mathbf{H}_{\scriptscriptstyle \mathbf{n}} $ is a biholomorphism which is defined between open neighborhoods of the punctured sphere $\mathcal{L}\smallsetminus (\mathsf{p}, \mathsf{q})$ in $\mathbb{M}$, such that it takes (in the mentioned neighborhoods) the blow-up, $\tilde{\mathcal{F}_{\scriptscriptstyle \mathbf{\mu}}}$, of foliation $\mathcal{F}_{\scriptscriptstyle \mathbf{\mu}}$ to the non dicritical foliation $\tilde{\mathcal{F}}$, and the blow--up, $\tilde{\mathcal{G}}_{\scriptscriptstyle \mathbf{r}}$, of the radial foliation $\mathcal{G}_{\scriptscriptstyle \mathbf{r}}$ to the dicritical foliation $\tilde{\mathcal{G}}$. Moreover, with respect to the coordinate chart $(x, u)$, the biholomorphism $\mathbf{H}_{\scriptscriptstyle \mathbf{n}}$ satisfies 
\begin{equation*}
\mathbf{H}_{\scriptscriptstyle \mathbf{n}} (x, u) = \bigl(x + \text{O} (x^{2}), \ u + \text{O}(x) \bigl) \, .
\end{equation*}

 \item[b)] \emph{A local model and a normalizing transformation of the foliation pair $(\mathcal{F}, \mathcal{G})$ and its corresponding blow--up $(\tilde{\mathcal{F}}, \tilde{\mathcal{G}})$, at the singularity $(x, u) = (0, p_{i})$}, $i=1,\ldots,n+1$, is given by
\begin{equation*}
    \{(\mathcal{F}_{p_{i}}^l, \mathcal{G}_{p_{i}}), H_{p_{i}}\}\,; \quad H_{p_{i}}(\mathcal{F}_{p_{i}}^l, \mathcal{G}_{p_{i}})=(\tilde{\mathcal{F}}, \tilde{\mathcal{G})}_{p_{i}}\,,
\end{equation*}
where $H_{p_{i}}$ is a biholomorphism from a neighborhood of $(0, p_{i})$ in $\C^{2}$ to a neighborhood of the singularity $\hat{p}_{i} = (0, p_{i})$ in $\mathbb{M}$, such that $H_{p_{i}}$ takes the linear foliation $\mathcal{F}_{p_{i}}^{l}$ induced by the linear vector field 
\begin{equation*}
    v_{p_{i}}^l=\lambda_{i} \, x \tfrac{\partial}{\partial \, x} + (u-p_{i}) \tfrac{\partial}{\partial \, u}
\end{equation*}
to the blow--up $\tilde{\mathcal{F}}$ of the non 
dicritical foliation $\mathcal{F}$ at a neighborhood of the singular point $(0,p_{i})$.  The biholomorphism $H_{p_{i}}$ takes as well the foliation
\begin{equation*}
   \mathcal{G}_{p_{i}} := (u + s_{i} (x) = \text{cst}) \,,
\end{equation*}
where $s_{i} \colon (\C, 0) \rightarrow (\C, 0)$ is a holomorphic function, to the dicritical foliation $\tilde{\mathcal{G}}$. 

Moreover, the restriction of the biholomorphism $H_{p_{i}}$ on  the line $\{x = 0\}$ is the identity map.
 
  \item[c)] \emph{A local model and a normalizing transformation of the foliation pair $(\mathcal{F}, \mathcal{G})$ and its corresponding blow--up $(\tilde{\mathcal{F}}, \tilde{\mathcal{G}})$, at the tangency point $(x, u) = (0, q_{j})$}, $j=1,\ldots,m$, is given by
\begin{equation*}
    \{((x=cst), \mathcal{G}_{q_{j}}), H_{q_{j}}\}\,; \quad H_{q_{j}}((x=cst), \mathcal{G}_{q_{j}})=(\tilde{\mathcal{F}}, \tilde{\mathcal{G})}_{q_{j}}\,,
\end{equation*}
where $H_{q_{j}}$ is a biholomorphism from a neighborhood of $(0, q_{j})$ in $\C^{2}$ to a neighborhood of the tangency point $\hat{q}_{j} = (0, q_{j})$ in $\mathbb{M}$, such that it takes the trivial foliation $(x=\text{cst})$ to the non dicritical foliation $\tilde{\mathcal{F}}$, and takes the foliation 
\begin{equation*}
 \mathcal{G}_{q_{j}} := (z_{j} (x) + g_{j} (u) = \text{cst})\,,   
\end{equation*}
where $z_{j} \colon (\C, 0) \rightarrow (\C, 0)$, $z^{(1)}_{j} (0) \neq 0$, and $g_{j} (u) := (q_{j} - u) (I_{j} (u) - q_{j})$, to the dicritical foliation $\tilde{\mathcal{G}}$. 

Moreover, the restriction of the biholomorphism $H_{q_{j}}$ on  the line $\{x = 0\}$ is the identity map.
 
  \end{itemize}
\end{definition}

In the work \cite {[JOV]}  it is proved that each collection of suitable local models can be realized by a foliation pair (\emph{non dicritical, dicritical}). Namely, the following precise formulation of Theorem \ref{teo_S: realización de modelos locales y t_norm} is proved.

 
\begin{theorem}[Realization of local models, \cite {[JOV]}]\label{teo:Construccion de PdF a partir de PL}
Let $\mathcal{F}_{p_{i}}^l$ be the linear foliation induced by the linear vector field $v_{p_{i}}^{l}(x,u)= \lambda_{i} \, x \tfrac{\partial}{\partial \, x} + (u-p_{i}) \tfrac{\partial}{\partial \, u}$ in the open domain $(\C^{2}, D_{p_{i}})$. 
Let $s_{i}, z_{j} \colon (\C, 0) \rightarrow (\C, 0)$, $z_{j}^{(1)} (0) \neq 0$, be the holomorphic transformations defining foliation $\G_{p_{i}} = (s_{i} (x) + u = \text{cst})$ in $(\C^{2}, D_{p_{i}})$, and foliation $\G_{q_{j}} = (z_{j} (x) + g_{j} (u) = \text{cst})$ in $(\C^{2}, D_{q_{j}})$. 

For small enough disks $D_{p_{i}}, D_{q_{j}} \subseteq \C^{2}$ there exist foliations 
\begin{equation*}
    \F \in \mathcal{N}_{\p} (\mathbf{h})\quad\text{and}\quad\G \in \mathcal{D}_{\q} ( \mathcal{I})
\end{equation*}
fulfilling the following properties (see figure \ref{CuTan_Figura_ML y TN respec a F_ast_L}). 
\begin{itemize}
\item[1.] There exists a biholomorphism $\mathbf{H}_{\scriptscriptstyle \mathbf{n}} \colon \bigl(\M, \mathcal{L} \smallsetminus (\p, \q)\bigl) \rightarrow \bigl(\M, \mathcal{L} \smallsetminus (\p, \q)\bigl)$ taking the foliation pair  $(\tilde{\F}_{\scriptscriptstyle \mu}, \tilde{\mathcal{G}_{\scriptscriptstyle \mathbf{r}}})$ to the foliation pair $(\tilde{\F}, \tilde{\G})$, and satisfying 
\begin{equation*}
\begin{split} 
 \mathbf{H}_{\scriptscriptstyle \mathbf{n}} \bigl\vert_{\mathcal{L}}  = \text{id}_{\mathcal{L}} \, ,
\hspace{0.5cm} 
\mathbf{H}_{\scriptscriptstyle \mathbf{n}} (x, u) = \bigl(x + \text{O} (x^{2}), h_{\scriptscriptstyle \mathbf{n}} (x, u) \bigl) \, .
\end{split}
\end{equation*}

\item[2.] There exists a biholomorphism $H_{p_{i}} \colon (\C^{2}, D_{p_{i}}) \rightarrow (\M, D_{p_{i}})$ taking the foliation pair $(\mathcal{F}_{p_{i}}^l, \G_{p_{i}})$ to the foliation pair $(\tilde{\F}, \tilde{\G})$, and satisfying 
\begin{equation*}
\begin{split} 
H_{p_{i}} \bigl\vert_{\{x = 0\}}  =  \text{id}_{\{x = 0\}} \, ,
\hspace{0.5cm} 
H_{p_{i}} \circ \Psi^{-1}_{i} = \mathbf{H}_{\scriptscriptstyle \mathbf{n}} \circ \xi_{i}   \, .
\end{split}
\end{equation*} 
where $\xi_{i}$, $\Psi_{i}$ are the biholomorphisms in (\ref{Ec_Series de Psi y Xi}). 

\item[3.] There exists a biholomorphism $H_{q_{j}} \colon (\C^{2}, D_{q_{j}}) \rightarrow (\M, D_{q_{j}})$ taking the foliation pair $((x = \text{cst}), \G_{q_{j}})$ to the foliation pair $(\tilde{\F}, \tilde{\G})$, and satisfying 
\begin{equation*}
\begin{split} 
H_{q_{j}} \bigl\vert_{\{x = 0\}}  =  \text{id}_{\{x = 0\}} \, ,
\hspace{0.5cm} 
H_{q_{j}} \circ \Phi^{-1}_{j}  = \mathbf{H}_{\scriptscriptstyle \mathbf{n}} \circ \zeta_{j}  \, .
\end{split}
\end{equation*}
where $\zeta_{j}$, $\Phi_{j}$ are the biholomorphisms in   (\ref{Ec_Series de Phi y Zeta}).  
\end{itemize}
\end{theorem}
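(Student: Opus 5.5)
This is a gluing (nonlinear Riemann–Hilbert) construction: I would build the blown-up pair $(\tilde\F,\tilde\G)$ on an abstract surface assembled from local pieces, and then identify that surface with a neighbourhood of $\mathcal{L}$ in $\M$.

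\textbf{Step 1: the pieces.} Take a neighbourhood $U_{\mathbf n}$ of $\mathcal{L}\smallsetminus(\p,\q)$ in $\M$ carrying the model pair $(\tilde\F_\mu,\tilde\G_{\mathbf r})$. For each $i$ take the bidisk $D_{p_i}$ carrying $(\F^l_{p_i},\G_{p_i})$, and for each $j$ the bidisk $D_{q_j}$ carrying $((x=\text{cst}),\G_{q_j})$. Over the annuli $D_{p_i}\cap U_{\mathbf n}$ and $D_{q_j}\cap U_{\mathbf n}$, glue by the maps $\Psi_i^{-1}\circ\xi_i^{-1}$ and $\Phi_j^{-1}\circ\zeta_j^{-1}$ of (\ref{Ec_Series de Psi y Xi}) and (\ref{Ec_Series de Phi y Zeta}). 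These must be biholomorphisms that are the identity on $\{x=0\}$ and that conjugate the model pairs on the overlaps. To produce them:
\begin{itemize}
\item[(i)] On the annulus around $\hat p_i$, both $\tilde\F_\mu$ and $\F^l_{p_i}$ have a linear-type singularity. Since $\F_\mu\in\mathcal{N}_\p(\mathbf h)$, its Camacho--Sad index $\lambda_i$ and its holonomy are the prescribed ones, so the two foliations are conjugate on the annulus by a map $\xi_i$.
\item[(ii)] The radial blow-up $\tilde\G_{\mathbf r}$ is $(u=\text{cst})$. The correction $\Psi_i$, built from $s_i$, straightens $u+s_i(x)$ to $u$ while preserving the first foliation.
\item[(iii)] At $\hat q_j$, the map $\Phi_j$ sends $z_j(x)+g_j(u)$ to a first integral of the model. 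Here the involution $I_j$ encodes the fold of $\tilde\G$ relative to the fibration $x=\text{cst}$.
\end{itemize}
The surface $S$ obtained by this gluing is Hausdorff and carries a well-defined pair of foliations, because every gluing map respects both foliations.

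\textbf{Step 2: $S$ is a neighbourhood of $\mathcal{L}$ in $\M$.} The gluing maps restrict to the identity on $\{x=0\}$, so $S$ contains an embedded copy of $\mathbb{CP}^1$. The gluing maps have the form $(x+O(x^2),\ \cdot\ )$, so the normal bundle of this sphere is the same as in $\M$, namely of degree $-1$. By Grauert's theorem, or equivalently Savelev's rigidity for exceptional curves with negative self-intersection, a neighbourhood of this sphere is biholomorphic to a neighbourhood of $\mathcal{L}$ in $\M$. The identification can be chosen to be the identity on $\mathcal{L}$ and of the form $(x+O(x^2),h_{\mathbf n})$ in the chart $(x,u)$. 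If necessary I would adjust it by a map tangent to the identity, solving a Cousin-type problem on the sphere, so that the normalization of item 1 holds.

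\textbf{Step 3: blow down and define the normalizing maps.} Contract the sphere to obtain germs $\F,\G$ at $0\in\C^2$. Set $\mathbf{H}_{\mathbf n}$, $H_{p_i}$, $H_{q_j}$ to be the charts of $S$ composed with the identification of Step 2. The compatibility relations in items 2 and 3 then hold by construction of the gluing. For the classes:
\begin{itemize}
\item $\F\in\mathcal{N}_\p(\mathbf h)$, because its singular points on $\mathcal{L}$ are the $\hat p_i$, it has the prescribed indices, and its holonomy along $\mathcal{L}$ is that of $\F_\mu$.
\item $\G$ is dicritical, because $\tilde\G$ is transverse to $\mathcal{L}$ away from the $\hat q_j$.
\item $\G\in\mathcal{D}_\q(\mathcal{I})$, because at each $\hat q_j$ it has a simple tangency with involution $I_j$, read off from $g_j$.
\end{itemize}
Blowing down is legitimate because $S$ is a neighbourhood of an exceptional curve.

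\textbf{Main obstacle.} I expect the hardest part to be the combination of Step 1(i) with Step 2. One has to show that the gluing over the annuli exists when the disks are small, and that the resulting normal neighbourhood really is standard while keeping the normalization $\mathbf{H}_{\mathbf n}|_{\mathcal L}=\text{id}$. My first attempt would be to solve the cocycle on the cover $\{U_{\mathbf n},D_{p_i},D_{q_j}\}$ order by order in $x$. Each step is an additive Cousin problem in $H^1(\mathcal{L},\mathcal{O}(k))$ with $k\ge 1$, which vanishes. I would then prove convergence by a Newton-type scheme or by the Kodaira--Spencer stability of $(-1)$-curves.
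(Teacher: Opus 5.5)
The paper contains no proof of this statement. It is quoted from \cite{[JOV]}, so there is no in-paper argument to compare with. Your scheme is the standard Lins Neto--type construction: glue the local models to $(\tilde\F_\mu,\tilde\G_{\mathbf r})$ along annuli by $\xi_i\circ\Psi_i$ and $\zeta_j\circ\Phi_j$, identify the resulting neighbourhood of the sphere with a neighbourhood of $\mathcal{L}$ in $\M$ by Grauert's theorem, then blow down. The bibliography lists \cite{[Grauert]} and \cite{[LinsNeto]} without citing them in the text, which suggests \cite{[JOV]} argues the same way. The outline is sound. However, two of your justifications are wrong, one of them fails as stated, and the difficulty you flag is misplaced.

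\textbf{Normal bundle and normalization.} The gluing maps are not of the form $(x+O(x^2),\cdot)$. Since $\hat\psi_{i,1}\equiv 1$ and $\Phi_j$ preserves $x$, the $x$-linear coefficient of the first component of $\xi_i\circ\Psi_i$ is $\varepsilon_{i,1}(u)$, and that of $\zeta_j\circ\Phi_j$ is $\varsigma_{j,1}(u)$. In general these equal $1$ only at $p_i$ and $q_j$ (see \eqref{Ec_Series de Psi y Xi} and \eqref{Ec_Series de Phi y Zeta}). The normal bundle is still $N=\mathcal{O}(-1)$, but for a different reason. The maps $\xi_i$ and $\zeta_j$ are biholomorphisms of the whole disks fixing $\{x=0\}$ pointwise. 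Hence $\varepsilon_{i,1}$ and $\varsigma_{j,1}$ are holomorphic and nonvanishing on $D_{p_i}$ and $D_{q_j}$, and they change the normal cocycle only by a coboundary.

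The same bookkeeping yields the normalization in item~1, and here your proposed tool fails. First compose Grauert's identification $\Theta$ with the lift of a linear map so that $\Theta|_{\mathcal{L}}=\mathrm{id}$. Use the frame $\partial_x$ on $U_{\mathbf n}$, and the frames $\varepsilon_{i,1}^{-1}\partial_x$ and $\varsigma_{j,1}^{-1}\partial_x$ on the disks. In these frames the normal derivative of $\Theta$ is a nowhere-vanishing holomorphic function on the whole sphere, hence a constant $c$. The lift of any map tangent to the identity has normal derivative $1$ along $\mathcal{L}$. So neither such a map nor a Cousin problem can remove $c$. You must compose with $(x,u)\mapsto(x/c,u)$, the lift of the homothety $(x,y)\mapsto(x/c,y/c)$. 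This step is also what makes the holonomy of $\tilde\F$ strictly conjugate to that of $\tilde\F_\mu$.

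\textbf{Where the work actually lies.} The difficulty you single out as the main obstacle is not there. The disks are pairwise disjoint, so there are no triple overlaps. The transition maps are the explicit ones given in the statement, so there is no cocycle to solve. The only analytic input is Grauert's theorem. It already combines the vanishing of $H^1(\mathbb{P}^1,T\mathbb{P}^1\otimes N^{-\nu})$ and $H^1(\mathbb{P}^1,N\otimes N^{-\nu})$ for $\nu\geqslant 1$ with convergence.

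Likewise, Step~1 need not construct $\xi_i,\Psi_i,\zeta_j,\Phi_j$, because the statement takes them as given. Your argument for $\xi_i$ would not suffice anyway. A holonomy conjugacy gives a conjugacy only near the annulus. By contrast, $\xi_i$ is a linearization of $\tilde\F_\mu$ on the whole disk that preserves the fibration $u=\mathrm{cst}$. That uses linearizability at $\hat p_i$ and the fact that the transverse separatrix is the fibre $\{u=p_i\}$.

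What does need care in Step~1 is Hausdorffness. The maps $\Psi_i$ and $\Phi_j$ are defined only on horn-shaped neighbourhoods whose $x$-width shrinks to $0$ at the centre. So you should glue only along neighbourhoods of closed sub-annuli, and take $U_{\mathbf n}$ to avoid the inner disks.
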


\begin{figure}[ht]
\begin{center}
\includegraphics[scale=0.5]{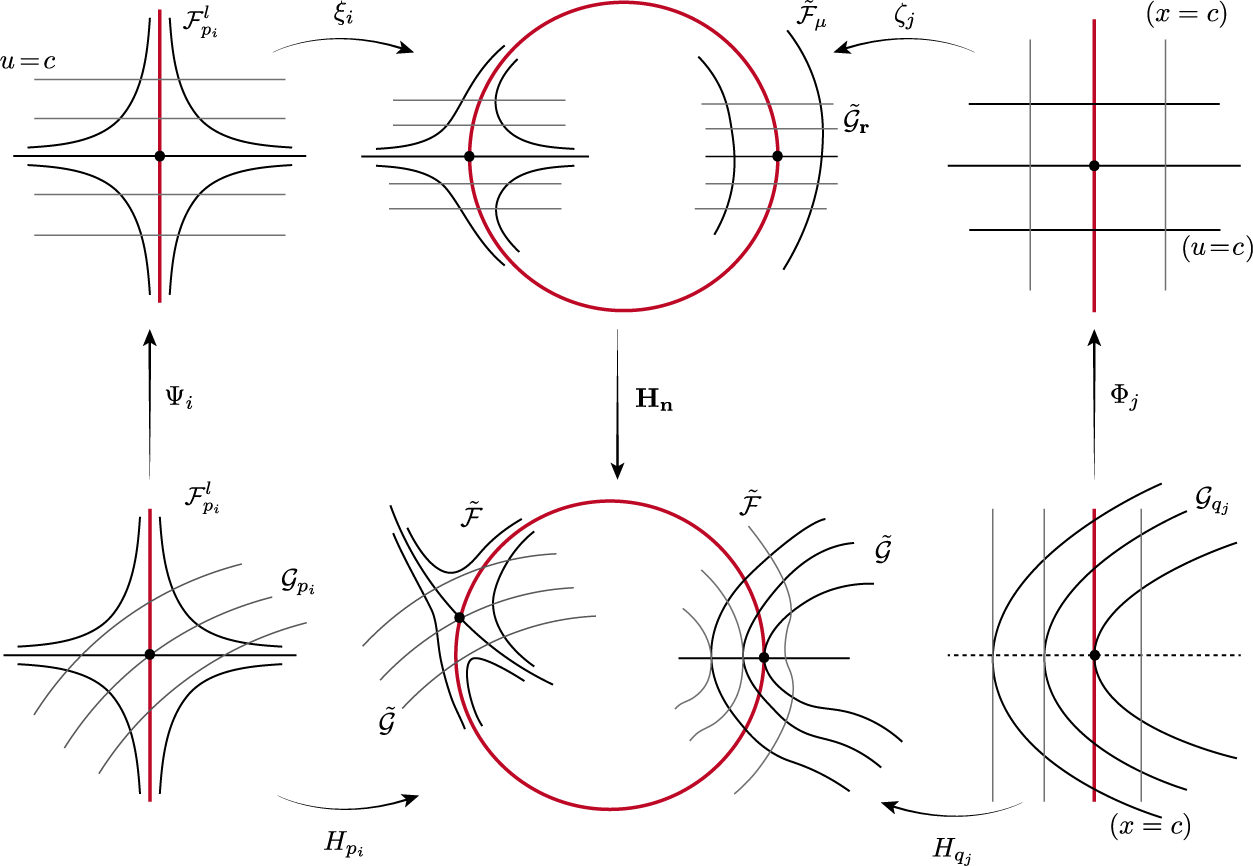}
\caption{\small{Local models and normalizing transformations with respect to $(\F_{\scriptscriptstyle \mu}, \G_{\scriptscriptstyle \mathbf{r}})$. Biholomorphism $\Psi_{i}$ is not defined at the singular point $(0,p_{i})$, biholomorphism $\Phi_{j}$ is not defined at the tangency point $(0,q_{j})$; thus, biholomorphism $ \mathbf{H}_{\scriptscriptstyle \mathbf{n}}$ is not defined neither at singular points nor at tangency points.}}
\label{CuTan_Figura_ML y TN respec a F_ast_L}
\end{center}
\end{figure}

\begin{definition}[Local analytic representatives --local models-- and normalizing transformations with respect to $(\F_{\scriptscriptstyle \mu}\, ,\mathcal{G}_{\scriptscriptstyle \mathbf{r}})$]
\label{Def_ML-TN_Fmu-Gr}
Let $(\mathcal{F}, \mathcal{G})$ be a foliation pair in $\mathcal{N}_{\p} (\mathbf{h}) \times \mathcal{D}_{\q} (\mathcal{I})$. Let us consider local models and normalizing transformatios of the pair $(\mathcal{F}, \mathcal{G})$ in the sense of Definition \ref{def: ML_TN}:
\begin{equation*}
(\F_{\scriptscriptstyle \mu}\, ,\mathcal{G}_{\scriptscriptstyle \mathbf{r}}) \, , \mathbf{H}_{\scriptscriptstyle \mathbf{n}} \ , 
\hspace{0.5cm}
\left\{(\mathcal{F}_{p_{i}}^l, \G_{p_{i}}), H_{p_{i}}\right\} \ ,  
\hspace{0.5cm}
\left\{((x=ct), \G_{q_{j}}), H_{q_{j}}\right\} \ .
\end{equation*}
If, additionally, the normalizing transformations at the singular points and tangency points satisfy the \textit{factorization equations} (see Definition \ref{def: factorization_norm_trans}): 
\begin{equation*}
        H_{p_{i}} = \mathbf{H}_{\scriptscriptstyle \mathbf{n}} \circ \xi_{i} \circ \Psi_{i} \ , 
        \hspace{0.8cm}
         H_{q_{j}} = \mathbf{H}_{\scriptscriptstyle \mathbf{n}} \circ \zeta_{j} \circ \Phi_{j} \ ,
\end{equation*}
we will say that they are \textit{local analytic representatives (local models) and normalizing transformations with respect to $(\F_{\scriptscriptstyle \mu}\, ,\mathcal{G}_{\scriptscriptstyle \mathbf{r}})$}. 
\end{definition}

 The following theorem gives a detailed formulation of Theorem \ref{teo_S:Teo_Exis y Unici de ML y TN_Intro} given in Section \ref{sec: Introduction}.

\begin{theorem}[Existence and uniqueness of local models and normalizing transformations, \cite {[JOV]}]
\label{CuTan_Teo: Existencia_Unicidad de ML y TN}
Let $\F_{\scriptscriptstyle \mu} \in \mathcal{N}_{\p}(\mathbf{h})$ be a non dicritical foliation having the straight lines $\{y = p_{i} x\}$, $1 \leqslant i \leqslant n+1$, as a complete set of separatrices, and let $\G_{\scriptscriptstyle \mathbf{r}}$ be the radial foliation. 

For any foliation pair $(\F,\G)$ where $\F \in \mathcal{N}_{\p}(\mathbf{h})$ and $\G \in \mathcal{D}_{\q} (\mathcal{I})$, there exist unique local models and normalizing transformations with respect to  $(\F_{\scriptscriptstyle \mu}, \G_{\scriptscriptstyle \mathbf{r}})$.
\end{theorem}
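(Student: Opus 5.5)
We construct the local models and normalizing transformations sheaf-theoretically: first near the punctured sphere, then at each singular and tangency point, and finally glue them through the factorization equations. Uniqueness comes from the rigidity built into the normalizations: tangency to the identity on $\mathcal{L}$, and identity on $\{x=0\}$.

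\emph{Step 1 (regular part).} On a neighborhood of $\mathcal{L}\smallsetminus(\p,\q)$, the blown-up foliations $\tilde{\F}$ and $\tilde{\F}_{\scriptscriptstyle\mu}$ are both transverse to the fibers $u=\text{cst}$ except at the points $\hat p_i$. Both have $\mathcal{L}$ as a leaf. Since $\F$ and $\F_{\scriptscriptstyle \mu}$ lie in the same class $\mathcal{N}_{\p}(\mathbf{h})$, their holonomy groups of $\mathcal{L}$ (hidden holonomy) are strictly analytically conjugate.

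By the standard Ilyashenko/Mattei--Moussu argument, a conjugacy of the holonomy along a transversal extends by lifting along leaves. This gives a fiber-preserving biholomorphism $\mathbf{H}_{\scriptscriptstyle\mathbf{n}}$ with $\mathbf{H}_{\scriptscriptstyle\mathbf{n}}|_{\mathcal{L}}=\text{id}$ taking $\tilde\F_{\scriptscriptstyle\mu}$ to $\tilde\F$. At regular points, $\tilde{\G}$ is transverse to $\mathcal{L}$, just like the blown-up radial foliation $\tilde\G_{\scriptscriptstyle\mathbf r}=(u=\text{cst})$.

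We then correct $\mathbf{H}_{\scriptscriptstyle\mathbf{n}}$ by a map preserving $\tilde\F_{\scriptscriptstyle\mu}$ and $\mathcal{L}$ pointwise, so that it also sends $(u=\text{cst})$ to $\tilde\G$. Concretely, we slide each point along the leaves of $\tilde\F_{\scriptscriptstyle \mu}$ until it reaches the leaf of the pulled-back $\tilde\G$ through the corresponding point of $\mathcal{L}$. Transversality of the two foliations off $\p,\q$ makes this well defined and holomorphic. It yields the form $(x+O(x^2),u+O(x))$.

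For uniqueness, let $\mathbf{H}'$ be another such map. Then $\mathbf{H}'^{-1}\circ\mathbf{H}_{\scriptscriptstyle\mathbf{n}}$ preserves both $\tilde\F_{\scriptscriptstyle\mu}$ and $(u=\text{cst})$ and is the identity on $\mathcal{L}$. Because it preserves both foliations, it preserves each intersection point of a leaf of one with a leaf of the other, so it is the identity.

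\emph{Step 2 (singular and tangency points).} At $\hat p_i$, genericity (Camacho--Sad index $\lambda_i$ non-resonant, Poincaré or Siegel type, as encoded in $\mathbf h$) gives a linearizing chart $H_{p_i}$ with $H_{p_i}|_{\{x=0\}}=\text{id}$. This chart is unique up to the centralizer of $v^l_{p_i}$ that fixes $\{x=0\}$ pointwise, i.e. up to maps $(x,u)\mapsto(cx,u)$. We transport $\tilde\G$: it is transverse to $\{x=0\}$ and tangent to the fibers at first order, so it has a first integral $u+s_i(x)$.

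At $\hat q_j$, $\tilde\F$ is regular, so a rectifying chart sends it to $(x=\text{cst})$, again equal to the identity on $\{x=0\}$. The simple tangency of $\tilde\G$ with the fibers, together with the involution $I_j$ exchanging the two intersection points of a leaf of $\tilde\G$ with a fiber, gives a first integral of the form $z_j(x)+g_j(u)$. The remaining freedom is reparametrization of $x$.

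\emph{Step 3 (gluing; main obstacle).} We now fix the remaining freedom, namely the constant $c$ at each $\hat p_i$ and the reparametrization of $x$ at each $\hat q_j$. We must show that $\mathbf{H}_{\scriptscriptstyle\mathbf{n}}^{-1}\circ H_{p_i}$, defined on a punctured neighborhood, factors as $\xi_i\circ\Psi_i$, where $\Psi_i$ and $\xi_i$ are the canonical maps of (\ref{Ec_Series de Psi y Xi}). The analogous factorization must hold at $\hat q_j$. This is the hardest step.

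The plan is as follows. The map $\mathbf{H}_{\scriptscriptstyle\mathbf{n}}^{-1}\circ H_{p_i}$ conjugates the local model pair to $(\tilde\F_{\scriptscriptstyle\mu},\tilde\G_{\scriptscriptstyle\mathbf r})$ on a punctured disc. It is the identity on $\{x=0\}$. Since both foliations are preserved, its germ is determined by its action on one transversal. We compare it with $\xi_i\circ\Psi_i$, which has the same properties by construction. The only remaining discrepancy is an element of the centralizer, which is absorbed into the choice of $H_{p_i}$, and that choice is then unique.

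Riemann's removable singularity theorem applied across $\hat p_i$ (the maps are bounded) gives holomorphy of the glued object. Uniqueness of all data follows from uniqueness in Step 1 and the rigidity of these centralizers.
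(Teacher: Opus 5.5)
The paper does not prove this statement; it is quoted from \cite{[JOV]}, and only the resulting factorization equations are used. Judged on its own, your overall architecture is reasonable: regular part, local charts, then comparison on annuli. The sliding construction in Step 1 is also fine. Step 2, however, has a genuine gap.

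\textbf{Step 2.} You claim that a linearizing chart of $\tilde{\F}$ at $\hat{p}_{i}$ with $H_{p_{i}}\vert_{\{x=0\}}=\text{id}$ is unique up to $(x,u)\mapsto(cx,u)$. You also claim that $\tilde{\G}$, transported by this chart, automatically has a first integral $u+s_{i}(x)$. Both claims are false.

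Every leaf slide $(x,u)\mapsto(x e^{\lambda_{i}x\tau},\, p_{i}+(u-p_{i})e^{x\tau})$, with $\tau$ holomorphic, is a symmetry of $\F^{l}_{p_{i}}$ fixing $\{x=0\}$ pointwise. So the residual freedom is infinite dimensional.

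The foliation $\{u+x^{2}(u-p_{i})=\text{cst}\}$ is transverse to $\{x=0\}$ and agrees with the fibres to first order. Its leaves $u=(c+p_{i}x^{2})/(1+x^{2})$ are not $u$-translates of one another, so it has no first integral of the form $u+s(x)$.

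The normal form is reached only by spending exactly the freedom you discard. Let $F$ be a first integral of the transported $\tilde{\G}$ with $F(0,u)=u$. One must solve $F(xe^{\lambda_{i}T},\,p_{i}+(u-p_{i})e^{T})=u+s_{i}(x)$ for $T=x\tau$ and $s_{i}$. Order by order this forces $s_{i,k}$ to be the value at $u=p_{i}$ of a known holomorphic function $R_{k}$. Then $T_{k}$ is obtained by dividing $s_{i,k}-R_{k}$ by $u-p_{i}$, and convergence still has to be proved.

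The same happens at $\hat{q}_{j}$. The symmetries of $(x=\text{cst})$ fixing $\{x=0\}$ are all maps $(f(x),\phi(x,u))$ with $\phi(0,u)=u$, not only reparametrizations of $x$. The $u$-part $\phi$ is what conjugates the involutions induced by $\tilde{\G}$ on the fibres $\{x=c\}$ simultaneously to $I_{j}$, producing $z_{j}(x)+g_{j}(u)$.

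This simultaneous normalization is the real local content of the theorem, and it is where $s_{i}$, $z_{j}$ and their uniqueness come from. The rigidity you invoke holds for the \emph{pair} $(\F^{l}_{p_{i}},\G_{p_{i}})$, not for $\F^{l}_{p_{i}}$ alone: holomorphic maps between two such models fixing $\{x=0\}$ are $(cx,u)$ with $s_{i}(x)=s'_{i}(cx)$.

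Once Step 2 is repaired, your Step 3 works. The discrepancy with $\xi_{i}\circ\Psi_{i}$ is $\Psi_{i}^{-1}\circ(cx,u)\circ\Psi_{i}$. It is absorbed by composing $H_{p_{i}}$ with $(x/c,u)$, which also replaces $s_{i}(x)$ by $s_{i}(x/c)$. No removable-singularity argument is needed. Boundedness would not suffice anyway: $\Psi_{i}$ and $\mathbf{H}_{\scriptscriptstyle \mathbf{n}}$ live on thin neighbourhoods of punctured discs, where for instance $x/(u-p_{i})$ is bounded but does not extend.

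\textbf{Step 1 uniqueness.} Your uniqueness argument in Step 1 also fails. The map $\Xi=\mathbf{H}'^{-1}\circ\mathbf{H}_{\scriptscriptstyle \mathbf{n}}$ sends leaves of $\tilde{\F}_{\scriptscriptstyle \mu}$ to leaves, not each leaf to itself. Only the fibres $\{u=c\}$ are fixed individually, because they are anchored on $\mathcal{L}$; the other leaves of $\tilde{\F}_{\scriptscriptstyle \mu}$ never meet $\mathcal{L}$.

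Hence $\Xi=(\chi(x,u),u)$, where $\chi(\cdot,u_{0})$ may be any tangent-to-identity germ commuting with the hidden holonomy group on $\{u=u_{0}\}$. Non-trivial such germs exist, for example when that group is finite. Uniqueness therefore needs a dynamical input.

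One way: conjugate by the fibre-preserving $\xi_{i}$. Then $\Xi$ becomes a fibre-preserving symmetry of $\F^{l}_{p_{i}}$ over a punctured disc, so it commutes with $x\mapsto e^{2\pi i\lambda_{i}}x$. If this multiplier is not a root of unity, $\Xi$ equals $(cx,u)$ there. Tangency to the identity forces $c=1$, and analytic continuation gives $\Xi=\text{id}$.

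Similarly, the form $x+\text{O}(x^{2})$ of the first component of your $\mathbf{H}_{\scriptscriptstyle \mathbf{n}}$ is not automatic. It holds because the linear parts of $\tilde{\F}$ and $\tilde{\F}_{\scriptscriptstyle \mu}$ along $\mathcal{L}$ coincide. Both are $dx/du=x\sum_{i}\lambda_{i}/(u-p_{i})+\text{O}(x^{2})$, which is determined by $\p$ and the Camacho--Sad indices, and the holonomy conjugacy is tangent to the identity.
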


Moreover, as it was mentioned in the introduction to this work, the local models are crucial in the analytic classification of foliation pairs in $\mathcal{N}_{\p}(\mathbf{h}) \times \mathcal{D}_{\q} (\mathcal{I})$ (see figure \ref{Figure_Equivan_L}).

\begin{theorem}[Analytic invariants of foliation pairs, \cite {[JOV]}]
\label{Teo_Invariantes de clasificacion}
A necessary and sufficient condition for the strict analytical equivalence of non dicritical and dicritical foliation pairs in $\mathcal{N}_{\p}(\mathbf{h}) \times \mathcal{D}_{\q} (\mathcal{I})$  is the coincidence of their corresponding  collections of local models.
\end{theorem}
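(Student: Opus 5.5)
We prove the two implications separately, combining the uniqueness in Theorem \ref{CuTan_Teo: Existencia_Unicidad de ML y TN} with the realization in Theorem \ref{teo:Construccion de PdF a partir de PL}. Throughout, we fix the common reference pair $(\F_{\scriptscriptstyle \mu}, \G_{\scriptscriptstyle \mathbf{r}})$. For each pair in $\mathcal{N}_{\p}(\mathbf{h}) \times \mathcal{D}_{\q}(\mathcal{I})$ we take its unique local models and normalizing transformations with respect to this reference pair. The local models of a pair are the data $(\lambda_i, s_i)$ at the points $p_i$ and $(z_j, g_j)$ at the points $q_j$.

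\emph{Necessity.} Let $H$ be tangent to the identity with $H(\F,\G)=(\hat\F,\hat\G)$, and let $\tilde H$ be its lift to $\M$. Since $\text{D}_0H=I$, the lift satisfies $\tilde H|_{\mathcal{L}}=\text{id}_{\mathcal{L}}$ and $\tilde H(x,u)=(x+\text{O}(x^2),u+\text{O}(x))$. We compose the normalizing transformations of $(\F,\G)$ with $\tilde H$, obtaining $\tilde H\circ \mathbf{H}_{\scriptscriptstyle \mathbf{n}}$, $\tilde H\circ H_{p_i}$ and $\tilde H\circ H_{q_j}$. We then check three things.
\begin{itemize}
\item[(i)] These maps send the same local models to $(\tilde{\hat\F},\tilde{\hat\G})$.
\item[(ii)] They keep the normalizations: identity on $\{x=0\}$, and the form $(x+\text{O}(x^2),u+\text{O}(x))$. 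This holds because $\tilde H$ is the identity on $\mathcal{L}$ and has the same form.
\item[(iii)] They satisfy the factorization equations of Definition \ref{Def_ML-TN_Fmu-Gr}. Composing on the left with $\tilde H$ preserves $H_{p_i}=\mathbf{H}_{\scriptscriptstyle \mathbf{n}}\circ\xi_i\circ\Psi_i$ and $H_{q_j}=\mathbf{H}_{\scriptscriptstyle \mathbf{n}}\circ\zeta_j\circ\Phi_j$, provided $\xi_i,\Psi_i,\zeta_j,\Phi_j$ depend only on the local models and on $\F_{\scriptscriptstyle \mu}$.
\end{itemize}
Uniqueness in Theorem \ref{CuTan_Teo: Existencia_Unicidad de ML y TN} then forces the local models of $(\hat\F,\hat\G)$ to coincide with those of $(\F,\G)$.

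\emph{Sufficiency.} Suppose the two collections of local models coincide, with normalizing transformations $\{\mathbf{H}_{\scriptscriptstyle \mathbf{n}},H_{p_i},H_{q_j}\}$ for $(\F,\G)$ and $\{\hat{\mathbf{H}}_{\scriptscriptstyle \mathbf{n}},\hat H_{p_i},\hat H_{q_j}\}$ for $(\hat\F,\hat\G)$. We define
\begin{equation*}
\Theta:=\hat{\mathbf{H}}_{\scriptscriptstyle \mathbf{n}}\circ \mathbf{H}_{\scriptscriptstyle \mathbf{n}}^{-1} \quad\text{near } \mathcal{L}\smallsetminus(\p,\q), \qquad \Theta_{p_i}:=\hat H_{p_i}\circ H_{p_i}^{-1}, \qquad \Theta_{q_j}:=\hat H_{q_j}\circ H_{q_j}^{-1}.
\end{equation*}
Each of these maps sends $(\tilde\F,\tilde\G)$ to $(\tilde{\hat\F},\tilde{\hat\G})$.

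The key step is to show that these maps glue. On the overlaps, the factorization equations give
\begin{equation*}
\hat H_{p_i}\circ H_{p_i}^{-1}=\hat{\mathbf{H}}_{\scriptscriptstyle \mathbf{n}}\circ\xi_i\circ\Psi_i\circ\Psi_i^{-1}\circ\xi_i^{-1}\circ \mathbf{H}_{\scriptscriptstyle \mathbf{n}}^{-1}=\Theta.
\end{equation*}
This uses that $\xi_i,\Psi_i$ coincide for both pairs, which again requires that they are determined by the common local models. The same computation applies at the points $q_j$. So $\Theta$ extends holomorphically to a full neighborhood of $\mathcal{L}$ in $\M$ and equals the identity on $\mathcal{L}$.

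Finally, we blow $\Theta$ down. Because $\Theta$ fixes $\mathcal{L}$ pointwise and preserves the projection, it descends to a biholomorphism of a punctured neighborhood of $0\in\C^2$. By Hartogs' theorem it extends to a germ $H$ at $0$. Its linear part is the identity, since $\Theta(x,u)=(x+\text{O}(x^2),u+\text{O}(x))$. This $H$ conjugates the two pairs.

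\emph{Main obstacle.} We expect the main difficulty to be verifying that $\xi_i,\Psi_i,\zeta_j,\Phi_j$ are canonically determined by the local models and by $(\F_{\scriptscriptstyle \mu},\G_{\scriptscriptstyle \mathbf{r}})$ alone. Both directions rely on this in the factorization equations. To establish it, we would go back to their construction in \eqref{Ec_Series de Psi y Xi} and \eqref{Ec_Series de Phi y Zeta}.
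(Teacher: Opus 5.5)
The paper does not prove this theorem itself (it is imported from \cite{[JOV]}), but your argument is correct and is exactly the mechanism the paper's setup relies on. Necessity goes by composing the normalizing transformations with the lift $\tilde H$ and invoking uniqueness; this is the same step the paper uses when it asserts that $\tilde{\mathbf{G}}_{1}\circ H_{[\centerdot]}$ are the normalizing transformations of $(\mathbf{G}_{1}(\F),\mathbf{G}_{1}(\G))$. Sufficiency goes by gluing $\hat H_{[\centerdot]}\circ H_{[\centerdot]}^{-1}$ through the factorization equations and blowing down, as depicted in Figure~\ref{Figure_Equivan_L}.

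The point you flag as the main obstacle is already supplied by the paper. Right after \eqref{Ec_Series de Psi y Xi} and \eqref{Ec_Series de Phi y Zeta} it records that $\xi_{i}$ and $\zeta_{j}$ depend only on $\F_{\scriptscriptstyle \mu}$. It also gives $\Psi_{i}$ and $\Phi_{j}$ explicitly in terms of $(\lambda_{i}, s_{i})$ and $(g_{j}, z_{j})$, that is, in terms of the local models alone.
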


\begin{figure}[ht]
\begin{center}
\includegraphics[scale=0.55]{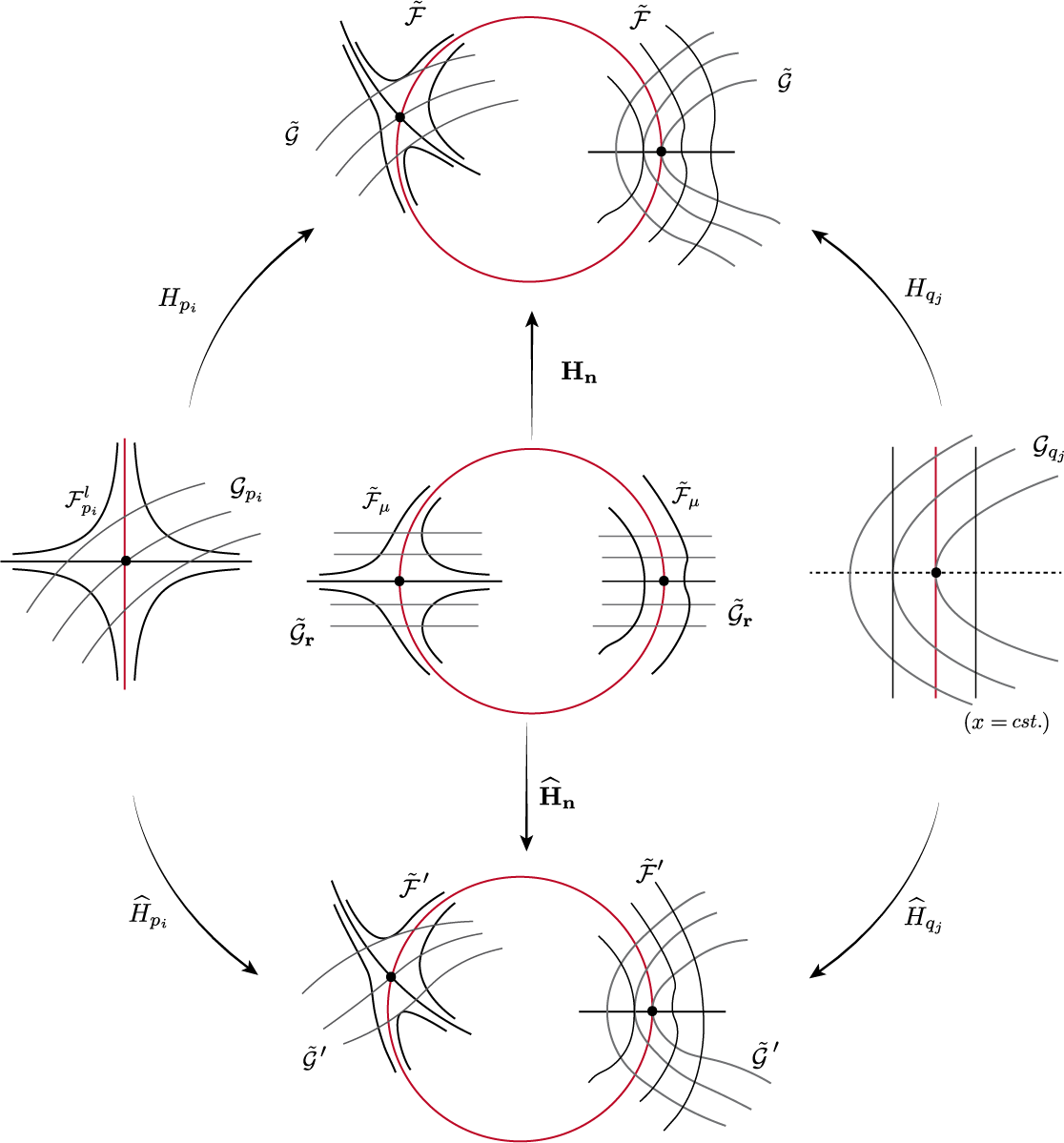}
\caption{\small{Foliation pairs $(\mathcal{F}, \mathcal{G})$ and $(\mathcal{F}', \mathcal{G}')$ are strictly analytically equivalent if and only if they have the same local models with respect to $(\F_{\scriptscriptstyle \mu}, \G_{\scriptscriptstyle \mathbf{r}})$.}}
\label{Figure_Equivan_L}
\end{center}
\end{figure}


As we said before, since we are working with foliation pairs, in what follows we have to deal with different expressions that have to be simplified simultaneously.
 We begin by giving normal forms for the normalizing transformations satisfying that, for each $k\geq 1$, the normalizing transformations  $\mathbf{H}_{\scriptscriptstyle \mathbf{n}} $, $H_{p_{i}}$, $H_{q_{j}}$ 
 with respect to the foliation pair $(\F_{\scriptscriptstyle \mu}, \G_{\scriptscriptstyle \mathbf{r}})$ can be simultaneously simplified, up to order $k$, by a suitable tangent to the identity biholomorphism $\mathcal{H}^{k} \colon (\C^{2}, 0) \rightarrow (\C^{2}, 0)$.

As it was said in the introduction to this work, the normalizing transformations as well as the local models, codify analytic information of the corresponding pair of foliations. For this reason, it is important to obtain expressions of the normalizing transformations that are as informative and simple as possible. The following section is devoted to state and prove a precise formulation of Theorem \ref{Teorema_Limpieza de TransNorm_Intro}, which gives the possibility of having such informative and clean normalizing transformations.

We stress that for 
any foliation pair $(\F, \G)$ in $\mathcal{N}_{\p}(\mathbf{h}) \times \mathcal{D}_{\q}(\mathcal{I})$ 
and any $k\geqslant 1$, there exists a foliation pair strictly analytically equivalent to $(\F, \G)$, whose normalizing transformations up to order $k$ have explicit expressions that are clear and informative in terms of the analytic invariants characterizing the foliation pair $(\F, \G)$.

As we emphasized at the beginning of the introduction, we are interested in the family of curves of tangencies arising from each foliation pair $(\F, \G)$ in $\mathcal{N}_{\p}(\mathbf{h}) \times \mathcal{D}_{\q}(\mathcal{I})$. These curves of tangencies (polar or Jacobian curves) are crucial in the geometric interpretation of the parametric Thom's analytic invariants given in \cite{[ORV 1]} and  \cite{[ORV 2]}. The local models and normalizing transformations of a foliation pair, stated in  Theorem \ref{CuTan_Teo: Existencia_Unicidad de ML y TN},   give the possibility of describing   finite jets of parametrizations of the corresponding curves of tangencies (see Theorem \ref{Teorema_Parametrizacion_Polar_pi_qj_Intro}); by choosing jets of order sufficiently high, this theorem on parametrizations of curves of tangencies will allow us to prove Theorem \ref{teo:realizacion curvas como curvas de tangencias} on realization of analytic curves with pairwise transversal smooth branches as curves of tangencies of foliation pairs.

\label{Pageref_Segunda parte}


\section{\textbf{\emph{Normal forms of normalizing transformations}}}\label{Transformaciones normalizantes y curvas polares de parejas de foliaciones}
\setcounter{equation}{0}

The theory of \emph{Normal Forms} constitutes a powerful treatment in different areas of mathematics, which allows us to work with objects (transformations or vector fields) from a simpler perspective.
Namely, they allow to recover the analytical information (differentiable, formal or topological) of the objects, from their simplest expressions.

In $1987$ Poincaré proved, in his Phd thesis \cite{[P]}, inspiring and useful theorems for local non-resonant vector fields satisfying some assumptions of genericity that are strictly related to the position in the complex plane of the eigenvalues of the linearization matrix at the singular point.  Later on, results of interest were obtained from H.Dulac, C.Siegel, V.I.Arnold, J.Martinet, J-P.Ramis, F.Takens, J.Écalle, S.M.Voronin, Yu.S.Ilyashenko, S.M.Yakovenko, F.Loray, L.Stolovich, E.Paul, and others. In all these results, the linear part of the vector fields at the singular point is not identically zero, although it might not satisfy genericity assumptions. 
 
In the case of diffeomorphisms, there is a certain parallelism in the theory. Thus, there exists analogous results providing key information from the simplest possible expressions.

 In our case, since we are working with foliation pairs, we have to deal with different expressions that have to be simplified simultaneously. 

By definition, the normalizing transformations of a pair of foliations carry the local models of the pair in the pair itself (definition \ref{def: ML_TN}). Therefore, the normalizing transformations carry the curves of tangencies of the local models in the curves of tangencies of the original pair. In this section, except for tangent to the identity coordinate changes, 
\emph{$k$--normal forms} of the normalizing transformations of  foliation pairs in $\mathcal{N}_{\p} (\mathbf{h}) \times \mathcal{D}_{\q} (\mathcal{I})$ are obtained (see Theorem \ref{Teorema_Limpieza de TransNorm_Intro}). As we will show in section \ref{Parametrizaciones de polares} these \emph{$k$--normal forms} allow to express parametrizations of the curves of tangencies of the pair in terms of the analytic data from their local models.

 In subsection \ref{Series de TN} we describe, by means of the \emph{factorization equations} introduced in Definition \ref{def: factorization_norm_trans}, the relations among the power series defining the normalizing transformations. These relations are used in subsection  \ref{Limpieza de TransNorm} in order to achieve, under tangent to the identity coordinate changes, that finite jets of the normalizing transformations are expressed in terms of analytic data of the corresponding local models. In section \ref{Parametrizaciones de polares}   we show how these expressions are used to describe finite jets of parametrizations of the curves of tangencies in terms of the analytic data of the local models corresponding to the original foliation pair. This is done up to changes of coordinates tangent to the identity. It is worth mentioning that, by choosing jets of sufficiently high order, this description of the parametrizations will allow us to study the analytical type of the curves of tangencies.

  Throughout this section we consider foliation pairs $(\F, \G)$ in $\mathcal{N}_{\p} (\mathbf{h}) \times \mathcal{D}_{\q} (\mathcal{I})$ for fixed holonomy $\mathbf{h}$ and fixed collection of involutions $\mathcal{I}$. Moreover, we fix a non dicritical foliation $\F_{\scriptscriptstyle \mu} \in \mathcal{N}_{\p} (\mathbf{h})$ having exclusively the straight lines $\{y = p_{i} x\}$ as invariant branches (separatrices). Let $\tilde{\F}_{\scriptscriptstyle \mu}$ be the blow-up foliation induced by $\F_{\scriptscriptstyle \mu}$. We denote by   $\lambda_{i}$ the corresponding Camacho--Sad indexes  at the singular points $(x, u) = (0, p_{i})$ of foliation $\tilde{\F}_{\scriptscriptstyle \mu}$. As in Definition \ref{def: ML_TN} and Theorem \ref{teo_S: realización de modelos locales y t_norm}, the holomorphic function $g_{j}$ is defined as $g_{j}(u):=(q_{j}-u)(I_{j}(u)-q_{j})$, where  $I_{j} \colon (\C, q_{j}) \rightarrow (\C, q_{j})$ is the $j$-th involution of the collection $\mathcal{I} = (I_{1}, \ldots, I_{m})$.

\subsection{Power series of normalizing transformations of foliation pairs}
\label{Series de TN}

This section is devoted to stablish the expressions of the \emph{factorization equations} of the normalizing transformations $H_{p_{i}}$, $H_{q_{j}}$, defined by the corresponding factorization equations (see Definition \ref{def: factorization_norm_trans})
at the respective points $p_i$, $i=1,\dots, n+1$, and $q_j$, $j=1,\dots, m$,  in terms of their power series. The corresponding statements are given in Lemma \ref{lem: series TN por ecFac en qj} and Lemma \ref{lem: series TN por ecFac en pi}.

Let $\mathbf{H}_{\scriptscriptstyle \mathbf{n}} = (A_{\scriptscriptstyle \mathbf{n}}, B_{\scriptscriptstyle \mathbf{n}})$, $H_{p_{i}} = (A_{p_{i}}, B_{p_{i}})$, $H_{q_{j}} = (A_{q_{j}}, B_{q_{j}})$ be the normalizing transformations of the foliation pair $(\F, \G)$ in $\mathcal{N}_{\p} (\mathbf{h}) \times \mathcal{D}_{\q} (\mathcal{I})$ with respect to the foliation pair $(\F_{\scriptscriptstyle \mu}, \G_{\scriptscriptstyle \mathbf{r}})$ (see Definition \ref{def: ML_TN}). Let us consider the corresponding power series:
\begin{itemize}
    \item [a)] For $\mathbf{H}_{\scriptscriptstyle \mathbf{n}} = (A_{\scriptscriptstyle \mathbf{n}}, B_{\scriptscriptstyle \mathbf{n}})$:

\begin{equation}
\label{Ec_Series de transformaciones normalizantes_n}
A_{\scriptscriptstyle \mathbf{n}}   = \sum_{k \geqslant 1}  A_{\scriptscriptstyle \mathbf{n} ,\scriptscriptstyle {k}} (u) \, x^{k} \, ,
\hspace{1cm}
B_{\scriptscriptstyle \mathbf{n}} = u + \sum_{k \geqslant 1}  B_{\scriptscriptstyle \mathbf{n} ,\scriptscriptstyle {k}} (u) \, x^{k} \, ,
\end{equation}
Defined at the regular points in the neighborhood  $(\M,\mathcal{L}\smallsetminus (\p, \q))$.
Recall that, by definition, the coefficient $A_{\scriptscriptstyle \mathbf{n}\, , 1}$ of $\mathbf{H}_{\scriptscriptstyle \mathbf{n}} $ is equal to $1$.
\vspace{0.3cm}
 \item [b)] For $H_{p_{i}} = (A_{p_{i}}, B_{p_{i}})$:
\begin{equation}
\label{Ec_Series de transformaciones normalizantes_pi}
A_{p_{i}}   = \sum_{k \geqslant 1}  A_{p_{i} ,\scriptscriptstyle {k}} (u) \, x^{k} \, ,
\hspace{1cm}
B_{p_{i}} = u + \sum_{k \geqslant 1}  B_{p_{i} ,\scriptscriptstyle {k}} (u) \, x^{k} \, ,
\end{equation}
Defined in $(\M, D_{p_{i}})$ for the singular points $(0, p_{i})$, $i=1,\dots,n$.
\vspace{0.3cm}
\item[c)] For $H_{q_{j}} = (A_{q_{j}}, B_{q_{j}})$:
\begin{equation}
\label{Ec_Series de transformaciones normalizantes_qj}
A_{q_{j}}   = \sum_{k \geqslant 1}  A_{q_{j} ,\scriptscriptstyle {k}} (u) \, x^{k} \, ,
\hspace{1cm}
B_{q_{j}} = u + \sum_{k \geqslant 1}  B_{q_{j} ,\scriptscriptstyle {k}} (u) \, x^{k} \, .
\end{equation}
\vspace{0.3cm}
Defined in $(\M, D_{q_{j}})$ for the tangency points 
$(0, q_{j})$ $j=1,\dots,m$.
\end{itemize}

\begin{remark} In what follows, for sake of simplicity, we will frequently use the notation 

\begin{equation}
\label{Ec_Series de transformaciones normalizantes}
A_{[\centerdot]}   = \sum_{k \geqslant 1}  A_{[\centerdot] ,\scriptscriptstyle {k}} (u) \, x^{k} \, ,
\hspace{1cm}
B_{[\centerdot]} = u + \sum_{k \geqslant 1}  B_{[\centerdot] ,\scriptscriptstyle {k}} (u) \, x^{k} \, ,
\end{equation}
where $[\centerdot]$ represents $\scriptscriptstyle \mathbf{n}$, $p_{i}$, or $q_{j}$,
be the respective power series defined in the corresponding neighborhoods $(\M,\mathcal{L}\smallsetminus (\p, \q))$ at the regular points,  $(\M, D_{p_{i}})$ at the singular points $(0, p_{i})$, $i=1,\dots,n$, and $(\M, D_{q_{j}})$ at the tangency points 
$(0, q_{j})$ $j=1,\dots,m$, respectively.
By definition, the coefficient $A_{\scriptscriptstyle \mathbf{n}\, , 1}$ of $\mathbf{H}_{\scriptscriptstyle \mathbf{n}} $ is equal to $1$.

\end{remark}

\begin{definition}\label{def: factorization_norm_trans}
    We say that the equalities 
\begin{equation}
\label{eq:ecuaciones de factorizacion}
H_{p_{i}} = \mathbf{H}_{\scriptscriptstyle \mathbf{n}} \circ \xi_{i} \circ \Psi_{i}
\hspace{0.8cm}
\text{and}
\hspace{0.8cm}
H_{q_{j}} = \mathbf{H}_{\scriptscriptstyle \mathbf{n}} \circ \zeta_{j} \circ \Phi_{j}\,,
\end{equation}
\noindent are the \emph{factorization equations of the normalizing transformations $\mathbf{H}_{\scriptscriptstyle \mathbf{n}}$, $H_{p_{i}}$ and $ H_{q_{j}}$} (see figure \ref{Figura_Construccion de PdF a partir de PL}).
\end{definition}

 \begin{figure}[ht]
\begin{center}
\includegraphics[scale=0.55]{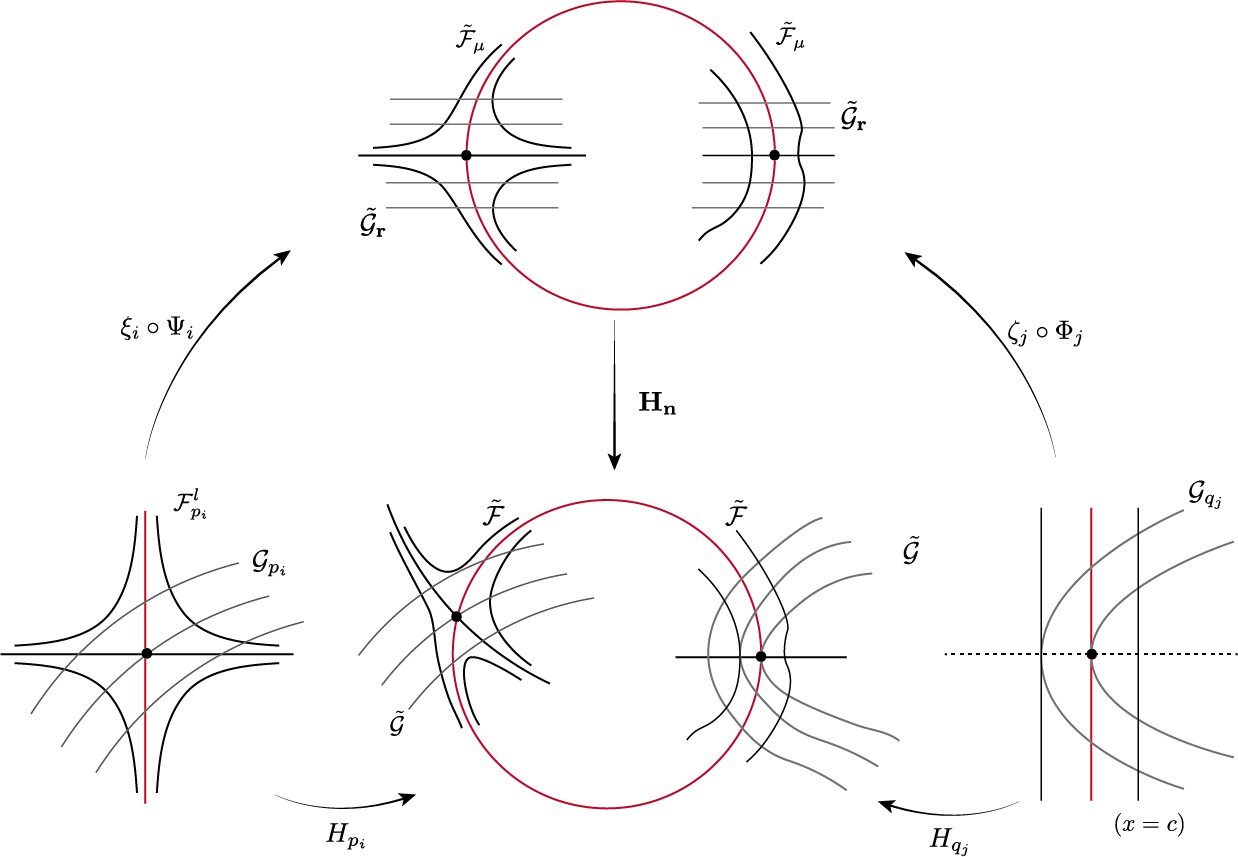}
\caption{\small{The factorization equations $H_{p_{i}} = \mathbf{H}_{\scriptscriptstyle \mathbf{n}} \circ \xi_{i} \circ \Psi_{i}$ and $H_{q_{j}} = \mathbf{H}_{\scriptscriptstyle \mathbf{n}} \circ \zeta_{j} \circ \Phi_{j}\,,$ give a decomposition of $(\tilde{\F}, \tilde{\G})$ by local analytic representatives.}}
\label{Figura_Construccion de PdF a partir de PL}
\end{center}
\end{figure}


In \eqref{eq:ecuaciones de factorizacion} the biholomorphism $\Psi_{i}$ is defined in a neighborhood in $\C^{2}$ of the annulus $D_{p_{i}} \smallsetminus \{(0, p_{i})\}$, and the biholomorphism $\xi_{i} \colon (\C^{2}, D_{p_{i}}) \rightarrow (\M, D_{p_{i}})$  satisfies (see Proposition 3.1 in \cite{[JOV]}) the equalities  
\begin{equation}
\label{Ec_Series de Psi y Xi}
\begin{split}
\Psi_{i} 
= \bigl(\hat{\psi}_{i}, \, u + s_{i}(x)\bigl)
\, , 
\hspace{0.4cm} 
\hat{\psi}_{i} 
& = \sum_{k \geqslant 1}  \hat{\psi}_{\scriptscriptstyle{i, k}} (u) \, x^{k}\,,
\hspace{0.3cm} 
\text{where}
\hspace{0.3cm}
\hat{\psi}_{i}(x,u)=x \bigl(1 + \tfrac{s_{i}(x)}{u - p_{i}}\bigl)^{\lambda_{i}} 
\, , \\
\xi_{i}  = (\varepsilon_{i}, u) \, ,
\hspace{0.5cm}
\varepsilon_{i} & = \sum_{k \geqslant 1}  \varepsilon_{\scriptscriptstyle{i, k}} (u) \, x^{k} \, ,
\hspace{0.3cm}
 \varepsilon_{\scriptscriptstyle{i, 1}}  (p_{i}) = 1 \,.
\end{split}
\end{equation}
The holomorphic map $s_{i} \colon (\C, 0) \rightarrow (\C, 0)$ depends on the foliation pair $(\F, \G)$ in $(0,p_{i})$ with respect to the foliation pair $(\F_{\scriptscriptstyle \mu}, \G_{\scriptscriptstyle \mathbf{r}})$. The power series of $s_{i}$ is written as
\begin{equation}\label{ec:si}
    s_{i}=\sum_{r \geqslant 1} s_{\scriptscriptstyle{i,r}}  \, x^{r}, \quad s_{\scriptscriptstyle{i,r}}  \in \C.
\end{equation}
The definition of the biholomorphism $\xi_{i}$ only depends on the non dicritical foliation $\F_{\scriptscriptstyle \mu}$, and not on the foliation pair $(\F, \G)$.

At the same time, the biholomorphism $\Phi_{j}$ defined in a neighborhood of the annulus $D_{q_{j}} \smallsetminus \{(0, q_{j})\}$ in $\C^{2}$, and the biholomorphism $\zeta_{j} \colon (\C^{2}, D_{q_{j}}) \rightarrow (\M, D_{q_{j}})$ satisfies (see Proposition 3.3 in \cite{[JOV]}) the equalities 
\begin{equation}
\label{Ec_Series de Phi y Zeta}
\begin{split}
& \Phi_{j} 
 = \bigl(x, \phi_{j}) \, ,
\hspace{0.2cm} 
\phi_{j}  = u  + \sum_{k \geqslant 1}  \phi_{\scriptscriptstyle{j, k}} (u) \, x^{k} \, ,
\hspace{0.1cm} 
\text{where} 
\hspace{0.1cm} 
g_{j} \circ \phi_{j} (x, u) = g_{j}(u) + z_{j} (x) \, ,
\\
& \zeta_{j} = (\varsigma_{j}, u) \, ,
\hspace{0.3cm}
\varsigma_{j}  = 
\sum_{k \geqslant 1}  \varsigma_{\scriptscriptstyle{j, k}} (u) \, x^{k} \, ,
\hspace{0.3cm}
 \varsigma_{\scriptscriptstyle{j, 1}} (q_{j}) = 1 \, ,
\hspace{0.2cm}
 \varsigma_{\scriptscriptstyle{j, k+1}}  (q_{j}) = 0 \, , 
\hspace{0.2cm}
\text{if}
\hspace{0.2cm}
k > 1 
\end{split}
\end{equation}
 The biholomorphism $z_{j} \colon (\C, 0) \rightarrow (\C, 0)$  is present in  the analytic model for the foliation pair
  $(\F, \G)$ at the point $(0,q_{j})$ with respect to the foliation pair $(\F_{\scriptscriptstyle \mu}, \G_{\scriptscriptstyle \mathbf{r}})$.  The power series of $z_{j}$ is written as
  \begin{equation}\label{ec:zj}
     z_{j}= \sum_{r \geqslant 1} z_{\scriptscriptstyle{j, r}}  \, x^{r},\quad z_{\scriptscriptstyle{j, r}}  \in \C
  \end{equation}
  Moreover, the definition of the biholomorphism $\zeta_{j}$ only depends on the non dicritical foliation $\F_{\scriptscriptstyle \mu}$, and not on the foliation pair $(\F, \G)$.


The following lemmas relate the expressions (\ref{Ec_Series de Psi y Xi}), (\ref{ec:si}), (\ref{Ec_Series de Phi y Zeta}), (\ref{ec:zj}) of the \emph{factorizing equations} in (\ref{eq:ecuaciones de factorizacion}) in terms of the power series defining the normalizing transformations of  the foliation pair $(\F, \G)$ with respect to the foliation pair $(\F_{\scriptscriptstyle \mu}, \G_{\scriptscriptstyle \mathbf{r}})$.

\begin{lemma}\label{lem: series TN por ecFac en pi}
Let $H_{p_{i}} = (A_{p_{i}}, B_{p_{i}})$  be the normalizing transformations
at the points $p_i$, $i=1,\dots, n+1$, defined by the corresponding factorization equations 
 $H_{p_{i}} = \mathbf{H}_{\scriptscriptstyle \mathbf{n}} \circ \xi_{i} \circ \Psi_{i}$. The power series defining the biholomorphisms  $ A_{p_{i}}$ and $B_{p_{i}}$  in (\ref{Ec_Series de transformaciones normalizantes}) satisfy the relations 
 
\begin{itemize}
    \item[a)] For $A_{p_{i}} (x, u)   = \sum_{k \geqslant 1}  A_{\scriptscriptstyle{p_{i}, {k}}} (u) \, x^{k} \, ,$

\begin{equation}\label{eq: Api,1}
     A_{\scriptscriptstyle{p_{i}, {1}}}  
=   \varepsilon_{\scriptscriptstyle{i, 1}}   \, ,
 \end{equation}
     
 \begin{equation}\label{eq: Api,2}
 A_{\scriptscriptstyle{p_{i}, k+1}} 
\, - \, ( \varepsilon_{\scriptscriptstyle{i, 1}} )^{k+1}
A_{\scriptscriptstyle \mathbf{n}, \scriptscriptstyle{k+1} }
 =  \varepsilon_{\scriptscriptstyle{i, k+1}}  
\, + \, \mathbf{A}_{\mathbf{\p}_{i},\scriptscriptstyle{k+1}}\,,
\end{equation}
where
\begin{equation}\label{eq: Api,3}
  \mathbf{A}_{\mathbf{\p}_{i},\scriptscriptstyle{k+1}}=  \mathbf{A}_{\mathbf{\p},\scriptscriptstyle{k+1}}[\lambda_{i}, s_{\scriptscriptstyle{i,r}} ;  \varepsilon_{\scriptscriptstyle{i, r}}, A_{\scriptscriptstyle {\mathbf{n}, r}}]^{k}_{r = 1} \, , \,\, k\geq 1\,,
\end{equation}
is holomorphic in the annulus $D_{p_{i}} \smallsetminus \{(0, p_{i})\}$ and it is defined by evaluation of the transformation  $\mathbf{A}_{\mathbf{\p},\scriptscriptstyle{k+1}}$ in the complex numbers $\lambda_{i}$, $s_{\scriptscriptstyle{i,r}} $ (information of the local model of $(\F, \G)$ at the point  $(0,p_{i})$), and in the functions, $ \varepsilon_{\scriptscriptstyle{i, r}}$, $A_{\scriptscriptstyle {\mathbf{n}, r}}$, $1 \leqslant r \leqslant k$ (coefficients of the series $\varepsilon_{i}$ and $A_{\scriptscriptstyle \mathbf{n}}$, respectively).
\item[b)] For $B_{p_{i}} (x, u)   = \sum_{k \geqslant 1}  B_{\scriptscriptstyle{p_{i}, {k}}} (u) \, x^{k} \, ,$
\begin{equation} \label{eq: Bpi,1}
     B_{\scriptscriptstyle{p_{i}, {1}}} 
\, - \,  \varepsilon_{\scriptscriptstyle{i, 1}}  B_{\scriptscriptstyle {\mathbf{n}, 1}}
= s_{\scriptscriptstyle{i, 1}}   \,, 
 \end{equation}     
 \begin{equation}\label{eq: Bpi,2}
B_{\scriptscriptstyle {p_{i}} , \scriptscriptstyle{k+1} } 
\, - \, ( \varepsilon_{\scriptscriptstyle{i, 1}} )^{k+1}  B_{\scriptscriptstyle \mathbf{n}, \scriptscriptstyle{k+1} } 
=   s_{\scriptscriptstyle{i, k+1}} 
\,  + \,  \varepsilon_{\scriptscriptstyle{i, k+1}}  B_{\scriptscriptstyle {\mathbf{n}, 1}} 
\, + \, \mathbf{B}_{\mathbf{\p}_{i},\scriptscriptstyle{k+1}} 
\end{equation}
where
\begin{equation}\label{eq: Bpi,3}
 \mathbf{B}_{\mathbf{\p}_{i},\scriptscriptstyle{k+1}}=\mathbf{B}_{\mathbf{\p},\scriptscriptstyle{k+1}}[\lambda_{i}, s_{\scriptscriptstyle{i,r}} ;  \varepsilon_{\scriptscriptstyle{i, r}}, B_{\scriptscriptstyle {\mathbf{n}, r}}]^{k}_{r = 1}\,, 
\end{equation}
is holomorphic in the annulus $D_{p_{i}} \smallsetminus \{(0, p_{i})\}$ and it is defined by evaluation of the transformation  $\mathbf{B}_{\mathbf{\p},\scriptscriptstyle{k+1}}$ in the complex numbers $\lambda_{i}$, $s_{\scriptscriptstyle{i,r}} $ (information of the local model of $(\F, \G)$ at the point  $(0,p_{i})$), and in the functions, $ \varepsilon_{\scriptscriptstyle{i, r}}$, $B_{\scriptscriptstyle {\mathbf{n}, r}}$, $1 \leqslant r \leqslant k$ (coefficients of the series $\varepsilon_{i}$, $B_{\scriptscriptstyle \mathbf{n}}$).
\end{itemize}

\end{lemma}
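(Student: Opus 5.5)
The plan is to expand the factorization equation $H_{p_{i}} = \mathbf{H}_{\scriptscriptstyle \mathbf{n}} \circ \xi_{i} \circ \Psi_{i}$ as formal power series in $x$, with coefficients holomorphic in $u$ on the annulus $D_{p_{i}} \smallsetminus \{(0,p_{i})\}$, and then compare coefficients of $x^{k+1}$. First compute the composite $\xi_{i}\circ\Psi_{i} = \bigl(\varepsilon_{i}(\hat{\psi}_{i}, u + s_{i}(x)),\, u + s_{i}(x)\bigr)$. Call its first component $X(x,u)$ and its second component $U(x,u) = u + s_{i}(x)$.

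Next expand $\hat{\psi}_{i} = x\bigl(1 + \tfrac{s_{i}(x)}{u-p_{i}}\bigr)^{\lambda_{i}}$ using the binomial series in $s_{i}(x)/(u-p_{i})$. This is legitimate on the annulus for small $x$. It gives $\hat{\psi}_{i,1}=1$, and $\hat{\psi}_{i,k+1}$ is a polynomial in $\lambda_{i}$, $s_{i,1},\dots,s_{i,k}$ and $(u-p_{i})^{-1}$. The next step is to expand $\varepsilon_{i,r}(u+s_{i}(x))$ by Taylor's formula around $u$. The resulting coefficient of $x^{k+1}$ in $X$ is
\[
X_{k+1} = \varepsilon_{i,k+1}(u) + (\text{terms built from } \lambda_{i},\, s_{i,r},\, \varepsilon_{i,r} \text{ and their derivatives},\ r\le k).
\]
We also get $X_{1} = \varepsilon_{i,1}(u)$, because $s_{i}(0)=0$ and $\hat{\psi}_{i,1}=1$. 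The key bookkeeping point is that only $\varepsilon_{i,k+1}$ enters linearly with index $k+1$. Every other contribution uses indices $r\le k$, since $s_{i}$ and $\hat{\psi}_{i}-x$ are $O(x)$ and $O(x^{2})$ respectively.

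Then substitute into $A_{\scriptscriptstyle\mathbf{n}}(X,U) = \sum_{r\ge1} A_{\scriptscriptstyle\mathbf{n},r}(U)\,X^{r}$, Taylor-expanding $A_{\scriptscriptstyle\mathbf{n},r}(u+s_{i}(x))$ in $s_{i}$. Since $X = \varepsilon_{i,1}x + O(x^{2})$ and $A_{\scriptscriptstyle\mathbf{n},1}=1$, the coefficient of $x^{k+1}$ picks up three kinds of terms. From the $r=1$ term we get $X_{k+1}$, which contains $\varepsilon_{i,k+1}$. From the $r=k+1$ term we get $(\varepsilon_{i,1})^{k+1}A_{\scriptscriptstyle\mathbf{n},k+1}(u)$. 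All remaining terms involve only $A_{\scriptscriptstyle\mathbf{n},r}$, $\varepsilon_{i,r}$ and $s_{i,r}$ with $r\le k$, together with $\lambda_{i}$ and derivatives in $u$. We collect these into $\mathbf{A}_{\mathbf{\p}_{i},k+1}$. This yields \eqref{eq: Api,2}, and matching $x^{1}$ gives \eqref{eq: Api,1}.

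For $B$ we do the same with $B_{\scriptscriptstyle\mathbf{n}}(X,U) = U + \sum_{r\ge1} B_{\scriptscriptstyle\mathbf{n},r}(U)X^{r}$. Now $U$ contributes $s_{i,k+1}$ at order $k+1$, the $r=1$ term contributes $B_{\scriptscriptstyle\mathbf{n},1}\varepsilon_{i,k+1}$, and the $r=k+1$ term contributes $(\varepsilon_{i,1})^{k+1}B_{\scriptscriptstyle\mathbf{n},k+1}$. At order one this gives $B_{p_{i},1} = s_{i,1} + \varepsilon_{i,1}B_{\scriptscriptstyle\mathbf{n},1}$, which is \eqref{eq: Bpi,1}. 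The rest defines $\mathbf{B}_{\mathbf{\p}_{i},k+1}$, giving \eqref{eq: Bpi,2}. Here one should note that $\mathbf{B}$ also depends on $\varepsilon_{i,r}$ through $X$.

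Holomorphy of $\mathbf{A}_{\mathbf{\p}_{i},k+1}$ and $\mathbf{B}_{\mathbf{\p}_{i},k+1}$ on the annulus follows because each is a finite polynomial expression in holomorphic functions there and $(u-p_{i})^{-1}$. The main obstacle is the careful index tracking. We must verify that no term of index $k+1$ other than the displayed ones survives, in particular from the Taylor expansions in $s_{i}(x)$ and from the binomial expansion. This follows from the order estimates $s_{i}=O(x)$, $X - \varepsilon_{i,1}x = O(x^{2})$ and $X^{r}=O(x^{r})$. We do not attempt explicit closed formulas for $\mathbf{A}_{\mathbf{\p},k+1}$ and $\mathbf{B}_{\mathbf{\p},k+1}$; we treat them as universal polynomial operators.
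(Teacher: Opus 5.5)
Your proposal is correct and is essentially the paper's argument. The paper derives these relations from its general composition lemma (the coefficient of $x^{k}$ in $A(\alpha,\beta)$ is $\beta_{k}A_{0}' + \alpha_{k}A_{1} + (\alpha_{1})^{k}A_{k}$ plus lower-index terms), applied once to $\varepsilon_{i}(\hat{\psi}_{i}, u+s_{i}(x))$ and once to $\mathbf{H}_{\scriptscriptstyle \mathbf{n}}\circ(\xi_{i}\circ\Psi_{i})$, and it uses the Fa\`a di Bruno formula for $\hat{\psi}_{\scriptscriptstyle{i,k+1}}$ together with $A_{\scriptscriptstyle \mathbf{n},1}\equiv 1$, $\hat{\psi}_{\scriptscriptstyle{i,1}}=1$ and the $x^{0}$-coefficients $0$ of $A_{\scriptscriptstyle \mathbf{n}}$ and $u$ of $B_{\scriptscriptstyle \mathbf{n}}$, which is exactly the index bookkeeping you carry out by hand to isolate the same top-order terms.
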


The proof of this and the next lemma is based on Lemma \ref{Lema_Series de composicion A en alpha y beta}  and Lemma \ref{Lema_Aplicacion hatPsik_i}.

\begin{lemma}\label{lem: series TN por ecFac en qj}
    Let $H_{q_{j}} = (A_{q_{j}}, B_{q_{j}})$, be the normalizing transformations
at the points $q_j$, $j=1,\dots, m$, defined by the corresponding factorization equations 
$H_{q_{j}} = \mathbf{H}_{\scriptscriptstyle \mathbf{n}} \circ \zeta_{j} \circ \Phi_{j}$.
 The power series defining the biholomorphisms  $ A_{q_{j}}$ and $B_{q_{j}}$  in (\ref{Ec_Series de transformaciones normalizantes}) satisfy the relations

\begin{itemize}
    \item[a)] For $A_{q_{j}} (x,u)   = \sum_{k \geqslant 1}  A_{\scriptscriptstyle{q_{j}, {k}}} (u) \, x^{k} \, ,$

\begin{equation}\label{eq: Aqj,1}
     A_{\scriptscriptstyle{q_{j}, {1}}}  
=   \varsigma_{\scriptscriptstyle{j, 1}}   \, ,
 \end{equation}
     
 \begin{equation}\label{eq: Aqj,2}
 A_{\scriptscriptstyle{q_{j}, k+1}} 
\, - \, ( \varsigma_{\scriptscriptstyle{j, 1}} )^{k+1}
A_{\scriptscriptstyle \mathbf{n}, \scriptscriptstyle{k+1} }
 =  \varsigma_{\scriptscriptstyle{j, k+1}}  
\, + \, \mathbf{A}_{\mathbf{\q}_{j},\scriptscriptstyle{k+1}}\,,
\end{equation}
where
\begin{equation}\label{eq: Aqj,3}
  \mathbf{A}_{\mathbf{\q}_{j},\scriptscriptstyle{k+1}}=  \mathbf{A}_{\mathbf{\q},\scriptscriptstyle{k+1}}[z_{\scriptscriptstyle{j,r}} ; g_{j}, \varsigma_{\scriptscriptstyle{i, r}}, A_{\scriptscriptstyle {\mathbf{n}, r}}]^{k}_{r = 1} \, , \,\, k\geq 1\,,
\end{equation}
is holomorphic in the annulus $D_{q_{j}} \smallsetminus \{(0, q_{j})\}$ and it is defined by evaluation of the transformation  $\mathbf{A}_{\mathbf{\q},\scriptscriptstyle{k+1}}$ in the complex number $z_{\scriptscriptstyle{j,r}}$, in the derivatives  $g_{j}^{(1)}$, $g_{j}^{(r+1)}$ (information of the local model of $(\F, \G)$ at the point  $(0,q_{j})$), and in the functions 
$ \varsigma_{\scriptscriptstyle{j, r}} $, $A_{\scriptscriptstyle {\mathbf{n}, r}}(u)$, $1 \leqslant r \leqslant k$ (coefficients of the series $\varsigma_{j}$ and  $A_{\scriptscriptstyle \mathbf{n}}$).
\item[b)]  For $B_{q_{j}} (x,u)   = \sum_{k \geqslant 1}  B_{\scriptscriptstyle{q_{j}, {k}}} (u) \, x^{k} \, ,$

\begin{equation}\label{eq: Bqj,1}
     B_{\scriptscriptstyle{q_{j}, {1}}} 
\, - \,  \varsigma_{\scriptscriptstyle{j, 1}}  B_{\scriptscriptstyle {\mathbf{n}, 1}}
= \tfrac{z_{\scriptscriptstyle{j, 1}}}{g^{(1)}_{j}}   \,, 
 \end{equation}
     
 \begin{equation}\label{eq: Bqj,2}
B_{\scriptscriptstyle {q_{j}} , \scriptscriptstyle{k+1} } 
\, - \, ( \varsigma_{\scriptscriptstyle{j, 1}} )^{k+1}  B_{\scriptscriptstyle \mathbf{n}, \scriptscriptstyle{k+1} } 
=   \tfrac{ z_{\scriptscriptstyle{j, k+1}}}{g_{j}^{(1)}} 
\,  + \,  \varsigma_{\scriptscriptstyle{j, k+1}}  B_{\scriptscriptstyle {\mathbf{n}, 1}} 
\, + \, \mathbf{B}_{\mathbf{\q}_{j},\scriptscriptstyle{k+1}} 
\end{equation}
where
\begin{equation}\label{eq: Bqj,3}
 \mathbf{B}_{\mathbf{\q}_{j},\scriptscriptstyle{k+1}}=\mathbf{B}_{\mathbf{\q},\scriptscriptstyle{k+1}}[  z_{\scriptscriptstyle{j,r}} ; g_{j},  \varsigma_{\scriptscriptstyle{j, r}}, B_{\scriptscriptstyle {\mathbf{n}, r}}]^{k}_{r = 1}\,, 
\end{equation}
is holomorphic in the annulus $D_{q_{j}} \smallsetminus \{(0, q_{j})\}$ and it is defined by evaluation of the transformation  $\mathbf{B}_{\mathbf{\q},\scriptscriptstyle{k+1}}$ in the complex number $z_{\scriptscriptstyle{j,r}}$ , the derivatives  $g_{j}^{(1)}$, $g_{j}^{(r+1)}$ (information of the local model of $(\F, \G)$ at the point  $(0,q_{j})$), and in the functions 
$ \varsigma_{\scriptscriptstyle{j, r}} $, $B_{\scriptscriptstyle {\mathbf{n}, r}}(u)$, $1 \leqslant r \leqslant k$ (coefficients of the series $\varsigma_{j}$ and  $B_{\scriptscriptstyle \mathbf{n}}$).
\end{itemize}
\end{lemma}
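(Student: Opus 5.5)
We expand both sides of the factorization equation $H_{q_{j}} = \mathbf{H}_{\scriptscriptstyle \mathbf{n}} \circ \zeta_{j} \circ \Phi_{j}$ in powers of $x$ with coefficients holomorphic in $u$, on the annulus $D_{q_{j}} \smallsetminus \{(0,q_{j})\}$, and compare coefficients. We do it in two stages. First we compute $\zeta_{j}\circ\Phi_{j}(x,u) = \bigl(\varsigma_{j}(x,\phi_{j}(x,u)),\, \phi_{j}(x,u)\bigr)$. Then we substitute into $\mathbf{H}_{\scriptscriptstyle \mathbf{n}} = (A_{\scriptscriptstyle \mathbf{n}}, B_{\scriptscriptstyle \mathbf{n}})$ using the composition lemma (Lemma \ref{Lema_Series de composicion A en alpha y beta}). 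That lemma gives the $x^{k+1}$ coefficient of $F(\alpha(x,u),\beta(x,u))$ when $F=\sum F_{k}(u)x^{k}$, $\alpha=\alpha_{1}x+\dots$ and $\beta=u+\beta_{1}x+\dots$. The coefficient has the form $\alpha_{1}^{k+1}F_{k+1}(u)$ plus terms depending only on $F_{r},\alpha_{r},\beta_{r}$ and derivatives of $F_{r}$, with $r\leqslant k$. When $F$ has a leading $u$, the top contribution $\beta_{k+1}$ also appears; when $F_{1}\neq 0$, the term $F_{1}\alpha_{k+1}$ appears as well.

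The key preliminary step is to get the coefficients $\phi_{j,k}$ of $\phi_{j}$ from the defining identity $g_{j}\circ\phi_{j}(x,u) = g_{j}(u)+z_{j}(x)$. We Taylor expand $g_{j}(u+\sum_{k}\phi_{j,k}x^{k})$ and match the $x^{k+1}$ coefficients. This gives $g_{j}^{(1)}(u)\,\phi_{j,k+1} + (\text{polynomial in } g_{j}^{(r+1)}(u), \phi_{j,r},\ r\leqslant k) = z_{j,k+1}$. Hence
\begin{equation*}
\phi_{j,1}=\tfrac{z_{j,1}}{g_{j}^{(1)}},\qquad \phi_{j,k+1}=\tfrac{z_{j,k+1}}{g_{j}^{(1)}}+\Phi_{j,k+1}\bigl[z_{j,r};g_{j}\bigr]_{r=1}^{k}.
\end{equation*}
Here the remainder $\Phi_{j,k+1}$ is obtained by induction and involves division by $g_{j}^{(1)}$. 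Since $g_{j}^{(1)}(q_{j})=0$, this quotient is holomorphic only on the punctured disk $D_{q_{j}}\smallsetminus\{(0,q_{j})\}$ in $u$. This is exactly why the statement asserts holomorphy only on the annulus, and it is the source of the appearance of $g_{j}^{(1)}$ and $g_{j}^{(r+1)}$ in the brackets.

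Next we apply the composition lemma to $\varsigma_{j}(x,\phi_{j})$ with $\alpha=x$ and $\beta=\phi_{j}$. Its $x^{k+1}$ coefficient is $\varsigma_{j,k+1}(u)$ plus a term built from $\varsigma_{j,r}$, their $u$-derivatives and $\phi_{j,r}$ with $r\leqslant k$. In particular its first coefficient is $\varsigma_{j,1}(u)$.

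Finally we apply the composition lemma to $A_{\scriptscriptstyle \mathbf{n}}$ and to $B_{\scriptscriptstyle \mathbf{n}}$, with $\alpha=\varsigma_{j}\circ\Phi_{j}$ and $\beta=\phi_{j}$.
\begin{itemize}
\item[a)] Since $A_{\scriptscriptstyle \mathbf{n},1}=1$, the linear coefficient gives \eqref{eq: Aqj,1}. At order $k+1$ we get $(\varsigma_{j,1})^{k+1}A_{\scriptscriptstyle \mathbf{n},k+1}$, plus $\varsigma_{j,k+1}$ coming from $A_{\scriptscriptstyle \mathbf{n},1}\alpha_{k+1}$, plus everything else. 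Collecting the rest yields $\mathbf{A}_{\mathbf{\q}_{j},k+1}$, which depends on $z_{j,r}$, $g_{j}$, $\varsigma_{j,r}$ and $A_{\scriptscriptstyle \mathbf{n},r}$ for $r\leqslant k$. This gives \eqref{eq: Aqj,2}.
\item[b)] For $B$, the leading $u$ contributes $\phi_{j,k+1}$, whose principal part is $z_{j,k+1}/g_{j}^{(1)}$. The term $B_{\scriptscriptstyle \mathbf{n},1}\alpha_{k+1}$ contributes $\varsigma_{j,k+1}B_{\scriptscriptstyle \mathbf{n},1}$. The top term is $(\varsigma_{j,1})^{k+1}B_{\scriptscriptstyle \mathbf{n},k+1}$. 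At $k=0$ this is \eqref{eq: Bqj,1}, and for general $k$ it is \eqref{eq: Bqj,2}.
\end{itemize}

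The main obstacle is the bookkeeping. We must verify by induction on $k$ that every leftover term involves only indices $r\leqslant k$. The composition lemma guarantees this for the outer layer, and the recursion for $\phi_{j,k}$ guarantees it for the inner layer. We must also check that no hidden occurrence of $A_{\scriptscriptstyle \mathbf{n},k+1}$, $B_{\scriptscriptstyle \mathbf{n},k+1}$, $\varsigma_{j,k+1}$ or $z_{j,k+1}$ remains inside the remainder.
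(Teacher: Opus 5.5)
Your proposal is correct and follows essentially the paper's route. You obtain the coefficients of $\phi_{j}$ recursively from $g_{j}\circ\phi_{j}=g_{j}+z_{j}$, which is the content of the paper's Lemma~\ref{Lema_Aplicacion Phik_j} via Fa\`a di Bruno; you then apply the composition lemma (Lemma~\ref{Lema_Series de composicion A en alpha y beta}) to $\mathbf{H}_{\scriptscriptstyle \mathbf{n}}\circ(\zeta_{j}\circ\Phi_{j})$ to isolate $(\varsigma_{j,1})^{k+1}A_{\mathbf{n},k+1}$, $\varsigma_{j,k+1}$, $z_{j,k+1}/g_{j}^{(1)}$ and $\varsigma_{j,k+1}B_{\mathbf{n},1}$. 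One minor correction to your commentary: the restriction to the annulus comes not only from dividing by $g_{j}^{(1)}$, but also from the coefficients $A_{\mathbf{n},r}$, $B_{\mathbf{n},r}$ of $\mathbf{H}_{\scriptscriptstyle \mathbf{n}}$, which are holomorphic only away from $\hat{q}_{j}$.
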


\begin{remark}\label{Obs_Ecuaciones de Factorizacion en coeficientes}
  
The relations (\ref{eq: Api,1})-(\ref{eq: Api,3}) and (\ref{eq: Bpi,1})-(\ref{eq: Bpi,3}) in Lemma \ref{lem: series TN por ecFac en pi} give the expressions of the factorization equations at the points $(0,p_i)$ in terms of the corresponding normalizing transformations. Analoguosly, the relations (\ref{eq: Aqj,1})-(\ref{eq: Aqj,3}) and (\ref{eq: Bqj,1})-(\ref{eq: Bqj,3}) in Lemma \ref{lem: series TN por ecFac en qj} give the expressions of the factorization equations at the points $(0,q_j)$ in terms of the corresponding normalizing transformations.

Note that the relations that satisfy the coefficients $ A_{\scriptscriptstyle{p_{i}, k+1}}$, $B_{\scriptscriptstyle{p_{i}, k+1}}$ of the power series defining the biholomorphisms  $ A_{p_{i}}$ and $B_{p_{i}}$ in Lemma \ref{lem: series TN por ecFac en pi} are given in terms of the information given by the local models at the points $(0,p_i)$ together with the information of the functions, $ \varepsilon_{\scriptscriptstyle{i, r}}$, $A_{\scriptscriptstyle {\mathbf{n}, r}}$, $1 \leqslant r \leqslant k$ (coefficients of the series $\varepsilon_{i}$ and $A_{\scriptscriptstyle \mathbf{n}}$, $B_{\scriptscriptstyle \mathbf{n}}$ respectively) that depend on the non dicritical foliation $\mathcal{F}_{\scriptscriptstyle \mu}$.

Analogusly, the relations that satisfy the coefficients $ A_{\scriptscriptstyle{q_{j}, k+1}}$, $B_{\scriptscriptstyle{q_{j}, k+1}}$ of the power series defining the biholomorphisms  $ A_{q_{j}}$ and $B_{q_{j}}$ in Lemma \ref{lem: series TN por ecFac en qj} are given in terms of the information given by the local models at the points $(0,q_j)$ together with the information of the functions,  $ \varepsilon_{\scriptscriptstyle{i, r}}$, $A_{\scriptscriptstyle \mathbf{n}}$, $B_{\scriptscriptstyle \mathbf{n}}$, respectively.

\end{remark}

\subsection{Suitable normal forms for normalizing transformations. Proof of Theorem \ref{Teorema_Limpieza de TransNorm_Intro}}
\label{Limpieza de TransNorm}

Our aim in this section is to get, by means of coordinate changes tangent to the identity, finite jets of the normalizing transformations expresed in terms of the local models of the original foliation pair. This result is expressed in Theorem \ref{Teorema_Limpieza de TransNorm_Intro}. Such finite jets give \emph{canonical solutions} to the equations expressed in (\ref{eq: Api,1})-(\ref{eq: Api,3}) and (\ref{eq: Bpi,1})-(\ref{eq: Bpi,3}) in Lemma \ref{lem: series TN por ecFac en pi}, and the equations (\ref{eq: Aqj,1})-(\ref{eq: Aqj,3}) and (\ref{eq: Bqj,1})-(\ref{eq: Bqj,3}) in Lemma \ref{lem: series TN por ecFac en qj}.


\vspace{0.2cm}

In what follows we fix the following data.

Let $(\F, \G)$ be a foliation pair in $\mathcal{N}_{\p} (\mathbf{h}) \times \mathcal{D}_{\q} (\mathcal{I})$. Suppose that $(\mathcal{F}_{p_{i}}^l, \G_{p_{i}})$ is the local model at the singular point $(0,p_{i})$, $i=1,\dots, n+1$, with respect to the foliation pair $(\F_{\scriptscriptstyle \mu}, \G_{\scriptscriptstyle \mathbf{r}})$, where $\mathcal{F}_{p_{i}}^l$ is the foliation generated by the vector field $\lambda_{i} \, x \tfrac{\partial}{\partial \, x} + (u-p_{i}) \tfrac{\partial}{\partial \, u}$, and $\G_{p_{i}}$ is the foliation defined by the level curves of  $u + s_{i} (x)$, where $s_{i} = \sum_{r \geqslant 1} s_{\scriptscriptstyle{i,r}}  \, x^{r}$, $s_{\scriptscriptstyle{i,r}}  \in \C$, is holomorphic. We assume as well that $((x = \text{cst}), \G_{q_{j}})$ is the local model at the point $(0,q_{j})$, , $j=1,\dots, m$, with respect to $(\F_{\scriptscriptstyle \mu}, \G_{\scriptscriptstyle \mathbf{r}})$, where $\G_{q_{j}}$ is the foliation given by the level curves of the holomorphic function $z_{j}(x) + g_{j}(u)$, $z_{j} = \sum_{r \geqslant 1} z_{\scriptscriptstyle{j, r}}  \, x^{r}$, $z_{\scriptscriptstyle{j, r}}  \in \C$, is a biholomorphism, and $g_{j} (u)$ is the holomorphic function $(q_{j} - u) (I_{j} (u) - q_{j})$ defined by the involution $I_{j}\in\mathcal{I} = (I_{1}, \ldots, I_{m})$.

\begin{proposition}\label{Prop:soluciones canonicas} 
  The \emph{canonical solutions} to the equations expressed in (\ref{eq: Api,1})-(\ref{eq: Api,3}) and (\ref{eq: Bpi,1})-(\ref{eq: Bpi,3}) in Lemma \ref{lem: series TN por ecFac en pi}, and the equations (\ref{eq: Aqj,1})-(\ref{eq: Aqj,3}) and (\ref{eq: Bqj,1})-(\ref{eq: Bqj,3}) in Lemma \ref{lem: series TN por ecFac en qj} are given by the following relations:

  \begin{itemize}
      \item[a)] For $k=1$
      \label{Pageref_Aplicaciones_a1_b1}
\begin{align*}
a_{\scriptscriptstyle{\mathbf{n}, 1}} & = 1 \, ,  
& a_{\scriptscriptstyle{p_{i} , 1}} & = \varepsilon_{\scriptscriptstyle{{i} , 1}} \, ,
& a_{\scriptscriptstyle{q_{j} , 1}} & = \varsigma_{\scriptscriptstyle{j, 1}} \, ,
\tag{$a^{1}$}
\\ b_{\scriptscriptstyle \mathbf{n}, 1} 
& = - \, \sum_{\jmb =1}^{m} \Bigl(\tfrac{z_{\scriptscriptstyle{{\jmb} , 1}}}{\varsigma_{\scriptscriptstyle{{\jmb} , 1}} \, g^{(1)}_{\jmb}}\Bigl)_{\CP, q_{\jmb}} \, , 
& b_{\scriptscriptstyle{p_{i} , 1}} 
& = s_{\scriptscriptstyle{i, 1}} 
\, + \,  \varepsilon_{\scriptscriptstyle{i, 1}}  \, b_{\scriptscriptstyle \mathbf{n}, 1}  \, ,
& b_{\scriptscriptstyle{q_{j} , 1}} 
& := \tfrac{z_{\scriptscriptstyle{j, 1}}}{g^{(1)}_{j}} 
\, + \,   \varsigma_{\scriptscriptstyle{j, 1}} \, b_{\scriptscriptstyle \mathbf{n}, 1} \, , 
\tag{$b^{1}$}
\end{align*}
\hspace{-1.2cm}where $(\hspace{0.2cm})_{\CP, q_{\jmb}}$ is the principal part of the function at the point $q_{\jmb} \in \C$. 
\item[b)] For $k\geqslant 1$ we define recursively 
\label{Pageref_Aplicaciones_ak+1}
\begin{equation*}
\begin{split}
a_{\scriptscriptstyle \mathbf{n}, \scriptscriptstyle{k+1} } 
 := -  \, & \sum_{\imb =1}^{n+1} \Bigl(\tfrac{\mathbf{A}_{\mathbf{\p},\scriptscriptstyle{k+1}}}{(\varepsilon^{1}_{\imb})^{k+1}} \Bigl)_{\CP, p_{\imb}}  
\, - \, \sum_{\jmb =1}^{m} \Bigl(\tfrac{\mathbf{A}_{\mathbf{\q},\scriptscriptstyle{k+1}}}{(\varsigma_{\scriptscriptstyle{{\jmb} , 1}})^{k+1}}\Bigl)_{\CP, q_{\jmb}}
\, , 
\\ a_{\scriptscriptstyle {p_{i}} , \scriptscriptstyle{k+1} }
& :=  \varepsilon_{\scriptscriptstyle{i, k+1}}  \, + \, \mathbf{A}_{\mathbf{\p}_{i},\scriptscriptstyle{k+1}}
\, + \,  ( \varepsilon_{\scriptscriptstyle{i, 1}} )^{k+1} \, a_{\scriptscriptstyle \mathbf{n}, \scriptscriptstyle{k+1} }  \, , 
\\ a_{\scriptscriptstyle {q_{j}} , \scriptscriptstyle{k+1} } 
& :=  \varsigma_{\scriptscriptstyle{j, k+1}}  \, + \, \mathbf{A}_{\mathbf{\q}_{j},\scriptscriptstyle{k+1}} 
\, + \,  ( \varsigma_{\scriptscriptstyle{j, 1}} )^{k+1} \, a_{\scriptscriptstyle \mathbf{n}, \scriptscriptstyle{k+1} } \, 
,
\end{split}
\tag{$a^{k+1}$}
\end{equation*}
\hspace{-1.2cm}$\mathbf{A}_{\mathbf{\p}_{i},\scriptscriptstyle{k+1}} =\mathbf{A}_{\mathbf{\p},\scriptscriptstyle{k+1}}[\lambda_{i}, s_{\scriptscriptstyle{i,r}} ; \varepsilon_{\scriptscriptstyle{j, r}}, a_{\scriptscriptstyle \mathbf{n},\, r}]^{k}_{r = 1}$, \hspace{0.5cm} $\mathbf{A}_{\mathbf{\q}_{j},\scriptscriptstyle{k+1}}=\mathbf{A}_{\mathbf{\q},\scriptscriptstyle{k+1}}[z_{\scriptscriptstyle{j, r}} ; g_{j},  \varsigma_{\scriptscriptstyle{j, r}} , a_{\scriptscriptstyle \mathbf{n},\, r}]^{k}_{r = 1};$
\begin{equation*}
\begin{split}
b_{\scriptscriptstyle \mathbf{n}, \scriptscriptstyle{k+1} } 
 :=  -  \, & \sum_{\imb =1}^{n+1} \Bigl(\tfrac{\mathbf{B}_{\mathbf{\p},\scriptscriptstyle{k+1}}}{(\varepsilon^{1}_{\imb})^{k+1}} \Bigl)_{\scriptscriptstyle{\CP, p_{\imb}}}  
-  \sum_{\jmb =1}^{m} \Bigl(\tfrac{z_{\scriptscriptstyle{{\jmb} , k+1}}}{(\varsigma_{\scriptscriptstyle{{\jmb} , 1}})^{k+1} g_{\jmb}^{(1)}}
 + \tfrac{\mathbf{B}_{\mathbf{\q},\scriptscriptstyle{k+1}}}{(\varsigma_{\scriptscriptstyle{{\jmb} , 1}})^{k+1}}\Bigl)_{\scriptscriptstyle{\CP, q_{\jmb}}}  
\\ b_{\scriptscriptstyle {p_{i}} , \scriptscriptstyle{k+1} } 
 & :=  s_{\scriptscriptstyle{i, k+1}}
\, + \,  \varepsilon_{\scriptscriptstyle{i, k+1}}  \, b_{\scriptscriptstyle{\mathbf{n}, 1}}
\, + \, \mathbf{B}_{\mathbf{\p}_{i},\scriptscriptstyle{k+1}} 
\, + \,  ( \varepsilon_{\scriptscriptstyle{i, 1}} )^{k+1} \, b_{\scriptscriptstyle \mathbf{n}, \scriptscriptstyle{k+1} }  \, , 
\\ b_{\scriptscriptstyle {q_{j}} , \scriptscriptstyle{k+1} }
 & := \tfrac{z_{\scriptscriptstyle{j, k+1}}}{g_{j}^{(1)}}
\, + \,  \varsigma_{\scriptscriptstyle{j, k+1}}  \,  b_{\scriptscriptstyle \mathbf{n}, 1}
\, + \, \mathbf{B}_{\mathbf{\q_{j}},\scriptscriptstyle{k+1}} 
\, + \, ( \varsigma_{\scriptscriptstyle{j, 1}} )^{k+1} \,  b_{\scriptscriptstyle \mathbf{n}, \scriptscriptstyle{k+1} } \, .
\end{split}
\tag{$b^{k+1}$}
\end{equation*}
\hspace{-1.2cm}$\mathbf{B}_{\mathbf{\p}_{i},\scriptscriptstyle{k+1}}=\mathbf{B}_{\mathbf{\p},\scriptscriptstyle{k+1}}[\lambda_{i}, s_{\scriptscriptstyle{i,r}} ; \varepsilon_{\scriptscriptstyle{j, r}}, a_{\scriptscriptstyle \mathbf{n},\, r}]^{k}_{r = 1}$, \hspace{0.5cm} $ \mathbf{B}_{\mathbf{\q}_{j},\scriptscriptstyle{k+1}}=\mathbf{B}_{\mathbf{\q},\scriptscriptstyle{k+1}}[z_{\scriptscriptstyle{j, r}} ; g_{j},  \varsigma_{\scriptscriptstyle{j, r}} , a_{\scriptscriptstyle \mathbf{n},\, r}]^{k}_{r = 1};$
  \end{itemize}
  where $(\hspace{0.2cm})_{\mathcal{P}, p_{\imb}}$ and $(\hspace{0.2cm})_{\mathcal{P}, q_{\jmb}}$ are the principal parts of the functions at the corresponding points $p_{\imb}$, $q_{\jmb} \in \C$.  
\end{proposition}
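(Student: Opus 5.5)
We read the statement as asserting that the functions defined in $(a^{1})$--$(b^{k+1})$ are well defined, holomorphic in the appropriate domains, and solve, order by order, the coefficient equations of Lemmas \ref{lem: series TN por ecFac en pi} and \ref{lem: series TN por ecFac en qj}. The word \emph{canonical} we interpret as: among all solutions, $a_{\mathbf{n},k+1}$ and $b_{\mathbf{n},k+1}$ are rational functions on $\mathcal{L}$ that are holomorphic off $(\p,\q)$, vanish at $u=\infty$, and have only the poles forced at the points $p_{\imb},q_{\jmb}$.

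The proof is by induction on $k$, and each step is a Cousin (Mittag-Leffler) problem on the sphere $\mathcal{L}$.

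\emph{Step $k=1$.} Equation \eqref{eq: Api,1} and \eqref{eq: Aqj,1} are satisfied by taking $a_{p_i,1}=\varepsilon_{i,1}$, $a_{q_j,1}=\varsigma_{j,1}$ and $a_{\mathbf{n},1}=1$.

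For the $B$-coefficients, \eqref{eq: Bpi,1} and \eqref{eq: Bqj,1} require $b_{\mathbf{n},1}$ to be holomorphic on $\mathcal{L}\smallsetminus(\p,\q)$. They also require that $b_{p_i,1}-\varepsilon_{i,1}b_{\mathbf{n},1}=s_{i,1}$ extends holomorphically at $p_i$, and that $b_{q_j,1}-\varsigma_{j,1}b_{\mathbf{n},1}=z_{j,1}/g_j^{(1)}$ extends holomorphically at $q_j$.

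\begin{itemize}
\item Since $\varepsilon_{i,1}(p_i)=1$, the function $\varepsilon_{i,1}$ is a unit near $p_i$, so holomorphy of $b_{p_i,1}$ at $p_i$ is automatic once $b_{\mathbf{n},1}$ is holomorphic there. The constant $s_{i,1}$ creates no pole.
\item At $q_j$, the function $g_j^{(1)}$ vanishes, because $g_j(u)=(q_j-u)(I_j(u)-q_j)$ has a double zero at $q_j$. So $z_{j,1}/(\varsigma_{j,1}g_j^{(1)})$ has a simple pole at $q_j$, using $\varsigma_{j,1}(q_j)=1$.
\item We therefore need $\varsigma_{j,1}b_{\mathbf{n},1}+z_{j,1}/g_j^{(1)}$ to be holomorphic at $q_j$. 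This means $b_{\mathbf{n},1}$ has principal part $-\bigl(z_{j,1}/(\varsigma_{j,1}g_j^{(1)})\bigr)_{\CP,q_j}$ at $q_j$.
\item We take $b_{\mathbf{n},1}$ to be the sum of these principal parts. It is rational, holomorphic off $\q$, and vanishes at $\infty$, which is the normalization fixing the free constant.
\item Then $b_{p_i,1}$ and $b_{q_j,1}$ are defined by the displayed formulas and are holomorphic on the disks $D_{p_i}$, $D_{q_j}$.
\end{itemize}

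\emph{Inductive step.} Assume all coefficients up to order $k$ have been constructed and are holomorphic on their domains. By Lemmas \ref{lem: series TN por ecFac en pi} and \ref{lem: series TN por ecFac en qj}, the right-hand sides $\mathbf{A}_{\p_i,k+1}$, $\mathbf{A}_{\q_j,k+1}$, $\mathbf{B}_{\p_i,k+1}$, $\mathbf{B}_{\q_j,k+1}$ are determined by these lower-order data. Each is holomorphic on the punctured disk around $p_i$ or $q_j$.

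Dividing \eqref{eq: Api,2} by $(\varepsilon_{i,1})^{k+1}$, which is a unit near $p_i$, the requirement that $a_{p_i,k+1}$ extend holomorphically to $p_i$ becomes a condition on $a_{\mathbf{n},k+1}$. Namely, $a_{\mathbf{n},k+1}+\mathbf{A}_{\p,k+1}/(\varepsilon_{i,1})^{k+1}$ must be holomorphic at $p_i$, where the term $\varepsilon_{i,k+1}/(\varepsilon_{i,1})^{k+1}$ is already holomorphic. So the principal part of $a_{\mathbf{n},k+1}$ at $p_i$ is prescribed, and likewise at $q_j$.

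The Mittag-Leffler theorem on $\mathcal{L}\cong\C\mathbb{P}^1$ then gives a unique solution vanishing at $\infty$: the negative sum of the principal parts, exactly as in $(a^{k+1})$. Substituting back into \eqref{eq: Api,2} and \eqref{eq: Aqj,2} defines $a_{p_i,k+1}$ and $a_{q_j,k+1}$, which are holomorphic by construction. The same argument applied to \eqref{eq: Bpi,2} and \eqref{eq: Bqj,2} gives $(b^{k+1})$, including the extra source term $z_{j,k+1}/g_j^{(1)}$.

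\emph{Main obstacle.} The delicate point is checking that the lower-order terms $\mathbf{A}_{\p,k+1}$ and $\mathbf{B}_{\p,k+1}$ have only isolated (finite-order) singularities at $p_i$, so that their principal parts are finite Laurent tails.

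The function $\hat\psi_i=x(1+s_i(x)/(u-p_i))^{\lambda_i}$ expands in $x$ with coefficients that are polynomials in $(u-p_i)^{-1}$ of degree at most $k$. Composition with the inductively constructed rational functions $a_{\mathbf{n},r}$, $b_{\mathbf{n},r}$ keeps the coefficients meromorphic at $p_i$. At $q_j$, the expansion of $\phi_j$ from $g_j\circ\phi_j=g_j+z_j$ involves only powers of $1/g_j^{(1)}$, hence only poles.

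Once this is verified, the remaining claims follow directly from the construction:
\begin{itemize}
\item uniqueness, because a rational function with no poles that vanishes at $\infty$ is zero;
\item compatibility with the conditions $\mathbf{H}_{\mathbf{n}}=(x+O(x^2),u+O(x))$ and $A_{\mathbf{n},1}=1$, which hold by the choices made at $k=1$.
\end{itemize}
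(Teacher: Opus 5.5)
Your route matches the paper's. You treat each order as a Mittag-Leffler problem on $\mathcal{L}$: the principal parts of $a_{\mathbf{n},k+1}$ and $b_{\mathbf{n},k+1}$ at $p_{\imb}$, $q_{\jmb}$ are forced by holomorphy of the local coefficients, and the free constant is fixed by vanishing at $u=\infty$. That is exactly the reasoning behind the paper's formulas. The paper gives no separate proof: it defines the canonical solutions as minus sums of principal parts, and then uses the holomorphy of $a_{p_i,k+1}$, $b_{q_j,k+1}$, etc.\ in the proof of Theorem~\ref{Teorema_Limpieza de TransNorm_Intro_1}. Your finite-pole-order check via Lemmas~\ref{Lema_Aplicacion hatPsik_i} and \ref{Lema_Aplicacion Phik_j} is fine, though it only matters for rationality, since principal parts exist at any isolated singularity.

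There is one step you assert rather than verify, and it needs an input you never use: that ``the same argument'' applied to \eqref{eq: Bqj,2} yields exactly $(b^{k+1})$.

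\begin{itemize}
\item Your argument forces the principal part of $b_{\mathbf{n},k+1}$ at $q_j$ to cancel that of the whole right-hand side of \eqref{eq: Bqj,2} divided by $(\varsigma_{j,1})^{k+1}$. This includes the term $\varsigma_{j,k+1}\,b_{\mathbf{n},1}/(\varsigma_{j,1})^{k+1}$.
\item At $p_i$ the analogous term $\varepsilon_{i,k+1}b_{\mathbf{n},1}$ is harmless, because $b_{\mathbf{n},1}$ is holomorphic there.
\item At $q_j$ it is not harmless. Since $b_{\mathbf{n},1}=-\sum_{\jmb}z_{\jmb,1}/(2(u-q_{\jmb}))$ has residue $-z_{j,1}/2\neq 0$ at $q_j$, this term has principal part $-\varsigma_{j,k+1}(q_j)\,z_{j,1}/(2(u-q_j))$. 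That principal part does not appear in $(b^{k+1})$.
\end{itemize}

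So $(b^{k+1})$ is your Mittag-Leffler solution only because of the normalization $\varsigma_{j,r}(q_j)=0$ for $r\geqslant 2$ in \eqref{Ec_Series de Phi y Zeta}. Equivalently, only this normalization makes $b_{q_j,k+1}$ as displayed holomorphic at $q_j$. You need it already for $r=2$, i.e.\ at the first inductive step. You must invoke it explicitly. Without it your construction produces a different $b_{\mathbf{n},k+1}$, and the stated one would leave a simple pole in $b_{q_j,k+1}$.
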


For the sake of readability, we state Theorem \ref{Teorema_Limpieza de TransNorm_Intro} in its extended version given in Theorem \ref{Teorema_Limpieza de TransNorm_Intro_1}. It asserts that the normalizing transformations of the foliation pair $(\mathcal{F},\mathcal{G})$ have a finite jet that provides canonical solutions to the equations (\ref{Ec_Series de transformaciones normalizantes_pi}) and (\ref{Ec_Series de transformaciones normalizantes_qj}). This is given up to coordinate changes tangent to the identity map:
\vspace{.3cm}

\begin{theorem}[Normal forms for normalizing transformations]\label{Teorema_Limpieza de TransNorm_Intro_1} 
Let $(\F, \G)$ be a pair of foliations in $\mathcal{N}_{\p}(\mathbf{h}) \times \mathcal{D}_{\q}(\mathcal{I})$ and assume that the $y$ axis is an invariant  branch of the dicritical foliation $\G$. Let  $\mathbf{H}_{\scriptscriptstyle \mathbf{n}} $, $H_{p_{i}}$, $H_{q_{j}}$ be the normalizing transformations with respect to the foliation pair $(\F_{\scriptscriptstyle \mathbf{\mu}}, \G_{\scriptscriptstyle \mathbf{r}})$.

For each $k \geqslant 1$, there exists $\mathcal{H}^{k} \colon (\C^{2}, 0) \rightarrow (\C^{2}, 0)$, a tangent to the identity biholomorphism, which preserves the $y$-axis, and such that the pair of foliations $\bigl(\mathcal{H}^{k}(\F), \mathcal{H}^{k} (\G)\bigl)$ that belong to $\mathcal{N}_{\p}(\mathbf{h}) \times \mathcal{D}_{\q}(\mathcal{I})$ has local normalizing transformations 
$\tilde{\mathcal{H}}^{k} \circ \mathbf{H}_{\scriptscriptstyle \mathbf{n}}$, $\tilde{\mathcal{H}}^{k} \circ H_{p_{i}}$, $\tilde{\mathcal{H}}^{k} \circ H_{q_{j}}$, correspondingly. The power series of such transformations in the coordinates $(x,u)$ are of the form
\label{Pageref_TN limpias_TNk} 
\begin{equation*}
\tilde{\mathcal{H}}^{k} \circ H_{[\centerdot]}=\Bigl(\sum_{r = 1}^{k} a_{[\centerdot], r}(u) x^{r}  
\, + \, \text{O} (x^{k+1}), 
\, u \, + \, \sum_{r = 1}^{k} b_{[\centerdot], r}(u) x^{r}
\, + \, \text{O} (x^{k+1})\Bigl) \, ,
\tag{$\mathbf{nt_{k}}$}
\end{equation*}
where the subindex $[\centerdot]$ represents $\scriptscriptstyle \mathbf{n}$, $p_{i}$, or $q_{j}$; the transformations $a_{\scriptscriptstyle{\mathbf{n}, r}}$, $b_{\scriptscriptstyle{\mathbf{n}, r}}$ are holomorphic in the sphere with holes $\mathcal{L} \setminus\{\hat{p}_{1},\ldots,\hat{p}_{n+1}, \hat{q}_{1},\ldots,\hat{q}_{m}\}$, and the local  transformations $a_{\scriptscriptstyle{p_{i} , r}}$, $b_{\scriptscriptstyle{p_{i} , r}}$ and $a_{\scriptscriptstyle{q_{j} , r}}$, $b_{\scriptscriptstyle{q_{j} , r}}$ are holomorphic in the neighborhoods $(\mathcal{L},\hat{p}_{i})$ and $(\mathcal{L},\hat{q}_{j})$, respectively (see Proposition \ref{Prop:soluciones canonicas}).
\end{theorem}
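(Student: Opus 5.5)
We will argue by induction on $k$, building $\mathcal{H}^{k}$ as a composition $\mathcal{H}^{k} = h_{k}\circ \cdots \circ h_{1}$ of tangent to the identity biholomorphisms of $(\C^{2},0)$ preserving the $y$-axis. Each $h_{k}$ modifies only the $k$-th coefficients of the normalizing transformations. The key observation is the following. If $\mathcal{H}$ is a tangent to the identity biholomorphism and $\tilde{\mathcal{H}}$ is its lift to $\M$, then the pair $(\mathcal{H}(\F),\mathcal{H}(\G))$ has the same local models. By the uniqueness part of Theorem \ref{CuTan_Teo: Existencia_Unicidad de ML y TN}, its normalizing transformations are exactly $\tilde{\mathcal{H}}\circ\mathbf{H}_{\scriptscriptstyle \mathbf{n}}$, $\tilde{\mathcal{H}}\circ H_{p_{i}}$, $\tilde{\mathcal{H}}\circ H_{q_{j}}$. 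These still satisfy the factorization equations \eqref{eq:ecuaciones de factorizacion} with the same $\xi_{i},\Psi_{i},\zeta_{j},\Phi_{j}$, because $\tilde{\mathcal{H}}$ simply composes on the left. Hence the new coefficients again satisfy the relations of Lemmas \ref{lem: series TN por ecFac en pi} and \ref{lem: series TN por ecFac en qj}, with the same local data $\lambda_{i}, s_{i,r}, z_{j,r}, g_{j}, \varepsilon_{i,r}, \varsigma_{j,r}$.

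We next compute the effect of a lift on the $x$-jets. Take $\mathcal{H}(x,y)=(x+\alpha_{k+1}(x,y),\,y+\beta_{k+1}(x,y))$ with $\alpha_{k+1},\beta_{k+1}$ homogeneous of degree $k+1$ and $\alpha_{k+1}(0,y)\equiv 0$, so that the $y$-axis is preserved. Its lift in the chart $(x,u)$ is
\[
\tilde{\mathcal{H}}(x,u)=\bigl(x+\alpha_{k+1}(1,u)x^{k+1}+\cdots,\; u+(\beta_{k+1}(1,u)-u\,\alpha_{k+1}(1,u))x^{k}+\cdots\bigr).
\]
Here $\alpha_{k+1}(1,u)$ is a polynomial of degree at most $k$ (because of the $y$-axis condition), and $\beta_{k+1}(1,u)-u\alpha_{k+1}(1,u)$ ranges over polynomials of degree at most $k+2$. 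Composing with $\mathbf{H}_{\scriptscriptstyle \mathbf{n}}$, whose coefficients up to order $k$ are already normalized, changes $A_{\mathbf{n},k+1}$ and $B_{\mathbf{n},k}$ by adding these polynomials. It leaves lower order coefficients unchanged, and it also leaves the terms $\mathbf{A}_{\p_i,k+1}$, $\mathbf{B}_{\p_i,k+1}$ unchanged since they depend only on coefficients of index at most $k$. The index shift between the $A$- and $B$-corrections is harmless: we normalize $B_{\mathbf{n},k}$ at step $k$ together with $A_{\mathbf{n},k}$, and lower-order mixed contributions are absorbed into the already fixed data. The inductive step is then as follows. With all coefficients of index at most $k$ already equal to the canonical $a_{\cdot,r},b_{\cdot,r}$, the function $A_{\mathbf{n},k+1}$ is holomorphic on $\mathcal{L}\smallsetminus(\p,\q)$, with $x$ and $u$ both lifted coordinates in the chart near $u=\infty$. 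Equation \eqref{eq: Api,2} says that $A_{\mathbf{n},k+1}+\mathbf{A}_{\p_i,k+1}/(\varepsilon_{i,1})^{k+1}$ extends holomorphically to $p_i$, because $A_{p_i,k+1}$ and $\varepsilon_{i,k+1}$ are holomorphic there. The analogous statement holds at $q_j$ by \eqref{eq: Aqj,2}. Therefore the principal part of $A_{\mathbf{n},k+1}$ at $p_i$ (resp. $q_j$) equals minus the principal part of $\mathbf{A}_{\p,k+1}/(\varepsilon_{i,1})^{k+1}$ (resp. $\mathbf{A}_{\q,k+1}/(\varsigma_{j,1})^{k+1}$). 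By the Mittag-Leffler decomposition on the Riemann sphere, $A_{\mathbf{n},k+1}-a_{\mathbf{n},k+1}$ is then an entire function on $\C$. Its growth at $u=\infty$ is controlled by the chart change $(v,y)=(u^{-1},xu)$: it is a pole of order at most $k$, so the difference is a polynomial of degree at most $k$. That polynomial is removed by choosing $\alpha_{k+1}(1,u)$ equal to minus it. The same argument, using \eqref{eq: Bpi,2} and \eqref{eq: Bqj,2} (including the $z_{j,k+1}/g_j^{(1)}$ term), normalizes $B_{\mathbf{n},k+1}$ via $\beta$. Once $a_{\mathbf{n},k+1}$ and $b_{\mathbf{n},k+1}$ are fixed, the factorization relations force $a_{p_i,k+1},b_{p_i,k+1},a_{q_j,k+1},b_{q_j,k+1}$ to be the expressions in Proposition \ref{Prop:soluciones canonicas}. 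The base case $k=1$ is the same argument: $A_{\mathbf{n},1}=1$ holds by definition, and $B_{\mathbf{n},1}$ has principal parts $-z_{j,1}/(\varsigma_{j,1}g_j^{(1)})$ only at the $q_j$. It has none at the $p_i$, since $s_{i,1}$ is a constant and so \eqref{eq: Bpi,1} imposes no pole.

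The main obstacle is the bookkeeping at $u=\infty$. We must verify that the pole order of $A_{\mathbf{n},k+1}$, $B_{\mathbf{n},k+1}$ at infinity, computed from holomorphy of $\mathbf{H}_{\scriptscriptstyle \mathbf{n}}$ in the second chart $(v,y)$, exactly matches the polynomial degrees achievable by $\alpha_{k+1}(1,u)$ and $\beta_{k+1}(1,u)-u\alpha_{k+1}(1,u)$ under the $y$-axis constraint. This is what makes the normal form well defined. We must also check that the cross-effect of $\tilde{\mathcal{H}}$ on lower-order terms really vanishes; this comes from its triangular structure in powers of $x$. We would first settle this by writing $\tilde{\mathcal{H}}$ in the chart $(v,y)$ and comparing Laurent expansions in $v$.
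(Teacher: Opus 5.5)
Your overall strategy matches the paper's: induction on $k$; the factorization relations fix the principal parts of the $\mathbf{n}$-coefficients at $p_i$, $q_j$; the discrepancy from the canonical solution is a polynomial; and that polynomial is removed by a coordinate change preserving the $y$-axis. However, the correction step as written fails.

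\textbf{The correction step.} By your own formula, a homogeneous map of degree $k+1$ adds $\alpha_{k+1}(1,u)$ to $A_{\mathbf n,k+1}$. Its second component adds $\beta_{k+1}(1,u)-u\,\alpha_{k+1}(1,u)$ to $B_{\mathbf n,k}$, not to $B_{\mathbf n,k+1}$, and this is a polynomial of degree at most $k+1$, not $k+2$. This causes two problems under $(\mathbf{nt_k})$:
\begin{itemize}
\item[(i)] A nonzero $\alpha_{k+1}$ spoils the already normalized $B_{\mathbf n,k}$ through the term $-u\,\alpha_{k+1}(1,u)x^{k}$. This cross effect is not removed by any triangularity.
\item[(ii)] $\beta_{k+1}$ cannot normalize $B_{\mathbf n,k+1}$. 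Protecting $B_{\mathbf n,k}$ forces $\beta_{k+1}(1,u)=u\,\alpha_{k+1}(1,u)$, and then the lift is exactly $(x+x^{k+1}\alpha_{k+1}(1,u),\,u)$, which touches no $B$-coefficient at all.
\end{itemize}
The protected map is precisely the paper's $\mathbf F_{k+1}(x,y)=(x+x^{k+1}\mathbf r_k(y/x),\,y+yx^{k}\mathbf r_k(y/x))$. For $B_{\mathbf n,k+1}$, whose pole at $u=\infty$ may have order $k+2$, the paper uses a separate map of degree $k+2$:
\[
\mathbf G_{k+1}(x,y)=\bigl(x,\,y+x^{k+2}\mathbf s_{k+2}(y/x)\bigr),
\]
with lift $(x,\,u+x^{k+1}\mathbf s_{k+2}(u))$ and $\deg\mathbf s_{k+2}\le k+2$. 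Alternatively, you can keep one homogeneous map per step by shifting the induction: let the degree $k+1$ map normalize the pair $(A_{\mathbf n,k+1},B_{\mathbf n,k})$, with degree bounds $k$ and $k+1$. This is legitimate because the $A$- and $B$-relations decouple: $B_{p_i}$ and $B_{q_j}$ depend only on $B_{\mathbf n}$, and $A_{p_i}$ and $A_{q_j}$ only on $A_{\mathbf n}$.

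\textbf{The pole bounds at $u=\infty$.} The bounds (order at most $k$ for $A_{\mathbf n,k+1}$, at most $k+2$ for $B_{\mathbf n,k+1}$) do not follow from holomorphy of $\mathbf H_{\mathbf n}$ in the chart $(v,y)$ alone. Write $\mathbf H_{\mathbf n}=(V,Y)$ there with $V=v+y\,e(v,y)$. Then $U=u/(1+xu^{2}e(1/u,xu))$, so $B_{\mathbf n,1}=-u^{3}e(1/u,0)$ already has a pole of order $3$ when $e(0,0)\neq 0$. What you need is $\mathbf H_{\mathbf n}(\{v=0\})\subseteq\{v=0\}$, i.e.\ $v$ divides $V$. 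This is exactly where the hypothesis that the $y$-axis is $\G$-invariant enters: $\mathbf H_{\mathbf n}$ is the identity on $\mathcal L$ and sends the radial leaf $\{v=0\}$ to the $\tilde{\G}$-leaf through $u=\infty$. The property persists along the induction because your corrections preserve the $y$-axis. Your proposal never invokes this hypothesis.
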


\begin{remark}
\label{Obs_Hipotesis del Teorema_Limpieza de TransNorm}
Any foliation pair $(\F, \G)$ in $\mathcal{N}_{\p}(\mathbf{h}) \times \mathcal{D}_{\q}(\mathcal{I})$ is strictly analytically equivalent to one satisfying the conditions of Theorem \ref{Teorema_Limpieza de TransNorm_Intro_1}. Thus, for any $k\geqslant 1$ there exists a foliation pair in the class of strict analytic equivalence of foliation pair $(\F, \G)$, such that its normalizing transformations have power series as in ($\mathbf{nt_{k}}$). 
\end{remark}

\begin{proof}[Proof of Theorem \ref{Teorema_Limpieza de TransNorm_Intro_1}]
We will prove the theorem by induction.

\emph{Base of induction}. For $k = 1$, we denote the power series of the normalizing transformation $H_{[\centerdot]} = (A_{[\centerdot]}, B_{[\centerdot]})$ of the foliation pair $(\F, \G)$ as in (\ref{Ec_Series de transformaciones normalizantes}).
From the definition of the normalizing transformation, it follows that $A_{\scriptscriptstyle \mathbf{n}\, , 1} =1 =: a_{\scriptscriptstyle{\mathbf{n}, 1}}$. Thus,
\begin{equation*}
  a_{\scriptscriptstyle{\mathbf{n}, 1}}:=  A_{\scriptscriptstyle \mathbf{n}\, , 1} =1\,.
\end{equation*}

From equalities (\ref{eq: Api,1}) in Lemma \ref{lem: series TN por ecFac en pi} and  (\ref{eq: Aqj,1}) in Lemma \ref{lem: series TN por ecFac en qj}, it follows that 
\begin{equation*}
   a_{\scriptscriptstyle{p_{i} , 1}}:= A_{\scriptscriptstyle{p_{i}, {1}}}=\varepsilon_{\scriptscriptstyle{i, 1}}
\end{equation*}
and 
\begin{equation*}
    a_{\scriptscriptstyle{q_{j} , 1}}:= A_{\scriptscriptstyle{q_{j}, {1}}}=\varsigma_{\scriptscriptstyle{j, 1}}\,.
\end{equation*}

Moreover, from relations (\ref{eq: Bpi,1}), (\ref{eq: Bqj,1}) it follows that 
\begin{equation*}
\tfrac{ B_{\scriptscriptstyle{p_{i}, {1}}}}{ \varepsilon_{\scriptscriptstyle{i, 1}} } 
\, - \, B_{\scriptscriptstyle {\mathbf{n}, 1}}
= \tfrac{s_{\scriptscriptstyle{i, 1}} }{ \varepsilon_{\scriptscriptstyle{i, 1}} }  \, ,
\hspace{1cm}
\tfrac{ B_{\scriptscriptstyle{q_{j}, {1}}}}{ \varsigma_{\scriptscriptstyle{j, 1}} } 
\, - \,  B_{\scriptscriptstyle {\mathbf{n}, 1}}
= \tfrac{z_{\scriptscriptstyle{j, 1}}}{ \varsigma_{\scriptscriptstyle{j, 1}} \, g^{(1)}_{j}}
\, .
\tag{$B^{1}$}
\end{equation*}
Since, by definition, the coefficients given by the functions  $b_{\scriptscriptstyle \mathbf{n} , 1}$,  $b_{\scriptscriptstyle{p_{i} , 1}}$  and $b_{\scriptscriptstyle{q_{j} , 1}}$ must satisfy also the former equalities ($B^{1}$), we can take the difference $b_{\scriptscriptstyle \mathbf{n} , 1} \, - B_{\scriptscriptstyle {\mathbf{n}, 1}} $ at the punctured disks
$D_{p_{i}} \smallsetminus \{(0, p_{i})\}$ and $D_{q_{j}} \smallsetminus \{(0, q_{j})\}$, and get the following relations

\begin{equation}\label{ec: bn1 en pi}
b_{\scriptscriptstyle \mathbf{n} , 1} \, - \, B_{\scriptscriptstyle {\mathbf{n}, 1}} 
= \tfrac{b_{\scriptscriptstyle{p_{i} , 1}}}{ \varepsilon_{\scriptscriptstyle{i, 1}} } 
\, - \,  \tfrac{ B_{\scriptscriptstyle{p_{i}, {1}}}}{ \varepsilon_{\scriptscriptstyle{i, 1}} } \, ,
\end{equation}
and
\begin{equation}\label{ec: bn1 en qj}
b_{\scriptscriptstyle \mathbf{n} , 1} \, - \,  B_{\scriptscriptstyle {\mathbf{n}, 1}}
= \tfrac{b_{\scriptscriptstyle{q_{j} , 1}}}{ \varsigma_{\scriptscriptstyle{j, 1}} } 
\, - \,  \tfrac{ B_{\scriptscriptstyle{q_{j}, {1}}}}{ \varsigma_{\scriptscriptstyle{j, 1}} }  \, ,
\end{equation}
respectively. Since the expressions at the right hand side of 
(\ref{ec: bn1 en pi}) and (\ref{ec: bn1 en qj}) are holomorphic in their respective disks, the difference $b_{\scriptscriptstyle \mathbf{n} , 1} \, - \,  B_{\scriptscriptstyle {\mathbf{n}, 1}}$ extends holomorphically to the whole complex plane $\C$. Note that $b_{\scriptscriptstyle \mathbf{n} , 1}$ is holomorphic at $u=\infty$, while $B_{\scriptscriptstyle \mathbf{n} , 1}$ has a pole of order at most $2$. This happens because the biholomorphism $\mathbf{H}_{\scriptscriptstyle \mathbf{n}} $ leaves invariant the straight line $\{v = 0\}$, where $v = x/y$. 
Therefore, since we know that  $b_{\scriptscriptstyle \mathbf{n} , 1} -  B_{\scriptscriptstyle {\mathbf{n}, 1}}$ is holomorphic in $\C$, it must be a polynomial $\mathbf{s}_{2} \in \C[u]$ of degree less or equal to $2$. 

Let $\mathbf{G}_{1}: (\C^{2},0)\longrightarrow (\C^{2},0)$ be the biholomorphism 
\begin{equation*}
\mathbf{G}_{1} (x, y) := (x, y + x^{2} \mathbf{s}_{2}(y/x)) \, .
\end{equation*}
We stress that $\mathbf{G}_{1}$ satisfies the assumptions of the theorem for $k=1$. Namely, its blowing-up morphism   $\tilde{\mathbf{G}}_{1}(x, u) = (x, u + x \mathbf{s}_{2} (u))$ is tangent to the identity and therefore, the transformations

\begin{itemize}
    \item[a)] $\tilde{\mathbf{G}}_{1} \circ \mathbf{H}_{\scriptscriptstyle \mathbf{n}} = (\hat{A}_{\scriptscriptstyle {\mathbf{n}}}, \hat{B}_{\scriptscriptstyle {\mathbf{n}}}) = (\sum_{k \geqslant 1}  \hat{A}_{\scriptscriptstyle \mathbf{n} ,\scriptscriptstyle {k}} (u) \, x^{k} \, ,\, u + \sum_{r \geqslant 1} \hat{B}_{\scriptscriptstyle \mathbf{n}\, , r} (u) \, x^{r})$.
    \item[b)] $\tilde{\mathbf{G}}_{1} \circ H_{p_{i}} = (\hat{A}_{p_{i}}, \hat{B}_{p_{i}}) = (\sum_{k \geqslant 1}  \hat{A}_{\scriptscriptstyle {p_{i}} ,\scriptscriptstyle {k}} (u) \, x^{k} \, ,\, u + \sum_{r \geqslant 1} \hat{B}_{\scriptscriptstyle {p_{i}}\, , r} (u) \, x^{r})$. 
    \item[c)] $\tilde{\mathbf{G}}_{1} \circ H_{q_{j}} = (\hat{A}_{q_{j}}, \hat{B}_{q_{j}}) = (\sum_{k \geqslant 1}  \hat{A}_{\scriptscriptstyle {q_{j}} ,\scriptscriptstyle {k}} (u) \, x^{k} \, ,\, u + \sum_{r \geqslant 1} \hat{B}_{\scriptscriptstyle {q_{j}}\, , r} (u) \, x^{r})$. 
\end{itemize}
are normalizing transformations of the foliation pair $\bigl(\mathbf{G}_{1} (\F), \mathbf{G}_{1}(\G)\bigl)$ with respect to the foliation pair $(\F_{\scriptscriptstyle \mathbf{\mu}}, \G_{\scriptscriptstyle \mathbf{r}})$. 

Note that the $y$ axis is an invariant branch of the dicritical foliation $\mathbf{G}_{1}(\G)$, since the biholomorphism $\mathbf{G}_{1}$ is the identity in ${x = 0}$. Moreover,

\begin{itemize}
    \item[a)] For $(x,u)$ in a neighborhood of $\mathcal{L} \setminus\{\hat{p}_{1},\ldots,\hat{p}_{n+1}, \hat{q}_{1},\ldots,\hat{q}_{m}\}$,
    \begin{equation*}
    \begin{split}
    \tilde{\mathbf{G}}_{1} \circ \mathbf{H}_{\scriptscriptstyle \mathbf{n}} 
& = \bigl(A_{\scriptscriptstyle \mathbf{n}}, \,  B_{\scriptscriptstyle \mathbf{n}} 
\, + \, A_{\scriptscriptstyle \mathbf{n}} \mathbf{s}_{2} (B_{\scriptscriptstyle \mathbf{n}})\bigl) \\
& = \bigl(A_{\scriptscriptstyle \mathbf{n}}, \,  B_{\scriptscriptstyle \mathbf{n}} 
\, + \, \bigl(x + \text{O}(x^{2})\bigl) \mathbf{s}_{2} (u + \text{O}(x))\bigl) \\
& = \bigl(A_{\scriptscriptstyle \mathbf{n}}, \,  u + (B_{\scriptscriptstyle \mathbf{n}, {1}} + \mathbf{s}_{2} (u))x  
\, + \, \text{O}(x^{2})\bigl) \, .
\end{split}
\end{equation*}
Thus, $\hat{A}_{\scriptscriptstyle {\mathbf{n}}}= A_{\scriptscriptstyle \mathbf{n}}$, and $\hat{B}_{\scriptscriptstyle \mathbf{n} , 1}= B_{\scriptscriptstyle \mathbf{n}, {1}} + \mathbf{s}_{2} (u) = b_{\scriptscriptstyle \mathbf{n} , 1} $.

\item[b)] For $(x,u)$ in a neighborhood of $(\mathcal{L},\hat{p}_{i})$
    \begin{equation*}
    \begin{split}
    \tilde{\mathbf{G}}_{1} \circ \mathbf{H}_{\scriptscriptstyle p_{i}} 
& = \bigl(A_{\scriptscriptstyle {p_{i}}}, \,  B_{\scriptscriptstyle {p_{i}}} 
\, + \, A_{\scriptscriptstyle {p_{i}}} \mathbf{s}_{2} (B_{\scriptscriptstyle {p_{i}}})\bigl) \\
& = \bigl(A_{\scriptscriptstyle {p_{i}}}, \,  B_{\scriptscriptstyle {p_{i}}} 
\, + \, \bigl(\varepsilon_{\scriptscriptstyle{{i} , 1}} \, x + \text{O}(x^{2})\bigl) \mathbf{s}_{2} (u + \text{O}(x))\bigl) \\
& = \bigl(A_{\scriptscriptstyle {p_{i}}}, \,  u + (B_{\scriptscriptstyle {p_{i}}, {1}} \, + \, \varepsilon_{\scriptscriptstyle{{i} , 1}} \, \mathbf{s}_{2} (u))x  
\, + \, \text{O}(x^{2})\bigl) \, .
\end{split}
\end{equation*} 
Thus, $\hat{A}_{p_{i}}=A_{\scriptscriptstyle {p_{i}}}$\,\, and
$\hat{B}_{\scriptscriptstyle {p_{i}}\, , 1}= B_{\scriptscriptstyle {p_{i}}, {1}} \, + \, \varepsilon_{\scriptscriptstyle{{i} , 1}} \, \mathbf{s}_{2} (u) $. On the other hand, by the equality \eqref{ec: bn1 en pi}, 
\begin{equation*}
b_{\scriptscriptstyle{p_{i} , 1}} 
\, - \, 
B_{\scriptscriptstyle{p_{i}, {1}}} 
\, = \, 
\varepsilon_{\scriptscriptstyle{i, 1}} \left(b_{\scriptscriptstyle \mathbf{n} , 1} \, - \, B_{\scriptscriptstyle {\mathbf{n}, 1}} \right)
\, = \,   
\varepsilon_{\scriptscriptstyle{i, 1}} \, \mathbf{s}_{2} \, .
\end{equation*}
Therefore, $b_{\scriptscriptstyle{p_{i} , 1}} \, = \, B_{\scriptscriptstyle {p_{i}}, {1}} \, + \, \varepsilon_{\scriptscriptstyle{{i} , 1}} \, \mathbf{s}_{2} (u) \, =  \, \hat{B}_{\scriptscriptstyle {p_{i} , 1} } $.

 
   \item[c)] For $(x,u)$ in a neighborhood of $(\mathcal{L},\hat{q}_{j})$
      \begin{equation*}
    \begin{split}
    \tilde{\mathbf{G}}_{1} \circ \mathbf{H}_{\scriptscriptstyle q_{j}} 
& = \bigl(A_{\scriptscriptstyle q_{j}}, \,  B_{\scriptscriptstyle q_{j}} 
\, + \, A_{\scriptscriptstyle q_{j}} \mathbf{s}_{2} (B_{\scriptscriptstyle q_{j}})\bigl) \\
& = \bigl(A_{\scriptscriptstyle q_{j}}, \,  B_{\scriptscriptstyle q_{j}} 
\, + \, \bigl(\varsigma_{\scriptscriptstyle{{i} , 1}} \, x + \text{O}(x^{2})\bigl) \mathbf{s}_{2} (u + \text{O}(x))\bigl) \\
& = \bigl(A_{\scriptscriptstyle {q_{j}}}, \,  u + (B_{\scriptscriptstyle {q_{j}}, {1}} \, + \, \varsigma_{\scriptscriptstyle{{i} , 1}} \, \mathbf{s}_{2} (u))x  
\, + \, \text{O}(x^{2})\bigl) \, .
\end{split}
\end{equation*} 
Thus, $\hat{A}_{q_{j}}=A_{\scriptscriptstyle {q_{j}}}$\,\, and
$\hat{B}_{\scriptscriptstyle {q_{j}}\, , 1}= B_{\scriptscriptstyle {q_{j}}, {1}} \, + \, \varsigma_{\scriptscriptstyle{{i} , 1}} \, \mathbf{s}_{2} (u) $. Using the equality \eqref{ec: bn1 en qj}, and proceeding analogously to the case b), we get $\hat{B}_{\scriptscriptstyle {q_{j} , 1} } = b_{\scriptscriptstyle{q_{j} , 1}}$.
\end{itemize}
Hence, defining $\mathcal{H}^{1}:=\mathbf{G}_{1}$, the theorem for $k = 1$ is proved.

\emph{Step of induction}. We now prove the step of induction; we first show that it is possible to modify the coefficients $A_{[\centerdot]\,, \scriptscriptstyle{k+1}}$ without changing neither the normalized coefficients of $A_{[\centerdot]}$ and $B_{[\centerdot]}$ of order less or equal than $k$, nor the coefficients $B_{[\centerdot]\,, \scriptscriptstyle{k+1}}$ of  order $k+1$. 
Next, we show how to modify the coefficients of $B_{[\centerdot]\,, \scriptscriptstyle{k+1}}$, without changing the coefficients of $A_{[\centerdot]}$ and $B_{[\centerdot]}$ less of equal than $k$, and the coefficients  $A_{[\centerdot]\,, \scriptscriptstyle{k+1}}$ of order  $k+1$ of the first coordinate. We will use Lemma \ref{lem: series TN por ecFac en pi} and Lemma \ref{lem: series TN por ecFac en qj}.

We now assume that the theorem takes place for $k \geqslant 1$. That is to say, there exists a tangent to the identity biholomorphim  $\mathcal{H}^{k} \colon (\C^{2}, 0) \rightarrow (\C^{2}, 0)$ that preserves the $y$--axis, and such that the foliation pair  $\bigl(\mathcal{H}^{k}(\F), \mathcal{H}^{k} (\G)\bigl)$ has normalizing transformations 
\begin{equation}\label{ec: Hk(H.)}
    \tilde{\mathcal{H}}^{k} \circ H_{[\centerdot]} = (\mathcal{A}_{[\centerdot]}, \mathcal{B}_{[\centerdot]})\,,
\end{equation}
 with power series expansion given by
\begin{equation}\label{ec: expansion de Hk(H.)}
\Bigl(\sum_{r = 1}^{k} a_{\scriptscriptstyle [\centerdot] ,\, r} x^{r}  
 +  \mathcal{A}_{[\centerdot] ,\scriptscriptstyle {k+1}} x^{k+1}
 +  \text{O} (x^{k+2}), 
\, u
 + \sum_{r = 1}^{k} b_{\scriptscriptstyle [\centerdot] ,\, r} x^{r}
 +  \mathcal{B}_{[\centerdot]\,,\scriptscriptstyle {k+1}} x^{k+1}
 +  \text{O} (x^{k+2})\Bigl) \, ,
\end{equation}
where $[\centerdot]$ represents $\scriptscriptstyle \mathbf{n}$, $p_{i}$, or $q_{j}$. 

The coefficients $ \mathcal{A}_{p_{i} ,\scriptscriptstyle {k+1}}$ , $ \mathcal{A}_{q_{j} ,\scriptscriptstyle {k+1}}$  in the power series expansion (\ref{ec: expansion de Hk(H.)}) satisfy the relations $(\ref{eq: Api,2})$ and $(\ref{eq: Aqj,2})$ and, therefore, the equalities 
\begin{equation*}
\begin{split}
\tfrac{ \mathcal{A}_{\scriptscriptstyle {p_{i}} , {k+1}}}{( \varepsilon_{\scriptscriptstyle{i, 1}} )^{k+1}} 
\, - \,   \mathcal{A}_{\scriptscriptstyle \mathbf{n}, {k+1}} 
& = \tfrac{ \varepsilon_{\scriptscriptstyle{i, k+1}} }{( \varepsilon_{\scriptscriptstyle{i, 1}} )^{k+1}} 
\, + \, \tfrac{1}{( \varepsilon_{\scriptscriptstyle{i, 1}} )^{k+1}} \mathbf{A}_{\mathbf{\p_{i}},\scriptscriptstyle{k+1}} \, , 
 \\
\tfrac{ \mathcal{A}_{\scriptscriptstyle {q_{j}} , {k+1}}}{( \varsigma_{\scriptscriptstyle{j, 1}} )^{k+1}} 
\, - \,    \mathcal{A}_{\scriptscriptstyle \mathbf{n}, {k+1}} 
& = \tfrac{ \varsigma_{\scriptscriptstyle{j, k+1}} }{( \varsigma_{\scriptscriptstyle{j, 1}} )^{k+1}} 
\, + \, \tfrac{1}{( \varsigma_{\scriptscriptstyle{j, 1}} )^{k+1}} \mathbf{A}_{\mathbf{\q_{j}},\scriptscriptstyle{k+1}}\,, 
\end{split}
\tag{$A^{\scriptscriptstyle{k+1}}$}
\end{equation*}
where $\mathbf{A}_{\mathbf{\p_{i}},\scriptscriptstyle{k+1}}$ is as in (\ref{eq: Api,3}) of Lemma \ref{lem: series TN por ecFac en pi}, and $\mathbf{A}_{\mathbf{\q_{j}},\scriptscriptstyle{k+1}}$ is as in  (\ref{eq: Aqj,3}) of Lemma \ref{lem: series TN por ecFac en qj}, are satisfied.

Analogously, by (\ref{eq: Bpi,2}), (\ref{eq: Bqj,2}), the coefficients  $ \mathcal{B}_{p_{i} ,\scriptscriptstyle {k+1}}$ , $ \mathcal{B}_{q_{j} ,\scriptscriptstyle {k+1}}$ in the power series expansion (\ref{ec: expansion de Hk(H.)}) satisfy 
\begin{equation*}
\begin{split}
\tfrac{ \mathcal{B}_{\scriptscriptstyle {p_{i}} , {k+1}}}{( \varepsilon_{\scriptscriptstyle{i, 1}} )^{k+1}} 
\, - \,   \mathcal{B}_{\scriptscriptstyle \mathbf{n}, {k+1}} 
& = \tfrac{ s_{\scriptscriptstyle{i, k+1}}}{( \varepsilon_{\scriptscriptstyle{i, 1}} )^{k+1}} 
\, + \, \tfrac{ \varepsilon_{\scriptscriptstyle{i, k+1}} }{( \varepsilon_{\scriptscriptstyle{i, 1}} )^{k+1}} \, b_{\scriptscriptstyle{\mathbf{n}, 1}}
\, + \,\tfrac{\mathbf{B}_{\mathbf{\p_{i}},\scriptscriptstyle{k+1}}}  \, , 
 \\
\tfrac{ \mathcal{B}_{\scriptscriptstyle {q_{j}} , {k+1}}}{( \varsigma_{\scriptscriptstyle{j, 1}} )^{k+1}} 
\, - \,    \mathcal{B}_{\scriptscriptstyle \mathbf{n}, {k+1}} 
& = \tfrac{z_{\scriptscriptstyle{j, k+1}}}{( \varsigma_{\scriptscriptstyle{j, 1}} )^{k+1} g_{j}^{(1)}}
\, + \, \tfrac{ \varsigma_{\scriptscriptstyle{j, k+1}} }{( \varsigma_{\scriptscriptstyle{j, 1}} )^{k+1}} \, b_{\scriptscriptstyle \mathbf{n}, 1}
\, + \, \tfrac{\mathbf{B}_{\mathbf{\q_{j}},\scriptscriptstyle{k+1}}}{( \varsigma_{\scriptscriptstyle{j, 1}} )^{k+1}}  \, ,
\end{split}
\tag{$B^{\scriptscriptstyle{k+1}}$}
\end{equation*}
where $\mathbf{B}_{\mathbf{\p_{i}},\scriptscriptstyle{k+1}}$ is as in (\ref{eq: Bpi,3}) of Lemma \ref{lem: series TN por ecFac en pi}, and $\mathbf{B}_{\mathbf{\q_{j}},\scriptscriptstyle{k+1}}$ is as in  (\ref{eq: Bqj,3}) of Lemma \ref{lem: series TN por ecFac en qj}.
\begin{remark}
    The relations ($A^{\scriptscriptstyle{k+1}}$), ($B^{\scriptscriptstyle{k+1}}$) are defined and holomorphic in the corresponding domains  $D_{p_{i}} \smallsetminus \{(0, p_{i})\}$ and $D_{q_{j}} \smallsetminus \{(0, q_{j})\}$.
\end{remark}

By Proposition \ref{Prop:soluciones canonicas} the transformations $a_{\scriptscriptstyle \mathbf{n}, \scriptscriptstyle{k+1} }$, $a_{\scriptscriptstyle {p_{i}} , \scriptscriptstyle{k+1} }$, $a_{\scriptscriptstyle {q_{j}} , \scriptscriptstyle{k+1} }$ satisfy, by definition, the equalities ($A^{\scriptscriptstyle{k+1}}$), while the transformations $b_{\scriptscriptstyle \mathbf{n}, \scriptscriptstyle{k+1} }$, $b_{\scriptscriptstyle {p_{i}} , \scriptscriptstyle{k+1} }$, $b_{\scriptscriptstyle {q_{j}} , \scriptscriptstyle{k+1} }$ satisfy as well the equalities ($B^{\scriptscriptstyle{k+1}}$); therefore taking the differences we get,
\begin{equation}
\label{Ec_an_An}
a_{\scriptscriptstyle \mathbf{n}, \scriptscriptstyle{k+1} } \, - \mathcal{A}_{\scriptscriptstyle \mathbf{n}, {k+1}} 
 = \tfrac{a_{\scriptscriptstyle {p_{i}} , \scriptscriptstyle{k+1} }}{( \varepsilon_{\scriptscriptstyle{i, 1}} )^{k+1}} 
\, - \,  \tfrac{ \mathcal{A}_{\scriptscriptstyle {p_{i}} , {k+1}}}{( \varepsilon_{\scriptscriptstyle{i, 1}} )^{k+1}} \, ,
\hspace{0.5cm}
a_{\scriptscriptstyle \mathbf{n}, \scriptscriptstyle{k+1} }
\, - \,   \mathcal{A}_{\scriptscriptstyle \mathbf{n}, {k+1}}
 = \tfrac{a_{\scriptscriptstyle {q_{j}} , \scriptscriptstyle{k+1} }}{( \varsigma_{\scriptscriptstyle{j, 1}} )^{k+1}} 
\, - \,  \tfrac{ \mathcal{A}_{\scriptscriptstyle {q_{j}} , {k+1}}}{( \varsigma_{\scriptscriptstyle{j, 1}} )^{k+1}}
\end{equation}
and
\begin{equation}
\label{Ec_bn_Bn}
b_{\scriptscriptstyle \mathbf{n}, \scriptscriptstyle{k+1} } 
\, - \,  \mathcal{B}_{\scriptscriptstyle \mathbf{n}, {k+1}} 
= \tfrac{b_{\scriptscriptstyle {p_{i}} , \scriptscriptstyle{k+1} }}{( \varepsilon_{\scriptscriptstyle{i, 1}} )^{k+1}} 
\, - \,  \tfrac{ \mathcal{B}_{\scriptscriptstyle {p_{i}} , {k+1}}}{( \varepsilon_{\scriptscriptstyle{i, 1}} )^{k+1}} \,,
\hspace{0.5cm}
b_{\scriptscriptstyle \mathbf{n}, \scriptscriptstyle{k+1} }
\, - \,   \mathcal{B}_{\scriptscriptstyle \mathbf{n}, {k+1}}
= \tfrac{b_{\scriptscriptstyle {q_{j}} , \scriptscriptstyle{k+1} }}{( \varsigma_{\scriptscriptstyle{j, 1}} )^{k+1}} 
\, - \,  \tfrac{ \mathcal{B}_{\scriptscriptstyle {q_{j}} , {k+1}}}{( \varsigma_{\scriptscriptstyle{j, 1}} )^{k+1}}  \, .
\end{equation}
By reasoning in a similar way to what was done in the case k=1, it follows that the differences $a_{\scriptscriptstyle \mathbf{n}, \scriptscriptstyle{k+1} } -  \mathcal{A}_{\scriptscriptstyle \mathbf{n}, {k+1}}$ and $b_{\scriptscriptstyle \mathbf{n}, \scriptscriptstyle{k+1} } -  \mathcal{B}_{\scriptscriptstyle \mathbf{n}, {k+1}}$ may be extended analytically to the whole plane $\C$. 

Note that the transformations $a_{\scriptscriptstyle \mathbf{n}, \scriptscriptstyle{k+1} }$ and $b_{\scriptscriptstyle \mathbf{n}, \scriptscriptstyle{k+1} }$ are holomorphic in $u = \infty$, while $ \mathcal{A}_{\scriptscriptstyle \mathbf{n}, {k+1}}$ has, at $u=\infty$, at most a pole of order $k$ and $\mathcal{B}_{\scriptscriptstyle \mathbf{n}, {k+1}}$ has at most a pole of order $k+2$  (since $\tilde{\mathcal{H}}^{k} \circ \mathbf{H}_{\scriptscriptstyle \mathbf{n}} $ preserves the line $\{v = 0\}$, where $v = x/y$). Hence,
the difference 
$a_{\scriptscriptstyle \mathbf{n}, \scriptscriptstyle{k+1} } -   \mathcal{A}_{\scriptscriptstyle \mathbf{n}, {k+1}}$ must be a polynomial $\mathbf{r}_{k} \in \C[u]$ of degree less or equal to  $k$, while the difference $b_{\scriptscriptstyle \mathbf{n}, \scriptscriptstyle{k+1} } -  \mathcal{B}_{\scriptscriptstyle \mathbf{n}, {k+1}}$ must be a polynomial $\mathbf{s}_{k+2} \in \C[u]$ of degree less or equal to $k+2$. 

Together with $\mathbf{r}_{k}$ and $\mathbf{s}_{k+2}$ we define  in $(\C^{2},0)$ the tangent to the identity biholomorphisms $\mathbf{F}_{k+1}$ and $\mathbf{G}_{k+1}$,
\begin{equation*}
\begin{split}
\mathbf{F}_{k+1} (x, y) 
& = \bigl(x \, + \, x^{k+1} \mathbf{r}_{k} (y/x), 
\, \, y \, + \, y x^{k} \mathbf{r}_{k} (y/x) \bigl) \, , 
\\ \mathbf{G}_{k+1} (x, y) 
& = \bigl(x , \, y \, + \, x^{k+2} \mathbf{s}_{k+2} (y/x) \bigl) \, .
\end{split}
\end{equation*}

Straight--forward calculations show that in the coordinates $(x,u)$ of the blow--up the biholomorphism $\mathbf{F}_{k+1}$ takes the form  
\begin{equation*}
    \tilde{\mathbf{F}}_{k+1}(x,u)=(x + x^{k+1} \mathbf{r}_{k} (u), u)\,,
\end{equation*}
and the biholomorphism $\mathbf{G}_{k+1}$ takes the form

\begin{equation*}
    \tilde{\mathbf{G}}_{k+1}(x, u)= (x, u + x^{k+1}\mathbf{s}_{k+2} (u))\,.
\end{equation*}
Note that the second coordinate of $\tilde{\mathbf{F}}_{k+1}$ is the identity, while the first coordinate of $ \tilde{\mathbf{G}}_{k+1}$ is the identity as well.
Therefore, the expression of the composition $\mathbf{G}_{k+1} \circ \mathbf{F}_{k+1}$ in the coordinates $(x, u)$ is given by
\begin{equation*}
\begin{split}
 \tilde{\mathbf{G}}_{k+1} \circ \tilde{\mathbf{F}}_{k+1} (x, u) 
= &\, \tilde{\mathbf{G}}_{k+1} (x + x^{k+1} \mathbf{r}_{k} (u), u) 
\\  = &\, \bigl(x + x^{k+1} \mathbf{r}_{k} (u), u + (x + x^{k+1} \mathbf{r}_{k} (u))^{k+1}\mathbf{s}_{k+2} (u) \bigl) \, .
\end{split}
\end{equation*}
We define 
\begin{equation}\label{ec: Hk+1 definicion}
    \mathcal{H}^{k+1}:=\mathbf{G}_{k+1} \circ \mathbf{F}_{k+1} \circ \mathcal{H}^{k}\,.
\end{equation}
We stress that the biholomorphism $\mathcal{H}^{k+1}$ satisfies the conditions of the theorem for $k+1$. Namely, since $\mathbf{F}_{k+1}$, $\mathbf{G}_{k+1}$ are tangent to the identity and leave invariant the $y$--axis, then the biholomorphism $\mathcal{H}^{k+1}$ has these properties as well.
Hence, the biholomorphisms
\begin{equation*} 
    \tilde{\mathcal{H}}^{k+1} \circ H_{[\centerdot]} = (\hat{\mathcal{A}}_{[\centerdot]}, \hat{\mathcal{B}}_{[\centerdot]})\,,
\end{equation*}
with power series expansion 
\begin{equation}\label{ec: Ntransf Hk+1(F,G)}
(\hat{\mathcal{A}}_{[\centerdot]}, \hat{\mathcal{B}}_{[\centerdot]})(x,u) = \biggl(\sum_{r \geqslant 1} \hat{\mathcal{A}}_{[\centerdot]\, , \, r}\, x^{r}, 
u + \sum_{r \geqslant 1} \hat{\mathcal{B}}_{[\centerdot]\, , \, r}\, x^{r}\biggl) \, ,
\end{equation}
where $[\centerdot]$ equals to $\scriptscriptstyle \mathbf{n}$, $p_{i}$, $q_{j}$, are the normalizing transformations of the foliation pair $\bigl(\mathcal{H}^{k+1} (\F), \mathcal{H}^{k+1}(\G)\bigl)$, which belongs to $\mathcal{N}_{\p}(\mathbf{h})\times\mathcal{D}_{\q}(\mathcal{I})$.

By the induction assumption \eqref{ec: Hk(H.)}, $\tilde{\mathcal{H}}^{k} \circ H_{[\centerdot]} = (\mathcal{A}_{[\centerdot]}, \mathcal{B}_{[\centerdot]})$ and definition \eqref{ec: Hk+1 definicion}, we obtain
\begin{equation*}
(\hat{\mathcal{A}}_{[\centerdot]}, \hat{\mathcal{B}}_{[\centerdot]}) 
\ = \
\tilde{\mathcal{H}}^{k+1} \circ H_{[\centerdot]}
\ = \ 
\tilde{\mathbf{G}}_{k+1} \circ \tilde{\mathbf{F}}_{k+1} \circ (\mathcal{A}_{[\centerdot]}, \mathcal{B}_{[\centerdot]}) \, ,
\end{equation*} 
Therefore, these expansions are equal to
\begin{equation*}
\begin{split}
(\hat{\mathcal{A}}_{[\centerdot]}, \hat{\mathcal{B}}_{[\centerdot]}) = & \Bigl(\mathcal{A}_{[\centerdot]} 
+ (\mathcal{A}_{[\centerdot]})^{k+1} \mathbf{r}_{k} (\mathcal{B}_{[\centerdot]}), 
\, \mathcal{B}_{[\centerdot]} 
+ (\mathcal{A}_{[\centerdot]} + \bigl(\mathcal{A}_{[\centerdot]})^{k+1} \mathbf{r}_{k} (\mathcal{B}_{[\centerdot]})\bigl)^{k+1}\mathbf{s}_{k+2} (\mathcal{B}_{[\centerdot]}) \Bigl) 
\\ = &  \Bigl(\mathcal{A}_{[\centerdot]} 
+ (a_{\scriptscriptstyle{[\centerdot], 1}} x + \text{O}(x^{2}))^{k+1} \mathbf{r}_{k} (u + \text{O}(x)), 
\, \mathcal{B}_{[\centerdot]} 
+ (a_{\scriptscriptstyle{[\centerdot], 1}} x + \text{O}(x^{2}))^{k+1}\mathbf{s}_{k+2} (u + \text{O}(x)) \Bigl) 
\\ = & \Bigl(\mathcal{A}_{[\centerdot]} 
+ (a_{\scriptscriptstyle{[\centerdot], 1}})^{k+1} x^{k+1} \mathbf{r}_{k} (u) + \text{O}(x^{k+2}), 
\, \mathcal{B}_{[\centerdot]} 
+ (a_{\scriptscriptstyle{[\centerdot], 1}})^{k+1}x^{k+1}\mathbf{s}_{k+2} (u) + \text{O}(x^{k+2}) \Bigl)  \, ,
\end{split}
\end{equation*}
where $a_{\scriptscriptstyle{\mathbf{n}, 1}} = 1$, $a_{\scriptscriptstyle{p_{i}, 1}} = \varepsilon_{\scriptscriptstyle{{i}, 1}}$, and $a_{\scriptscriptstyle{q_{j}, 1}} = \varsigma_{\scriptscriptstyle{{j}, 1}}$ (Proposition \ref{Prop:soluciones canonicas}).
Thus, considering the power series expansion \eqref{ec: Ntransf Hk+1(F,G)}, the following equalities for the coefficients take place 
\begin{equation}
\label{Ec_HatA_A}
\hat{\mathcal{A}}_{[\centerdot]\, , \, r} = \mathcal{A}_{[\centerdot]\, , \, r} \, , 
\hspace{0.5cm} 1 \leqslant r \leqslant k \, ; 
\hspace{1cm}
 \hat{\mathcal{A}}_{[\centerdot]\, , \, k+1}= \mathcal{A}_{[\centerdot] ,\scriptscriptstyle {k+1}} + (a_{\scriptscriptstyle{[\centerdot], k+1}})^{k+1}\mathbf{r}_{k}
\end{equation}
and
\begin{equation}
\label{Ec_HatB_B}
\hat{\mathcal{B}}_{[\centerdot]\, , \, r} = \mathcal{B}_{[\centerdot]\, , \, r} \, , 
\hspace{0.5cm} 1 \leqslant r \leqslant k \, ; 
\hspace{1cm}
\hat{\mathcal{B}}_{[\centerdot]\, , \, k+1}=\mathcal{B}_{[\centerdot],\scriptscriptstyle {k+1}} + (a_{\scriptscriptstyle{[\centerdot], k+1}})^{k+1}\mathbf{s}_{k+2}\,.
\end{equation}
By the induction assumption (see (\ref{ec: Hk(H.)}) and (\ref{ec: expansion de Hk(H.)})), $\hat{\mathcal{A}}_{[\centerdot]\, , \, r} = a_{\scriptscriptstyle [\centerdot] ,\, r}$, $\hat{\mathcal{B}}_{[\centerdot]\, , \, r} = b_{\scriptscriptstyle [\centerdot] ,\, r}$, for $[\centerdot]$ equal to $\scriptscriptstyle \mathbf{n}$, $p_{i}$, $q_{j}$, and $1 \leqslant r \leqslant k$. 

Now we will prove that $\hat{\mathcal{A}}_{[\centerdot]\, , \, k+1} = a_{\scriptscriptstyle [\centerdot] ,\, k+1}$ and $\hat{\mathcal{B}}_{[\centerdot]\, , \, k+1} = b_{\scriptscriptstyle [\centerdot] ,\, k+1}$, for $[\centerdot]$ equal to $\scriptscriptstyle \mathbf{n}$, $p_{i}$, $q_{j}$.

Namely, since 
$a_{\scriptscriptstyle \mathbf{n}, \scriptscriptstyle{k+1} } -   \mathcal{A}_{\scriptscriptstyle \mathbf{n}, {k+1}} = \mathbf{r}_{k}$, using \eqref{Ec_HatA_A} we have
\begin{equation*}
\hat{\mathcal{A}}_{\mathbf{n}\, , \, k+1}
= \mathcal{A}_{\mathbf{n} ,\scriptscriptstyle {k+1}} + \mathbf{r}_{k}
= a_{\scriptscriptstyle \mathbf{n}, \scriptscriptstyle{k+1}} \, ;
\end{equation*}  
and since $a_{\scriptscriptstyle{p_{i}, 1}} = \varepsilon_ {\scriptscriptstyle{{i}, 1}}$, $a_{\scriptscriptstyle{q_{j}, 1}} = \varsigma_ {\scriptscriptstyle{{j}, 1}}$, then 
\begin{equation*}
\hat{\mathcal{A}}_{\scriptscriptstyle{p_{i}\, , \, k+1}}= \mathcal{A}_{\scriptscriptstyle{p_{i}\, , \, k+1}} + (\varepsilon_ {\scriptscriptstyle{{i}, 1}})^{k+1}\mathbf{r}_{k} \, ,
\hspace{0.5cm}
\hat{\mathcal{A}}_{\scriptscriptstyle{q_{j}\, , \, k+1}}= \mathcal{A}_{\scriptscriptstyle{q_{j}\, , \, k+1}} + (\varsigma_ {\scriptscriptstyle{{j}, 1}})^{k+1}\mathbf{r}_{k} \, .
\end{equation*}
On the other hand, by the equalities \eqref{Ec_an_An}
\begin{equation*}
a_{\scriptscriptstyle {p_{i}} , \scriptscriptstyle{k+1} }
\, - \,  \mathcal{A}_{\scriptscriptstyle {p_{i}} , {k+1}}
\, = \, 
(\varepsilon_{\scriptscriptstyle{i, 1}} )^{k+1} \mathbf{r}_{k} \, ,
\hspace{1cm}
a_{\scriptscriptstyle {q_{j}} , \scriptscriptstyle{k+1} }
\, - \,  \mathcal{A}_{\scriptscriptstyle {q_{j}} , {k+1}}
\, = \, 
(\varsigma_{\scriptscriptstyle{j, 1}} )^{k+1} \mathbf{r}_{k}\,.
\end{equation*}
Therefore, $\hat{\mathcal{A}}_{\scriptscriptstyle{p_{i}\, , \, k+1}} = a_{\scriptscriptstyle {p_{i}} , \scriptscriptstyle{k+1} }$ and $\hat{\mathcal{A}}_{\scriptscriptstyle{q_{j}\, , \, k+1}} = a_{\scriptscriptstyle {q_{j}} , \scriptscriptstyle{k+1} }$.

We get $\hat{\mathcal{B}}_{[\centerdot]\, , \, k+1} = b_{\scriptscriptstyle [\centerdot] ,\, k+1}$, for $[\centerdot]$ equal to $\scriptscriptstyle \mathbf{n}$, $p_{i}$, $q_{j}$, proceeding analogously, considering $b_{\scriptscriptstyle \mathbf{n}, \scriptscriptstyle{k+1} } -   \mathcal{B}_{\scriptscriptstyle \mathbf{n}, {k+1}} = \mathbf{s}_{k+2}$, and the equalities \eqref{Ec_bn_Bn} and \eqref{Ec_HatB_B}.

Therefore, the normalizing transformations $\tilde{\mathcal{H}}^{k+1} \circ H_{[\centerdot]}$, for $[\centerdot]$ equal to $\scriptscriptstyle \mathbf{n}$, $p_{i}$, $q_{j}$, of the foliation pair $\bigl(\mathcal{H}^{k+1} (\F), \mathcal{H}^{k+1}(\G)\bigl)$, satisfy the form $(\mathbf{nt_{k + 1}})$. In this way Theorem \ref{Teorema_Limpieza de TransNorm_Intro} is proved.

 \end{proof}

\begin{remark}
\label{Obs_Aplicaciones_ak+1_bk+1_como funcionales}
By Proposition \ref{Prop:soluciones canonicas} we know that the transformations $a_{\scriptscriptstyle \mathbf{n}, \scriptscriptstyle{k+1} }$ defined in ($a^{k+1}$) depend on the complex numbers $\lambda_{\imb}$, $s_{\scriptscriptstyle{{\imb} , r}}$, $z_{\scriptscriptstyle{{\jmb} , r}}$, $\imb = 1,\ldots,n+1$, on the function $g_{\jmb}$ (local information of the pair foliation $(\tilde{\F}, \tilde{\G})$ at each tangency point $(0,q_{\jmb})$, $\jmb = 1,\ldots,m$), and on the coefficients $\varepsilon_{\scriptscriptstyle{{\imb} , r}}$, $\varsigma_{\scriptscriptstyle{{\jmb} , r}}$ of $\varepsilon_{\imb}$, $\varsigma_{\jmb}$ (local information of the non dicritical foliation $\tilde{\F}_{\scriptscriptstyle \mu}$), $1 \leqslant r \leqslant k$, depending on all the singular points $(0, p_{\imb})$, $\imb = 1,\ldots,n+1$, and on all the tangency points   $(0, q_{\jmb})$, $\jmb = 1,\ldots,m$. 

 Analogously, by  Proposition \ref{Prop:soluciones canonicas} we know that the transformations $b_{\scriptscriptstyle \mathbf{n}, \scriptscriptstyle{k+1} }$ defined in ($b^{k+1}$) depend on the complex numbers $\lambda_{\imb}$, $s_{\scriptscriptstyle{{\imb} , r}}$, $z_{\scriptscriptstyle{{\jmb} , r}}$, $z_{\scriptscriptstyle{{\jmb} , k+1}}$, $\imb = 1,\ldots,n+1$, on the function $g_{\jmb}$ (local information of the pair foliation $(\tilde{\F}, \tilde{\G})$ at each tangency point $(0,q_{\jmb})$, $\jmb = 1,\ldots,m$), and on the coefficients $\varepsilon_{\scriptscriptstyle{{\imb} , r}}$, $\varsigma_{\scriptscriptstyle{{\jmb} , r}}$ of $\varepsilon_{\imb}$, $\varsigma_{\jmb}$ (local information of the non dicritical foliation $\tilde{\F}_{\scriptscriptstyle \mu}$), $1 \leqslant r \leqslant k$, depending on all the singular points $(0, p_{\imb})$, $\imb = 1,\ldots,n+1$, and on all the tangency points   $(0, q_{\jmb})$, $\jmb = 1,\ldots,m$. Therefore, the transformation 

\begin{equation*}
b_{\scriptscriptstyle \mathbf{n}, \scriptscriptstyle{k+1} } 
+ \sum_{\jmb =1}^{m} \Bigl(\tfrac{z_{\scriptscriptstyle{{\jmb} , k+1}}}{(\varsigma_{\scriptscriptstyle{{\jmb} , 1}})^{k+1} g_{\jmb}^{(1)}} \Bigl)_{\CP, q_{\jmb}}
\end{equation*}
depends on the aforementioned parameters, just as the following transformations do  
\begin{equation*}
\begin{split}
b_{\scriptscriptstyle {p_{i}} , \scriptscriptstyle{k+1} } 
\, - \,  s_{\scriptscriptstyle{i, k+1}}
\, - \,  \varepsilon_{\scriptscriptstyle{i, k+1}}  \, b_{\scriptscriptstyle{\mathbf{n}, 1}} 
\, + \, ( \varepsilon_{\scriptscriptstyle{i, 1}} )^{k+1} \sum_{\jmb =1}^{m} \Bigl(\tfrac{z_{\scriptscriptstyle{{\jmb} , k+1}}}{(\varsigma_{\scriptscriptstyle{{\jmb} , 1}})^{k+1} g_{\jmb}^{(1)}} \Bigl)_{\CP, q_{\jmb}} \, ,
\\ b_{\scriptscriptstyle {q_{j}} , \scriptscriptstyle{k+1} }
\, - \, \tfrac{z_{\scriptscriptstyle{j, k+1}}}{g_{j}^{(1)}}
\, - \,  \varsigma_{\scriptscriptstyle{j, k+1}}  \,  b_{\scriptscriptstyle \mathbf{n}, 1}
\, + \, ( \varsigma_{\scriptscriptstyle{j, 1}} )^{k+1} \sum_{\jmb = 1}^{m} \Bigl(\tfrac{z_{\scriptscriptstyle{{\jmb} , k+1}}}{(\varsigma_{\scriptscriptstyle{{\jmb} , 1}})^{k+1} g_{\jmb}^{(1)}} \Bigl)_{\CP, q_{\jmb}} \, .
\end{split}
\end{equation*}
\end{remark}

\section{\textbf{\emph{Parametrizations of curves of tangencies of foliation pairs}}}
\label{Parametrizaciones de polares}
\setcounter{equation}{0}
The aim of this section is to describe finite jets of the parametrizations of the curves of tangencies related to foliation pairs in $(\C^2,0)$, up to coordinate changes tangent to the identity. Such a description is done by using the local analytic representatives (local models) of the foliation pair with respect to $(\F_{\scriptscriptstyle \mathbf{\mu}}, \G_{\scriptscriptstyle \mathbf{r}})$. This was briefly stated in Theorem \ref{Teorema_Parametrizacion_Polar_pi_qj_Intro} and is now precisely stated and proved in Theorem \ref{Teorema_Parametrizacion_Polar_pi_qj}. It is relevant to underline again that if we choose sufficiently high order jets, we will be able to determine the analytic type of such curves of tangencies.

In order to prove Theorem \ref{Teorema_Parametrizacion_Polar_pi_qj} we will use Theorem \ref{Teorema_Limpieza de TransNorm_Intro_1}. To this aim let $(\F, \G)$ be a foliation pair in $\mathcal{N}_{\p} (\mathbf{h}) \times \mathcal{D}_{\q} (\mathcal{I})$, and $(\tilde{\F}, \tilde{\G})$ its blow-up foliation. We denote by $\CP_{p_{i}} (\tilde{\F}, \tilde{\G})$  the \textit{curves of tangencies} of  $(\tilde{\F}, \tilde{\G})$ passing, respectively, through the singular points $(0, p_{i})$, $i=1,\dots,n+1$, and by $\CP_{q_{j}} (\tilde{\F}, \tilde{\G})$ those passing through the tangency points $(0, q_{j})$, $j=1,\dots,m$. The curves  $\CP_{p_{i}} (\tilde{\F}, \tilde{\G})$ and  $\CP_{q_{j}} (\tilde{\F}, \tilde{\G})$ are smooth branches having transversal intersection with the sphere $\mathcal{L}$,  and hence, they can be parametrized by the $x$ variable. By Remark \ref{Obs_Hipotesis del Teorema_Limpieza de TransNorm} we know that any foliation pair $(\F, \G)$ in $\mathcal{N}_{\p}(\mathbf{h}) \times \mathcal{D}_{\q}(\mathcal{I})$ is strictly analytically equivalent to one satisfying the conditions of Theorem \ref{Teorema_Limpieza de TransNorm_Intro_1}. Thus, for any $k\geqslant 1$ there exists a foliation pair in the class of strict analytic equivalence of foliation pair $(\F, \G)$, such that its normalizing transformations have power series as in ($\mathbf{nt_{k}}$). In what follows we will assume that such representant of the foliation pair $(\F, \G)$ is used.

 \begin{theorem}
\label{Teorema_Parametrizacion_Polar_pi_qj}
Let $(\F, \G) \in \mathcal{N}_{\p} (\mathbf{h}) \times \mathcal{D}_{\q} (\mathcal{I})$ be a foliation pair whose normalizing transformations with respect to the foliation pair $(\F_{\scriptscriptstyle \mathbf{\mu}}, \G_{\scriptscriptstyle \mathbf{r}})$, have power series expansion with $k_{0}$-jet ($\mathbf{nt_{k_{0}}}$) as in Theorem \ref{Teorema_Limpieza de TransNorm_Intro_1}. 

Let $\pi_{p_{i}}$ and $\pi_{q_{j}}$ be the parametrizations by    $x$ of the curves of tangencies $\CP_{p_{i}} (\tilde{\F}, \tilde{\G})$ and $\CP_{q_{j}} (\tilde{\F}, \tilde{\G})$, respectively. Then,
\begin{itemize}
    \item [a)]The coefficients of the power series expansion $\pi_{p_{i}} = p_{i} + \textstyle \sum_{r \geqslant 1}  \mathbf{c}_{\scriptscriptstyle{p_{i}, r}}\,x^{r}$\,  of the parametrizations by $x$ of the curve of tangencies $\CP_{p_{i}} (\tilde{\F}, \tilde{\G})$ satisfy,  for $1 \leqslant k \leqslant k_{0}$, 
\begin{equation*}
\mathbf{c}_{\scriptscriptstyle{p_{i}, k}}\, 
 = (1 - k\lambda_{i}) s_{\scriptscriptstyle{{i} , k}} 
\, - \,  \sum_{\jmb =1}^{m} \tfrac{z_{\scriptscriptstyle{{\jmb} , k}}}{2(p_{i}-q_{\jmb})} 
\,  + \,  \boldsymbol{\pi}_{\scriptscriptstyle{p_{i}\, , k}}  (p_{i})
 \, , 
 \end{equation*}
where $\boldsymbol{\pi}_{\scriptscriptstyle{p_{i}\, , k}}  (p_{i})$ represents the value at $p_{i}$ under the holomorphic transformation 

\begin{equation*}
   \boldsymbol{\pi}_{\scriptscriptstyle{p_{i}\, , k}}= \boldsymbol{\pi}_{\scriptscriptstyle{p_{i}\, , k}}  [\varepsilon_{\scriptscriptstyle{{i} , k}}; \, (\lambda_{\imb}, s_{\scriptscriptstyle{{\imb} , r}}, 
z_{\scriptscriptstyle{{\jmb} , r}}, g_{\jmb}; 
\varepsilon_{\scriptscriptstyle{{\imb} , r}}, \varsigma_{\scriptscriptstyle{{\jmb} , r}})_{\imb, \jmb}]_{r=1}^{k-1}\,.
\end{equation*}
The value $\boldsymbol{\pi}_{\scriptscriptstyle{p_{i}\, , k}} $ is defined in the disk $D_{p_{i}}$ and it is given by the evaluation of a functional transformation  $\boldsymbol{\pi}_{\scriptscriptstyle{p_{i}\, , k}} $ in  $\varepsilon_{\scriptscriptstyle{{i} , k}}$ (local information of  $\tilde{\F}_{\scriptscriptstyle \mu}$ at $(0,p_{i})$), as well as in the complex numbers $\lambda_{\imb}$, $s_{\scriptscriptstyle{{\imb} , r}}$, $z_{\scriptscriptstyle{{\jmb} , r}}$, the function  $g_{\jmb}$ (local information of the foliation pair $(\tilde{\F}, \tilde{\G})$), and the coefficients  $\varepsilon_{\scriptscriptstyle{{\imb} , r}}$, $\varsigma_{\scriptscriptstyle{{\jmb} , r}}$ of $\varepsilon_{\imb}$, $\varsigma_{\jmb}$ (local information of $\tilde{\F}_{\scriptscriptstyle \mu}$), for $1 \leqslant r \leqslant k-1$, and with respect to every singular and tangency point $(0, p_{\imb})$ and $(0, q_{\jmb})$(in particular, $\boldsymbol{\pi}_{\scriptscriptstyle{p_{i}, 1}} = 0$). 

\item[b)] The coefficients of the power series expansion $\pi_{q_{j}} = q_{j} + \textstyle \sum_{r \geqslant 1}  \mathbf{c}_{\scriptscriptstyle{q_{j}, r}}\, x^{r}$ of the parametrizations by $x$ of the curve of tangencies $\CP_{q_{j}} (\tilde{\F}, \tilde{\G})$ satisfy, for $1 \leqslant k \leqslant k_{0}$,  

\begin{equation*}
\hspace{1.0cm}\mathbf{c}_{\scriptscriptstyle{q_{j}, k}}\, 
 = - \tfrac{z_{\scriptscriptstyle{j, k}}}{2} \Bigl(k(\varsigma_{\scriptscriptstyle{j, 1}})^{(1)}(q_{j}) 
\, + \, \tfrac{g_{j}^{(3)}(q_{j})}{4}\Bigl)
\, - \, \sum_{\jmb \neq j}^{m} \tfrac{z_{\scriptscriptstyle{{\jmb} , k}}}{2(q_{j} - q_{\jmb})} 
\, + \, \boldsymbol{\pi}_{\scriptscriptstyle{q_{j}\, , k}}  (q_{j}) \,,
\end{equation*}
where $\boldsymbol{\pi}_{\scriptscriptstyle{q_{j}\, , k}} (q_{j})$ represents the value at $q_{j}$ under the holomorphic transformation 
\begin{equation*}
    \boldsymbol{\pi}_{\scriptscriptstyle{q_{j}\, , k}}=\boldsymbol{\pi}_{\scriptscriptstyle{q_{j}\, , k}}  [\varsigma_{\scriptscriptstyle{ j \, , k}}; \, (\lambda_{\imb}, s_{\scriptscriptstyle{{\imb} , r}}, 
z_{\scriptscriptstyle{{\jmb} , r}}, g_{\jmb}; 
\varepsilon_{\scriptscriptstyle{{\imb} , r}}, \varsigma_{\scriptscriptstyle{{\jmb} , r}})_{\imb, \jmb}]_{r=1}^{k-1}\,.
\end{equation*}
The value $\boldsymbol{\pi}_{\scriptscriptstyle{q_{j}\, , k}}$ is defined in the disk $D_{q_{j}}$ and it is given by the evaluation of a functional transformation $\boldsymbol{\pi}_{\scriptscriptstyle{q_{j}\, , k}} $ in $\varsigma_{\scriptscriptstyle{ j \, , k}}$ (local information of $\tilde{\F}_{\scriptscriptstyle \mu}$ at $(0,q_{j})$), the complex numbers $\lambda_{\imb}$, $s_{\scriptscriptstyle{{\imb} , r}}$, $z_{\scriptscriptstyle{{\jmb} , r}}$, the function  $g_{\jmb}$, and the coefficients $\varepsilon_{\scriptscriptstyle{{\imb} , r}}$, $\varsigma_{\scriptscriptstyle{{\jmb} , r}}$, for $1 \leqslant r \leqslant k-1$, and with respect to every singular and tangency point  $(0, p_{\imb})$ and $(0, q_{\jmb})$ (in particular, $\boldsymbol{\pi}_{\scriptscriptstyle{q_{j}, 1}} = 0$).
 \end{itemize}
 \end{theorem}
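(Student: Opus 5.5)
The curve of tangencies is transported by the normalizing transformations. Near $(0,p_i)$ the local model $(\F^l_{p_i},\G_{p_i})$ has its own explicit curve of tangencies. The image of that curve under $H_{p_i}$ is $\CP_{p_i}(\tilde\F,\tilde\G)$, and the same holds at $(0,q_j)$ with $H_{q_j}$. So I will proceed in three steps.

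\emph{Step 1: the tangency curve of each local model.} At $p_i$, the tangency of $\lambda_i x\partial_x+(u-p_i)\partial_u$ with the level sets of $u+s_i(x)$ is given by $\lambda_i x s_i'(x)+(u-p_i)=0$. Hence the model polar curve is
\[
u=p_i-\lambda_i\sum_r r s_{i,r}x^r .
\]
At $q_j$, the foliation $(x=\text{cst})$ is tangent to $z_j(x)+g_j(u)=\text{cst}$ exactly where $g_j'(u)=0$. Since $g_j(u)=(q_j-u)(I_j(u)-q_j)$ and $I_j'(q_j)=-1$, we get $g_j'(q_j)=0$. So the model tangency curve is the vertical line $u=u_j^*$, where $u_j^*$ is the critical point of $g_j$; it equals $q_j$ exactly because $q_j$ is the fixed point. (Note that $g_j^{(1)}$ appearing in the denominators of Proposition \ref{Prop:soluciones canonicas} should then be read as the coefficient function arising from $\phi_j$, with the singular behaviour handled through principal parts.)

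\emph{Step 2: push forward by the normalized transformations.} Take the model curve $(x,\gamma(x))$ and apply $\tilde{\mathcal H}^{k_0}\circ H_{[\centerdot]}=(A,B)$ in the form $(\mathbf{nt_{k_0}})$. This gives $X=\sum_r a_r(\gamma)x^r$ and $U=\gamma+\sum_r b_r(\gamma)x^r$. Inverting $X(x)$ as a series and substituting yields $U$ as a series in $X$. Because $a_{[\centerdot],1}$ is $\varepsilon_{i,1}$ or $\varsigma_{j,1}$, normalized to $1$ at the base point, the coefficient of $X^k$ has the form
\[
\gamma_k+b_{[\centerdot],k}(p_i)+(\text{terms with derivatives of }a_r,b_r\text{ at order}<k).
\]
The terms of order $<k$ are recursively expressible through the data $\lambda,s,z,g,\varepsilon,\varsigma$ at orders $\le k-1$, together with the order-$k$ entries $\varepsilon_{i,k}$ and $\varsigma_{j,k}$. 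These are absorbed into $\boldsymbol{\pi}_{p_i,k}$ and $\boldsymbol{\pi}_{q_j,k}$.

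\emph{Step 3: extract the explicit top-order terms.} Use $(b^{k})$ of Proposition \ref{Prop:soluciones canonicas} to evaluate $b_{p_i,k}(p_i)$. It equals
\[
s_{i,k}+\varepsilon_{i,k}(p_i)b_{\mathbf n,1}(p_i)+\mathbf B_{p_i,k}(p_i)+b_{\mathbf n,k}(p_i).
\]
The $z_{\jmb,k}$-dependence of $b_{\mathbf n,k}$ is $-\sum_{\jmb}\bigl(z_{\jmb,k}/((\varsigma_{\jmb,1})^kg'_{\jmb})\bigr)_{\CP,q_{\jmb}}$. Since $g'_{\jmb}(u)\approx -2(u-q_{\jmb})$, using $I_{\jmb}(u)\approx 2q_{\jmb}-u$, each principal part is $z_{\jmb,k}/(2(q_{\jmb}-u))$. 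Evaluated at $p_i$, this yields $-\sum_{\jmb} z_{\jmb,k}/(2(p_i-q_{\jmb}))$. Adding $\gamma_k=-k\lambda_i s_{i,k}$ gives the coefficient $(1-k\lambda_i)s_{i,k}$. Everything else, including $\mathbf B_{p_i,k}$, depends only on lower-order data and $\varepsilon_{i,k}$.

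At $q_j$ the same computation applies to $b_{q_j,k}=z_{j,k}/g_j'+\cdots$. The difficulty is that the term $z_{j,k}/g_j'$ and the principal part at $q_j$ itself have poles there, but their combination is holomorphic. Expanding $1/g_j'$ to order one uses $g_j'''(q_j)$, and expanding $(\varsigma_{j,1})^{-k}$ uses $k(\varsigma_{j,1})'(q_j)$. This produces $-\tfrac{z_{j,k}}{2}\bigl(k\varsigma_{j,1}'(q_j)+g_j'''(q_j)/4\bigr)$. The sum over $\jmb\ne j$ arises as at $p_i$.

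The main obstacle is this Laurent bookkeeping at $q_j$, namely the cancellation of poles and the exact constant $1/4$. A second point is showing that no order-$k$ unknown other than $\varepsilon_{i,k}$ or $\varsigma_{j,k}$ enters $\boldsymbol\pi$; I would handle this by induction on $k$, using the recursive structure of $\mathbf A_{\cdot,k}$ and $\mathbf B_{\cdot,k}$ from Lemmas \ref{lem: series TN por ecFac en pi} and \ref{lem: series TN por ecFac en qj}. For $k=1$ there are no lower-order terms, so $\boldsymbol\pi_{\cdot,1}=0$.
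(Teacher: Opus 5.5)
Your proposal follows the paper's proof essentially step for step. The shared ingredients are: the model polar curves $u=p_i+\tilde s_i(x)$ and $u=q_j$; their push-forward by the normalized transformations $(\mathbf{nt_{k_0}})$, giving the implicit parametrizations of Lemmas \ref{Lema_ParamImpli_Polar_pi} and \ref{Lema_ParamImpli_Polar_qj}; the inversion $\beta\circ\alpha^{-1}$ via Fa\`a di Bruno; and the principal/regular-part computation of Remark \ref{Obs_PartePrincipal y ParteRegular en cocientes} and Lemma \ref{Lema_Descripcion de bAst}.

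\textbf{Sign slip.} Since $I_{\jmb}(u)-q_{\jmb}=-(u-q_{\jmb})+\cdots$, you have $g_{\jmb}(u)=(u-q_{\jmb})^{2}+\cdots$. Hence $g_{\jmb}^{(1)}(u)=2(u-q_{\jmb})+\cdots$, i.e.\ $g_{\jmb}^{(2)}(q_{\jmb})=2$, not $-2(u-q_{\jmb})$ as you wrote. The principal part is therefore $z_{\jmb,k}/(2(u-q_{\jmb}))$. With your stated principal part $z_{\jmb,k}/(2(q_{\jmb}-u))$, evaluating at $p_i$ would give $+\sum_{\jmb} z_{\jmb,k}/(2(p_i-q_{\jmb}))$. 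You reach the correct formula only through a second, compensating slip. Your regular-part constant $-\tfrac{z_{j,k}}{2}\bigl(k\varsigma_{j,1}'(q_j)+g_j'''(q_j)/4\bigr)$ at $q_j$ is stated correctly, but it too requires $g_{j}^{(2)}(q_{j})=+2$.

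\textbf{Imprecision in Step 2.} The inversion also brings in the order-$k$ coefficient $\alpha_{p_i,k}$, through the term $\beta_{p_i,1}\tilde\alpha_{p_i,k}$. So $a_{p_i,k}(p_i)$, including $a_{\mathbf n,k}(p_i)$ and $\mathbf{A}_{\p_i,k}(p_i)$, enters $\boldsymbol{\pi}_{p_i,k}$, not only terms of order $<k$. This does not change your conclusion. By Lemmas \ref{lem: series TN por ecFac en pi} and \ref{lem: series TN por ecFac en qj}, $\mathbf{A}_{\p,k}$ and $\mathbf{A}_{\q,k}$ involve only data of order $\leqslant k-1$. Hence this term contributes only $\varepsilon_{i,k}$ and lower-order data, which is exactly the check you deferred to your induction.
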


In order to prove Theorem \ref{Teorema_Parametrizacion_Polar_pi_qj} we need to delve deeper into the relationship between the curves of tangencies and the normalizing transformations that establish the link between the local analytical representatives and the original foliation pair. For this purpose we recall some notation and state two lemmas that will be used in the proof of the theorem.

\subsection{Preparation lemmas for the proof of Theorem \ref{Teorema_Parametrizacion_Polar_pi_qj}}

Let $(\F, \G)$ be a foliation pair in $\mathcal{N}_{\p} (\mathbf{h}) \times \mathcal{D}_{\q} (\mathcal{I})$, and let  $\mathbf{H}_{\scriptscriptstyle \mathbf{n}} $, $H_{p_{i}}$ and  $H_{q_{j}}$ be its normalizing transformations with respect to the foliation pair $(\F_{\scriptscriptstyle \mathbf{\mu}}, \G_{\scriptscriptstyle \mathbf{r}})$. By Theorem \ref{Teorema_Limpieza de TransNorm_Intro_1} we know that the $k_0$--jet of the respective transformations is given by 
$\tilde{\mathcal{H}}^{k_{0}} \circ \mathbf{H}_{\scriptscriptstyle \mathbf{n}}$, $\tilde{\mathcal{H}}^{k_{0}} \circ H_{p_{i}}$, $\tilde{\mathcal{H}}^{k_{0}} \circ H_{q_{j}}$. The power series of such transformations in the coordinates $(x,u)$ are of the form (\ref{Pageref_TN limpias_TNk}). Namely, 
\begin{equation*}
\tilde{\mathcal{H}}^{k_{0}} \circ H_{[\centerdot]}=\Bigl(\sum_{r = 1}^{k_{0}} a_{[\centerdot], r}(u) x^{r}  
\, + \, \text{O} (x^{k+1}), 
\, u \, + \, \sum_{r = 1}^{k_{0}} b_{[\centerdot], r}(u) x^{r}
\, + \, \text{O} (x^{k+1})\Bigl) \, ,
\tag{$\mathbf{nt_{k}}$}
\end{equation*}
where the subindex $[\centerdot]$ represents $\scriptscriptstyle \mathbf{n}$, $p_{i}$, or $q_{j}$; the transformations $a_{\scriptscriptstyle{\mathbf{n}, r}}$, $b_{\scriptscriptstyle{\mathbf{n}, r}}$ are holomorphic in $\mathcal{L} \setminus\{\hat{p}_{1},\ldots,\hat{p}_{n+1}, \hat{q}_{1},\ldots,\hat{q}_{m}\}$, and the local  transformations $a_{\scriptscriptstyle{p_{i} , r}}$, $b_{\scriptscriptstyle{q_{j} , r}}$ are holomorphic in the neighborhoods $(\mathcal{L},\hat{p}_{i})$ and $(\mathcal{L},\hat{q}_{j})$, respectively.

\begin{remark}
    We recall that, unless otherwise stated, we assume that the foliation pair  $(\F, \G)$ is the representant in the analytic class whose normalizing transformations $\mathbf{H}_{\scriptscriptstyle \mathbf{n}} $, $H_{p_{i}}$ and  $H_{q_{j}}$ have the power series ($\mathbf{nt_{k}}$).
\end{remark}\label{obs: nt y series ntk}
 We know that the local models $(\mathcal{F}_{p_{i}}^l, \G_{p_{i}})$ and $((x = \text{cst}), \G_{q_{j}})$ are transformed by the normalizing transformations $H_{p_{i}}$ and $H_{q_{j}}$ to the foliation pair $(\tilde{\F}, \tilde{\G})$. Straight-forward calculations show that the curves of tangencies of the local models are given by 
\begin{equation*}
\begin{split}
\CP_{p_{i}} (\mathcal{F}_{p_{i}}^l, \G_{p_{i}}) 
& = \{ u = p_{i} - \lambda_{i} x s_{i}^{(1)} (x)\} \, ,
\hspace{0.7cm} 
\CP_{q_{j}} ((x = \text{cst}), \G_{q_{j}}) 
= \{u = q_{j}\} \, .
\end{split}
\end{equation*}
These curves of tangencies are transformed by $H_{p_{i}}$ and $H_{q_{j}}$ to the corresponding curves of tangencies 

\begin{equation*}
    \CP_{p_{i}} (\tilde{\F}, \tilde{\G})= H_{p_{i}} (x, p_{i} + \tilde{s}_{i} (x))\,,
\end{equation*}
 where $\tilde{s}_{i} (x):= - \lambda_{i} x s_{i}^{(1)} (x)$ at the singular point, and 

\begin{equation*}
    \CP_{q_{j}} (\tilde{\F}, \tilde{\G}) = H_{q_{j}} (x, q_{j})\,.
\end{equation*}

The power series expansion for the polar curve $\CP_{p_{i}} (\tilde{\F}, \tilde{\G})$ at the singular point $(0,p_i)$ is given by

\begin{equation*}
\begin{split}
 (&\alpha_{p_{i}}(x), p_{i}+\beta_{p_{i}}(x)):=\, \\
\biggl(&\sum_{r = 1}^{k_{0}} a_{\scriptscriptstyle{p_{i} , r}} (p_{i} + \tilde{s}_{i} (x)) x^{r}, 
\, p_{i} + \tilde{s}_{i} (x) \, + \, \sum_{r = 1}^{k_{0}} b_{\scriptscriptstyle{p_{i} , r}}(p_{i} + \tilde{s}_{i} (x)) x^{r}\biggl) \, + \, \text{O} (x^{k_{0}+1}) \, 
\hspace{0.9cm}
\end{split} 
\tag{$\CP_{p_{i}}$}
\end{equation*}
and  power series expansion for the curve of tangencies $\CP_{q_{j}} (\tilde{\F}, \tilde{\G}) = H_{q_{j}} (x, q_{j})$  at the tangency point $(0,q_j)$ is given by  \label{Pageref_Pqj}
\begin{equation*}
(\alpha_{q_{j}}(x), q_{j}+ \beta_{q_{j}}(x)):=\biggl(\sum_{r = 1}^{k_{0}} a_{\scriptscriptstyle{{q_{j}} , r}}(q_{j}) x^{r}, 
\, q_{j} \, + \, \sum_{r = 1}^{k_{0}} b_{\scriptscriptstyle{q_{j} , r}}(q_{j}) x^{r}\biggl) \, + \, \text{O} (x^{k_{0}+1}) 
.\quad
\tag{$\CP_{q_{j}}$}
\end{equation*}

\begin{lemma}[Implicit parametrization of curves of tangencies at singular points]
\label{Lema_ParamImpli_Polar_pi}
Let $(\F, \G) \in \mathcal{N}_{\p} (\mathbf{h}) \times \mathcal{D}_{\q} (\mathcal{I})$ be a foliation pair whose normalizing transformations $\mathbf{H}_{\scriptscriptstyle \mathbf{n}} $, $H_{p_{i}}$, $H_{q_{j}}$ with respect to the foliation pair $(\F_{\scriptscriptstyle \mathbf{\mu}}, \G_{\scriptscriptstyle \mathbf{r}})$ have the $k_{0}$--jet of its power series expansion ($\mathbf{nt_{k_{0}}}$) given in Theorem \ref{Teorema_Limpieza de TransNorm_Intro_1}. Let $(\alpha_{p_{i}} (x), p_{i} + \beta_{p_{i}} (x))$ be the implicit parametrization of the curve of tancencies  $\CP_{p_{i}} (\tilde{\F}, \tilde{\G})$ as in ($\CP_{p_{i}}$). Then the corresponding power series expansions 
 $\alpha_{p_{i}} = \textstyle \sum_{r \geqslant 1} \alpha_{\scriptscriptstyle{p_{i} , r}} x^{r}$ and $\beta_{p_{i}} = \textstyle \sum_{r \geqslant 1} \beta_{\scriptscriptstyle{p_{i} , r}} x^{r}$ satisfy the following relations: 

\begin{itemize}
    \item [a)] $k=1$ the coefficients with respect to the monomial $x$ are
\begin{equation*}
\alpha_{\scriptscriptstyle{p_{i} , 1}} = 1 \, ,
\hspace{1cm}
\beta_{\scriptscriptstyle{p_{i} , 1}}
= (1 - \lambda_{i}) s_{\scriptscriptstyle{i,1}}  
-  \sum_{\jmb =1}^{m} \tfrac{z_{\scriptscriptstyle{{\jmb} , 1}}}{2(p_{i}-q_{\jmb})} \, .
\end{equation*}
\item [b)] For $2 \leqslant k \leqslant k_{0}$, the coefficients with respect to the monomial $x^k$ satisfy
\begin{equation*}
\begin{split}
& \alpha_{\scriptscriptstyle{p_{i} , k}}
= \mathbf{a}_{\scriptscriptstyle{p_{i} , k}}(p_{i})
 \, ,
\\
& \beta_{\scriptscriptstyle{p_{i} , k}} 
= (1 - k\lambda_{i}) s_{\scriptscriptstyle{{i} , k}} 
\, - \, \sum_{\jmb =1}^{m} \tfrac{z_{\scriptscriptstyle{{\jmb} , k}}}{2(p_{i}-q_{\jmb})} 
\,  + \, \mathbf{b}_{\scriptscriptstyle{p_{i} , k}}(p_{i})
\end{split}
\end{equation*}
\end{itemize}
where the holomorphic transformations 
\begin{equation*}
    \mathbf{a}_{\scriptscriptstyle{p_{i} , k}}=\mathbf{a}_{\scriptscriptstyle{p_{i} , k}}[\varepsilon_{\scriptscriptstyle{{i} , k}}; \, (\lambda_{\imb}, s_{\scriptscriptstyle{{\imb} , r}}, 
z_{\scriptscriptstyle{{\jmb} , r}}, g_{\jmb}; 
\varepsilon_{\scriptscriptstyle{{\imb} , r}}, \varsigma_{\scriptscriptstyle{{\jmb} , r}})_{\imb, \jmb}]_{r=1}^{k-1}\,
\end{equation*}
and
\begin{equation*}
   \mathbf{b}_{\scriptscriptstyle{p_{i} , k}} =\mathbf{b}_{\scriptscriptstyle{p_{i} , k}}[\varepsilon_{\scriptscriptstyle{{i} , k}}; \, (\lambda_{\imb}, s_{\scriptscriptstyle{{\imb} , r}}, 
z_{\scriptscriptstyle{{\jmb} , r}}, g_{\jmb}; 
\varepsilon_{\scriptscriptstyle{{\imb} , r}}, \varsigma_{\scriptscriptstyle{{\jmb} , r}})_{\imb, \jmb}]_{r=1}^{k-1}\, 
\end{equation*}
 are defined in the disk $D_{p_{i}}$, and are obtained by the evaluation of the functional transformations $\mathbf{a}_{\scriptscriptstyle{p_{i} , k}}$ and $\mathbf{b}_{\scriptscriptstyle{p_{i} , k}}$ in $\varepsilon_{\scriptscriptstyle{{i} , k}}$ (local information of $\tilde{\F}_{\scriptscriptstyle \mu}$ in $(0,p_{i})$), the complex numbers $\lambda_{\imb}$, $s_{\scriptscriptstyle{{\imb} , r}}$, $z_{\scriptscriptstyle{{\jmb} , r}}$, the function $g_{\jmb}$ (local information of the foliation pair $(\tilde{\F}, \tilde{\G})$), and the coefficients $\varepsilon_{\scriptscriptstyle{{\imb} , r}}$, $\varsigma_{\scriptscriptstyle{{\jmb} , r}}$ de $\varepsilon_{\imb}$, $\varsigma_{\jmb}$ (local information of $\tilde{\F}_{\scriptscriptstyle \mu}$), for $1 \leqslant r \leqslant k-1$, and with respect to every singularity  $(0, p_{\imb})\,, \, \imb = 1,\dots, n+1$ and every tangency point  $(0, q_{\jmb})\,, \,\jmb = 1,\dots, m$.
\end{lemma}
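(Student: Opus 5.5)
We expand the composition $(\CP_{p_{i}})$ directly. Write $\tilde{s}_{i}(x) = -\lambda_{i}\sum_{r\geqslant 1} r s_{i,r}x^{r}$, so $\tilde{s}_{i}(x) = O(x)$. Taylor expand each coefficient $a_{p_{i},r}$, $b_{p_{i},r}$ around $u=p_{i}$:
\begin{equation*}
a_{p_{i},r}(p_{i}+\tilde{s}_{i}(x)) = a_{p_{i},r}(p_{i}) + \sum_{l\geqslant 1}\tfrac{1}{l!}a^{(l)}_{p_{i},r}(p_{i})\,\tilde{s}_{i}(x)^{l}.
\end{equation*}
The same expansion holds for $b_{p_{i},r}$. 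Collecting the coefficient of $x^{k}$, we obtain
\begin{equation*}
\alpha_{p_{i},k} = a_{p_{i},k}(p_{i}) + (\text{terms involving } a_{p_{i},r}^{(l)}(p_{i}),\ r\leqslant k-1,\ \text{and } s_{i,r},\ r\leqslant k-1),
\end{equation*}
\begin{equation*}
\beta_{p_{i},k} = -k\lambda_{i}s_{i,k} + b_{p_{i},k}(p_{i}) + (\text{analogous lower order terms}).
\end{equation*}
All the lower order terms only involve $r\leqslant k-1$. The whole point is therefore to compute $a_{p_{i},k}(p_{i})$ and $b_{p_{i},k}(p_{i})$ from Proposition \ref{Prop:soluciones canonicas} and to isolate their dependence on the top index $k$.

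For $k=1$, we have $\alpha_{p_{i},1}=a_{p_{i},1}(p_{i})=\varepsilon_{i,1}(p_{i})=1$ by (\ref{Ec_Series de Psi y Xi}). By ($b^{1}$),
\begin{equation*}
b_{p_{i},1}(p_{i}) = s_{i,1} + b_{\mathbf{n},1}(p_{i}).
\end{equation*}
Here $b_{\mathbf{n},1}(p_{i}) = -\sum_{\jmb}\bigl(z_{\jmb,1}/(\varsigma_{\jmb,1}g^{(1)}_{\jmb})\bigr)_{\CP,q_{\jmb}}$ evaluated at $p_{i}$. We compute this principal part. Since $g_{\jmb}(u)=(q_{\jmb}-u)(I_{\jmb}(u)-q_{\jmb})$ and $I_{\jmb}'(q_{\jmb})=-1$, we get $g_{\jmb}(u) = (u-q_{\jmb})^{2}+O((u-q_{\jmb})^{3})$. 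Hence $g^{(1)}_{\jmb}(u) = 2(u-q_{\jmb})+O((u-q_{\jmb})^{2})$. Together with $\varsigma_{\jmb,1}(q_{\jmb})=1$, the principal part at $q_{\jmb}$ is $z_{\jmb,1}/(2(u-q_{\jmb}))$. Evaluating at $u=p_{i}$ gives $\beta_{p_{i},1} = (1-\lambda_{i})s_{i,1} - \sum_{\jmb} z_{\jmb,1}/(2(p_{i}-q_{\jmb}))$, which is statement a).

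For $2\leqslant k\leqslant k_{0}$, we use ($a^{k}$) and ($b^{k}$) with the index shifted. The quantities $\varepsilon_{i,k}$, $\mathbf{A}_{\p_{i},k}$ and $a_{\mathbf{n},k}$ depend only on $\lambda$, $s_{\cdot,r}$, $z_{\cdot,r}$, $g$, $\varepsilon_{\cdot,r}$, $\varsigma_{\cdot,r}$ with $r\leqslant k-1$, together with $\varepsilon_{i,k}$. This holds because the functionals $\mathbf{A}_{\p,k}$, $\mathbf{A}_{\q,k}$ take arguments with index up to $k-1$. By induction, the $a_{\mathbf{n},r}$ with $r<k$ are of the same type. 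This gives $\alpha_{p_{i},k}=\mathbf{a}_{p_{i},k}(p_{i})$ with the stated dependence.

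For $b_{p_{i},k}(p_{i})$, ($b^{k}$) produces the term $s_{i,k}$, harmless terms $\varepsilon_{i,k}b_{\mathbf{n},1}$ and $\mathbf{B}_{\p_{i},k}$, and the term $(\varepsilon_{i,1})^{k}b_{\mathbf{n},k}$. Evaluated at $p_{i}$, the last term equals $b_{\mathbf{n},k}(p_{i})$. Its only top-order contribution is $-\sum_{\jmb}\bigl(z_{\jmb,k}/((\varsigma_{\jmb,1})^{k}g^{(1)}_{\jmb})\bigr)_{\CP,q_{\jmb}}(p_{i})$, as isolated in Remark \ref{Obs_Aplicaciones_ak+1_bk+1_como funcionales}. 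The same principal part computation as before gives $z_{\jmb,k}/(2(u-q_{\jmb}))$, since $\varsigma_{\jmb,1}(q_{\jmb})=1$. Evaluation at $p_{i}$ then gives $-\sum_{\jmb}z_{\jmb,k}/(2(p_{i}-q_{\jmb}))$. Adding $-k\lambda_{i}s_{i,k}$ from $\tilde{s}_{i}$ gives $(1-k\lambda_{i})s_{i,k}$. All remaining pieces are collected into $\mathbf{b}_{p_{i},k}$.

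The main obstacle is the bookkeeping needed to confirm that no other hidden dependence on $s_{i,k}$ or $z_{\jmb,k}$ enters. Such dependence could arise through $\mathbf{B}_{\p,k}$, $\mathbf{B}_{\q,k}$, or through the principal parts $(\mathbf{B}_{\p,k}/\varepsilon^{k})_{\CP,p_{\imb}}$. I would check it against Lemmas \ref{lem: series TN por ecFac en pi} and \ref{lem: series TN por ecFac en qj}, where these functionals are explicitly indexed by $r\leqslant k-1$. I would also check the principal part computation near $q_{\jmb}$, which needs the normalization $\varsigma_{\jmb,1}(q_{\jmb})=1$ and $I_{\jmb}'(q_{\jmb})=-1$.
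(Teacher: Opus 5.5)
Your proposal is correct and follows essentially the same route as the paper. You expand $a_{p_i,r}(p_i+\tilde s_i(x))$ and $b_{p_i,r}(p_i+\tilde s_i(x))$ in $x$, where the paper packages the lower-order terms as a Fa\`a di Bruno functional $\mathsf{P}^k$. You then use $\varepsilon_{i,1}(p_i)=1$ and the canonical solutions of Proposition \ref{Prop:soluciones canonicas}, take the $-k\lambda_i s_{i,k}$ term from $\tilde s_i$, and isolate the $z_{\jmb,k}$-dependence through the principal part $z_{\jmb,k}/(2(u-q_{\jmb}))$ at $q_{\jmb}$. The only difference is that you compute this principal part directly from $g_{\jmb}^{(2)}(q_{\jmb})=2$ and $\varsigma_{\jmb,1}(q_{\jmb})=1$, whereas the paper cites Lemma \ref{Lema_Descripcion de bAst}, which rests on the same observation.
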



Analogously Lemma \ref{Lema_ParamImpli_Polar_qj} provides an implicit parametrization of curves of tangencies at the tangency points. Namely, the following lemma gives the relations that the coefficients of the $k_{0}$--jet of the power series expansion of the implicit parametrization of the curve of tangencies $ \CP_{q_{j}} (\tilde{\F}, \tilde{\G}) = H_{q_{j}} (x, q_{j})$ must satisfy

\begin{lemma}[Implicit parametrization of curves of tangencies at tangency points]
\label{Lema_ParamImpli_Polar_qj}
Let $(\F, \G) \in \mathcal{N}_{\p} (\mathbf{h}) \times \mathcal{D}_{\q} (\mathcal{I})$ be a foliation pair whose normalizing transformations $\mathbf{H}_{\scriptscriptstyle \mathbf{n}} $, $H_{p_{i}}$, $H_{q_{j}}$ with respect to the foliation pair $(\F_{\scriptscriptstyle \mathbf{\mu}}, \G_{\scriptscriptstyle \mathbf{r}})$ have the $k_{0}$--jet of its power series expansion ($\mathbf{nt_{k_{0}}}$) given in Theorem \ref{Teorema_Limpieza de TransNorm_Intro_1}. Let $(\alpha_{q_{j}} (x), q_{j} + \beta_{q_{j}} (x))$ be the implicit parametrization of the curve of tancencies  $\CP_{q_{j}} (\tilde{\F}, \tilde{\G})$ as in ($\CP_{q_{j}}$). Then the corresponding power series expansions 
 $\alpha_{q_{j}} = \textstyle \sum_{r \geqslant 1} \alpha_{\scriptscriptstyle{q_{j} , r}} x^{r}$ and $\beta_{q_{j}} = \textstyle \sum_{r \geqslant 1} \beta_{\scriptscriptstyle{q_{j} , r}} x^{r}$ satisfy the following relations:

 \begin{itemize}
     \item [a)] For $k=1$ the coefficients with respect to the monomial $x$ are
 
\begin{equation*}
\alpha_{\scriptscriptstyle{q_{j} , 1}} 
= 1 \, ,
\hspace{0.8cm}
\beta_{\scriptscriptstyle{q_{j} , 1}}
= - \tfrac{z_{\scriptscriptstyle{j,1}} }{2} 
\Bigl((\varsigma_{\scriptscriptstyle{j, 1}})^{(1)}(q_{j}) 
+ \tfrac{g_{j}^{(3)}(q_{j})}{4}\Bigl)
-  \sum_{\jmb \neq j}^{m} \tfrac{z_{\scriptscriptstyle{{\jmb} , 1}}}{2(q_{j}-q_{\jmb})} \, ;
\end{equation*}
\item[b)] For $2 \leqslant k \leqslant k_{0}$, the coefficients with respect to the monomial  $x^{k}$ satisfy
\begin{equation*}
\begin{split}
& \alpha_{\scriptscriptstyle{q_{j} , k}}
= \mathbf{a}_{\scriptscriptstyle{q_{j} , k}} (q_{j}) \, ,
\\ & \beta_{\scriptscriptstyle{q_{j} , k}} 
= - \tfrac{z_{\scriptscriptstyle{j, k}}}{2} \Bigl(k(\varsigma_{\scriptscriptstyle{j, 1}})^{(1)}(q_{j}) 
\, + \, \tfrac{g_{j}^{(3)}(q_{j})}{4}\Bigl)
\, - \, \sum_{\jmb \neq j}^{m} \tfrac{z_{\scriptscriptstyle{{\jmb} , k}}}{2(q_{j} - q_{\jmb})} 
\, + \, \mathbf{b}_{\scriptscriptstyle{q_{j} , k}} (q_{j})
 \, ,
\end{split}
\end{equation*}
\end{itemize}
where the holomorphic transformations 
\begin{equation*}
    \mathbf{a}_{\scriptscriptstyle{q_{j} , k}}=\mathbf{a}_{\scriptscriptstyle{q_{j} , k}}[\varsigma_{\scriptscriptstyle{ j \, , k}}; \, (\lambda_{\imb}, s_{\scriptscriptstyle{{\imb} , r}}, 
z_{\scriptscriptstyle{{\jmb} , r}}, g_{\jmb}; 
\varepsilon_{\scriptscriptstyle{{\imb} , r}}, \varsigma_{\scriptscriptstyle{{\jmb} , r}})_{\imb, \jmb}]_{r=1}^{k-1}\,
\end{equation*}
and
\begin{equation*}
   \mathbf{b}_{\scriptscriptstyle{q_{j} , k}} =\mathbf{b}_{\scriptscriptstyle{q_{j} , k}}[\varsigma_{\scriptscriptstyle{ j \, , k}}; \, (\lambda_{\imb}, s_{\scriptscriptstyle{{\imb} , r}}, 
z_{\scriptscriptstyle{{\jmb} , r}}, g_{\jmb}; 
\varepsilon_{\scriptscriptstyle{{\imb} , r}}, \varsigma_{\scriptscriptstyle{{\jmb} , r}})_{\imb, \jmb}]_{r=1}^{k-1}\, 
\end{equation*}
 are defined in the disk $D_{q_{j}}$, and are obtained by the evaluation of the functional transformations $\mathbf{a}_{\scriptscriptstyle{q_{j} , k}}$ and $\mathbf{b}_{\scriptscriptstyle{q_{j} , k}}$ in $\varsigma_{\scriptscriptstyle{ j \, , k}}$ (local information of $\tilde{\F}_{\scriptscriptstyle \mu}$ in $(0,q_{j})$), the complex numbers $\lambda_{\imb}$, $s_{\scriptscriptstyle{{\imb} , r}}$, $z_{\scriptscriptstyle{{\jmb} , r}}$, the function $g_{\jmb}$ (local information of the foliation pair $(\tilde{\F}, \tilde{\G})$), and the coefficients $\varepsilon_{\scriptscriptstyle{{\imb} , r}}$, $\varsigma_{\scriptscriptstyle{{\jmb} , r}}$ of $\varepsilon_{\imb}$, $\varsigma_{\jmb}$ (local information of $\tilde{\F}_{\scriptscriptstyle \mu}$), for $1 \leqslant r \leqslant k-1$, and with respect to every singularity  $(0, p_{\imb})\,, \, \imb = 1,\dots, n+1$, and every tangency point  $(0, q_{\jmb})\,, \,\jmb = 1,\dots, m$.
\end{lemma}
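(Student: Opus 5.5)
The plan is to read off the coefficients of $(\alpha_{q_{j}}, q_{j}+\beta_{q_{j}})$ directly from ($\CP_{q_{j}}$). There $\alpha_{\scriptscriptstyle{q_{j}, k}} = a_{\scriptscriptstyle{q_{j}, k}}(q_{j})$ and $\beta_{\scriptscriptstyle{q_{j}, k}} = b_{\scriptscriptstyle{q_{j}, k}}(q_{j})$, with $a_{\scriptscriptstyle{q_{j}, k}}$, $b_{\scriptscriptstyle{q_{j}, k}}$ the canonical solutions of Proposition \ref{Prop:soluciones canonicas}. The lemma therefore reduces to evaluating the formulas ($a^{k}$), ($b^{k}$) at $u=q_{j}$ and isolating the terms that are linear in the top-order data $z_{\scriptscriptstyle{\jmb, k}}$.

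\emph{Case $k=1$.} From ($a^{1}$) we get $\alpha_{\scriptscriptstyle{q_{j},1}} = \varsigma_{\scriptscriptstyle{j,1}}(q_{j}) = 1$ by (\ref{Ec_Series de Phi y Zeta}). From ($b^{1}$),
\begin{equation*}
\beta_{\scriptscriptstyle{q_{j},1}} = \Bigl[\tfrac{z_{\scriptscriptstyle{j,1}}}{g_{j}^{(1)}} - \varsigma_{\scriptscriptstyle{j,1}} \sum_{\jmb=1}^{m}\Bigl(\tfrac{z_{\scriptscriptstyle{\jmb,1}}}{\varsigma_{\scriptscriptstyle{\jmb,1}}\, g^{(1)}_{\jmb}}\Bigr)_{\CP, q_{\jmb}}\Bigr]_{u=q_{j}} .
\end{equation*}
Here $g_{j}(u) = (q_{j}-u)(I_{j}(u)-q_{j})$ and $I_{j}'(q_{j})=-1$, so $g_{j}(u) = (u-q_{j})^{2} + c_{3}(u-q_{j})^{3} + \cdots$ and $g_{j}^{(1)}$ (the $u$-derivative) has a simple zero at $q_{j}$. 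Writing $w=u-q_{j}$ and $\varsigma_{\scriptscriptstyle{j,1}} = 1 + \varsigma'_{j}(q_{j})\, w + \cdots$, we have
\begin{equation*}
\tfrac{1}{\varsigma_{\scriptscriptstyle{j,1}}\, g^{(1)}_{j}} = \tfrac{1}{2w}\bigl(1 - \varsigma'_{j}(q_{j})\, w - \tfrac{3c_{3}}{2}w + O(w^{2})\bigr),
\end{equation*}
whose principal part is $\tfrac{1}{2w}$. In $b_{\scriptscriptstyle{q_{j},1}}$ the $j$-th polar term cancels against $\tfrac{z_{\scriptscriptstyle{j,1}}}{g_{j}^{(1)}}$, and the regular remainder at $w=0$ equals $-\tfrac{z_{\scriptscriptstyle{j,1}}}{2}\bigl(\varsigma'_{j}(q_{j}) + \tfrac{3c_{3}}{2}\bigr)$. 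Since $g_{j}^{(3)}(q_{j}) = 6c_{3}$, the claimed constant $\tfrac{g_{j}^{(3)}(q_{j})}{4}$ follows once this normalization is checked carefully. The terms with $\jmb\neq j$ give $-\tfrac{z_{\scriptscriptstyle{\jmb,1}}}{2(q_{j}-q_{\jmb})}$ at $u=q_{j}$, because $\varsigma_{\scriptscriptstyle{j,1}}(q_{j})=1$. This proves a).

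\emph{Case $2\leq k\leq k_{0}$.} We use ($a^{k}$) and ($b^{k}$) with $k$ in place of $k+1$. The term $\mathbf{A}_{\mathbf{\q}_{j},k}$ depends only on data of order $\le k-1$, and so does $a_{\scriptscriptstyle{\mathbf{n},k}}$ through the $\mathbf{A}$'s, together with $\varsigma_{\scriptscriptstyle{j,k}}$. Hence $a_{\scriptscriptstyle{q_{j},k}}(q_{j})$ is of the stated functional form, and we define $\mathbf{a}_{\scriptscriptstyle{q_{j},k}}$ to be it. For $b_{\scriptscriptstyle{q_{j},k}}$ the only order-$k$ local-model data are $z_{\scriptscriptstyle{\jmb,k}}$, entering through $\tfrac{z_{\scriptscriptstyle{j,k}}}{g_{j}^{(1)}}$ and through $-\varsigma_{\scriptscriptstyle{j,1}}^{k}\sum_{\jmb}\bigl(\tfrac{z_{\scriptscriptstyle{\jmb,k}}}{\varsigma_{\scriptscriptstyle{\jmb,1}}^{k}\, g^{(1)}_{\jmb}}\bigr)_{\CP,q_{\jmb}}$ (compare Remark \ref{Obs_Aplicaciones_ak+1_bk+1_como funcionales}). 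Repeating the Laurent computation with $\varsigma_{\scriptscriptstyle{j,1}}^{k}$ replaces $\varsigma'_{j}(q_{j})$ by $k\,\varsigma'_{j}(q_{j})$, and the $\jmb\neq j$ polar terms evaluate as before. Everything else, namely $\varsigma_{\scriptscriptstyle{j,k}} b_{\scriptscriptstyle{\mathbf{n},1}}$, $\mathbf{B}_{\mathbf{\q}_{j},k}$ and the $\mathbf{B}$-contributions to $b_{\scriptscriptstyle{\mathbf{n},k}}$, involves $\varsigma_{\scriptscriptstyle{j,k}}$ and lower-order data only. Their sum, evaluated at $q_{j}$, is what we call $\mathbf{b}_{\scriptscriptstyle{q_{j},k}}(q_{j})$.

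\emph{Main obstacle.} Two points need care. First, we must check that $\mathbf{B}_{\mathbf{\q}_{j},k}$ is genuinely free of $z_{\scriptscriptstyle{j,k}}$, i.e.\ that the dependence of $\phi_{j}$ on $z_{j}$ at order $k$ is exactly $\tfrac{z_{\scriptscriptstyle{j,k}}}{g^{(1)}_{j}}$. This should follow from linearizing $g_{j}\circ\phi_{j} = g_{j}+z_{j}$ at order $x^{k}$, as in Lemma \ref{lem: series TN por ecFac en qj}. Second, each $b_{\scriptscriptstyle{q_{j},k}}$ is holomorphic at $q_{j}$, so the cancellation of the pole of $\tfrac{1}{g^{(1)}_{j}}$ has to be verified and the finite part extracted precisely. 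This Laurent bookkeeping, including the exact coefficient in front of $g_{j}^{(3)}(q_{j})$, is the step I expect to be hardest.
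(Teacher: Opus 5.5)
Your proposal is correct and follows essentially the same route as the paper: you read off $\alpha_{\scriptscriptstyle{q_{j},k}}=a_{\scriptscriptstyle{q_{j},k}}(q_{j})$ and $\beta_{\scriptscriptstyle{q_{j},k}}=b_{\scriptscriptstyle{q_{j},k}}(q_{j})$ from the canonical solutions, isolate the $z_{\scriptscriptstyle{\jmb,k}}$-terms, and split them into principal and regular parts at $q_{j}$; the paper packages your Laurent computation as Remark \ref{Obs_PartePrincipal y ParteRegular en cocientes}. Both points you flag are already settled: $g_{j}^{(3)}(q_{j})=6c_{3}$ gives $\tfrac{3c_{3}}{2}=\tfrac{g_{j}^{(3)}(q_{j})}{4}$ exactly, and the absence of $z_{\scriptscriptstyle{j,k}}$ from $\mathbf{B}_{\mathbf{\q}_{j},k}$ is contained in Lemma \ref{lem: series TN por ecFac en qj} (via Lemma \ref{Lema_Aplicacion Phik_j}) and Remark \ref{Obs_Aplicaciones_ak+1_bk+1_como funcionales}, which the paper invokes directly.
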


Since for the proof of Theorem \ref{Teorema_Parametrizacion_Polar_pi_qj} we deal with the power series expansions of the parametrizations by $x$ of the curve of tangencies $\CP_{p_{i}} (\tilde{\F}, \tilde{\G})$, we need to look to the derivatives of the compositions of several transformations. This is done by using the well known \emph{Fa\`a di Bruno formula}. For the sake of redability we state  here the corresponding theorem.

\begin{theorem}[Fa\`a di Bruno formula]
\label{Teorema_Formula de Faa di Bruno}
Let $f$ and $h$ be two holomorphic transformations defined in their corresponding domains in $\C$, and such that its composition $h \circ f$ is well defined and is complex valuated. Then the $k$-th derivative  $(h \circ f)^{(k)}$ of the composition $h \circ f$ has the expression 
\begin{equation*}
(h \circ f)^{(k)}=\sum \tfrac{k!}{r_{1}! \cdots r_{k}!} \, h^{(r)} \vert_{f} \, \left(\tfrac{f^{(1)}}{1!}\right)^{r_{1}} \, \cdots \, \left(\tfrac{f^{(k)}}{k!}\right)^{r_{k}} \, ,
\end{equation*}
where $r_{1}, \ldots, r_{k}$ are non-negative integer numbers satisfying $r_{1} + 2 r_{2} + \cdots + k r_{k} = k$ and where $r= r_{1} + r_{2} + \cdots + r_{k}$. 
\end{theorem}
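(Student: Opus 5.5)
The plan is to prove the formula by comparing Taylor coefficients; an induction on $k$ would also work. Fix a point $x_{0}$ in the domain of $f$ and put $y_{0} = f(x_{0})$. Near $x_{0}$ write
\begin{equation*}
f(x_{0}+t) = y_{0} + \sum_{l \geqslant 1} \tfrac{f^{(l)}(x_{0})}{l!}\, t^{l},
\qquad
h(y_{0}+w) = \sum_{r \geqslant 0} \tfrac{h^{(r)}(y_{0})}{r!}\, w^{r}.
\end{equation*}
Both series converge near $0$. Since $f(x_{0}+t) - y_{0} = \text{O}(t)$, we may substitute $w = f(x_{0}+t) - y_{0}$ into the series of $h$. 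Absolute convergence of the double series for small $|t|$ lets us rearrange terms. This gives the Taylor series of $h \circ f$ at $x_{0}$, and $(h\circ f)^{(k)}(x_{0})$ equals $k!$ times its coefficient of $t^{k}$.

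Next, we compute that coefficient. Only the terms with $r \leqslant k$ contribute, because $w^{r} = \text{O}(t^{r})$. For a fixed $r$, expand $w^{r} = \bigl(\sum_{l} c_{l} t^{l}\bigr)^{r}$ with $c_{l} = f^{(l)}(x_{0})/l!$ by the multinomial theorem. Group the factors according to how many copies $r_{l}$ of $c_{l} t^{l}$ are chosen. A monomial $c_{1}^{r_{1}}\cdots c_{k}^{r_{k}} t^{r_{1}+2r_{2}+\cdots+kr_{k}}$ then appears with multiplicity $r!/(r_{1}!\cdots r_{k}!)$, subject to $r_{1}+\cdots+r_{k} = r$. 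We keep exactly the terms with $r_{1}+2r_{2}+\cdots+kr_{k} = k$. Multiplying by $h^{(r)}(y_{0})/r!$, the factor $r!$ cancels. Multiplying by $k!$ then gives precisely
\begin{equation*}
\tfrac{k!}{r_{1}!\cdots r_{k}!}\, h^{(r)}\vert_{f}\, \Bigl(\tfrac{f^{(1)}}{1!}\Bigr)^{r_{1}}\cdots\Bigl(\tfrac{f^{(k)}}{k!}\Bigr)^{r_{k}}.
\end{equation*}
Summing over all admissible $(r_{1},\ldots,r_{k})$ yields the stated formula at $x_{0}$. Since $x_{0}$ was arbitrary, the formula holds on the whole domain.

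The only delicate step is justifying that the composed series may be rearranged, which is the main potential obstacle, although it is mild. I would handle it by majorants: the series $\sum_{r} \tfrac{|h^{(r)}(y_{0})|}{r!} \bigl(\sum_{l}|c_{l}||t|^{l}\bigr)^{r}$ converges once $\sum_{l}|c_{l}||t|^{l}$ is smaller than the radius of convergence of $h$ at $y_{0}$. This holds for $|t|$ small because the inner sum tends to $0$ as $t \to 0$.

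Alternatively, the identity is purely algebraic in the jets, so it suffices to check it for polynomials $f, h$, where no convergence issue arises. As a fallback, one can induct on $k$: differentiating a term $h^{(r)}\vert_{f}\prod_{l} (f^{(l)})^{r_{l}}$ either raises $r$ by one while adding a factor $f^{(1)}$, or replaces one factor $f^{(l)}$ by $f^{(l+1)}$. The multiplicities are then checked via the bijection with set partitions of $\{1,\ldots,k+1\}$, obtained by inserting the element $k+1$ either as a new block or into an existing block.
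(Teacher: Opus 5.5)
Your proof is correct and complete. Substituting $w=f(x_{0}+t)-y_{0}$ into the Taylor series of $h$ and expanding $w^{r}$ by the multinomial theorem gives the right coefficient, and your majorant argument properly justifies the rearrangement of the double series. Keeping only the terms with $r_{1}+2r_{2}+\cdots+kr_{k}=k$ then produces exactly $\tfrac{k!}{r_{1}!\cdots r_{k}!}\,h^{(r)}(y_{0})\prod_{l}\bigl(f^{(l)}(x_{0})/l!\bigr)^{r_{l}}$. Any term with $r_{l}\neq 0$ for some $l>k$ has degree greater than $k$, so truncating $w$ at order $k$ is harmless.

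There is nothing in the paper to compare against. It does not prove this statement; it recalls Fa\`a di Bruno's formula as well known and only uses it. It enters through the polynomials $\mathbf{P}^{k}$, $\tilde{\mathbf{P}}^{k}$, $\hat{\mathbf{P}}^{k}$ in the computations of $\hat{\psi}_{i,k+1}$, of $\partial^{k}\phi_{j}/\partial x^{k}$, and of the coefficients of $A(\alpha,\beta)$. Your multinomial bookkeeping is in fact the same computation the paper carries out for the coefficients $\alpha_{r,s}$ of $(\alpha(x,u))^{r}$ in the proof of the lemma on $A(\alpha,\beta)$.

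Your two routes differ in what they need. The series argument uses holomorphy, through convergence of the Taylor series, and holomorphy is exactly the hypothesis here. The inductive set-partition argument, or your reduction to polynomials via the fact that $(h\circ f)^{(k)}(x_{0})$ depends only on $k$-jets, gives the identity for merely $k$ times differentiable $f$ and $h$.
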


\begin{remark}
\label{Obs_Sobre la suma en el Teo_FaaDiBruno}
As in Theorem \ref{Teorema_Formula de Faa di Bruno}, let $r_{1}, \ldots, r_{k}$ be non-negative integer numbers such that $r_{1} + 2 r_{2} + \cdots + k r_{k} = k$ and with $r=r_{1} + r_{2} + \cdots + r_{k}$. The following equivalence take place
\begin{equation*}
r_{k} \neq 0 
\hspace{0.3cm}
\text{if and only if}
\hspace{0.3cm}
r_{k} = 1 
\hspace{0.2cm} 
\text{and} 
\hspace{0.2cm} 
r_{1}= \cdots = r_{k-1} = 0\,,
\hspace{0.1cm}
\text{and this implies}
\hspace{0.1cm}
r = 1 \, .
\end{equation*}
Since $r \leqslant k$, the case $r = k$ takes place if and only if $r_{1} = k$ and $r_{2} = \cdots = r_{k} = 0$.
\end{remark}

We now define three polynomials that will be needed in the proof of Theorem \ref{Teorema_Parametrizacion_Polar_pi_qj}. Namely, for 
  $k \geqslant 1$, we define the polynomials $\mathbf{P}^{k}$, $\tilde{\mathbf{P}}^{k}$, $\hat{\mathbf{P}}^{k}$ in the variables  $\mathbf{w}_{1},\dots,\mathbf{w}_{s}$ and $\mathbf{z}_{1}\dots, \mathbf{z}_{s}$ and with coefficients in $\N$.  
\begin{equation}
\label{Ec_Definicion de Pk}
\begin{split}
\mathbf{P}^{k}  [\mathbf{w}_{s}; & \mathbf{z}_{s}]^{k}_{s =1} 
:= \sum \tfrac{k!}{r_{1}! \cdots r_{k}!} \, \mathbf{w}_{r} \, \left(\tfrac{\mathbf{z}_{1}}{1!}\right)^{r_{1}} \, \cdots \, \left(\tfrac{\mathbf{z}_{k}}{k!}\right)^{r_{k}}   \, ,
\\  \tilde{\mathbf{P}}^{k} & = \mathbf{P}^{k} - \mathbf{w}_{1} \mathbf{z}_{k} \, , 
\hspace{0.8cm}
\hat{\mathbf{P}}^{k} = \mathbf{P}^{k} - \mathbf{w}_{k} (\mathbf{z}_{1})^{k} \, ,
\end{split}
\end{equation}
where $r_{1}, \ldots, r_{k}$ satisfy $r_{1} + 2 r_{2} + \cdots + k r_{k} = k$, and $r=r_{1} + r_{2} + \cdots + r_{k}$ as in Remark \ref{Obs_Sobre la suma en el Teo_FaaDiBruno}. Note that  $\tilde{\mathbf{P}}^{1}=\hat{\mathbf{P}}^{1}=0$. This remark implies that the polynomial $\tilde{\mathbf{P}}^{k}$ depends on the variables $\mathbf{w}_{s+1}$, $\mathbf{z}_{s}$, with $1 \leqslant s \leqslant k-1$, i.e. $\tilde{\mathbf{P}}^{k} [\mathbf{w}_{s+1}; \mathbf{z}_{s}]^{k-1}_{s =1} $. It implies as well that the polynomial  $\hat{\mathbf{P}}^{k}$ depends on the  variables $\mathbf{w}_{s}$, $\mathbf{z}_{1}$, $\mathbf{z}_{s+1}$, with $1 \leqslant s \leqslant k-1$, i.e. $\hat{\mathbf{P}}^{k} [\mathbf{w}_{s};\mathbf{z}_{1}, \mathbf{z}_{s+1}]^{k-1}_{s =1}$.  

Fa\`a di Bruno formula for $k \geqslant 1$ is expressed in terms of polynomials   $\mathbf{P}^{k}$, $\tilde{\mathbf{P}}^{k}$, $\hat{\mathbf{P}}^{k}$, as follows
\begin{equation}
\label{Ec_Formula de FaaDiBruno con polinomios}
\begin{split}
(h \circ f)^{(k)} 
& = \mathbf{P}^{k}\bigl[h^{(s)} \vert_{f} \,  ; \, f^{(s)}\bigl]^{k}_{s=1}
= h^{(1)} \vert_{f} f^{(k)} 
\, + \, \tilde{\mathbf{P}}^{k}\bigl[h^{(s+1)} \vert_{f} \,  ; \, f^{(s)}\bigl]^{k-1}_{s=1} 
\\ & = h^{(k)}\vert_{f} (f^{(1)})^{k}
\, + \, \hat{\mathbf{P}}^{k} \bigl[h^{(s)} \vert_{f}; f^{(1)}, f^{(s+1)}\bigl]^{k-1}_{s =1} \, .
\end{split}
\end{equation}


\subsection{Proof of Theorem \ref{Teorema_Parametrizacion_Polar_pi_qj}}
\label{Demostracion_Teorema_Parametrizacion_Polar_pi_qj}
Let $\CP_{p_{i}} (\tilde{\F}, \tilde{\G})$ and $\CP_{q_{j}} (\tilde{\F}, \tilde{\G})$ be the curves of tangencies of the foliation pair  $(\tilde{\F}, \tilde{\G})$, and let $(\alpha_{p_{i}} (x), p_{i} + \beta_{p_{i}} (x))$ and $(\alpha_{q_{j}} (x), q_{j} + \beta_{q_{j}} (x))$ be their corresponding implicit parametrizations whose power series expansions are given in ($\CP_{p_{i}}$) and ($\CP_{q_{j}}$)).

Therefore, the parametrization by $x$ of the curve of tangencies  $\CP_{p_{i}} (\tilde{\F}, \tilde{\G})$ is given by the expression
\begin{equation*}
\pi_{p_{i}} 
\ = \ p_{i} + \beta_{p_{i}} \circ \alpha_{p_{i}}^{-1} (x) 
\ = \ p_{i} + \textstyle \sum_{r \geqslant 1}  \mathbf{c}_{\scriptscriptstyle{p_{i}, r}}\, x^{r}\,.
\end{equation*}
Analogously,  the parametrization by $x$ of the curve of tangencies
$\CP_{q_{j}} (\tilde{\F}, \tilde{\G})$ is given by the expression
\begin{equation*}
\pi_{q_{j}} 
\ = \ q_{j} + \beta_{q_{j}} \circ \alpha_{q_{j}}^{-1} (x) 
\ = \ q_{j} + \textstyle \sum_{r \geqslant 1}  \mathbf{c}_{\scriptscriptstyle{q_{j}, r}}\, x^{r}\,.
\end{equation*}

It is now clear that if we want to give an explicit expression of the coefficients of the power series expansion of  $\pi_{p_{i}}$ in terms of the power series expansions of $\alpha_{p_{i}}$ and $\beta_{p_{i}}$, and of the coefficients of $\pi_{q_{j}}$, in terms of the power series expansions of  $\alpha_{q_{j}}$ and $\beta_{q_{j}}$, we will need the expression introduced in Theorem \ref{Teorema_Formula de Faa di Bruno} of Fa\`a di Bruno formula.

First note that since the transformations $\alpha_{p_{i}}$ and $\alpha_{q_{j}}$ have identity linear part, $\alpha_{\scriptscriptstyle{p_{i} , 1}}=\alpha_{\scriptscriptstyle{q_{j} , 1}}=1$ then their inverses 
$\alpha_{p_{i}}^{-1} = \textstyle \sum_{r \geqslant 1} \tilde{\alpha}_{\scriptscriptstyle{{p_{i}} , r}} x^{r}$ and $\alpha_{q_{j}}^{-1} = \textstyle \sum_{r \geqslant 1} \tilde{\alpha}_{\scriptscriptstyle{{q_{j}} , r}} x^{r}$,  have identity linear part as well, $\tilde{\alpha}_{p_{i}}^{1}=\tilde{\alpha}_{\scriptscriptstyle{{q_{j}} , 1}}=1$. Moreover, for  $r \geqslant 2$, the corresponding coefficients are

\begin{itemize}
    \item [a)]$\tilde{\alpha}_{\scriptscriptstyle{{p_{i}} , r}}=\tfrac{-1}{r!}\,\hat{\mathbf{P}}^{r} [s! \tilde{\alpha}_{\scriptscriptstyle{{p_{i}} , s}}; 1, (s+1)!  \alpha_{\scriptscriptstyle{{p_{i}} , s+1}}]^{r-1}_{s =1}$\,,
    where $\hat{\mathbf{P}}^{r}$ is the polynomial (\ref{Ec_Definicion de Pk}) and it is evaluated in $\alpha_{\scriptscriptstyle{p_{i} , 1}}, \ldots, \alpha_{\scriptscriptstyle{p_{i} , r}}$.\\
    \item [b)] $\tilde{\alpha}_{\scriptscriptstyle{{q_{j}} , r}}=\tfrac{-1}{r!}\,\hat{\mathbf{P}}^{r} [s! \tilde{\alpha}_{\scriptscriptstyle{{q_{j}} , s}}; 1, (s+1)!  \alpha_{\scriptscriptstyle{{q_{j}} , s+1}}]^{r-1}_{s =1}$, where $\hat{\mathbf{P}}^{r}$ is again the polynomial (\ref{Ec_Definicion de Pk}) and it is evaluated in $\alpha_{\scriptscriptstyle{q_{j} , 1}}, \ldots, \alpha_{\scriptscriptstyle{q_{j} , r}}$.
\end{itemize}

By (\ref{Ec_Formula de FaaDiBruno con polinomios}) it can be shown that the coefficients of $x^{k}$ in the compositions $\beta_{p_{i}} \circ \alpha_{p_{i}}^{-1}$ and $\beta_{q_{j}} \circ \alpha_{q_{j}}^{-1}$ are, respectively, 

\begin{equation}\label{Pageref_Pik_i_j}
\mathbf{c}_{\scriptscriptstyle{p_{i}, k}}\,  
= \beta_{\scriptscriptstyle{p_{i} , k}} 
\, + \, \tfrac{1}{k!}\, \hat{\mathbf{P}}^{k} \bigl[r! \, \beta_{\scriptscriptstyle{p_{i} , r}} ; 1,
(r+1)! \, \tilde{\alpha}_{\scriptscriptstyle{{p_{i}} , r+1}}\bigl]^{k-1}_{r =1} \, , 
\end{equation}

\begin{equation}   \label{Pageref_Pik_i_j_bis} 
\mathbf{c}_{\scriptscriptstyle{q_{j}, k}}\,  
 = \beta_{\scriptscriptstyle{q_{j} , k}} 
\, + \, \tfrac{1}{k!} \,\hat{\mathbf{P}}^{k} \bigl[r! \, \beta_{\scriptscriptstyle{q_{j} , r}} ; 1,
(r+1)! \, \tilde{\alpha}_{\scriptscriptstyle{{q_{j}} , r+1}}\bigl]^{k-1}_{r =1} \, .
\end{equation}

Since the polynomial $\hat{\mathbf{P}}^{1}$ is identically zero, the coefficients $\mathbf{c}_{\scriptscriptstyle{p_{i}, 1}}$ and $\mathbf{c}_{\scriptscriptstyle{q_{j}, 1}}$ are given by
\begin{equation*}
\mathbf{c}_{\scriptscriptstyle{p_{i}, 1}} =\beta_{\scriptscriptstyle{p_{i} , 1}} \quad \text{and} \quad \mathbf{c}_{\scriptscriptstyle{q_{j}, 1}} = \beta_{\scriptscriptstyle{q_{j} , 1}}\,.
\end{equation*}

For $2 \leqslant k  \leqslant k_{0}$, Lemma \ref{Lema_ParamImpli_Polar_pi} implies that the coefficients $\beta_{\scriptscriptstyle{p_{i} , r}}$ and $\alpha_{\scriptscriptstyle{p_{i} , r+1}}$, for $1 \leqslant r \leqslant k-1$, are obtained by the evaluation at $p_{i}$ of holomorphic transformations that are, in turn, obtained from functional transformations that are evaluated in the holomorphic transformation $\varepsilon_{\scriptscriptstyle{{i} , k}}$ (local information depending on $\tilde{\F}_{\scriptscriptstyle \mu}$ at $(0,p_{i})$), the complex numbers $\lambda_{\imb}$, $s_{\scriptscriptstyle{{\imb} , l}}$, $z_{\scriptscriptstyle{{\jmb} , l}}$, the application $g_{\jmb}$ (local information depending on the foliation pair  $(\tilde{\F}, \tilde{\G})$), and on the coefficients $\varepsilon_{\scriptscriptstyle{{\imb} , l}}$, $\varsigma_{\scriptscriptstyle{{\jmb} , l}}$ of $\varepsilon_{\imb}$, $\varsigma_{\jmb}$ (local information depending on $\tilde{\F}_{\scriptscriptstyle \mu}$), for $1 \leqslant l \leqslant k-1$, with respect to all the singular points $(0, p_{\imb})$, $\imb=1,\dots, n+1$ and tangency points $(0, q_{\jmb})$, $\jmb=1, \dots, m$. 

Analogously, for $2 \leqslant k  \leqslant k_{0}$, Lemma \ref{Lema_ParamImpli_Polar_qj} implies that the coefficients $\beta_{\scriptscriptstyle{q_{j} , r}}$, $\alpha_{\scriptscriptstyle{q_{j} , r+1}}$, for $1 \leqslant r \leqslant k-1$, are obtained by the evaluation at the corresponding point $q_{j}$  of holomorphic transformations that are, in turn, obtained from functional transformations that are evaluated in the holomorphic transformation $\varsigma_{\scriptscriptstyle{ j \, , k}}$ (local information depending on $\tilde{\F}_{\scriptscriptstyle \mu}$ at the point $(0,q_{j})$), the complex numbers $\lambda_{\imb}$, $s_{\scriptscriptstyle{{\imb} , l}}$, $z_{\scriptscriptstyle{{\jmb} , l}}$, the function $g_{\jmb}$, and the coefficients $\varepsilon_{\scriptscriptstyle{{\imb} , l}}$, $\varsigma_{\scriptscriptstyle{{\jmb} , l}}$, for $1 \leqslant l \leqslant k-1$, with respect to all the singular points $(0, p_{\imb})$ and tangency points $(0, q_{\jmb})$. 
The proof of Theorem \ref{Teorema_Parametrizacion_Polar_pi_qj} is finished.


\subsection{Proof of Lemma \ref{Lema_ParamImpli_Polar_qj} and Lemma \ref{Lema_ParamImpli_Polar_pi}}
\label{Demostracion_Lema_ParamImpli_Polar_qj}
In this subsection we prove Lemma \ref{Lema_ParamImpli_Polar_qj} and Lemma \ref{Lema_ParamImpli_Polar_pi}. For this purpose we begin with a remark, already used in several statements, that involves the importance of looking to the principal and regular parts of some meromorphic expressions.


\begin{remark}
\label{Obs_PartePrincipal y ParteRegular en cocientes}
Let $f, h \colon D \rightarrow \C$ be holomorphic functions defined in the open disk $D \subseteq \C$ with center at $u_{0}$. Assume that 
\begin{equation*}
    h(u_{0})=0\,\quad\text{and}\quad h^{(1)}(u_{0})\neq 0.
\end{equation*}
Then the meromorphic map has a simple pole at $u_{0}$, and the principal and regular part of the quotient $\frac{f}{h}$ at $u_0$ is given by
\begin{equation} \label{ec: polos simples}
\Bigl(\tfrac{f}{h}\Bigl)_{\scriptscriptstyle{\CP, u_{0}}} 
= \tfrac{f(u_{0})}{h^{(1)}(u_{0})} 
\, \tfrac{1}{u - u_{0}} \, ,
\hspace{1cm}
\Bigl(\tfrac{f}{h}\Bigl)_{\scriptscriptstyle{\mathcal{R}, u_{0}}} (u_{0})
= \tfrac{f^{(1)}(u_{0})}{h^{(1)}(u_{0})} 
\, - \, \tfrac{f(u_{0})}{2\bigl(h^{(1)}(u_{0})\bigl)^{2}} \, h^{(2)}(u_{0}) \, .
\end{equation}

This expressions are useful when one considers the quotient 
\begin{equation*}
    \frac{c}{((\varsigma_{\scriptscriptstyle{{\jmb} , 1}})^{r}\, g^{(1)}_{\jmb})}\,,
\end{equation*}
where $c$ is a complex number, the transformation $\varsigma_{\scriptscriptstyle{{\jmb} , 1}}$ as defined in  (\ref{Ec_Series de Phi y Zeta}) (in particular, $\varsigma_{\scriptscriptstyle{{\jmb} , 1}}(q_{\jmb}) = 1$), and the function $g_{\jmb}(u)=(q_{\jmb} - u)(I_{\jmb}(u) - q_{\jmb})$, where $I_{\jmb}$ is the non trivial involution centered at $q_{\jmb}$. Then 
$g_{\jmb}^{(1)}(q_{\jmb}) = 0$ and $g_{\jmb}^{(2)}(q_{\jmb}) = 2$, since  $I_{\jmb}^{(1)}(q_{\jmb}) = -1$. 
Therefore, by (\ref{ec: polos simples}), the following equalities take place  
\begin{equation*}
\begin{split}
\Bigl(\tfrac{c}{(\varsigma_{\scriptscriptstyle{{\jmb} , 1}})^{r} \, g^{(1)}_{\jmb}}\Bigl)_{\scriptscriptstyle{\CP, q_{\jmb}}} 
& = \tfrac{c}{(\varsigma_{\scriptscriptstyle{{\jmb} , 1}} (q_{\jmb}))^{r} \, g^{(2)}_{\jmb}(q_{\jmb})}  \, \tfrac{1}{u - q_{\jmb}} 
= \tfrac{c}{2(u - q_{\jmb})} \, , 
\\ \Bigl(\tfrac{c}{(\varsigma_{\scriptscriptstyle{\jmb \, , 1}})^{r} \, g^{(1)}_{\jmb}}\Bigl)_{\scriptscriptstyle{\mathcal{R}, q_{\jmb}}} (q_{\jmb}) 
& = - \tfrac{r \, c}{(\varsigma_{\scriptscriptstyle{\jmb \, , 1}}(q_{\jmb}))^{r+1}}
 \, \tfrac{(\varsigma_{\scriptscriptstyle{\jmb \, , 1}})^{(1)}(q_{\jmb})}{g_{\jmb}^{(2)}(q_{\jmb})} 
\, - \, \tfrac{c}{(\varsigma_{\scriptscriptstyle{\jmb \, , 1}}(q_{\jmb}))^{r}} 
\, \tfrac{g_{\jmb}^{(3)}(q_{\jmb})}{2 (g_{\jmb}^{(2)}(q_{\jmb}))^{2}}
\\ & = -\tfrac{c}{2}\bigl(r \, (\varsigma_{\scriptscriptstyle{\jmb \, , 1}})^{(1)}(q_{\jmb}) + \tfrac{g_{\jmb}^{(3)}(q_{\jmb})}{4}\bigl) \, .
\end{split}
\end{equation*}
\end{remark}

The following lemma is a direct consequence of Remark \ref{Obs_Aplicaciones_ak+1_bk+1_como funcionales} and Remark \ref{Obs_PartePrincipal y ParteRegular en cocientes}.

\begin{lemma}
\label{Lema_Descripcion de bAst}
Let $b_{\scriptscriptstyle{\mathbf{n}, 1}}$ and $b_{\scriptscriptstyle{\mathbf{n}, k}}$, $k \geqslant 2$, be the maps defined in ($b^{1}$) and ($b^{k+1}$), respectively, in Proposition \ref{Prop:soluciones canonicas}. Then 
\begin{equation*}
\begin{split}
b_{\scriptscriptstyle{\mathbf{n}, 1}}  
 = - & \sum_{\jmb = 1}^{m} \tfrac{z_{\scriptscriptstyle{{\jmb} , 1}}}{2(u - q_{\jmb})} \, ,
\hspace{0.7cm}
b_{\scriptscriptstyle{\mathbf{n}, k}} - \mathbf{b}_{\scriptscriptstyle \mathbf{n}, k} 
= - \sum_{\jmb = 1}^{m} \tfrac{z_{\scriptscriptstyle{{\jmb} , k}}}{2(u - q_{\jmb})} \, ,
\end{split}
\end{equation*}
where $\mathbf{b}_{\scriptscriptstyle \mathbf{n}, k} = \mathbf{b}_{\scriptscriptstyle \mathbf{n}, k}[(\lambda_{\imb}, s_{\scriptscriptstyle{{\imb} , r}}, 
z_{\scriptscriptstyle{{\jmb} , r}}, g_{\jmb}; 
\varepsilon_{\scriptscriptstyle{{\imb} , r}}, \varsigma_{\scriptscriptstyle{{\jmb} , r}})_{\imb, \jmb}]_{r=1}^{k-1}$ is a holomorphic function in the punctured sphere   $\mathcal{L} \smallsetminus (\p, \q)$ (i.e., it extends analytically to $u = \infty$), which is, in turn, the image of a functional application  $\mathbf{b}_{\scriptscriptstyle \mathbf{n}, k}$ evaluated in the complex numbers   $\lambda_{\imb}$, $s_{\scriptscriptstyle{{\imb} , r}}$, $z_{\scriptscriptstyle{{\jmb} , r}}$, in the function  $g_{\jmb}$ (local information of the foliation pair  $(\tilde{\F}, \tilde{\G})$), and the coefficients $\varepsilon_{\scriptscriptstyle{{\imb} , r}}$, $\varsigma_{\scriptscriptstyle{{\jmb} , r}}$ of $\varepsilon_{\imb}$, $\varsigma_{\jmb}$ (local information of $\tilde{\F}_{\scriptscriptstyle \mu}$), for $1 \leqslant r \leqslant k-1$, with respect to every singular point  $(0, p_{\imb})$ and every tangency point $(0, q_{\jmb})$. 
\end{lemma}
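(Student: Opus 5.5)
We read off both identities directly from the explicit formulas $(b^{1})$ and $(b^{k+1})$ of Proposition \ref{Prop:soluciones canonicas}. We compute the principal parts with Remark \ref{Obs_PartePrincipal y ParteRegular en cocientes}. We then use Remark \ref{Obs_Aplicaciones_ak+1_bk+1_como funcionales} to control the dependence of everything else on the data of order at most $k-1$.

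\emph{Step 1 ($k=1$).} By $(b^{1})$, $b_{\scriptscriptstyle \mathbf{n},1}$ is minus the sum of the principal parts at $q_{\jmb}$ of $z_{\scriptscriptstyle{\jmb,1}}/(\varsigma_{\scriptscriptstyle{\jmb,1}}\,g^{(1)}_{\jmb})$. Since $g_{\jmb}^{(1)}(q_{\jmb})=0$ and $g_{\jmb}^{(2)}(q_{\jmb})=2$ (because $I_{\jmb}^{(1)}(q_{\jmb})=-1$), and since $\varsigma_{\scriptscriptstyle{\jmb,1}}(q_{\jmb})=1$, we apply Remark \ref{Obs_PartePrincipal y ParteRegular en cocientes} with $c=z_{\scriptscriptstyle{\jmb,1}}$ and $r=1$. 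This shows the principal part equals $z_{\scriptscriptstyle{\jmb,1}}/(2(u-q_{\jmb}))$. Summing over $\jmb$ gives the first formula. It is holomorphic at $u=\infty$, since each summand vanishes there.

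\emph{Step 2 ($k\geq 2$).} We write $k=k'+1$ and split the $q_{\jmb}$-sum in $(b^{k'+1})$ into the term containing $z_{\scriptscriptstyle{\jmb,k}}$ and the rest. The same computation, now with $c=z_{\scriptscriptstyle{\jmb,k}}$ and $r=k$, gives $\bigl(z_{\scriptscriptstyle{\jmb,k}}/((\varsigma_{\scriptscriptstyle{\jmb,1}})^{k}g^{(1)}_{\jmb})\bigr)_{\CP,q_{\jmb}}=z_{\scriptscriptstyle{\jmb,k}}/(2(u-q_{\jmb}))$. We then define $\mathbf{b}_{\scriptscriptstyle \mathbf{n},k}$ as minus the sum of the principal parts of $\mathbf{B}_{\mathbf{\p},k}/(\varepsilon_{\imb,1})^{k}$ at the $p_{\imb}$, minus the sum of the principal parts of $\mathbf{B}_{\mathbf{\q},k}/(\varsigma_{\scriptscriptstyle{\jmb,1}})^{k}$ at the $q_{\jmb}$. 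With this definition, $b_{\scriptscriptstyle \mathbf{n},k}-\mathbf{b}_{\scriptscriptstyle \mathbf{n},k}=-\sum_{\jmb}z_{\scriptscriptstyle{\jmb,k}}/(2(u-q_{\jmb}))$ holds identically.

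\emph{Step 3 (properties of $\mathbf{b}_{\scriptscriptstyle \mathbf{n},k}$).} $\mathbf{b}_{\scriptscriptstyle \mathbf{n},k}$ is a finite sum of principal parts at the points of $(\p,\q)$. Hence it is rational, holomorphic off $(\p,\q)$, and vanishes at $u=\infty$, so it is holomorphic on $\mathcal{L}\smallsetminus(\p,\q)$.

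For the dependence, Lemmas \ref{lem: series TN por ecFac en pi} and \ref{lem: series TN por ecFac en qj} show that $\mathbf{B}_{\mathbf{\p}_{\imb},k}$ and $\mathbf{B}_{\mathbf{\q}_{\jmb},k}$ involve only the following data of order at most $k-1$:
\begin{itemize}
\item the numbers $\lambda_{\imb}$, $s_{\scriptscriptstyle{\imb,r}}$, $z_{\scriptscriptstyle{\jmb,r}}$;
\item the function $g_{\jmb}$;
\item the coefficients $\varepsilon_{\scriptscriptstyle{\imb,r}}$, $\varsigma_{\scriptscriptstyle{\jmb,r}}$;
\item the previously normalized coefficients $a_{\scriptscriptstyle \mathbf{n},r}$, $b_{\scriptscriptstyle \mathbf{n},r}$ with $r\leq k-1$.
\end{itemize}
We argue by induction on $k$, using Step 1 and Remark \ref{Obs_Aplicaciones_ak+1_bk+1_como funcionales} (and the analogous statement for $a_{\scriptscriptstyle \mathbf{n},r}$). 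This shows that the $a_{\scriptscriptstyle \mathbf{n},r}$ and $b_{\scriptscriptstyle \mathbf{n},r}$ with $r\leq k-1$ are themselves functionals of the listed data with indices at most $k-1$. Substituting them expresses $\mathbf{b}_{\scriptscriptstyle \mathbf{n},k}$ as a functional of exactly the claimed arguments.

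\emph{Main obstacle.} The delicate point is Step 3. We must check that no index-$k$ datum, such as $z_{\scriptscriptstyle{\jmb,k}}$, $s_{\scriptscriptstyle{\imb,k}}$ or $\varepsilon_{\scriptscriptstyle{\imb,k}}$, enters $\mathbf{B}_{\mathbf{\p},k}$ or $\mathbf{B}_{\mathbf{\q},k}$. The bracket notation $[\cdots]^{k-1}_{r=1}$ in the lemmas asserts this, and the term $\varepsilon_{\scriptscriptstyle{\imb,k}}b_{\scriptscriptstyle \mathbf{n},1}$ appears outside $\mathbf{B}$ in $(b^{k})$ and is not part of $b_{\scriptscriptstyle \mathbf{n},k}$. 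We would verify this by a brief look at the Faà di Bruno structure (Remark \ref{Obs_Sobre la suma en el Teo_FaaDiBruno}), where top-order terms appear only linearly and have been separated out.
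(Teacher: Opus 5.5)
Your proposal is correct and takes the same route as the paper, which simply states the lemma as a direct consequence of Remark \ref{Obs_PartePrincipal y ParteRegular en cocientes} (the principal part at $q_{\jmb}$ is $\tfrac{z_{\jmb,k}}{2(u-q_{\jmb})}$) and Remark \ref{Obs_Aplicaciones_ak+1_bk+1_como funcionales} (the remaining terms of $(b^{k})$ depend only on data of order at most $k-1$). Your Steps 1--3 are a more detailed write-up of that argument, and the ``main obstacle'' you flag is already covered by the bracket dependence in Lemmas \ref{lem: series TN por ecFac en pi} and \ref{lem: series TN por ecFac en qj}, so no separate Fa\`a di Bruno check is needed.
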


\begin{proof}[Proof of Lemma \ref{Lema_ParamImpli_Polar_qj}]
By definition  (see ($\CP_{p_{i}}$) and ($\CP_{q_{j}}$))
\begin{equation*} 
\alpha_{q_{j}} (x)
= \sum_{r = 1}^{k_{0}} a_{\scriptscriptstyle{{q_{j}} , r}}(q_{j}) x^{r} 
+ \text{O} (x^{k_{0} + 1}) \, ,
\hspace{0.8cm} 
\beta_{q_{j}} (x)
= \sum_{r = 1}^{k_{0}} b_{\scriptscriptstyle{q_{j} , r}}(q_{j}) x^{r} 
+ \text{O} (x^{k_{0} + 1}) \, ;
\end{equation*}
therefore, $\alpha_{\scriptscriptstyle{q_{j} , r}} = a_{\scriptscriptstyle{{q_{j}} , r}} (q_{j})$, $\beta_{\scriptscriptstyle{q_{j} , r}} = b_{\scriptscriptstyle{q_{j} , r}}(q_{j})$, for $1 \leqslant r \leqslant k_{0}$. 

We begin by analyzing the coefficients of the transformation  $\alpha_{q_{j}}$.
From ($a^{1}$) in Proposition \ref{Prop:soluciones canonicas} and from (\ref{Ec_Series de Phi y Zeta}) we know that $a_{\scriptscriptstyle{{q_{j}} , 1}} (q_{j})= \varsigma_{\scriptscriptstyle{j, 1}}(q_{j})=1$.
By Remark \ref{Obs_Aplicaciones_ak+1_bk+1_como funcionales}, for  $2 \leqslant k \leqslant k_{0}$,
\begin{equation*}
\begin{split}
\alpha_{\scriptscriptstyle{q_{j} , k}}
= a_{\scriptscriptstyle{q_{j} , k}} (q_{j})
& = \mathbf{a}_{\scriptscriptstyle{q_{j} , k}}[\varsigma_{\scriptscriptstyle{ j \, , k}}; \, (\lambda_{\imb}, s_{\scriptscriptstyle{{\imb} , r}}, 
z_{\scriptscriptstyle{{\jmb} , r}}, g_{\jmb}; 
\varepsilon_{\scriptscriptstyle{{\imb} , r}}, \varsigma_{\scriptscriptstyle{{\jmb} , r}})_{\imb, \jmb}]_{r=1}^{k-1} (q_{j}) \, ,
\end{split}
\end{equation*}
where $\mathbf{a}_{\scriptscriptstyle{q_{j} , k}}$ is a functional transformation satisfying the properties in the statement of Lemma \ref{Lema_ParamImpli_Polar_qj}.

For the coefficients of the transformation $\beta_{q_{j}}$, we recall that from ($b^{1}$) in Proposition \ref{Prop:soluciones canonicas} we know that 
\begin{equation*}
    b_{\scriptscriptstyle{q_{j} , 1}} 
 := \tfrac{z_{\scriptscriptstyle{j, 1}}}{g^{(1)}_{j}} 
\, + \,   \varsigma_{\scriptscriptstyle{j, 1}} \, b_{\scriptscriptstyle \mathbf{n}, 1} \, .
\end{equation*}
Thus,

\begin{equation*}
       b_{\scriptscriptstyle{q_{j} , 1}} 
 = \varsigma_{\scriptscriptstyle{j, 1}} \Bigl(\tfrac{z_{\scriptscriptstyle{j, 1}}}{\varsigma_{\scriptscriptstyle{j, 1}} g^{(1)}_{j}}  
\, + \,  b_{\scriptscriptstyle \mathbf{n}, 1}\Bigl) \, .
\end{equation*} 
By Lemma \ref{Lema_Descripcion de bAst}, it follows that the coefficient $\beta_{\scriptscriptstyle{q_{j} , 1}}$ is given by

\begin{equation*}
\beta_{\scriptscriptstyle{q_{j} , 1}} =        b_{\scriptscriptstyle{q_{j} , 1}}  (q_{j}) 
 = \Bigl(\varsigma_{\scriptscriptstyle{j, 1}} \Bigl(\tfrac{z_{\scriptscriptstyle{j, 1}}}{\varsigma_{\scriptscriptstyle{j, 1}} g^{(1)}_{j}}  
\,   - \sum_{\jmb = 1}^{m} \tfrac{z_{\scriptscriptstyle{{\jmb} , k}}}{2(u - q_{\jmb})} \Bigl) \Bigl)\,  (q_{j}) \, ,
\end{equation*}
 Hence, since $\varsigma_{\scriptscriptstyle{j, 1}} (q_{j}) = 1$ (by (\ref{Ec_Series de Phi y Zeta})), and by Remark  \ref{Obs_PartePrincipal y ParteRegular en cocientes},  we have

\begin{equation*}
\beta_{\scriptscriptstyle{q_{j} , 1}} =        b_{\scriptscriptstyle{q_{j} , 1}}  (q_{j}) 
 =   -\tfrac{z_{\scriptscriptstyle{j,1}} }{2}\Bigl((\varsigma_{\scriptscriptstyle{j, 1}})^{(1)}(q_{j}) + \tfrac{g_{j}^{(3)}(q_{j})}{4}\Bigl) 
-  \sum_{\jmb \neq j}^{m} \tfrac{z_{\scriptscriptstyle{{\jmb} , 1}}}{2(q_{j} - q_{\jmb})} \, .
\end{equation*}

For $2 \leqslant k \leqslant k_{0}$,

\begin{equation}\label{coeficientes betaqjk}
\begin{split}
b_{\scriptscriptstyle{q_{j} , k}}
- \mathbf{b}_{\scriptscriptstyle{q_{j} , k}} 
& = \tfrac{z_{\scriptscriptstyle{j, k}}}{g_{j}^{(1)}}
 - ( \varsigma_{\scriptscriptstyle{j, 1}} )^{k} \sum_{\jmb = 1}^{m} \Bigl(\tfrac{z_{\scriptscriptstyle{{\jmb} , k}}}{(\varsigma_{\scriptscriptstyle{{\jmb} , 1}})^{k} g_{\jmb}^{(1)}} \Bigl)_{\CP, q_{\jmb}}
\\ & = ( \varsigma_{\scriptscriptstyle{j, 1}} )^{k} \biggl(\Bigl(\tfrac{z_{\scriptscriptstyle{j, k}}}{( \varsigma_{\scriptscriptstyle{j, 1}} )^{k} \, g_{j}^{(1)}}\Bigl)_{\mathcal{R}, q_{\jmb}}
 -  \sum_{\jmb \neq j}^{m} \Bigl(\tfrac{z_{\scriptscriptstyle{{\jmb} , k}}}{(\varsigma_{\scriptscriptstyle{{\jmb} , 1}})^{k} g_{\jmb}^{(1)}} \Bigl)_{\CP, q_{\jmb}}\biggl)
  \, ,
\end{split}
\end{equation}
where $\mathbf{b}_{\scriptscriptstyle{q_{j} , k}}$ is a functional transformation satisfying the properties in the statement of Lemma \ref{Lema_ParamImpli_Polar_qj}. Since   $ \varsigma_{\scriptscriptstyle{j, 1}} (q_{j}) = 1$ and $\beta_{\scriptscriptstyle{q_{j} , k}}  = b_{\scriptscriptstyle{q_{j} , k}} (q_{j})$, the evaluation of (\ref{coeficientes betaqjk}) at $q_{j}$ imply, by Remark \ref{Obs_PartePrincipal y ParteRegular en cocientes}, the equalities
\begin{equation*}
\begin{split}
\beta_{\scriptscriptstyle{q_{j} , k}}
- \mathbf{b}_{\scriptscriptstyle{q_{j} , k}} (q_{j})
= & \Bigl(\tfrac{z_{\scriptscriptstyle{j, k}}}{( \varsigma_{\scriptscriptstyle{j, 1}} )^{k} \, g_{j}^{(1)}} \Bigl)_{\mathcal{R}, q_{\jmb}}  (q_{j})
 - \sum_{\jmb \neq j}^{m} \Bigl(\tfrac{z_{\scriptscriptstyle{{\jmb} , k}}}{(\varsigma_{\scriptscriptstyle{{\jmb} , 1}})^{k} g_{\jmb}^{(1)}}\Bigl)_{\CP, q_{\jmb}} (q_{j})
\\ = - & \tfrac{z_{\scriptscriptstyle{j, k}}}{2} \Bigl(k (\varsigma_{\scriptscriptstyle{j, 1}})^{(1)}(q_{j})  + \tfrac{g_{j}^{(3)}(q_{j})}{4}\Bigl) 
- \sum_{\jmb \neq j}^{m} \tfrac{z_{\scriptscriptstyle{{\jmb} , k}}}{2(q_{j} - q_{\jmb})} \, .
\end{split}
\end{equation*}
Lemma \ref{Lema_ParamImpli_Polar_qj} is proved.

\end{proof}

\label{Demostracion_Lema_ParamImpli_Polar_pi}
\begin{proof}[Proof of Lemma \ref{Lema_ParamImpli_Polar_pi}]
We asume that $(\F, \G) \in \mathcal{N}_{\p} (\mathbf{h}) \times \mathcal{D}_{\q} (\mathcal{I})$ is a foliation pair whose normalizing transformations $\mathbf{H}_{\scriptscriptstyle \mathbf{n}} $, $H_{p_{i}}$, $H_{q_{j}}$ with respect to the foliation pair $(\F_{\scriptscriptstyle \mathbf{\mu}}, \G_{\scriptscriptstyle \mathbf{r}})$ have the $k_{0}$--jet of its power series expansion ($\mathbf{nt_{k_{0}}}$) as given in Theorem \ref{Teorema_Limpieza de TransNorm_Intro_1}. 

Let $\CP_{p_{i}} (\tilde{\F}, \tilde{\G})$  be the corresponding curve of tangencies in the charts $(x,u)$
\begin{equation*}
    \CP_{p_{i}} (\tilde{\F}, \tilde{\G})= H_{p_{i}} (x, p_{i} + \tilde{s}_{i} (x))\,,
\end{equation*}
 where $\tilde{s}_{i} (x):= - \lambda_{i} x s_{i}^{(1)} (x)$ at the singular point, $(0,p_{i})$.

We asume as well that $(\alpha_{p_{i}} (x), p_{i} + \beta_{p_{i}} (x))$ represents the implicit parametrization of the curve of tancencies  $\CP_{p_{i}} (\tilde{\F}, \tilde{\G})$ as in ($\CP_{p_{i}}$). Such parametrization is expressed in terms of the series
\begin{equation}\label{series implicitas de polar pi}
\begin{split}
\sum_{r = 1}^{k_{0}} a_{\scriptscriptstyle{p_{i} , r}} (p_{i} + \tilde{s}_{i} (x)) x^{r} 
\, + \,\text{O} (x^{k_{0}+1}) \, ,
\hspace{0.3cm}
\sum_{r = 1}^{k_{0}} b_{\scriptscriptstyle{p_{i} , r}}(p_{i} + \tilde{s}_{i} (x)) x^{r} 
\, + \, \text{O} (x^{k_{0}+1}) \, , 
\end{split} 
\end{equation}
where $\tilde{s}_{i}(x) = -\lambda_{i} \textstyle\sum_{r \geqslant 1} r s_{\scriptscriptstyle{{i} , r}} x^{r}$. 

For $1 \leqslant k \leqslant k_{0}$, 
the coefficients corresponding to the monomial $x^k$ in the power series expansion (\ref{series implicitas de polar pi}) are given, respectively by

\begin{equation}\label{ec: Pageref_Pk en ar y br}
a_{\scriptscriptstyle{p_{i} , k}} (p_{i}) 
\, + \, \mathsf{P}^{k}[a_{\scriptscriptstyle{p_{i} , r}}; s_{\scriptscriptstyle{{i} , r}}]_{r=1}^{k-1} (p_{i}) \, ,
\hspace{0.8cm}
b_{\scriptscriptstyle{p_{i} , k}} (p_{i}) 
\, + \, \mathsf{P}^{k}[b_{\scriptscriptstyle{p_{i} , r}}; s_{\scriptscriptstyle{{i} , r}}]_{r=1}^{k-1} (p_{i}) \, ,
\end{equation}
where $\mathsf{P}^{k}[a_{\scriptscriptstyle{p_{i} , r}}; s_{\scriptscriptstyle{{i} , r}}]_{r=1}^{k-1}$ is a holomorphic transformation in $D_{p_{i}}$ obtained by the evaluation of a functional transformation $\mathsf{P}^{k}$ in the coefficients $a_{\scriptscriptstyle{p_{i} , r}}$ and the complex numbers $s_{\scriptscriptstyle{{i} , r}}$, and 
  $\mathsf{P}^{k}[b_{\scriptscriptstyle{p_{i} , r}}; s_{\scriptscriptstyle{{i} , r}}]_{r=1}^{k-1}$
is the same functional transformation $\mathsf{P}^{k}$ evaluated in the coefficients $b_{\scriptscriptstyle{p_{i} , r}}$ and the complex numbers $s_{\scriptscriptstyle{{i} , r}}$, for $1 \leqslant r \leqslant k-1$. The functional transformation $\mathsf{P}^{k}$ is constructed by Fa\`a di Bruno formula (\ref{Ec_Formula de FaaDiBruno con polinomios}), in terms of the polinomials $\mathbf{P}^{1}, \ldots, \mathbf{P}^{k-1}$. In particular, $\mathsf{P}^{1}$ vanishes identically.

\begin{itemize}
    \item [a)] Coefficients of $\alpha_{p_{i}}=\textstyle \sum_{r = 1}^{k_{0}} a_{\scriptscriptstyle{p_{i} , r}} (p_{i} 
+ \tilde{s}_{i} (x)) x^{r} \, 
+ \,\text{O} (x^{k_{0}+1})$.

By the expressions ($a^{1}$) in Proposition \ref{Prop:soluciones canonicas} we know that $a_{\scriptscriptstyle{p_{i} , 1}}=\varepsilon_{\scriptscriptstyle{{i} , 1}}$, and by \eqref{Ec_Series de Psi y Xi}, its evaluation at  $p_{i}$ satisfies
\begin{equation}\label{ec: apien pi es uno}
    a_{\scriptscriptstyle{p_{i} , 1}}(p_{i})=\varepsilon_{\scriptscriptstyle{{i} , 1}}(p_{i})=1\,.
\end{equation}
Moreover, by Remark \ref{Obs_Aplicaciones_ak+1_bk+1_como funcionales}, for $r \geqslant 2$, the holomorphic transformation $a_{\scriptscriptstyle{p_{i} , r}}$ is the image of a  functional transformation evaluated in $\varepsilon_{\scriptscriptstyle{{i} , r}}$, the complex numbers $\lambda_{\imb}$, $s_{\scriptscriptstyle{{\imb} , l}}$, $z_{\scriptscriptstyle{{\jmb} , l}}$, the transformation  $g_{\jmb}$, and the coefficients $\varepsilon_{\scriptscriptstyle{{\imb} , l}}$, $\varsigma_{\scriptscriptstyle{{\jmb} , l}}$, with $1 \leqslant l \leqslant r-1$, and with respect to all the singularity points $(0, p_{\imb})$, $\imb=1,\dots,n+1$, and the tangency points $(0, q_{\jmb})$, $\jmb=1,\dots,m$.

Since $\alpha_{p_{i}}=\textstyle \sum_{r = 1}^{k_{0}} a_{\scriptscriptstyle{p_{i} , r}} (p_{i} 
+ \tilde{s}_{i} (x)) x^{r} \, 
+ \,\text{O} (x^{k_{0}+1})$, using (\ref{ec: apien pi es uno}) and the expressions (\ref{ec: Pageref_Pk en ar y br}), it follows that
\begin{equation*}
\begin{split}
\alpha_{\scriptscriptstyle{p_{i} , k}}
& = \mathbf{a}_{\scriptscriptstyle{p_{i} , k}}[\varepsilon_{\scriptscriptstyle{{i} , k}}; \, (\lambda_{\imb}, s_{\scriptscriptstyle{{\imb} , r}}, 
z_{\scriptscriptstyle{{\jmb} , r}}, g_{\jmb}; 
\varepsilon_{\scriptscriptstyle{{\imb} , r}}, \varsigma_{\scriptscriptstyle{{\jmb} , r}})_{\imb, \jmb}]_{r=1}^{k-1} \, (p_{i})\,,
\end{split}
\end{equation*}
where $\mathbf{a}_{\scriptscriptstyle{p_{i} , k}}$ is a functional transformation that satisfies the properties stated  in Lemma \ref{Lema_ParamImpli_Polar_pi}.


\item[b)] Coefficients of $\beta_{p_i} = \tilde{s}_{i} (x) + \sum_{r = 1}^{k_{0}} b_{\scriptscriptstyle{p_{i} , r}}(p_{i} + \tilde{s}_{i} (x)) x^{r} 
\, + \, \text{O} (x^{k_{0}+1})$.

By the expressions ($b^{1}$) in Proposition \ref{Prop:soluciones canonicas}
we know that $b_{\scriptscriptstyle{p_{i} , 1}}=s_{\scriptscriptstyle{i,1}}  + \varepsilon_{\scriptscriptstyle{{i} , 1}}b_{\scriptscriptstyle{\mathbf{n}, 1}}  $, and by Lemma \ref{Lema_Descripcion de bAst}, 
\begin{equation}\label{ec: bpi1 en pi}
    b_{\scriptscriptstyle{p_{i} , 1}}(p_{i})=\Bigl( s_{\scriptscriptstyle{i,1}}  
- \varepsilon_{\scriptscriptstyle{{i} , 1}}  \sum_{\jmb = 1}^{m} \frac{z_{\scriptscriptstyle{{\jmb} , 1}}}{2(u - q_{\jmb})}\Bigl) (p_{i})\,.
\end{equation}

Moreover, if $r \geqslant 2$, by Remark \ref{Obs_Aplicaciones_ak+1_bk+1_como funcionales} and Remark \ref{Obs_PartePrincipal y ParteRegular en cocientes}, the transformation
\begin{equation*}
\begin{split}
b_{\scriptscriptstyle{p_{i} , r}} 
- s_{\scriptscriptstyle{i,r}} 
+ ( \varepsilon_{\scriptscriptstyle{i, 1}} )^{r} \sum_{\jmb =1}^{m}  \tfrac{z_{\scriptscriptstyle{{\jmb} , r}}}{2(u-q_{\jmb})}
\, =  \, b_{\scriptscriptstyle{p_{i} , r}} 
- s_{\scriptscriptstyle{i,r}} 
+ ( \varepsilon_{\scriptscriptstyle{i, 1}} )^{r} \sum_{\jmb =1}^{m} \Bigl(  \tfrac{z_{\scriptscriptstyle{{\jmb} , r}}}{(\varsigma_{\scriptscriptstyle{{\jmb} , 1}})^{r} g_{\jmb}^{(1)}} \Bigl)_{\CP, q_{\jmb}} 
\end{split}
\end{equation*}
is obtained once more by the evaluation of a functional application in $\varepsilon_{\scriptscriptstyle{{i} , r}}$, the complex numbers $\lambda_{\imb}$, $s_{\scriptscriptstyle{{\imb} , l}}$, $z_{\scriptscriptstyle{{\jmb} , l}}$, the transformation $g_{\jmb}$, and the coefficients $\varepsilon_{\scriptscriptstyle{{\imb} , l}}$, $\varsigma_{\scriptscriptstyle{{\jmb} , l}}$ of $\varepsilon_{\imb}$ and $\varsigma_{\jmb}$, for  $1 \leqslant l \leqslant r-1$, and with respect to all the singularities $(0, p_{\imb})$, $\imb= 1\dots,n+1$ and all the tangencies  $(0, q_{\jmb})$, $\jmb=1,\dots,m$.

By ($\CP_{p_{i}}$) we know that 
\begin{equation*}
    \beta_{p_{i}} - p_{i} 
- \tilde{s}_{i} (x)=\textstyle \sum_{r = 1}^{k_{0}} b_{\scriptscriptstyle{p_{i} , r}} (p_{i} 
+ \tilde{s}_{i} (x)) x^{r} \, 
+ \,\text{O} (x^{k_{0}+1})\,.
\end{equation*}
Hence, since $\tilde{s}_{i} (x) = - \lambda_{i} \sum_{r \geqslant 1} r s_{\scriptscriptstyle{i, r}} x^{r} $, by (\ref{ec: bpi1 en pi}) 
\begin{equation*}
\beta_{\scriptscriptstyle{p_{i} , 1}}=(1 - \lambda_{i}) s_{\scriptscriptstyle{i,1}}  
- \textstyle \sum_{\jmb =1}^{m} \frac{z_{\scriptscriptstyle{{\jmb} , 1}}}{2(p_{i}-q_{\jmb})}\,,
\end{equation*}
and for $2 \leqslant k \leqslant k_{0}$, by using the previous properties and the expressions (\ref{ec: Pageref_Pk en ar y br}), we get
\begin{equation*}
\begin{split}
\beta_{\scriptscriptstyle{p_{i} , k}} 
& = (1 - k\lambda_{i}) s_{\scriptscriptstyle{{i} , k}} 
- \sum_{\jmb =1}^{m} \tfrac{z_{\scriptscriptstyle{{\jmb} , k}}}{2(p_{i}-q_{\jmb})} 
 + \mathbf{b}_{\scriptscriptstyle{p_{i} , k}} (p_{i})\,,
\end{split}
\end{equation*}
where $\mathbf{b}_{\scriptscriptstyle{p_{i} , k}}:=\mathbf{b}_{\scriptscriptstyle{p_{i} , k}}[\varepsilon_{\scriptscriptstyle{{i} , k}}; \, (\lambda_{\imb}, s_{\scriptscriptstyle{{\imb} , r}}, 
z_{\scriptscriptstyle{{\jmb} , r}}, g_{\jmb}; 
\varepsilon_{\scriptscriptstyle{{\imb} , r}}, \varsigma_{\scriptscriptstyle{{\jmb} , r}})_{\imb, \jmb}]_{r=1}^{k-1} \,$ is the functional transformation satisfying the stated properties in Lemma    \ref{Lema_ParamImpli_Polar_pi}.
\end{itemize}
\end{proof}

\subsection{ Proof of Lemma \ref{lem: series TN por ecFac en pi} and Lemma \ref{lem: series TN por ecFac en qj}}\label{Dem dos lemas}

We recall that it was stressed that Lemma \ref{lem: series TN por ecFac en pi} on the factorizing equations at the singular points $p_{i}$ and Lemma \ref{lem: series TN por ecFac en qj} on the factorizing equations at the tangency points $q_{j}$ rely on Lemma \ref{Lema_Series de composicion A en alpha y beta}, Lemma \ref{Lema_Aplicacion hatPsik_i} and Lemma \ref{Lema_Aplicacion Phik_j}. In this subsection we state and proof such lemmas. We begin by stating  Lemma \ref{Lema_Series de composicion A en alpha y beta}.

\subsubsection*{Power series of the composition $A (\alpha, \beta)$.}
\label{Series de composicion A en alpha beta}
Let $A$, $\alpha$, $\beta$ be holomorphic functions defined in domains in  $\C^{2}$ intersecting the $u$--axis. Assume that the restriction to the $u$--axis of the functions  $\alpha$, $\beta$ equals to the identity.
Assume as well, that the composition  $A (\alpha, \beta)$ is well defined, and let 
\begin{equation*}
\begin{split}
A & = A_{ 0 } (u) + A_{ 1 }(u) x + A_{2}(u) x^{2} + \cdots  + A_{ k }(u) x^{k} + \cdots \, , \\
\alpha & =  \hspace{1.38cm} \alpha_{1}(u) x + \alpha_{2}(u) x^{2} + \cdots + \alpha_{k}(u) x^{k} + \cdots \, , \\
\beta & = \hspace{0.8cm} u + \beta_{1}(u) x + \beta_{2}(u) x^{2} + \cdots + \beta_{k}(u) x^{k} + \cdots \, ,
\end{split}
\end{equation*}
be the series expansion with respect to $u$ of $A$, $\alpha$, $\beta$  in the corresponding domains.

\begin{lemma}
\label{Lema_Series de composicion A en alpha y beta} 
The composition $A (\alpha, \beta)$ equals to $A_{ 0 }$ at the $u$--axis; the coefficient $x$ in the power series of $A (\alpha, \beta)$ equals $\beta_{1} \, \tfrac{\text{d} \, A_{ 0 }}{\text{d} \, u}  \, + \, \alpha_{1} \, A_{ 1 }$, and for $k \geqslant 2$ the coefficient  of $x^{k}$ equals to
\begin{equation*}
\beta_{k} \, \tfrac{\text{d} \, A_{ 0 }}{\text{d} \, u}  
\, + \,  \alpha_{k} \, A_{ 1 }  
\, + \, (\alpha_{1})^{k} \, A_{ k }  \, + \, \Omega^{k} [A_{ 0 }, A_{ r }; \alpha_{r}, \beta_{r}]^{k-1}_{r = 1} \, , 
\end{equation*}
where $\Omega^{k} [A_{ 0 }, A_{ r }; \alpha_{r}, \beta_{r}]^{k-1}_{r = 1}$ is a polynomial with rational non-negative coefficients depending on the derivatives    $\tfrac{\text{d}^{i} \, A_{ 0 }}{\text{d} \, u^{i}}$ de $A_{ 0 }$ of order $2 \leqslant i \leqslant k$, and for $1 \leqslant r \leqslant k-1$, it depends on the coefficients $\alpha_{r}$, $\beta_{r}$ of the power series $\alpha$ and $\beta$, and on the derivatives   $\tfrac{\text{d}^{s} \, A_{ r }}{\text{d} \, u^{s}}$ of $A_{ r }$ of order $0 \leqslant s \leqslant k - r$. The definition of the polynomial $\Omega^{k}$ is independent of the functions $A$, $\alpha$, $\beta$.
\end{lemma}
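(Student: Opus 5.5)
The plan is to expand $A(\alpha,\beta)$ as a two-variable Taylor series around the point $(0,u)$ and collect powers of $x$. Write
\begin{equation*}
A(\alpha,\beta)=\sum_{l\geqslant 0} A_{l}(\beta)\,\alpha^{l}
=\sum_{l\geqslant 0}\ \sum_{s\geqslant 0}\tfrac{1}{s!}\,\tfrac{\text{d}^{s}A_{l}}{\text{d}u^{s}}(u)\,(\beta-u)^{s}\,\alpha^{l}\,.
\end{equation*}
Here $\alpha=\textstyle\sum_{r\geqslant1}\alpha_{r}x^{r}$ and $\beta-u=\textstyle\sum_{r\geqslant 1}\beta_{r}x^{r}$ are both $\text{O}(x)$. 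So the term with indices $(l,s)$ is $\text{O}(x^{l+s})$, and only the finitely many pairs with $l+s\leqslant k$ contribute to the coefficient of $x^{k}$. Setting $x=0$ kills every term except $(l,s)=(0,0)$, which gives $A(\alpha,\beta)|_{x=0}=A_{0}$. For the coefficient of $x$ only $(0,1)$ and $(1,0)$ contribute, giving $\beta_{1}A_{0}'+\alpha_{1}A_{1}$.

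For $k\geqslant 2$, I would expand each product $(\beta-u)^{s}\alpha^{l}$ by the multinomial formula; equivalently, apply the Fa\`a di Bruno polynomials $\mathbf{P}^{k}$ of (\ref{Ec_Definicion de Pk}) in each variable. Then I would isolate the terms that involve a top-index coefficient $\alpha_{k}$, $\beta_{k}$, or $A_{k}$.

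\textbf{Top-index terms.} A monomial in the $x^{k}$ coefficient containing $\beta_{k}$ has total $x$-weight $k$ already, so it must come from $(l,s)=(0,1)$; this gives $\beta_{k}A_{0}'$. Likewise $\alpha_{k}$ appears only from $(1,0)$, giving $\alpha_{k}A_{1}$. The function $A_{k}$ appears only for $l=k$, $s=0$, where the only weight-$k$ part of $\alpha^{k}$ is $\alpha_{1}^{k}$; this gives $\alpha_{1}^{k}A_{k}$.

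\textbf{Remainder $\Omega^{k}$.} Every other monomial involves only $\alpha_{r},\beta_{r}$ with $r\leqslant k-1$. It also involves only derivatives $\frac{\text{d}^{s}A_{l}}{\text{d}u^{s}}$ with $l+s\leqslant k$, where either $l=0$ and $2\leqslant s\leqslant k$, or $1\leqslant l\leqslant k-1$ and $0\leqslant s\leqslant k-l$. This is exactly the dependence claimed for $\Omega^{k}$.

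The coefficients of $\Omega^{k}$ are products of $1/s!$ with multinomial coefficients. They are therefore non-negative rationals, and they depend only on the combinatorics, not on the particular functions $A$, $\alpha$, $\beta$; hence the polynomial $\Omega^{k}$ is universal.

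The only delicate point is convergence of the expansion and the bookkeeping that no other term reaches the top indices. Convergence follows because $\alpha,\beta-u$ are small near the $u$-axis, so the double series converges absolutely on the common domain. The bookkeeping is the weight argument above, as in Remark \ref{Obs_Sobre la suma en el Teo_FaaDiBruno}.
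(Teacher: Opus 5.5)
Your argument is correct and is essentially the paper's proof. Both write $A(\alpha,\beta)=\sum_{l} A_{l}(\beta)\,\alpha^{l}$, expand $A_{l}(\beta)$ in powers of $\beta-u$ (your Taylor expansion is exactly the Fa\`a di Bruno step the paper applies to $\partial_{x}^{s}(A_{l}\circ\beta)$) and $\alpha^{l}$ by the multinomial formula, and then isolate $\beta_{k}A_{0}'$, $\alpha_{k}A_{1}$ and $(\alpha_{1})^{k}A_{k}$; your weight count is a cleaner phrasing of what the paper does via $\tilde{\mathbf{P}}^{k}$ and the identities $\alpha_{1,k}=\alpha_{k}$, $\alpha_{k,k}=(\alpha_{1})^{k}$.
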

For sake of completeness we give the proof of Lemma \ref{Lema_Series de composicion A en alpha y beta} at the end of this section (see \ref{Aplicacion Phik_j_Demostracion}).


\subsubsection*{ On the normalizing transformations $\mathbf{H}_{\scriptscriptstyle \mathbf{n}} $, $H_{p_{i}}$}
Recall the normalizing transformations $\mathbf{H}_{\scriptscriptstyle \mathbf{n}} $, $H_{p_{i}}$,
which satisfy the \textit{factorizing equations} (see figure \ref{Figura_Construccion de PdF a partir de PL}):
\begin{equation*} 
H_{p_{i}} = \mathbf{H}_{\scriptscriptstyle \mathbf{n}} \circ \xi_{i} \circ \Psi_{i} \, ,
\hspace{1cm}
\end{equation*}
where the biholomorphism $\Psi_{i}$, defined in a neighborhood in $\C^{2}$ of the annulus $D_{p_{i}} \smallsetminus \{(0, p_{i})\}$, and the biholomorphism $\xi_{i} \colon (\C^{2}, D_{p_{i}}) \rightarrow (\M, D_{p_{i}})$  satisfy 
\begin{equation*} 
\begin{split}
\Psi_{i} 
= \bigl(\hat{\psi}_{i}, \, u + s_{i}(x)\bigl)
\, , 
\hspace{0.4cm} 
\hat{\psi}_{i} 
& = \sum_{k \geqslant 1}  \hat{\psi}_{\scriptscriptstyle{i, k}} (u) \, x^{k}\,,
\hspace{0.3cm} 
\text{where}
\hspace{0.3cm}
\hat{\psi}_{i}(x,u)=x \bigl(1 + \tfrac{s_{i}(x)}{u - p_{i}}\bigl)^{\lambda_{i}} 
\, , \\
\xi_{i}  = (\varepsilon_{i}, u) \, ,
\hspace{0.5cm}
\varepsilon_{i} & = \sum_{k \geqslant 1}  \varepsilon_{\scriptscriptstyle{i, k}} (u) \, x^{k} \, ,
\hspace{0.3cm}
 \varepsilon_{\scriptscriptstyle{i, 1}}  (p_{i}) = 1 \,.
\end{split}
\end{equation*}
The holomorphic map $s_{i} \colon (\C, 0) \rightarrow (\C, 0)$ depends on the foliation pair $(\F, \G)$ in $(0,p_{i})$ with respect to the foliation pair $(\F_{\scriptscriptstyle \mathbf{\mu}}, \G_{\scriptscriptstyle \mathbf{r}})$. The power series of $s_{i}$ is written as
\begin{equation} 
    s_{i}=\sum_{r \geqslant 1} s_{\scriptscriptstyle{i,r}}  \, x^{r}, \quad s_{\scriptscriptstyle{i,r}}  \in \C.
\end{equation}


\begin{lemma} 
\label{Lema_Aplicacion hatPsik_i} 
The transformation $\hat{\psi}_{\scriptscriptstyle{ {i} , 1}}$ equals to $1$, and if  $k \geqslant 1$, 
\begin{equation*}
\begin{split}
\hat{\psi}_{\scriptscriptstyle{ {i} , k+1}} (u)
& = \lambda_{i} \, \tfrac{\,  s_{\scriptscriptstyle{i, k}}}{u-p_i}
\, + \, \tfrac{1}{k!} \, \tilde{\mathbf{P}}^{k}\bigl[\lambda_{i} (\lambda_{i} - 1) \cdots (\lambda_{i} - r)  \,  ; \, \tfrac{r! \, s_{\scriptscriptstyle{i,r}} }{u-p_i}\bigl]^{k-1}_{r=1} \\
& = \lambda_{i} \, \tfrac{\,  s_{\scriptscriptstyle{i, k}}}{u-p_i}
\, + \, \sum \tfrac{1}{r_{1}! \cdots r_{k-1}!} \, \tfrac{\lambda_{i} (\lambda_{i} - 1) \cdots (\lambda_{i} - r + 1)}{(u - p_{i})^{r}} \, (s_{\scriptscriptstyle{i, 1}} )^{r_{1}} \cdots (s_{\scriptscriptstyle{i, k-1}})^{r_{k-1}}\, ,
\end{split}
\end{equation*}
where $\tilde{\mathbf{P}}^{k}$ is the polynomial defined in \eqref{Ec_Definicion de Pk}.
\end{lemma}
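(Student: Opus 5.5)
The plan is a direct power-series expansion of $\hat{\psi}_{i}(x,u)=x\bigl(1+\tfrac{s_{i}(x)}{u-p_{i}}\bigr)^{\lambda_{i}}$ in $x$, with $u$ treated as a parameter in the annulus $D_{p_{i}}\smallsetminus\{(0,p_{i})\}$, where $u-p_{i}\neq 0$. There $\bigl|\tfrac{s_{i}(x)}{u-p_{i}}\bigr|<1$ for $|x|$ small, since $s_{i}(0)=0$. So the principal branch of $w\mapsto (1+w)^{\lambda_{i}}$ can be composed with $f(x):=\tfrac{s_{i}(x)}{u-p_{i}}$. Writing $h(w)=(1+w)^{\lambda_{i}}$, we have $\hat{\psi}_{i}=x\,(h\circ f)(x)$. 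Hence $\hat{\psi}_{i,1}=h(f(0))=h(0)=1$, and for $k\geqslant 1$
\begin{equation*}
\hat{\psi}_{i,k+1}(u)=\tfrac{1}{k!}\,(h\circ f)^{(k)}(0).
\end{equation*}

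Next, apply Fa\`a di Bruno (Theorem \ref{Teorema_Formula de Faa di Bruno}) in the polynomial form (\ref{Ec_Formula de FaaDiBruno con polinomios}), using the splitting $(h\circ f)^{(k)}=h^{(1)}\vert_{f}\,f^{(k)}+\tilde{\mathbf{P}}^{k}\bigl[h^{(s+1)}\vert_{f};f^{(s)}\bigr]^{k-1}_{s=1}$. The ingredients at $x=0$ are
\begin{equation*}
h^{(r)}(0)=\lambda_{i}(\lambda_{i}-1)\cdots(\lambda_{i}-r+1),\qquad f^{(r)}(0)=\tfrac{r!\,s_{i,r}}{u-p_{i}}.
\end{equation*}
The first term then gives $\lambda_{i}\,k!\,s_{i,k}/(u-p_{i})$, which after dividing by $k!$ yields $\lambda_{i}s_{i,k}/(u-p_{i})$. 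The remaining term gives $\tfrac{1}{k!}\tilde{\mathbf{P}}^{k}$ evaluated at $\mathbf{w}_{r+1}=\lambda_{i}(\lambda_{i}-1)\cdots(\lambda_{i}-r)$ and $\mathbf{z}_{r}=r!\,s_{i,r}/(u-p_{i})$ for $1\leqslant r\leqslant k-1$. This is the first displayed formula. By Remark \ref{Obs_Sobre la suma en el Teo_FaaDiBruno}, $\tilde{\mathbf{P}}^{k}$ involves only $\mathbf{z}_{1},\dots,\mathbf{z}_{k-1}$, so the indexing is consistent.

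For the second, explicit form, expand $\tilde{\mathbf{P}}^{k}$ using the definition (\ref{Ec_Definicion de Pk}). Each partition $r_{1}+2r_{2}+\dots+(k-1)r_{k-1}=k$ with $r_{k}=0$ contributes
\begin{equation*}
\tfrac{k!}{r_{1}!\cdots r_{k-1}!}\,h^{(r)}(0)\prod_{l}\Bigl(\tfrac{l!\,s_{i,l}}{l!\,(u-p_{i})}\Bigr)^{r_{l}}.
\end{equation*}
The factorials $l!$ cancel, and the powers of $u-p_{i}$ collect into $(u-p_{i})^{-r}$ with $r=\sum r_{l}$. Dividing by $k!$ produces exactly the stated sum with falling factorial $\lambda_{i}(\lambda_{i}-1)\cdots(\lambda_{i}-r+1)$.

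The only delicate point is bookkeeping. The index shift in $\tilde{\mathbf{P}}^{k}$ (arguments $h^{(s+1)}$ paired with the partition count $r$) must match the falling factorial of length $r$ rather than $r+1$ in the final expression. I would check this by verifying small cases, namely $k=1$ (where $\tilde{\mathbf{P}}^{1}=0$) and $k=2$, where the extra term should be $\tfrac{\lambda_{i}(\lambda_{i}-1)}{2}\tfrac{s_{i,1}^{2}}{(u-p_{i})^{2}}$. Analyticity in the annulus is immediate from the formula.
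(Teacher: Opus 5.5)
Your proposal is correct and follows essentially the same route as the paper: write $\hat{\psi}_{i}=x\,(h\circ f)$, read off $\hat{\psi}_{i,1}=1$, and obtain $\hat{\psi}_{i,k+1}=\tfrac{1}{k!}(h\circ f)^{(k)}(0)$ from Fa\`a di Bruno in the split form $h^{(1)}f^{(k)}+\tilde{\mathbf{P}}^{k}$, with falling factorials for the outer derivatives and $r!\,s_{i,r}/(u-p_{i})$ for the inner ones. The only cosmetic difference is that you compose $(1+w)^{\lambda_{i}}$ with $s_{i}(x)/(u-p_{i})$, whereas the paper composes $z^{\lambda_{i}}$ with $1+s_{i}(x)/(u-p_{i})$ and evaluates at $z=1$; your explicit expansion of $\tilde{\mathbf{P}}^{k}$ and the $k=2$ check are both right.
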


\begin{proof}
    
 For sake of simplicity, throughout the proof we will skip the subscript $i$ in the notation.

 Let $\psi := 1 + \tfrac{s(x)}{u - p}$. If $s = \sum_{k \geqslant 1} s_{k} \, x^{k}$, then the power series expanssion $\psi = \sum_{k \geqslant 0} \psi_{k} (u) \, x^{k}$ in a neighborhood of  the $u$--axis equals to
\begin{equation*}
1 
+ \tfrac{s_{1}}{u-p} \, x 
+ \tfrac{s_{2}}{u-p} \, x^{2} + \cdots +
\tfrac{s_{k}}{u-p} \, x^{k} + \cdots \, ,
\end{equation*}
that is to say, for $k \geqslant 1$,
 $\tfrac{1}{k!} \psi_{x^{k}} (0, u) = \psi_{k} (u) = \tfrac{s_{k}}{u-p}$. 
 
 Let $h (z) = z^{\lambda}$; then, $h^{(r)} (1) = \lambda (\lambda - 1) \cdots (\lambda - r+1)$. By Fa\`a di Bruno's formula (see  (\ref{Ec_Formula de FaaDiBruno con polinomios})) with respect to the composition $h \circ \psi = \psi^{\lambda}$ we get\begin{equation*}
\begin{split}
\tfrac{\partial^{k} \, \psi^{\lambda}}{\partial \, x^{k}} \Bigl\vert_{(0, u)}
& = h^{(1)} (1) \psi_{x^{k}} (0, u)
\, + \, \tilde{\mathbf{P}}^{k}\Bigl[h^{(i+1)} (1)  \,  ; \, \psi_{x^{i}} (0, u) \bigl]^{k-1}_{i=1} \\
& = \lambda \, \tfrac{k! \, s_{k}}{u-p}
\, + \, \tilde{\mathbf{P}}^{k}\Bigl[\lambda (\lambda - 1) \cdots (\lambda - i)  \,  ; \, \tfrac{i! \, s_{i}}{u-p}\bigl]^{k-1}_{i=1} \, .
\end{split}
\end{equation*}
Since $\hat{\psi} = x \psi^{\lambda}$, its power series expansion  $\hat{\psi} = \sum_{k \geqslant 1} \hat{\psi}_{k} (u) \, x^{k}$ in a neighborhood of the  $u$--axis satisfies $\hat{\psi}_{1} (u) = 1$ and  $\hat{\psi}_{k+1} (u)= \tfrac{1}{k!} \, \tfrac{\partial^{k} \, \psi^{\lambda}}{\partial \, x^{k}} \bigl\vert_{(0, u)}$ for $k \geqslant 1$, i.e., 
\begin{equation*}
\begin{split}
\hat{\psi}^{k+1} (u)
& = \lambda \, \tfrac{\, s_{k}}{u-p}
\, + \, \tfrac{1}{k!} \, \tilde{\mathbf{P}}^{k}\Bigl[\lambda (\lambda - 1) \cdots (\lambda - i)  \,  ; \, \tfrac{i! \, s_{i}}{u-p}\bigl]^{k-1}_{i=1} \\
& = \lambda \, \tfrac{\, s_{k}}{u-p}
\, + \, \sum \tfrac{1}{r_{1}! \cdots r_{k-1}!} \, \tfrac{\lambda (\lambda - 1) \cdots (\lambda - r + 1)}{(u - p)^{r}} \, (s_{1})^{r_{1}} \cdots (s_{k-1})^{r_{k-1}}
\, ,
\end{split}
\end{equation*}
where the sum is in the non-negative integers   $r_{1}, \ldots, r_{k-1}$ satisfying $r_{1} + 2 r_{2} + \cdots + (k-1) r_{k-1} = k$, and $r=r_{1} + r_{2} + \cdots + r_{k-1}$.
The proof of Lemma \ref{Lema_Aplicacion hatPsik_i} is finished.
\end{proof}


\subsubsection*{ On the normalizing transformations $\mathbf{H}_{\scriptscriptstyle \mathbf{n}} $, $H_{q_{j}}$}
We now recall 
 that the normalizing transformations $\mathbf{H}_{\scriptscriptstyle \mathbf{n}} $, $H_{q_{j}}$ satisfying the \textit{factorizing equations} (see figure \ref{Figura_Construccion de PdF a partir de PL}):
\begin{equation}\label{eq:ecuaciones de factorizacion_bis}
\hspace{1cm}
H_{q_{j}} = \mathbf{H}_{\scriptscriptstyle \mathbf{n}} \circ \zeta_{j} \circ \Phi_{j} \, ,
\end{equation}
the biholomorphism $\Phi_{j}$ defined in a neighborhood of the annulus $D_{q_{j}} \smallsetminus \{(0, q_{j})\}$ in $\C^{2}$, and the biholomorphism $\zeta_{j} \colon (\C^{2}, D_{q_{j}}) \rightarrow (\M, D_{q_{j}})$ satisfy   
\begin{equation} \label{Ec_Series de Phi y Zeta_Bis}
\begin{split}
& \Phi_{j} 
 = \bigl(x, \phi_{j}) \, ,
\hspace{0.2cm} 
\phi_{j}  = u  + \sum_{k \geqslant 1}  \phi_{\scriptscriptstyle{j, k}} (u) \, x^{k} \, ,
\hspace{0.1cm} 
\text{where} 
\hspace{0.1cm} 
g_{j} \circ \phi_{j} (x, u) = g_{j}(u) + z_{j} (x) \, ,
\\
& \zeta_{j} = (\varsigma_{j}, u) \, ,
\hspace{0.3cm}
\varsigma_{j}  = 
\sum_{k \geqslant 1}  \varsigma_{\scriptscriptstyle{j, k}} (u) \, x^{k} \, ,
\hspace{0.3cm}
 \varsigma_{\scriptscriptstyle{j, 1}} (q_{j}) = 1 \, ,
\hspace{0.2cm}
 \varsigma_{\scriptscriptstyle{j, k+1}}  (q_{j}) = 0 \, , 
\hspace{0.2cm}
\text{if}
\hspace{0.2cm}
k > 1  \, ,
\end{split}
\end{equation}
(see \eqref{Ec_Series de Phi y Zeta}). The biholomorphism $z_{j} \colon (\C, 0) \rightarrow (\C, 0)$  is present in  the analytic model for the foliation pair $(\F, \G)$ at the point $(0,q_{j})$ with respect to the foliation  pair $(\F_{\scriptscriptstyle \mathbf{\mu}}, \G_{\scriptscriptstyle \mathbf{r}})$.  The power series of $z_{j}$ is written as
  \begin{equation*} 
     z_{j}= \sum_{r \geqslant 1} z_{\scriptscriptstyle{j, r}}  \, x^{r},\quad z_{\scriptscriptstyle{j, r}}  \in \C
  \end{equation*}
Moreover, the definition of the biholomorphism $\zeta_{j}$ only depends on the non dicritical foliation $\F_{\scriptscriptstyle \mu}$, and not on the foliation pair $(\F, \G)$.

\begin{lemma}
\label{Lema_Aplicacion Phik_j}
Let $\Phi_{j}= (x,\phi_{j})$ as in (\ref{eq:ecuaciones de factorizacion_bis}) and (\ref{Ec_Series de Phi y Zeta_Bis}). For every $k \geqslant 1$, 
\begin{equation*}
\tfrac{\partial^{k} \, \phi_{j}}{\partial \, x^{k}} \Bigl\vert_{(0, u)}
= \tfrac{z_{j}^{(k)} (0)}{g_{j}^{(1)} (u)} \, - \, \tfrac{1}{g_{j}^{(1)} (u)} \, \tilde{\mathbf{P}}^{k} \Bigl[g_{j}^{(s+1)}(u) \, ; \, \tfrac{\partial^{s} \, \phi_{j}}{\partial \, x^{s}} \Bigl\vert_{(0, u)}
 \Bigl]_{s=1}^{k-1} \, ,
\end{equation*}
where $\tilde{\mathbf{P}}^{k}$ is the polynomial defined in \eqref{Ec_Definicion de Pk}. Moreover, $\tilde{\mathbf{P}}^{k} \bigl[g_{j}^{(s+1)}(u) \, ; \, \tfrac{\partial^{s} \, \phi_{j}}{\partial \, x^{s}} \bigl\vert_{(0, u)} \bigl]_{s=1}^{k-1}$ is a rational transformation with denominator $(g_{j}^{(1)}(u))^{2k-2}$ and numerator equal to a polynomial with coefficients in  $\Z$ and variables $z_{j}^{(s)}(0)$, $g_{j}^{(1)} (u)$, $g_{j}^{(s+1)}(u)$, with $1 \leqslant s \leqslant k-1$. The definition of the polynomial is independent of the transformations $g_{j}$ and $z_{j}$.
\end{lemma}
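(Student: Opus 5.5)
Differentiate the defining identity $g_{j}\circ\phi_{j}(x,u)=g_{j}(u)+z_{j}(x)$ of \eqref{Ec_Series de Phi y Zeta_Bis} $k$ times in $x$ and evaluate on the $u$--axis. Since $\phi_{j}(0,u)=u$, the left side is a composition $h\circ f$ with $h=g_{j}$ and $f=\phi_{j}(\cdot,u)$ for fixed $u$, and $h^{(s)}\vert_{f}$ at $x=0$ equals $g_{j}^{(s)}(u)$. The right side has $k$-th $x$-derivative $z_{j}^{(k)}(0)$, because $g_{j}(u)$ does not depend on $x$.

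For the left side we use the first splitting in \eqref{Ec_Formula de FaaDiBruno con polinomios}. This isolates the term $g_{j}^{(1)}(u)\,\partial_{x}^{k}\phi_{j}(0,u)$ and leaves $\tilde{\mathbf{P}}^{k}\bigl[g_{j}^{(s+1)}(u);\partial_{x}^{s}\phi_{j}(0,u)\bigr]_{s=1}^{k-1}$. By Remark \ref{Obs_Sobre la suma en el Teo_FaaDiBruno}, the remainder involves only derivatives of $\phi_{j}$ of order at most $k-1$. Away from $q_{j}$ we have $g_{j}^{(1)}(u)\neq0$, since $q_{j}$ is the only critical point of $g_{j}$ near $q_{j}$. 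Dividing by $g_{j}^{(1)}(u)$ then gives the stated recursion on the annulus.

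For the structural claim we argue by induction on $k$, with the sharper hypothesis that $\partial_{x}^{s}\phi_{j}(0,u)$ is a rational expression with denominator $(g_{j}^{(1)})^{2s-1}$ and integer-polynomial numerator in $z_{j}^{(t)}(0)$, $g_{j}^{(1)}$ and $g_{j}^{(t+1)}$ for $t\le s$.

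\begin{itemize}
\item[1.] The base case is $\partial_{x}\phi_{j}(0,u)=z_{j}^{(1)}(0)/g_{j}^{(1)}(u)$, whose denominator has exponent $1=2\cdot1-1$.
\item[2.] Each monomial of $\tilde{\mathbf{P}}^{k}$ has the form $g_{j}^{(r+1)}\prod_{s}(\partial_{x}^{s}\phi_{j})^{r_{s}}$ with $\sum s r_{s}=k$, $r=\sum r_{s}\ge2$, and $r_{k}=0$.
\item[3.] Such a monomial therefore has denominator exponent $\sum r_{s}(2s-1)=2k-r\le 2k-2$.
\item[4.] Multiplying numerator and denominator by $(g_{j}^{(1)})^{r-2}$ brings every monomial to the common denominator $(g_{j}^{(1)})^{2k-2}$ with integer coefficients. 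The Fa\`a di Bruno coefficients $k!/(r_{1}!\cdots r_{k}!)\prod(1/s!)^{r_{s}}$ become integers once we use the unnormalized derivatives $\partial_{x}^{s}\phi_{j}$ as variables, as in \eqref{Ec_Formula de FaaDiBruno con polinomios}.
\item[5.] Dividing once more by $g_{j}^{(1)}$ gives $\partial_{x}^{k}\phi_{j}(0,u)$ with denominator $(g_{j}^{(1)})^{2k-1}$, which closes the induction.
\end{itemize}

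The resulting polynomial is built only from $\mathbf{P}^{k}$ and the recursion, so it is independent of $g_{j}$ and $z_{j}$. The main care point is the bookkeeping that the denominator exponent is exactly $2k-2$, which needs the observation $r\ge2$ in $\tilde{\mathbf{P}}^{k}$, together with the integrality of the Fa\`a di Bruno coefficients.
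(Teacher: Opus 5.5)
Your proposal is correct and follows essentially the same route as the paper: you differentiate $g_{j}\circ\phi_{j}=g_{j}(u)+z_{j}(x)$ using the first splitting of the Fa\`a di Bruno formula, then induct with the sharper hypothesis that $\partial_{x}^{s}\phi_{j}(0,u)$ has denominator $(g_{j}^{(1)})^{2s-1}$, and use $\sum_{s}(2s-1)r_{s}=2k-r\leqslant 2k-2$ because $r\geqslant 2$ in $\tilde{\mathbf{P}}^{k}$. One harmless slip: in item 2 the $g_{j}$-factor of a monomial with $r=\sum r_{s}$ is $g_{j}^{(r)}$, not $g_{j}^{(r+1)}$, so only $g_{j}^{(2)},\dots,g_{j}^{(k)}$ occur, as the statement requires.
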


\begin{proof}

The proof is by induction over $k \geqslant 1$. For sake of simplicity we omit the subscripts $j$ and $j, k$. Thus, we consider 
\begin{equation*}
\begin{split}
\Phi
= \bigl(x, \phi) \, ,
\hspace{0.3cm} 
\phi = u + \sum_{k \geqslant 1} \phi_{x^{k}}(0, u) \, x^{k} \, ,
\hspace{0.3cm} 
\text{where} 
\hspace{0.3cm} 
g \circ \phi (x, u) = g(u) + z (x)\,,
\end{split}
\end{equation*}
where $\phi_{x^{k}}(0, u) $ denotes the partial derivative of $\phi$ evaluated at $(0, u)$, $\phi_{x^{k}}(0, u)=\tfrac{\partial^{k} \, \phi_{j}}{\partial \, x^{k}} \Bigl\vert_{(0, u)}$.

Since $g \circ \phi (x, u)= g(u) + z (x)$, then the $k$-th  partial derivative  $(g \circ \phi)_{x^{k}}$ with respect to $x$ equals to the derivative $z^{(k)}$, for every $k \geqslant 1$. In particular, for $k = 1$ we get $(g \circ \phi)_{x} (0, u) = g^{(1)}(u) \, \phi_{x} (0, u)= z^{(1)}(0)$. Moreover, since  $\tilde{\mathbf{P}}^{1} = 0$ (see (\ref{Ec_Definicion de Pk})),  the case $k = 1$ is proved.

We now assume the Lemma for $1 \leqslant k \leqslant k_{0}$; hence, $\phi_{x^{k}} (0, u)$ is a rational map with  $\bigl(g^{(1)}(u)\bigl)^{2k-1}$ as denominator, and having as numerator a polynomial with integer coefficients and variables $g^{(s)}(u)$, $z^{(s)}(0)$, $1 \leqslant s \leqslant k$. Note that the definition of the polynomial is independent of the transformations $g$ and $z$.

We prove the case $k_{0} + 1$.  By definition, the polynomial $\tilde{\mathbf{P}}^{k_{0} + 1}$ (see (\ref{Ec_Definicion de Pk})) it follows that  $\tilde{\mathbf{P}}^{k_{0} + 1} \bigl[g^{(s+1)}(u) \, ; \,  \phi_{x^{s}} (0, u) \bigl]_{s=1}^{k_{0}}$ equals to
\begin{equation*}
\sum \tfrac{(k_{0} + 1)!}{r_{1}! \cdots r_{k_{0}}!} \, \tfrac{1}{(1!)^{r_{1}} (2!)^{r_{2}} \cdots (k_{0}!)^{r_{k_{0}}}} g^{(r)}(u) \, \left(\phi_{x}(0, u)\right)^{r_{1}} \, \cdots \, \left(\phi_{x^{k_{0}}}(0, u)\right)^{r_{k_{0}}} \, ,
\end{equation*}
where the sum is considered over the non-negative integers $r_{1}, \ldots, r_{k_{0}}$ satisfying $r_{1} + 2 r_{2} + \cdots + k_{0} r_{k_{0}} = k_{0} + 1$, and $r$ represents the sum  $r_{1} + r_{2} + \cdots + r_{k_{0}}$ (see Remark \ref{Obs_Sobre la suma en el Teo_FaaDiBruno}). By the induction step, the rational transformation  
\begin{equation}\label{producto parciales phi}
\left(\phi_{x}(0, u)\right)^{r_{1}} \, \cdots \, \left(\phi_{x^{k_{0}}}(0, u)\right)^{r_{k_{0}}} 
\hspace{0.4cm}
\end{equation}
has as denominator
\begin{equation*}
\hspace{0.4cm}
\prod^{k_{0}}_{k = 1} \bigl(g^{(1)}(u)\bigl)^{(2k-1) r_{k}}\,,
\end{equation*}
where the exponents $r_{k}$ satisfy the relation  $\sum^{k_{0}}_{k = 1} (2k - 1) r_{k} = 2(k_{0} + 1) - r$. The numerator of (\ref{producto parciales phi}) is given by a polynomial with integer coefficients and variables  $g^{(s)}(u)$, $z^{(s)}(0)$, with $1 \leqslant s \leqslant k_{0}$ (again, the definition of such polynomial is independent of $g$ y $z$). 

From Remark  \ref{Obs_Sobre la suma en el Teo_FaaDiBruno}, the sum $\sum^{k_{0}}_{k = 1} (2k - 1) r_{k} = 2(k_{0} + 1) - r$ is less or equal to $2(k_{0} + 1) - 2$, since $r \geqslant 2$. Hence,   $\tilde{\mathbf{P}}^{k_{0} + 1} \bigl[g^{(s+1)}(u) \, ; \,  \phi_{x^{s}} (0, u) \bigl]_{s=1}^{k_{0}}$ is a rational transformation with denominator $\bigl(g^{(1)}(u)\bigl)^{2(k_{0} + 1) -2}$, and numerator given by a polynomial with integer coefficients and variables $g^{(1)}(u)$, $g^{(s+1)}(u)$, $z^{(s)}(0)$, where $1 \leqslant s \leqslant k_{0}$ (the definition of such polynomial is independent of $g$ y $z$). Finally, we recall that  $z^{(k_{0} + 1)} (0)= (g \circ \phi)_{x^{k_{0} + 1}} (0, u)$. By 
Fa\`a di Bruno formula (\ref{Ec_Formula de FaaDiBruno con polinomios}), it equals to 
\begin{equation*}
g^{(1)} (u) \, \phi_{x^{k_{0} + 1}} (0, u) 
+ \tilde{\mathbf{P}}^{k_{0} + 1} \bigl[g^{(s+1)}(u) \, ; \,  \phi_{x^{s}} (0, u) \bigl]_{s=1}^{k_{0}} \, .
\end{equation*}

This proves the case $k_{0} + 1$.  Lemma \ref{Lema_Aplicacion Phik_j} is proved.
\end{proof}

\subsection{Proof of Lemma \ref{Lema_Series de composicion A en alpha y beta} on the coefficients of the power series expansion of $A(\alpha, \beta)$}
\label{Aplicacion Phik_j_Demostracion}

In order to prove  Lemma \ref{Lema_Series de composicion A en alpha y beta}, we assume the notation introduced at the beginning of subsection \ref{Dem dos lemas}.

Let 
\begin{equation*}
    A (\alpha, \beta) = \sum_{k \geqslant 0} A_{ k } (\beta (x, u)) \, (\alpha(x, u))^{k}\,,
\end{equation*}
where
\begin{equation*}
    (\alpha(x, u))^{r}=\sum_{s \geqslant r} \alpha_{r , s} x^{s}\,.
\end{equation*}
Thus, for $r \geqslant 1$, the term $A_{ r } (\beta (x, u)) \, (\alpha(x, u))^{r} $ equals to 
\begin{equation*}
\begin{split}
    A_{ r } (\beta (x, u)) \, (\alpha(x, u))^{r}&= 
\biggl(\sum_{s \geqslant 0} \tfrac{1}{s!} \tfrac{\partial^{s} \, (A_{ r } \circ \beta)}{\partial \, x^{s}} \bigl\vert_{(0, u)} \, x^{s} \biggl)
\biggl(\sum_{s \geqslant r} \alpha_{r , s } x^{s}\biggl) \\
= \alpha_{\scriptscriptstyle{ {r} , r}} \, A_{ r } \, x^{r} 
& + \sum_{i \geqslant 1} x^{r+i} \biggl(\alpha_{\scriptscriptstyle{ {r} , r+1}} \, A_{ r } + \sum_{s = 1}^{i} \tfrac{1}{s!} \tfrac{\partial^{s} \, (A_{ r } \circ \beta)}{\partial \, x^{s}} \bigl\vert_{(0, u)} \, \alpha_{\scriptscriptstyle{ {r} , r+i-s}}\biggl) \, .
\end{split}
\end{equation*} 
Therefore, the composition $A (\alpha, \beta)$ equals to $A_{ 0 }$ at the points in the $u$--axis, and the coefficient in $x$ of the power series $A (\alpha, \beta)$ is 
\begin{equation*}
\tfrac{\partial \, (A_{ 0 } \circ \beta)}{\partial \, x} \bigl\vert_{(0, u)} 
\, + \, \alpha_{\scriptscriptstyle{ {1} , 1}} \, A_{ 1 } 
= \beta_{1} \, \tfrac{\text{d} \, A_{ 0 }}{\text{d} \, u} 
\, + \, \alpha_{1} \, A_{ 1 } \, .
\end{equation*}
This proves the fist part of the lemma. 

Let $k \geqslant 2$. Note that the coefficient in $x^{k}$ of the power series 
$A (\alpha, \beta)$ is the sum of the coefficients of $x^{k}$ with respect to the terms $A_{ r } (\beta (x, u)) \, (\alpha(x, u))^{r}$, for $0 \leqslant r \leqslant k$. That is to say, 
\begin{equation}
\label{Ec_Coeficiente de xk en la composicion A en alpha y beta}
\tfrac{1}{k!} \tfrac{\partial^{k} \, (A_{ 0 } \circ \beta)}{\partial \, x^{k}} \bigl\vert_{(0, u)}
\, + \, \sum_{r = 1}^{k-1} \biggl(\alpha_{\scriptscriptstyle{ {r} , k}} \, A_{ r } 
\, + \, \sum_{s = 1}^{k-r} \tfrac{1}{s!} \tfrac{\partial^{s} \, (A_{ r } \circ \beta)}{\partial \, x^{s}} \bigl\vert_{(0, u)} \, \alpha_{\scriptscriptstyle{ {r} , k-s}}\biggl) 
\, + \, \alpha_{\scriptscriptstyle{ {k} , k}} \, A_{ k } \, .
\end{equation}

Using the expression (\ref{Ec_Coeficiente de xk en la composicion A en alpha y beta}) of the coefficient of $x^{k}$,  in the composition $A (\alpha, \beta)$, we will prove the lemma for $k \geqslant 2$. To this purpose we express the partial derivatives  $(A_{ r } \circ \beta)_{x^{s}}$ in terms of the derivatives $A_{ r }$ and the coefficients of $\beta$, and express the coefficients $\alpha_{r , s }(u)$ in terms of the coefficients of $\alpha$.

By using Fa\`a di Bruno's formula (see Theorem \ref{Teorema_Formula de Faa di Bruno}) we obtain that, for $s \geqslant 1$
\begin{equation*}
\tfrac{\partial^{s} \, (A_{ r } \circ \beta)}{\partial \, x^{s}} \bigl\vert_{(0, u)}
= 
\sum \tfrac{s!}{r_{1}! r_{2}! \cdots r_{s}!} \, \tfrac{\text{d}^{l} \, A_{ r }}{\text{d} \, u^{l}}
\, \left(\beta_{1}\right)^{r_{1}} \, \left(\beta_{2}\right)^{r_{2}} \, \cdots \, \left(\beta_{s}\right)^{r_{s}} \, ,
\tag{$\partial^{s}$}
\end{equation*}
where the sum is taken over the non-negative integers $r_{1}, r_{2}, \ldots, r_{s}$ satisfying $r_{1} + 2 r_{2} + \cdots + s r_{s} = s$, and where  $l$ represents its sum $r_{1} + r_{2} + \cdots + r_{s}$.
 
Therefore, $\bigl(A_{ r } \circ \beta\bigl)_{x^{s}} (0, u)$ is a polynomial with natural coefficients and variables    $(A_{ r })^{(1)}, \ldots, (A_{ r })^{(s)}$, $\beta_{1}$, $\beta_{2}, \cdots, \beta_{s}$. In particular, by 
(\ref{Ec_Formula de FaaDiBruno con polinomios}) we get 
\begin{equation}\label{ec: igualdad k derivada}
\tfrac{1}{k!} \, \tfrac{\partial^{k} \, (A_{ 0 } \circ \beta)}{\partial \, x^{k}} \bigl\vert_{(0, u)}
- \, \beta_{k} \, \tfrac{\text{d} \, A_{ 0 }}{\text{d} \, u} 
= \tfrac{1}{k!} \, \tilde{\mathbf{P}}^{k}\Bigl[\tfrac{\text{d}^{s+1} \, A_{ 0 }}{\text{d} \, u^{s+1}} \,  ; \, s! \, \beta_{s} \Bigl]^{k-1}_{s=1} \, ,
\end{equation}  
The equality (\ref{ec: igualdad k derivada}) is expressed by a polynomial with coefficients in the non-negative rational numbers and variables $(A_{ 0 })^{(s+1)}$, $\beta_{s}$, where $1 \leqslant s \leqslant k-1$. 

Since the power series of $(\alpha(x, u))^{r}$ is  $\sum_{s \geqslant r} \alpha_{r , s } x^{s}$ then
\begin{equation*} 
\alpha_{r , s } (u) = \sum \tfrac{r!}{r_{1}! r_{2}!\cdots r_{s}!}
\,
\bigl(\alpha_{1}(u)\bigl)^{r_{1}} \cdots 
\bigl(\alpha_{s}(u)\bigl)^{r_{s}}
\end{equation*}
where the sum is considered in the non-negative integer numbers
 $r_{1}, \ldots, r_{s}$ such that  $r_{1} + \cdots + r_{s}=r$ and such that  $r_{1} + 2 r_{2} + \cdots + s r_{s} = s$. Hence, $\alpha_{r , s }$ is is a polynomial with coefficients in the natural numbers and variables   $\alpha_{1}, \ldots, \alpha_{s}$; moreover, if $r > 1$ it only depends on $\alpha_{1}, \ldots, \alpha_{s-1}$.

Since $\alpha_{1, k} = \alpha_{k}$, $\alpha_{k,k} = (\alpha_{1})^{k}$, the equality (\ref{Ec_Coeficiente de xk en la composicion A en alpha y beta}) expresses that the coefficient in $x^{k}$ in the power series expansion $A (\alpha, \beta)$ equals to 
\begin{equation*}
     \beta_{k} \, \tfrac{\text{d} \, A_{ 0 }}{\text{d} \, u}
\, + \, \alpha_{k} A_{ 1 } 
\, + \, (\alpha_{1})^{k} A_{ k }+ \\
\end{equation*}
\begin{equation*}
\biggl(\tfrac{1}{k!} \tfrac{\partial^{k} \, (A_{ 0 } \circ \beta)}{\partial \, x^{k}} \bigl\vert_{(0, u)} 
\, - \, \beta_{k} \, \tfrac{\text{d} \, A_{ 0 }}{\text{d} \, u}\biggl) \, + \, \sum_{r = 1}^{k-1} \biggl(\alpha_{r, k } \, A_{ r } 
\, + \, \sum_{s = 1}^{k-r} \tfrac{1}{s!} \tfrac{\partial^{s} \, (A_{ r } \circ \beta)}{\partial \, x^{s}} \bigl\vert_{(0, u)} \, \alpha_{r, k-s } \biggl) \, - \, \alpha_{k} \, A_{ 1 } \, .
\end{equation*}

By $(\partial^{s})$, the sum $\tfrac{1}{s!}(A_{ 1 } \circ \beta)_{x^{s}} \, \alpha_{1, k-s}$, for $1 \leqslant s \leqslant k-1$, is a polynomial with natural coefficients and variables $(A_{ 1 })^{(i)}$, $\beta_{i}$ y $\alpha_{i}$, for $1 \leqslant i \leqslant k-1$. Asuming that $1 < r \leqslant k-1$, and using the relations $(\partial^{s})$, it follows that the sum of $\tfrac{1}{s!}(A_{ r } \circ \beta)_{x^{s}} \, \alpha_{r,k-s}$, for $0 \leqslant s \leqslant k-r$, is a polynomial with coefficients in the natural numbers and variables $\beta_{i}$, $\alpha_{i}$, $1 \leqslant i \leqslant k-1$, and the derivatives  $(A_{ r })^{(s)}$, $0 \leqslant s \leqslant k-r$. These properties, together with the relations $(\partial^{k})$ finish the proof of Lemma \ref{Lema_Series de composicion A en alpha y beta}.

\section{\textbf{\emph{Realization of curves as curves of tangencies of foliation pairs}}}
\label{Realizacion de curvas}

\setcounter{equation}{0}

In this section we prove Theorem \ref{teo:realizacion curvas como curvas de tangencias} on realization of a germ of analytic curve in $(\C^{2}, 0)$ having $m+n+1$ pairwise transversal smooth branches as a curve of tangencies of a foliation pair in $\mathcal{N}_{\p}(\mathbf{h}) \times \mathcal{D}_{\mathsf{q}}(\mathcal{I})$. Note in figure \ref{Fig: Figura curvas polares} how the different elements that we have been using will be present.

First, we consider $C$ a germ of analytic curve in $(\C^{2}, 0)$ having $m+n+1$ pairwise transversal smooth branches, whose tangent lines are either $y = p_{i}x$ or $y = q_{j} x$. The parametrizations of the branches of $C$ with respect to $x$ may be expressed as follows:
\begin{equation}
\label{Ec_Curvas_Parametrizaciones}
\begin{split}
C_{p_{i}} & = \{y = \sigma_{p_{i}} (x) = p_{i} x + a_{i} x^{2} + \cdots\} \, , 
\hspace{1.3cm}
1 \leqslant i \leqslant n+1 \, ,
\\
C_{q_{j}} & = \{y = \sigma_{q_{j}} (x) = q_{j} x + a_{j + n + 1} x^{2} + \cdots\} \, ,
\hspace{0.5cm}
1 \leqslant j \leqslant m \, .
\end{split}
\end{equation} 
The collection $(a_{1}, a_{2}, \ldots, a_{m+n+1})$ of coefficients corresponding to the quadratic monomials of the corresponding $m+n+1$ branches of the curve $C$, will be called \textit{the quadratic coefficients of $C$}.

\begin{figure}[ht]
\begin{center}
\includegraphics[scale=0.4]{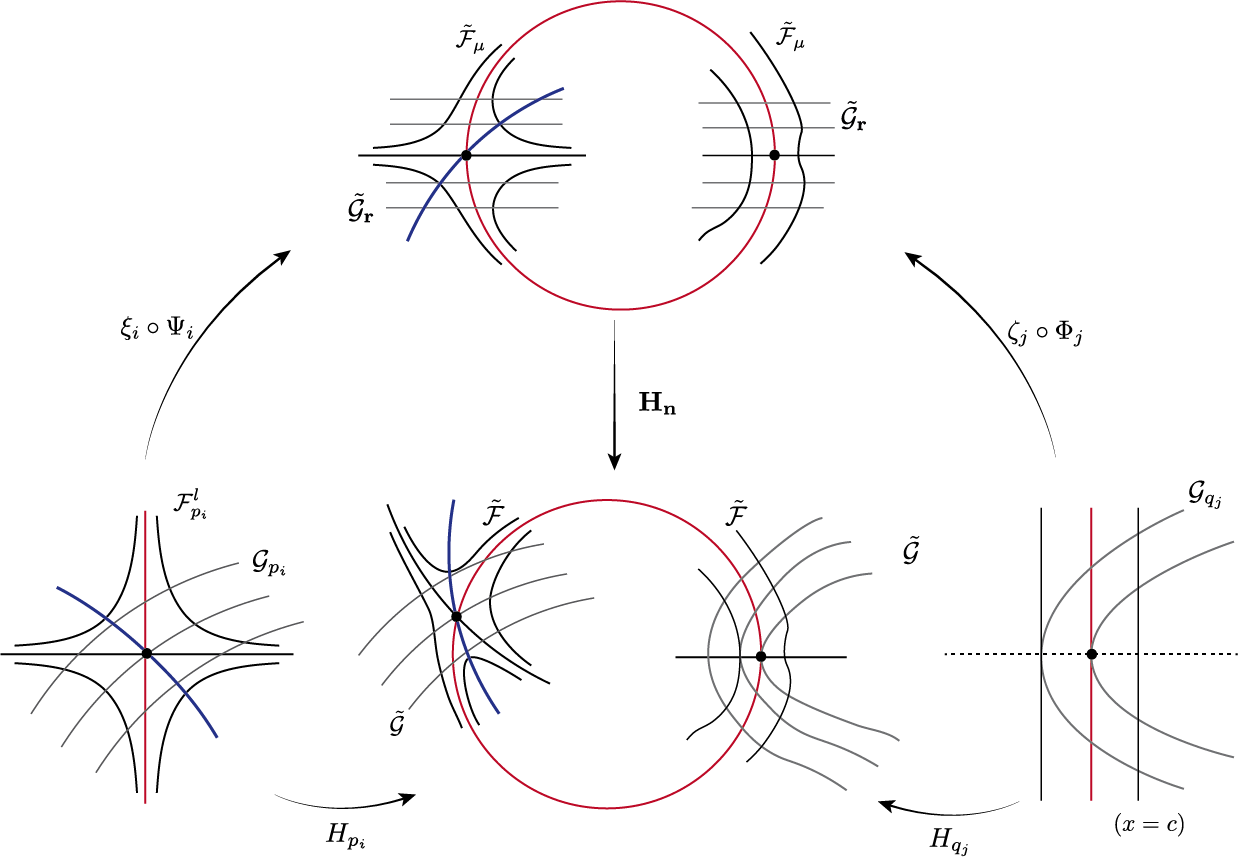}
\caption{Realization of curves as curves of tangencies of foliation pairs.}
\label{Fig: Figura curvas polares}
\end{center}
\end{figure} 

\begin{remark} \label{Obs_DeterminacionFinita}

In 1979, Granger obtained the analytic classification of germs of analytic curves in $(\C^{2}, 0)$ having pairwise transversal smooth branches. In particular, Granger proved the finite determinacy of the analytic type of these curves (see \cite[Proposition 1]{[Gra]}). 

As a consequence of this result, we have the following property for germs of curves having $m+n+1$ pairwise transversal smooth branches, whose tangent lines are either $y = p_{i}x$ or $y = q_{j} x$. Namely, if $C$ and $D$ are two of such curves, and the parametrizations of their branches with respect to $x$ coincide up to their $(m+n-1)$-jet, then there exists a change of coordinates tangent to the identity, which sends the curve $C$ to the curve $D$ (see also \cite[Subsec. 2.3]{[V]}).

\end{remark}

We will now state Theorem \ref{Propo_Igualdad_Curvas_Amalgamado}. This theorem is a reformulation of Theorem \ref{teo:realizacion curvas como curvas de tangencias} and it will be proved by using Theorem \ref{Teorema_Limpieza de TransNorm_Intro_1} and Theorem \ref{Teorema_Parametrizacion_Polar_pi_qj} given in the previous sections. 

For this purpose, we consider the collection $\lambda = (\lambda_{1}, \ldots, \lambda_{n+1}) \in \C^{n+1}$ of Camacho--Sad indices of non dicritical foliations in $\mathcal{N}_{\mathsf{p}}(\mathbf{h})$ (this collection is fixed when the hidden holonomy $\mathbf{h}$ is fixed), and also the quadratic coefficients of the collection of involutions $\mathcal{I}$ of the dicritical foliations in $\mathcal{D}_{\mathsf{q}}(\mathcal{I})$. In the following theorem, assumptions of genericity  on the collections $\lambda$ and $\tau$ are required; these assumptions will be explicitely defined in Subsection \ref{genericity assumptions}.

\begin{theorem}\label{Propo_Igualdad_Curvas_Amalgamado}
Under genericity assumptions on the collection  $\lambda \in \C^{n+1}$ of Camacho--Sad indices and on the collection $\tau \in \C^{m}$ of quadratic coefficients of the involutions,  for each germ of analytic curve $C$ as in \eqref{Ec_Curvas_Parametrizaciones}, up to, perhaps, an analytic
change of coordinates tangent to the identity which modifies the quadratic coefficients $(a_{1}, a_{2}, \ldots, a_{m+n+1})$ of $C$ to suitable ones, the following property holds:

For each natural number $k_{0} \geqslant 1$, there exists $(\F, \G)$ in $\mathcal{N}_{\mathsf{p}}(\mathbf{h}) \times \mathcal{D}_{\mathsf{q}}(\mathcal{I})$ a foliation pair whose normalizing transformations have power series with $k_{0}$-jet \textnormal{($\mathbf{nt_{k_{0}}}$)} as in Theorem \ref{Teorema_Limpieza de TransNorm_Intro_1}, and  such that the branches of its curve of tangencies $\mathcal{P}(\F, \G)$ and the branches of the curve $C$ have parametrizations with the same $(k_{0}+1)$-jet. 
\end{theorem}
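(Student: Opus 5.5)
The plan is to prescribe the local models at the singular and tangency points (the numbers $s_{\scriptscriptstyle{i,r}}$ and $z_{\scriptscriptstyle{j,r}}$) order by order, so that the formulas of Theorem \ref{Teorema_Parametrizacion_Polar_pi_qj} reproduce the jets of the branches of $C$. The pair is then realized with Theorem \ref{teo:Construccion de PdF a partir de PL} and normalized with Theorem \ref{Teorema_Limpieza de TransNorm_Intro_1}.

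\emph{Translation to the blow-up.} In the chart $(x,u)$, $u=y/x$, the branch $y=\sigma_{p_i}(x)=p_ix+a_ix^2+\cdots$ becomes $u=\sigma_{p_i}(x)/x=p_i+a_ix+\cdots$, and similarly for $\sigma_{q_j}$. So agreement of $(k_0+1)$-jets in $(x,y)$ is equivalent to agreement of $k_0$-jets of the parametrizations $\pi_{p_i},\pi_{q_j}$ in $(x,u)$. Writing $\sigma_{[\centerdot]}(x)/x=[\centerdot]+\sum_k d_{[\centerdot],k}x^k$, we must achieve $\mathbf{c}_{\scriptscriptstyle{p_i,k}}=d_{p_i,k}$ and $\mathbf{c}_{\scriptscriptstyle{q_j,k}}=d_{q_j,k}$ for $1\leqslant k\leqslant k_0$.

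\emph{Inductive linear systems.} We proceed by induction on $k$. By Theorem \ref{Teorema_Parametrizacion_Polar_pi_qj}, the term $\boldsymbol{\pi}_{\scriptscriptstyle{p_i,k}}(p_i)$ depends only on data of order $\leqslant k-1$ and on $\varepsilon_{\scriptscriptstyle{i,k}}$, which is fixed by $\F_{\scriptscriptstyle\mu}$; the same holds for $\boldsymbol{\pi}_{\scriptscriptstyle{q_j,k}}(q_j)$ and $\varsigma_{\scriptscriptstyle{j,k}}$. Hence, once $s_{\scriptscriptstyle{i,r}},z_{\scriptscriptstyle{j,r}}$ for $r<k$ are chosen, the $k$-th equations form a linear system in the $n+1+m$ unknowns $(s_{\scriptscriptstyle{i,k}},z_{\scriptscriptstyle{j,k}})$. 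Its matrix is block triangular:
\begin{equation*}
\begin{pmatrix} \mathrm{diag}(1-k\lambda_i) & M \\ 0 & N_k\end{pmatrix},
\qquad
(N_k)_{jj}=-\tfrac12\bigl(k(\varsigma_{\scriptscriptstyle{j,1}})^{(1)}(q_j)+\tfrac{g_j^{(3)}(q_j)}{4}\bigr),
\quad
(N_k)_{jj'}=-\tfrac{1}{2(q_j-q_{j'})}\ (j\neq j').
\end{equation*}
Writing $I_j(u)=q_j-(u-q_j)+\tau_j(u-q_j)^2+\cdots$ gives $g_j=(u-q_j)^2-\tau_j(u-q_j)^3+\cdots$, so $g_j^{(3)}(q_j)=-6\tau_j$. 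The genericity assumptions are then:
\begin{itemize}
\item[(i)] $\lambda_i\neq 1/k$ for all $i$ and all $k\geqslant 1$;
\item[(ii)] $\det N_k(\tau)\neq 0$ for all $k\geqslant 1$.
\end{itemize}
Here $(\varsigma_{\scriptscriptstyle{j,1}})^{(1)}(q_j)$ depends only on $\F_{\scriptscriptstyle\mu}$. Condition (ii) is a countable family of nontrivial polynomial conditions in $\tau$: the leading term in $\tau$ is $\prod_j(3\tau_j/4)$. So it holds off a countable union of hypersurfaces. Under (i) and (ii) each system has a unique solution, found by solving for $z_{\scriptscriptstyle{\centerdot,k}}$ via $N_k$ and then for $s_{\scriptscriptstyle{\centerdot,k}}$.

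\emph{Constraint at $k=1$ (expected main obstacle).} The $z_j$ must be biholomorphisms, so $z_{\scriptscriptstyle{j,1}}\neq0$ is required, and the $k=1$ system has $d_{\centerdot,1}=a_\centerdot$, the quadratic coefficients of $C$. If the solution has some $z_{\scriptscriptstyle{j,1}}=0$, I would first apply a tangent-to-identity change $(x,y)\mapsto(x,y+Q(x,y))$ with $Q$ a homogeneous quadratic. This shifts $a_\centerdot\mapsto a_\centerdot+Q(1,[\centerdot])$ and is precisely the allowed modification of the quadratic coefficients. Since $N_1$ is invertible, the map $a\mapsto z_{\scriptscriptstyle{\centerdot,1}}$ is affine and surjective onto $\C^m$, so one must check that the three-parameter family of shifts $Q(1,u)$ is enough to leave the hyperplanes $\{z_{\scriptscriptstyle{j,1}}=0\}$. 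If it is not, because the image of the shifts is too small when $m>3$, I would instead use the full freedom in choosing a suitable $a$ permitted by the statement, interpreted as an analytic change tangent to the identity realizing any prescribed modification. This is the step I consider delicate, and I would try the quadratic-shift argument first.

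\emph{Realization.} With the polynomials $s_i=\sum_{r\leqslant k_0}s_{\scriptscriptstyle{i,r}}x^r$ and $z_j=\sum_{r\leqslant k_0}z_{\scriptscriptstyle{j,r}}x^r$, Theorem \ref{teo:Construccion de PdF a partir de PL} yields a pair $(\F,\G)\in\mathcal{N}_{\p}(\mathbf{h})\times\mathcal{D}_{\q}(\mathcal{I})$ with these local models. By Theorem \ref{Teorema_Limpieza de TransNorm_Intro_1} and Remark \ref{Obs_Hipotesis del Teorema_Limpieza de TransNorm}, we replace it by an equivalent representative whose normalizing transformations have the form ($\mathbf{nt_{k_0}}$); its local models, hence the data entering Theorem \ref{Teorema_Parametrizacion_Polar_pi_qj}, are unchanged. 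Theorem \ref{Teorema_Parametrizacion_Polar_pi_qj} then gives $\mathbf{c}_{\scriptscriptstyle{\centerdot,k}}=d_{\centerdot,k}$ for $k\leqslant k_0$, i.e.\ equality of the $(k_0+1)$-jets of the branches of $\mathcal{P}(\F,\G)$ and $C$.
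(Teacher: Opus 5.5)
Your scheme is exactly the paper's: pass to $k_0$-jets in the chart $(x,u)$, solve the block-triangular systems $\mathbf{A}_k$ order by order (since $\boldsymbol{\pi}_{\scriptscriptstyle{p_i,k}}$ and $\boldsymbol{\pi}_{\scriptscriptstyle{q_j,k}}$ involve only invariants of order $<k$), realize the local models by Theorem \ref{teo:Construccion de PdF a partir de PL}, and pass to the $(\mathbf{nt_{k_0}})$ representative.

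\textbf{The gap.} It is the step you flagged and left open. Nothing in your argument produces a tangent-to-identity change after which all $z_{\scriptscriptstyle{j,1}}\neq 0$, and conditions (i)--(ii) do not guarantee that one exists. Because the lower-left block vanishes, $z_{\scriptscriptstyle{\centerdot,1}}=N_1^{-1}a_{\mathsf q}$ is linear in the $\mathsf q$-part of the quadratic coefficients. Your changes $(x,y)\mapsto(x,y+Q)$ move $a$ only inside $a+W$, where $W$ is the image of the Vandermonde columns $1,\rho,\rho^2$. Suppose that for some $j$ the functional $a\mapsto z_{\scriptscriptstyle{j,1}}$ vanishes identically on $W$; nothing in (i)--(ii) excludes this. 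Then every curve with $z_{\scriptscriptstyle{j,1}}(a)=0$ is out of reach.

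\textbf{Why the fallback fails.} By Granger's Lemma 1, quoted in the paper, a change tangent to the identity moves the quadratic coefficients only within $a+\mathrm{Im}(\mathbf{V})$, which has dimension at most $4$. A curve with arbitrarily reassigned quadratic coefficients is in general not analytically equivalent to $C$. That equivalence is exactly what the reduction of Theorem \ref{teo:realizacion curvas como curvas de tangencias} via finite determinacy requires.

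\textbf{What is missing.} You need an additional generic condition on $\tau$, the analogue of property b) in Lemma \ref{lem: abierto Zariski A}. Your fear that $W$ is too small is unfounded: even the single shift $y\mapsto y+d_0x^2$, which adds $d_0$ to every quadratic coefficient, suffices generically.
\begin{itemize}
\item By Cramer's rule, $\det(N_1)\,(N_1^{-1}\mathbf{1})_j$ is the determinant of $N_1$ with its $j$-th column replaced by $\mathbf{1}=(1,\ldots,1)^T$.
\item In its Leibniz expansion only the identity permutation produces the monomial $\prod_{l\neq j}(N_1)_{ll}$. So it is a nonzero polynomial in $\tau$ with leading term $\prod_{l\neq j}\tfrac34\tau_l$.
\item Impose that these $m$ polynomials do not vanish. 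Then $z_{\scriptscriptstyle{j,1}}$ changes by $d_0\,(N_1^{-1}\mathbf{1})_j$ under the shift.
\item Any $d_0$ outside finitely many values therefore gives $z_{\scriptscriptstyle{j,1}}\neq 0$ for all $j$.
\end{itemize}

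\textbf{The paper's route.} The paper instead uses the full four-parameter family: quadratic terms in both coordinates, which adds the $\rho^3$ column. It states the requirement as $a+\mathrm{Im}(\mathbf{V})\not\subseteq\bigcup_j\mathbf{A}_1(\C^{n+1}\times\C^m_j)$, and proves genericity through the determinants of the matrices $\mathbf{\Lambda}_{\mathcal{J}}$ (the polynomial $\mathbf{R}$). With either condition added, the rest of your proof goes through.
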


Note that Theorem \ref{teo:realizacion curvas como curvas de tangencias} is a consequence of Theorem \ref{Propo_Igualdad_Curvas_Amalgamado}. In fact, considering Remark \ref{Obs_DeterminacionFinita}, Theorem \ref{Propo_Igualdad_Curvas_Amalgamado} allows us to conclude that, for any germ of curve $C$ as in \eqref{Ec_Curvas_Parametrizaciones}, there exists a tangent to the identity change of coordinates $H$ which sends $C$ to the curve of tangencies of a foliation pair $(\F, \G) \in \mathcal{N}_{\mathsf{p}}(\mathbf{h}) \times \mathcal{D}_{\mathsf{q}}(\mathcal{I})$, i.e., $H(C) = \mathcal{P}(\F, \G)$. As a consequence, the curve $C$ is the curve of tangencies of the foliation pair $(H^{-1}(\F), H^{-1}(\G))$ in $\mathcal{N}_{\mathsf{p}}(\mathbf{h}) \times \mathcal{D}_{\mathsf{q}}(\mathcal{I})$.

We will now focus on the proof of Theorem \ref{Propo_Igualdad_Curvas_Amalgamado}. In order to prove it, in Subsection \ref{genericity assumptions}, we will clarify what are the genericity conditions that we assume in its statement. After that, in Subsection \ref{Demostracion Propo_Igualdad_Curvas_Amalgamado}, we will present the proof of Theorem \ref{Propo_Igualdad_Curvas_Amalgamado}.

\subsection{Genericity assumptions for collections $(\lambda,\tau )$ in Theorem \ref{Propo_Igualdad_Curvas_Amalgamado} }\label{genericity assumptions}

\subsection*{Vandermonde matrix and analytic equivalence of curves }
Let us consider the collections $\mathsf{p}= (p_{1}, \ldots, p_{n+1})$ and $\mathsf{q} = (q_{1}, \ldots, q_{m})$ of  $n+1$ and $m$ mutually distinct complex numbers such that, for any  $k=1,\dots,n+1$ and $j= 1,\dots,m$, $p_{k}\neq q_{j}$. For fixed $(\mathsf{p},\mathsf{q}) \in \C^{n+1} \times \C^{m}$ we introduce the corresponding Vandermonde matrix $\mathbf{V} = \mathbf{V} (\mathsf{p},\mathsf{q})$, with $n+1+m$ rows and $4$ columns: 
 
\begin{equation}
\label{Ec_Vandermonde}
 \mathbf{V} (\mathsf{p},\mathsf{q}):=\left[
\begin{array}{c c}
\begin{matrix}
1  & p_{1} & p_{1}^{2} & p_{1}^{3} \\
\vdots & \vdots & \vdots & \vdots  \\
1  & p_{n+1} & p_{n+1}^{2} & p_{n+1}^{3} 
\end{matrix}
\\ \midrule
\begin{matrix}
1  & q_{1} & q_{1}^{2} & q_{1}^{3} \\
\vdots & \vdots & \vdots & \vdots  \\
\; 1 \; & \; q_{m} \; & \; q_{m}^{2} \; & \; q_{m}^{3} \;  
\end{matrix}
\end{array}
\right]
\end{equation}



Let $\hat{C}_{\rho} = \{y = \rho x + \mu x^{2} + \ldots\}$ be a germ of analytic smooth branch in $(\C^{2}, 0)$. We consider $H$ an analytic change of coordinates in $(\C^{2}, 0)$ satisfying 
\begin{equation*}
H(x, y) 
= (x, y) \, + \, (\mathbf{c}, \mathbf{d}) \, + \, \cdots \, ,
\end{equation*}
where
\begin{equation*}
    \mathbf{c} = \mathbf{c}_{0} x^{1} + \mathbf{c}_{1} x y+ \mathbf{c}_{2} y^{2} 
\end{equation*}
and
\begin{equation*}
    \mathbf{d} = \mathbf{d}_{0} x^{1} + \mathbf{d}_{1} x y+ \mathbf{d}_{2} y^{2}\,.
\end{equation*}
Then, $\hat{C}_{\rho}$ and $H(\hat{C}_{\rho}) = \{y = \rho x + \tilde{\mu} x^{2} + \ldots\}$ have the same linear coefficient $\rho$, while the coefficients of their quadratic monomials $\mu$ and $\tilde{\mu}$ are related as follows
\begin{equation*}
\tilde{\mu} \, - \, \mu \, = \, 
\mathbf{d}_{0} \, + \, (\mathbf{d}_{1} - \mathbf{c}_{0}) \, \rho 
\, + \, (\mathbf{d}_{2} - \mathbf{c}_{1}) \,\rho^{2}
\, - \, \mathbf{c}_{2} \, \rho^{3} \, 
\end{equation*}
(see \cite{[Gra]}, Lemma $1$). For curves with several branches, this property has the following consequence: let $C$ be an analytic curve in $(\C^{2}, 0)$, whose branches are as in \eqref{Ec_Curvas_Parametrizaciones}; we consider the quadratic coefficients of its branches $a = (a_{1}, a_{2}, \ldots, a_{m+n+1})$. If $H$ is the change of coordinates described previously, then the collection of coefficients of the quadratic monomials of the curve $H(C)$ are equal to 
\begin{equation}
a \, + \, \mathbf{V} \left(\mathbf{d}_{0}, \, \mathbf{d}_{1} - \mathbf{c}_{0}, \, \mathbf{d}_{2} - \mathbf{c}_{1}, \, - \mathbf{c}_{2} \right) \,,
\end{equation}
where $\mathbf{V}$ is the Vandermonde matrix defined in \eqref{Ec_Vandermonde}. 

\begin{remark}\label{rem: esp afin coeficientes imagen}
    The affine space $a + \text{Im} (\mathbf{V})$ consists of the coefficients of the quadratic monomials of the curves strictly analytically equivalent to $C$. In other words, $a + \text{Im} (\mathbf{V})$ is the set of the coefficients of the quadratic monomials of the curves of the form $H(C)$, being $H$ an analytic change of coordinates tangent to the identity.
\end{remark}

\subsection*{Parametrizations of curves of tangencies in terms of matrices}
For fixed $(\mathsf{p},\mathsf{q}) \in \C^{n+1} \times \C^{m}$ we  consider for any foliation pair in $\mathcal{N}_{\p}(\mathbf{h}) \times \mathcal{D}_{\mathsf{q}}(\mathcal{I})$, the collections $\lambda = (\lambda_{1}, \ldots, \lambda_{n+1}) \in \C^{n+1}$  of Camacho--Sad indices, and $\tau = (\tau_{1}, \ldots, \tau_{m}) \in \C^{m}$ the collection of the quadratic coefficients of the involutions $\mathcal{I} = (I_{1}, \ldots, I_{m})$, i.e., $I_{j} (u) - q_{j}$ is equal to $-(u-q_{j}) + \tau_{j} (u-q_{j})^{2} + \cdots$.

 Note that, since the map $g_{j}(u)$ is defined as $(q_{j} - u)(I_{j}(u) - q_{j})$, its derivative $g_{j}^{(3)}(q_{j})$ is equal to $-3! \tau_{j}$.

We define $\theta_{\scriptscriptstyle{j, \, k}} := -\tfrac{3}{2} \tau_{j} + k \cdot (\varsigma_{\scriptscriptstyle{j, \, 1}})^{(1)} (q_{j})$, where $\varsigma_{j} = \varsigma_{\scriptscriptstyle{j\, 1}}(u) x + \varsigma_{\scriptscriptstyle{j\, 2}}(u) x^{2} + \cdots$ is the first coordinate of the biholomorphism $\zeta_{j}$ (see (3.8)). Then, for $k \geqslant 1$ we define the square matrix $\mathbf{A}_{k} = \mathbf{A}_{k} (\mathsf{p}, \mathsf{q}; \lambda, \tau)$ of dimension $m + n +1$ 
\begin{equation}
\label{Ec_MatrizAk}
\mathbf{A}_{k}:=
\left[
\begin{array}{c|c} 
\begin{matrix}
\mbox{\footnotesize $1 - k\lambda_{1}$} & 0 & \cdots & 0 \\
0 & \mbox{\footnotesize $1 - k\lambda_{2}$} & \cdots & 0 \\
\vdots & \vdots & \ddots & \vdots \\
0 & 0 & \cdots & \mbox{\footnotesize $1 - k\lambda_{n+1}$}
\end{matrix}
& 
\begin{matrix}
\tfrac{1}{p_{1} - q_{1}} 
& \tfrac{1}{p_{1} - q_{2}} & \cdots 
& \tfrac{1}{p_{1} - q_{m}}
\\
\tfrac{1}{p_{2} - q_{1}} 
& \tfrac{1}{p_{2} - q_{2}} & \cdots 
& \tfrac{1}{p_{2} - q_{m}} \\
\vdots & \vdots & \ddots & \vdots \\
\tfrac{1}{p_{n+1} - q_{1}} 
& \tfrac{1}{p_{n+1} - q_{2}} & \cdots 
& \tfrac{1}{p_{n+1} - q_{m}}
\end{matrix}
\\ \midrule 
\begin{matrix}
\hspace{0.4cm} 0 \hspace{0.4cm}  
& \hspace{0.4cm} 0 \hspace{0.4cm} & \cdots 
& \hspace{0.4cm} 0 \hspace{0.4cm} \\
0 & 0 & \cdots & 0 \\
\vdots & \vdots & \ddots & \vdots \\
0 & 0 & \cdots & 0
\end{matrix}
& 
\begin{matrix}
\mbox{\footnotesize $\theta_{\scriptscriptstyle{1, \, k}}$} 
& \tfrac{1}{q_{1} - q_{2}} & \cdots 
& \tfrac{1}{q_{1} - q_{m}}
\\
\tfrac{1}{q_{2} - q_{1}} 
& \mbox{\footnotesize $\theta_{\scriptscriptstyle{2, \, k}}$} & \cdots 
& \tfrac{1}{q_{2} - q_{m}} \\
\vdots & \vdots & \ddots & \vdots \\
\tfrac{1}{q_{m} - q_{1}} 
& \tfrac{1}{q_{m} - q_{2}} & \cdots 
& \mbox{\footnotesize $\theta_{\scriptscriptstyle{m, \, k}}$}
\end{matrix}
\end{array}
\right]
\end{equation}

\begin{remark}
\label{Obs_TeoMatricialBis}
The square matrices \eqref{Ec_MatrizAk} give us expressions of the curves of tangencies of the foliation pairs $(\F, \G)$ considered in Theorem \ref{Teorema_Parametrizacion_Polar_pi_qj}, as we shall see below.

The coefficients $\mathbf{c}_{\scriptscriptstyle{p_{i}, k}}$ (see (\ref{Pageref_Pik_i_j}) of the power series $\pi_{p_{i}} = p_{i} + \textstyle \sum_{r \geqslant 1}  \mathbf{c}_{\scriptscriptstyle{p_{i}, r}}\,x^{r}$ of the parametrizations by $x$ of the curve of tangencies $\CP_{p_{i}} (\tilde{\F}, \tilde{\G})$ satisfy,  for $1 \leqslant k \leqslant k_{0}$, 
\begin{equation*}
\mathbf{c}_{\scriptscriptstyle{p_{i}, k}}\, 
 = (1 - k\lambda_{i}) s_{\scriptscriptstyle{{i} , k}} 
\, - \,  \sum_{\jmb =1}^{m} \tfrac{z_{\scriptscriptstyle{{\jmb} , k}}}{2(p_{i}-q_{\jmb})} 
\,  + \,  \boldsymbol{\pi}_{\scriptscriptstyle{p_{i}\, , k}}  (p_{i})
 \, 
 \end{equation*}
(see property \emph{a)} in Theorem \ref{Teorema_Parametrizacion_Polar_pi_qj}). Analogously, the coefficients $\mathbf{c}_{\scriptscriptstyle{q_{j}, k}}$ (see (\ref{Pageref_Pik_i_j_bis})) of the power series $    \pi_{q_{j}} = q_{j} + \textstyle \sum_{r \geqslant 1}  \mathbf{c}_{\scriptscriptstyle{q_{j}, r}}\, x^{r}$ of the parametrizations by $x$ of the curve of tangencies $\CP_{q_{j}} (\tilde{\F}, \tilde{\G})$ satisfy, for $1 \leqslant k \leqslant k_{0}$,  
\begin{equation*}
\begin{split}
\hspace{1.0cm}\mathbf{c}_{\scriptscriptstyle{q_{j}, k}}\, 
 &= 
 - \tfrac{z_{\scriptscriptstyle{j, k}}}{2} \Bigl(k(\varsigma_{\scriptscriptstyle{j, 1}})^{(1)}(q_{j}) 
\, + \, \tfrac{g_{j}^{(3)}(q_{j})}{4}\Bigl)
\, - \, \sum_{\jmb \neq j}^{m} \tfrac{z_{\scriptscriptstyle{{\jmb} , k}}}{2(q_{j} - q_{\jmb})} 
\, + \, \boldsymbol{\pi}_{\scriptscriptstyle{q_{j}\, , k}}  (q_{j}) \\
&=
- \tfrac{z_{\scriptscriptstyle{j, k}}}{2} \ \theta_{\scriptscriptstyle{j, \, k}}
\, - \, \sum_{\jmb \neq j}^{m} \tfrac{z_{\scriptscriptstyle{{\jmb} , k}}}{2(q_{j} - q_{\jmb})} 
\, + \, \boldsymbol{\pi}_{\scriptscriptstyle{q_{j}\, , k}}  (q_{j})
\,,
\end{split}
\end{equation*}
where $\theta_{\scriptscriptstyle{j, \, k}}$ is, by definition,  $k(\varsigma_{\scriptscriptstyle{j, 1}})^{(1)}(q_{j}) 
\, - \, \tfrac{3}{2} \tau_{j}$ (see property \emph{b)} in Theorem \ref{Teorema_Parametrizacion_Polar_pi_qj}). Thus, the differences $\mathbf{c}_{\scriptscriptstyle{p_{\imb}, k}} \,  - \, \boldsymbol{\pi}_{\scriptscriptstyle{p_{\imb}, k}}(p_{\imb})$ and  $\mathbf{c}_{\scriptscriptstyle{q_{\jmb}, k}} \, - \, \boldsymbol{\pi}_{\scriptscriptstyle{q_{\jmb}, k}}(q_{\jmb})$  are linear functions in terms of the coefficients $s_{\scriptscriptstyle{{\imb}, \, k}}$, $z_{\scriptscriptstyle{{\jmb}, \, k}}$, where 
\begin{equation*}
    s_{i} = \sum_{r \geqslant 1} s_{\scriptscriptstyle{i, \, r}} x^{r}\,,\quad z_{j} = \sum_{r \geqslant 1} z_{\scriptscriptstyle{j, \, r}} x^{r}\,,\quad z_{\scriptscriptstyle{j, \, 1}} \in \C^{\ast}\,,
\end{equation*}
are analytic invariants of the pair $(\F, \G)$. More specifically, these relations are expressed by the square matrices $\mathbf{A}_{k}$ defined in \eqref{Ec_MatrizAk}:
\begin{equation*} 
\begin{split}
& \hspace{1.9cm}
\mathbf{A}_{k} \left( \left(s_{\scriptscriptstyle{{\imb}, \, k}}\right)_{\scriptscriptstyle{\imb = 1}}^{\scriptscriptstyle{n+1}}, \left(-\tfrac{1}{2} z_{\scriptscriptstyle{{\jmb}, \, k}}\right)_{\scriptscriptstyle{\jmb = 1}}^{\scriptscriptstyle{m}}\right) \\
& \, = \, 
\left(\left(\mathbf{c}_{\scriptscriptstyle{p_{\imb}, k}} \,  - \, \boldsymbol{\pi}_{\scriptscriptstyle{p_{\imb}, k}}(p_{\imb})\right)_{\scriptscriptstyle{\imb = 1}}^{\scriptscriptstyle{n+1}}, \, 
\left(\mathbf{c}_{\scriptscriptstyle{q_{\jmb}, k}} \, - \, \boldsymbol{\pi}_{\scriptscriptstyle{q_{\jmb}, k}}(q_{\jmb})\right)_{\scriptscriptstyle{\jmb = 1}}^{\scriptscriptstyle{m}} \right) \, .
\end{split} 
\end{equation*}

\end{remark}

\subsection*{Genericity assumptions for collections ($\lambda, \tau$)}

\begin{lemma}
\label{lem: abierto Zariski A}
For such fixed collection of pairwise different complex numbers $(\mathsf{p},\mathsf{q}) \in \C^{n+1} \times \C^{m}$, we consider a natural number $k_{0} \geqslant 1$.

There exists a Zariski open set $\mathcal{A}$ of $\C^{n+1}\times \C^{m}$, depending on $\q$ and $k_{0}$, in which any element $(\lambda, \tau) \in \mathcal{A}$ satisfies the following properties:
\begin{itemize}

\item[a)]  For each $1 \leqslant k \leqslant k_{0}$, the square matrix
$\mathbf{A}_{k} = \mathbf{A}_{k}(\mathsf{p},\mathsf{q}; \lambda, \tau)$ defined in \eqref{Ec_MatrizAk}, is an invertible matrix.

\item[b)] For any $a \in \C^{n+1} \times \C^{m}$, the affine space $a + \text{Im} (\mathbf{V})$ defined by the image of the Vandermonde matrix $\mathbf{V}$ given in \eqref{Ec_Vandermonde}, satisfies
\begin{equation*}
a \, + \, \text{Im} (\mathbf{V}) \,  
\nsubseteq
\, 
\bigcup_{j=1}^{m} \, \mathbf{A}_{1} (\C^{n+1} \times \C^{m}_{j}) \, ,
\end{equation*}  
where $\C^{m}_{j}$ consists of the elements $(\mathsf{z}_{1}, \ldots, \mathsf{z}_{m} ) \in \C^{m}$ whose $j$th coordinate is zero, i.e., $\mathsf{z}_{j} = 0$.
\end{itemize}
\end{lemma}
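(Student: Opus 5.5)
Since $\mathbf{A}_{k}$ is block upper triangular, I would write
\[
\det \mathbf{A}_{k} = \prod_{i=1}^{n+1} (1-k\lambda_{i}) \cdot \det \mathbf{Q}_{k}(\tau),
\]
where $\mathbf{Q}_{k}$ is the $m\times m$ block with diagonal entries $\theta_{j,k} = -\tfrac{3}{2}\tau_{j} + k(\varsigma_{j,1})^{(1)}(q_{j})$ and off-diagonal entries $1/(q_{j}-q_{l})$. The numbers $(\varsigma_{j,1})^{(1)}(q_{j})$ depend only on the fixed foliation $\F_{\mu}$, so for fixed $k$ the function $\det \mathbf{Q}_{k}$ is a polynomial in $\tau$. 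Its monomial $\prod_{j} \theta_{j,k}$ carries the top-degree term $(-\tfrac{3}{2})^{m}\tau_{1}\cdots\tau_{m}$, and no other term of the expansion reaches that degree. Hence $\det \mathbf{Q}_{k}$ is a nonzero polynomial. Likewise $\prod_{i}(1-k\lambda_{i})$ is a nonzero polynomial in $\lambda$. Setting $\mathcal{A}_{a} := \{\prod_{k=1}^{k_{0}} \det \mathbf{A}_{k} \neq 0\}$ gives a nonempty Zariski open set on which property a) holds.

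For b) I would argue on the complementary side. For every $a$, the set $a+\mathrm{Im}(\mathbf{V})$ is a translate of the linear space $W = \mathrm{Im}(\mathbf{V})$. On $\mathcal{A}_{a}$ the matrix $\mathbf{A}_{1}$ is invertible, so $\mathbf{A}_{1}(\C^{n+1}\times\C^{m}_{j})$ is a hyperplane $L_{j} = \{\ell_{j}=0\}$, where $\ell_{j} = e^{*}_{n+1+j}\circ \mathbf{A}_{1}^{-1}$. An affine space contained in a finite union of hyperplanes lies in one of them, by irreducibility. If $a+W \subseteq L_{j}$, then in particular $W \subseteq L_{j}$, that is, $\ell_{j}$ vanishes on $W$. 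Conversely, if $\ell_{j}|_{W}\not\equiv 0$ for every $j$, then no translate $a+W$ is contained in any $L_{j}$, and b) holds for all $a$ simultaneously. So b) reduces to requiring that, for each $j$, the row $\ell_{j}$ does not annihilate the columns of $\mathbf{V}$. Equivalently, the vector $\ell_{j}\mathbf{V} \in \C^{4}$ is not the zero vector.

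By Cramer's rule, $\ell_{j} = \mathrm{adj}(\mathbf{A}_{1})_{n+1+j,\cdot}/\det\mathbf{A}_{1}$. Consequently $\det(\mathbf{A}_{1})\,\ell_{j}\mathbf{V}$ is a vector of polynomials in $(\lambda,\tau)$, and condition b) becomes the nonvanishing of at least one of its four entries for each $j$. The intersection of $\mathcal{A}_{a}$ with these conditions is then Zariski open. It remains to show it is nonempty, i.e.\ that for each $j$ some entry is not identically zero. This is the main obstacle. I would test it against the first column of $\mathbf{V}$ (all ones), checking that the sum of the adjugate row entries is a nonzero polynomial, by a degree argument in $\tau$. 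Because the lower-left block is zero, the last $m$ rows of $\mathbf{A}_{1}^{-1}$ equal $(0 \mid \mathbf{Q}_{1}^{-1})$. So $\ell_{j}$ only sees the $q$-coordinates, and it suffices to show that row $j$ of $\mathbf{Q}_{1}^{-1}$ has nonzero sum for generic $\tau$. Letting $\tau \to \infty$ with $\tau_{j}$ dominant, $\mathbf{Q}_{1}^{-1} \approx \mathrm{diag}(\theta_{l,1}^{-1})$, so the row sum is approximately $1/\theta_{j,1} \neq 0$. A cleaner route is the leading coefficient in $\prod_{l\neq j}\tau_{l}$ of the cofactor expansion, which is $(-\tfrac{3}{2})^{m-1}\neq 0$. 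Taking $\mathcal{A}$ to be the intersection of these finitely many nonempty Zariski open sets completes the argument. As the statement asserts, $\mathcal{A}$ depends only on $\q$ (through the $\varsigma_{j,1}$) and on $k_{0}$.
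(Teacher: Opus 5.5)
Your proof is correct. Part a) is the paper's argument: the block-triangular factorization $\det\mathbf{A}_k=\prod_i(1-k\lambda_i)\det\tilde{\mathbf{A}}_k$ together with the leading monomial $(-\tfrac32)^m\tau_1\cdots\tau_m$. Here $\tilde{\mathbf{A}}_k$ is the paper's name for your $\mathbf{Q}_k$. For part b) you take a genuinely different and shorter route.

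\textbf{The paper's route for b).} The paper proves the spanning identity $\mathbf{A}_1(\C^{n+1}\times\C^m_j)+\mathrm{Im}(\mathbf{V})=\C^{n+1}\times\C^m$.
\begin{itemize}
\item Using $\lambda_i\neq1$, it reduces this to $\mathbf{\Lambda}(\C^m_j\times\C^4)=\C^m$ for the $m\times(m+4)$ matrix $\mathbf{\Lambda}=[\tilde{\mathbf{A}}_1\mid(q_r^s)]$.
\item It settles $m\le4$ by surjectivity of the Vandermonde block.
\item For $m\ge5$ it deletes four columns of $\tilde{\mathbf{A}}_1$ to get square matrices $\mathbf{\Lambda}_{\mathcal{J}_s}$. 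It shows $\det\mathbf{\Lambda}_{\mathcal{J}_s}\not\equiv0$ by a separate Leibniz/Vandermonde lemma, covers $\{1,\dots,m\}$ by the index sets $\mathcal{J}_s$, and multiplies these determinants into the polynomial $\mathbf{R}$.
\end{itemize}

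\textbf{Your route for b).} You dualize instead. Where $\mathbf{A}_1$ is invertible, $\mathbf{A}_1(\C^{n+1}\times\C^m_j)=\ker\ell_j$ is a hyperplane. Hence the paper's spanning identity is equivalent to your condition $\ell_j\mathbf{V}\neq0$. You also make explicit, via irreducibility, the passage from a single $j$ with $a=0$ to the union over $j$ and an arbitrary translate $a$; the paper only asserts this step. Because $\ell_j$ is row $j$ of $\tilde{\mathbf{A}}_1^{-1}$ padded with zeros, it is enough to test the constant column of $\mathbf{V}$. Your cofactor count then settles this uniformly in $m$: only the $(j,j)$ cofactor of $\tilde{\mathbf{A}}_1$ can contain the monomial $\prod_{l\neq j}\tau_l$, with coefficient $(-\tfrac32)^{m-1}$.

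\textbf{Comparison.} Your approach needs no case split in $m$ and no auxiliary determinant lemma. It also identifies the exceptional set exactly and shows that the translation parameter alone (the column of ones in $\mathbf{V}$) generically suffices. The paper's approach instead produces an explicit product formula for the defining polynomial, with denominators that are products of the differences $q_j-q_l$.

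One small caution about your heuristic. The approximation $\tilde{\mathbf{A}}_1^{-1}\approx\mathrm{diag}(\theta_{l,1}^{-1})$ requires every $\tau_l\to\infty$, not just $\tau_j$ being dominant. If only $\tau_j$ grows, row $j$ of the inverse is not close to $\theta_{j,1}^{-1}e_j$. The leading-coefficient argument you give is the rigorous one, and it is what carries the proof.
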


\begin{remark}\label{obs: abierto Zariski A y polinomio P}
    The Zariski open set $\mathcal{A}$ results the complement of the zeros of a polynomial
\begin{equation}\label{ec: mathcal P}
\mathcal{P}(k_{0}, \lambda, \tau ):=\mathbf{P}_{k_{0}} (\tau_{1}, \tau_{2}, \ldots, \tau_{m}) \, \cdot\prod_{\substack{1 \leqslant k \leqslant k_{0} \\ 1 \leqslant i \leqslant n+1 }} \, \bigl(1 \, - \, k\lambda_{i}\bigl) \, ,
\end{equation}
where $\mathbf{P}_{k_{0}} (\tau_{1}, \tau_{2}, \ldots, \tau_{m})$ is a nonzero polynomial in the variables $\tau_{1}, \tau_{2}, \ldots, \tau_{m}$, whose coefficients are rational functions in the coordinates $q_{j}$ of $\q$, with denominators equal to products of differences  $q_{j} - q_{l}$, $j \neq l$. A precise description of the polynomial $\mathbf{P}_{k_{0}} (\tau_{1}, \tau_{2}, \ldots, \tau_{m})$ is given in Subsection \ref{Descripcion polinomio cal P} 
\end{remark}

\subsection{Proof of Theorem \ref{Propo_Igualdad_Curvas_Amalgamado}}
\label{Demostracion Propo_Igualdad_Curvas_Amalgamado}
Theorem \ref{Propo_Igualdad_Curvas_Amalgamado} will follow from two results: Proposition \ref{Propo_Cuadratico} and Proposition \ref{Propo_Igualdad_Curvas}. 

Proposition \ref{Propo_Cuadratico} allows us to modify the collection $a = (a_{1}, a_{2}, \ldots, a_{m+n+1})$ of quadratic coefficients of an analytic curve $C$ by means of a matrix $\mathbf{A}_{1} = \mathbf{A}_{1} (\mathsf{p}, \mathsf{q}; \lambda, \tau)$ of dimension $m + n +1$. On the other hand, Proposition \ref{Propo_Igualdad_Curvas} states that, for curves having quadratic coefficients as in Proposition \ref{Propo_Cuadratico}, any finite jet of the parametrizations of its branches is realized by the parametrizations of the branches of the curve of tangencies of some foliation pair in $\mathcal{N}_{\mathsf{p}}(\mathbf{h}) \times \mathcal{D}_{\mathsf{q}}(\mathcal{I})$.

\begin{proposition} 
\label{Propo_Cuadratico}
For the fixed collection $(\mathsf{p},\mathsf{q}) \in \C^{n+1} \times \C^{m}$, let  $(\lambda, \tau)$ be an element of the Zariski open set $\mathcal{A}$ described in Subsection \ref{genericity assumptions}, for  $k_{0} \geqslant 1$.

Then any germ of curve $C$ with branches as in \eqref{Ec_Curvas_Parametrizaciones}, except for an analytic change of coordinates tangent to the identity, has a collection of quadratic coefficients $(a_{1}, a_{2}, \ldots, a_{m+n+1})$ in the image of $\C^{n+1} \times (\C^{\ast})^{m}$ under the matrix $\mathbf{A}_{1}$, defined in \eqref{Ec_MatrizAk}. That is, there exist $(\mathsf{s}, \mathsf{z}) \in \C^{n+1}\times (\C^{\ast})^{m}$ such that 
\begin{equation*}
\mathbf{A}_{1} \left(\mathsf{s}, \mathsf{z}\right)
= \left(a_{1}, a_{2}, \ldots, a_{m+n+1}\right) \, .
\end{equation*}
\end{proposition}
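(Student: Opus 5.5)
The plan is to combine Remark \ref{rem: esp afin coeficientes imagen} with the two genericity properties of Lemma \ref{lem: abierto Zariski A}. Let $a=(a_{1},\ldots,a_{m+n+1})$ be the quadratic coefficients of $C$. By Remark \ref{rem: esp afin coeficientes imagen}, the quadratic coefficients of the curves $H(C)$, with $H$ an analytic change of coordinates tangent to the identity, form exactly the affine space $a+\text{Im}(\mathbf{V})$. Every point $a+\mathbf{V}w$, $w\in\C^{4}$, is realized by the explicit quadratic change $H(x,y)=(x,y)+(\mathbf{c},\mathbf{d})$, choosing $\mathbf{c}_{0},\mathbf{c}_{1},\mathbf{c}_{2},\mathbf{d}_{0},\mathbf{d}_{1},\mathbf{d}_{2}$ so that $(\mathbf{d}_{0},\mathbf{d}_{1}-\mathbf{c}_{0},\mathbf{d}_{2}-\mathbf{c}_{1},-\mathbf{c}_{2})=w$. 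Such an $H$ is tangent to the identity and does not change the linear coefficients $p_{i},q_{j}$. So it suffices to find a point $a'\in a+\text{Im}(\mathbf{V})$ that lies in $\mathbf{A}_{1}(\C^{n+1}\times(\C^{\ast})^{m})$.

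Since $(\lambda,\tau)\in\mathcal{A}$, property a) of Lemma \ref{lem: abierto Zariski A} with $k=1$ says that $\mathbf{A}_{1}$ is invertible. Hence every $a'\in\C^{n+1}\times\C^{m}$ can be written uniquely as $a'=\mathbf{A}_{1}(\mathsf{s},\mathsf{z})$. The only remaining requirement is that all coordinates $\mathsf{z}_{j}$ be nonzero, that is, $a'\notin\bigcup_{j=1}^{m}\mathbf{A}_{1}(\C^{n+1}\times\C^{m}_{j})$. Property b) of Lemma \ref{lem: abierto Zariski A} gives exactly this: the affine space $a+\text{Im}(\mathbf{V})$ is not contained in that union, so some $a'=a+\mathbf{V}w$ lies outside it. For this $a'$ the unique preimage $(\mathsf{s},\mathsf{z})=\mathbf{A}_{1}^{-1}a'$ has $\mathsf{z}\in(\C^{\ast})^{m}$.

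It then only remains to take the change of coordinates $H$ associated with this $w$. The curve $H(C)$ has branches of the form \eqref{Ec_Curvas_Parametrizaciones}, with quadratic coefficients $a'=\mathbf{A}_{1}(\mathsf{s},\mathsf{z})$, which is the claim. The only point needing care is that the image of each branch is again parametrizable by $x$ with the same linear coefficient; this holds because $H$ is tangent to the identity, and it is the computation of \cite{[Gra]}, Lemma 1, already quoted in the paper.

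The substance of the statement therefore lies in Lemma \ref{lem: abierto Zariski A}, which is assumed. The argument itself is a direct combination: invertibility gives existence and uniqueness of $(\mathsf{s},\mathsf{z})$, and the non-containment gives the nonvanishing of $\mathsf{z}$. Equivalently, the union is a finite union of hyperplanes, and an affine space not contained in it meets its complement in a dense set, so such a $w$ exists.
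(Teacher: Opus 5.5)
Your proposal is correct and follows the paper's proof essentially step for step. Property a) gives invertibility of $\mathbf{A}_{1}$, and property b) gives a point of $a+\text{Im}(\mathbf{V})$ outside $\bigcup_{j}\mathbf{A}_{1}(\C^{n+1}\times\C^{m}_{j})$, whose preimage therefore has all $\mathsf{z}_{j}\neq 0$. Remark \ref{rem: esp afin coeficientes imagen} then supplies a tangent-to-the-identity change of coordinates realizing that point as the quadratic coefficients of $H(C)$.
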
 

\begin{proof}[Proof of Proposition \ref{Propo_Cuadratico}]
Let $C$ be a germ of analytic curve in $(\C^{2}, 0)$, with branches as in \eqref{Ec_Curvas_Parametrizaciones}. Let us consider $a = (a_{1}, a_{2}, \ldots, a_{m+n+1})$, the collection of coefficients of the quadratic monomials of its branches.

Let $\lambda = (\lambda_{1}, \ldots, \lambda_{n+1}) \in \C^{n+1}$ be the collection of Camacho--Sad indices, and $\tau = (\tau_{1}, \ldots, \tau_{1}) \in \C^{m}$ be the collection of quadratic coefficients of the collection of involutions $\mathcal{I}$. We assume that $(\lambda, \tau)$ is an element of the Zariski open set $\mathcal{A}$. Hence, the matrix $\mathbf{A}_{1}$ defined in \eqref{Ec_MatrizAk} is invertible. Moreover,     
\begin{equation*}
a \, + \, \text{Im} (\mathbf{V}) \,  
\nsubseteq
\, 
\bigcup_{j=1}^{m} \, \mathbf{A}_{1} (\C^{n+1} \times \C^{m}_{j}) \, ,
\end{equation*}  
where $\C^{m}_{j}$ consists of the elements of $\C^{m}$ whose $j$th coordinate is zero. Therefore, there exists $b = (b_{1}, b_{2}, \ldots, b_{m+n+1})$ in the affine space $a + \text{Im} (\mathbf{V})$ that is not an element of the union of subspaces $\cup_{j=1}^{m} \mathbf{A}_{1} (\C^{n+1} \times \C^{m}_{j})$. 

Since $\mathbf{A}_{1}$ is a surjective linear map, $b$ belongs to the image of $\C^{n+1} \times (\C^{\ast})^{m}$ under $\mathbf{A}_{1}$. The proof of Proposition \ref{Propo_Cuadratico} follows since, by Remark \ref{rem: esp afin coeficientes imagen}, if $b \in a + \text{Im} (\mathbf{V})$ then there exists $H$ an analytic change of coordinates tangent to the identity such that $H(C)$ has quadratic coefficients equal to $b$. This finishes the proof of Proposition \ref{Propo_Cuadratico}.
\end{proof}

\begin{proposition}
\label{Propo_Igualdad_Curvas}
For the fixed collection $(\mathsf{p},\mathsf{q}) \in \C^{n+1} \times \C^{m}$, let  $(\lambda, \tau)$ be an element of the Zariski open set $\mathcal{A}$ described in Subsection \ref{genericity assumptions}, for  $k_{0} \geqslant 1$. 

Let us consider a germ of curve $C$ with branches as in \eqref{Ec_Curvas_Parametrizaciones}, whose quadratic coefficients $(a_{1}, a_{2}, \ldots, a_{m+n+1})$ are in the image of $\C^{n+1} \times (\C^{\ast})^{m}$ under the invertible matrix $\mathbf{A}_{1}=\mathbf{A}_{1} \left(\mathsf{p}, \mathsf{q}; \lambda, \tau \right)$,
 \begin{equation*}
\mathbf{A}_{1} \left(\mathsf{s}, \mathsf{z}\right)
= \left(a_{1}, a_{2}, \ldots, a_{m+n+1}\right) \,, \quad \left(\mathsf{s}, \mathsf{z}\right) \in \C^{n+1}\times (\C^{\ast})^{m}.
\end{equation*} 
Then there exists a foliation pair $(\mathcal{F}, \mathcal{G}) \in \mathcal{N}_{\mathsf{p}}(\mathbf{h}) \times \mathcal{D}_{\mathsf{q}}(\mathcal{I})$, whose normalizing transformations have power series with $k_{0}$-jet $(\mathbf{nt_{k_{0}}})$ as in Theorem \ref{Teorema_Limpieza de TransNorm_Intro_1}, and such that the branches of its curve of tangencies, $\mathcal{P}(\mathcal{F}, \mathcal{G})$, and the branches of the curve $C$ have parametrizations with the same $(k_{0} +1)$-jet. 
\end{proposition}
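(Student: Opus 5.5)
We construct the foliation pair by choosing its local models inductively, order by order in $x$, and then realizing them. The plan uses Theorem \ref{teo:Construccion de PdF a partir de PL} for the realization, Theorem \ref{Teorema_Limpieza de TransNorm_Intro_1} to normalize the normalizing transformations to $(\mathbf{nt_{k_0}})$, and Theorem \ref{Teorema_Parametrizacion_Polar_pi_qj} (in the matrix form of Remark \ref{Obs_TeoMatricialBis}) to read off the jets of the curve of tangencies. Throughout, the data $\lambda$, $\mathcal{I}$ (hence $g_j$, $\tau$), and the functions $\varepsilon_i,\varsigma_j$ attached to the fixed $\F_\mu$ stay fixed. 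The only free parameters are the Taylor coefficients $s_{i,r}$ of $s_i$ and $z_{j,r}$ of $z_j$, with the constraint $z_{j,1}\neq 0$.

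First we translate jets. A branch $y=\sigma(x)=\rho x+a x^{2}+\dots$ in the chart $(x,u)$ becomes $u=\sigma(x)/x=\rho+a x+\dots$. So matching the $(k_0+1)$-jet of the branches of $C$ with those of $\mathcal{P}(\F,\G)$ is equivalent to requiring
\[
\mathbf{c}_{p_i,k}=\sigma_{p_i,k+1},\qquad \mathbf{c}_{q_j,k}=\sigma_{q_j,k+1},\qquad 1\leqslant k\leqslant k_0 .
\]
Here $\sigma_{\cdot,k+1}$ denotes the coefficient of $x^{k+1}$ in the corresponding branch of $C$, and the blow-up of $\mathcal{P}(\F,\G)$ consists of the curves $\CP_{p_i}(\tilde\F,\tilde\G)$ and $\CP_{q_j}(\tilde\F,\tilde\G)$.

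We then solve recursively in $k$. For $k=1$, $\boldsymbol{\pi}_{p_i,1}=\boldsymbol{\pi}_{q_j,1}=0$, so the required equation reads $\mathbf{A}_1(\mathsf{s}_1,-\tfrac12\mathsf{z}_1)=a$. The hypothesis gives $(\mathsf{s},\mathsf{z})$ with $\mathbf{A}_1(\mathsf{s},\mathsf{z})=a$ and $\mathsf{z}\in(\C^\ast)^m$. We set $s_{i,1}=\mathsf{s}_i$ and $z_{j,1}=-2\mathsf{z}_j\neq0$, which respects the constraint $z_j^{(1)}(0)\neq0$. For $2\leqslant k\leqslant k_0$, the quantities $\boldsymbol{\pi}_{p_i,k}(p_i)$ and $\boldsymbol{\pi}_{q_j,k}(q_j)$ depend only on the fixed data and on $s_{\cdot,r},z_{\cdot,r}$ with $r\leqslant k-1$, which are already chosen. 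We then set
\[
\bigl((s_{i,k})_i,(-\tfrac12 z_{j,k})_j\bigr)=\mathbf{A}_k^{-1}\bigl((\sigma_{p_i,k+1}-\boldsymbol{\pi}_{p_i,k}(p_i))_i,(\sigma_{q_j,k+1}-\boldsymbol{\pi}_{q_j,k}(q_j))_j\bigr).
\]
This is possible because Lemma \ref{lem: abierto Zariski A} a) guarantees that $\mathbf{A}_k$ is invertible for $(\lambda,\tau)\in\mathcal{A}$. All coefficients of order $>k_0$ are set to zero, so $s_i$ and $z_j$ are polynomials and in particular holomorphic.

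With these $s_i,z_j$ we form the local models $(\F^l_{p_i},\G_{p_i})$ and $((x=\text{cst}),\G_{q_j})$ and apply Theorem \ref{teo:Construccion de PdF a partir de PL} to obtain a pair $(\F,\G)\in\mathcal{N}_\p(\mathbf{h})\times\mathcal{D}_\q(\mathcal{I})$ realizing them. Its normalizing transformations satisfy the factorization equations, so they are normalizing transformations with respect to $(\F_\mu,\G_{\mathbf r})$. Next we apply $\mathcal{H}^{k_0}$ from Theorem \ref{Teorema_Limpieza de TransNorm_Intro_1}. This is a strict analytic equivalence, so the local models are unchanged, consistent with Theorem \ref{Teo_Invariantes de clasificacion}. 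It brings the normalizing transformations to the form $(\mathbf{nt_{k_0}})$; we may first arrange that the $y$-axis is $\G$-invariant, as in Remark \ref{Obs_Hipotesis del Teorema_Limpieza de TransNorm}. Theorem \ref{Teorema_Parametrizacion_Polar_pi_qj} then applies and gives exactly the coefficients prescribed above.

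The main obstacle I expect is checking that $\boldsymbol{\pi}_{\cdot,k}$ depends only on lower-order local-model data. In particular, $\mathcal{H}^{k_0}$ must not feed back into these quantities. This should hold because the canonical solutions in Proposition \ref{Prop:soluciones canonicas} are determined by the local models alone, but it is the point that has to be verified carefully.
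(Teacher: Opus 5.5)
Your proposal is correct and follows essentially the same route as the paper: solve the triangular system $\mathbf{A}_k\bigl((s_{i,k})_i,(-\tfrac12 z_{j,k})_j\bigr)=\bigl((\sigma_{p_i,k+1}-\boldsymbol{\pi}_{p_i,k}(p_i))_i,(\sigma_{q_j,k+1}-\boldsymbol{\pi}_{q_j,k}(q_j))_j\bigr)$ order by order using the invertibility of the $\mathbf{A}_k$. You then realize the resulting local models via Theorem \ref{teo:Construccion de PdF a partir de PL}, make the $y$-axis $\G$-invariant, normalize to $(\mathbf{nt_{k_0}})$, and read off the jets from Remark \ref{Obs_TeoMatricialBis}. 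The point you flag as the main obstacle is exactly what the paper settles in Remark \ref{Obs_DependenciaPi}, and it is already built into the statement of Theorem \ref{Teorema_Parametrizacion_Polar_pi_qj}: the functionals $\boldsymbol{\pi}_{\cdot,k}$ are evaluated only on fixed data and on $s_{\cdot,r},z_{\cdot,r}$ with $r\leqslant k-1$, so the normalization by $\mathcal{H}^{k_0}$ cannot feed back into them.
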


\subsection*{Proof of Proposition \ref{Propo_Igualdad_Curvas}}

Let $C$ be a germ of analytic curve in $(\C^{2}, 0)$, whose branches are as in \eqref{Ec_Curvas_Parametrizaciones}. We denote by $\tilde{C}$ its first blow-up. Its branches in the coordinate chart $(x, u = y/x)$,
\begin{equation*}
\begin{split}
\tilde{C}_{p_{i}} = \{u = \tilde{\sigma}_{p_{i}} (x)\} \, , 
\hspace{1cm}
\tilde{C}_{q_{j}} = \{u = \tilde{\sigma}_{q_{j}} (x)\} \, ,
\end{split}
\end{equation*}
satisfy $\sigma_{p_{i}} (x) = x \tilde{\sigma}_{p_{i}} (x)$, $\sigma_{q_{j}} (x) = x \tilde{\sigma}_{q_{j}} (x)$. Let us consider the power series of the parametrizations $\tilde{\sigma}_{p_{i}}$, $\tilde{\sigma}_{q_{j}}$,
\begin{equation*}
\tilde{\sigma}_{p_{i}} (x) 
\, = \, p_{i} \, + \, \sigma_{\scriptscriptstyle{{p_{i}}, 1}} x 
\, + \, \sigma_{\scriptscriptstyle{{p_{i}}, 2}} x^{2} 
\, + \, \cdots \, , 
\hspace{0.5cm}
\tilde{\sigma}_{q_{j}} (x) 
\, = \, q_{j} \, + \, \sigma_{\scriptscriptstyle{{q_{j}}, 1}} x 
\, + \, \sigma_{\scriptscriptstyle{{q_{j}}, 2}} x^{2} 
\, + \, \cdots 
\end{equation*} 
where $\sigma_{\scriptscriptstyle{{p_{i}}, 1}} = a_{i}$, $\sigma_{\scriptscriptstyle{{q_{j}}, 1}} = a_{j+n+1}$ are the quadratic coefficients stated in the assumptions of Proposition \ref{Propo_Igualdad_Curvas}.

To achieve the proof, we need to evidence the existence of a foliation pair $(\F, \G)$, where  $\F\in \mathcal{N}_{\p}(\mathbf{h})$ and  $\G \in \mathcal{D}_{\mathsf{q}}(\mathcal{I})$, having the following properties:

\begin{itemize}
\item[$a)$] \label{Propo_PropA} The normalizing transformations of $(\F, \G)$ have power series with $k_{0}$-jet as in $(\mathbf{nt_{k_{0}}})$ (see Theorem \ref{Teorema_Limpieza de TransNorm_Intro_1} ).

\item[$b)$] \label{Propo_PropB} 
The branches of the curve of tangencies $\mathcal{P}(\tilde{\mathcal{F}}, \tilde{\mathcal{G}})$ of the first blow-up of $(\F, \G)$, have parametrizations
\begin{equation*}
\begin{split}
\CP_{p_{i}} (\tilde{\F}, \tilde{\G})
= \{u = \pi_{p_{i}} (x)\} \, , 
\hspace{1cm}
\CP_{q_{j}} (\tilde{\F}, \tilde{\G}) = \{u = \pi_{q_{j}} (x)\} \, ,
\end{split}
\end{equation*}
as in Theorem 4.1. Thus, these parametrizations have the same $k_{0}$-jet as the parametrizations of the branches of $\tilde{C}$. That is,
\begin{equation*}
\textnormal{j}^{k_{0}} (\pi_{p_{i}})
\, = \,
\textnormal{j}^{k_{0}} (\tilde{\sigma}_{p_{i}})\, ,
\hspace{1cm}
\textnormal{j}^{k_{0}} (\pi_{q_{j}})
\, = \,
\textnormal{j}^{k_{0}} (\tilde{\sigma}_{q_{j}}) \, ,
\end{equation*}
for $1 \leqslant i \leqslant n+1$, $1 \leqslant j \leqslant m$.
\end{itemize}

To prove the existence of a foliation pair $(\F, \G)\in \mathcal{N}_{\p}(\mathbf{h}) \times \mathcal{D}_{\mathsf{q}}(\mathcal{I})$ satisfying the properties $a)$ and $b)$, we will look for its local models. 

For this purpose, let us recall Theorem \ref{Teorema_Parametrizacion_Polar_pi_qj}, the corresponding foliation pair $(\F, \G)$ considered there, and also, the parametrizations by $x$ of the curves of tangencies $\CP_{p_{i}} (\tilde{\F}, \tilde{\G})$ and $\CP_{q_{j}} (\tilde{\F}, \tilde{\G})$, respectively,
\begin{equation*}
       \pi_{p_{i}} = p_{i} + \textstyle \sum_{r \geqslant 1}  \mathbf{c}_{\scriptscriptstyle{p_{i}, r}}\,x^{r} \, ,
\hspace{1cm}       
\pi_{q_{j}} = q_{j} + \textstyle \sum_{r \geqslant 1}  \mathbf{c}_{\scriptscriptstyle{q_{j}, r}}\, x^{r} \, .
\end{equation*}
By Remark \ref{Obs_TeoMatricialBis}, the coefficients of these parametrizations are expressed in terms of the coefficients $s_{\scriptscriptstyle{{\imb}, \, k}}$, $z_{\scriptscriptstyle{{\jmb}, \, k}}$  of the analytic invariants of the pair $(\F, \G)$: 
\begin{equation*}
s_{i} = \sum_{r \geqslant 1} s_{\scriptscriptstyle{i, \, r}} x^{r}\,,\quad z_{j} = \sum_{r \geqslant 1} z_{\scriptscriptstyle{j, \, r}} x^{r}\,,\quad z_{\scriptscriptstyle{j, \, 1}} \in \C^{\ast}\,. 
\end{equation*}
More precisely, for $1 \leqslant k \leqslant k_{0}$, the differences $\mathbf{c}_{\scriptscriptstyle{p_{\imb}, k}} \,  - \, \boldsymbol{\pi}_{\scriptscriptstyle{p_{\imb}, k}}(p_{\imb})$ and  $\mathbf{c}_{\scriptscriptstyle{q_{\jmb}, k}} \, - \, \boldsymbol{\pi}_{\scriptscriptstyle{q_{\jmb}, k}}(q_{\jmb})$  are expressed by the square matrices $\mathbf{A}_{k}$ defined in \eqref{Ec_MatrizAk}:
\begin{equation} 
\label{Ec_TeoMatricialBis}
\begin{split}
& \hspace{1.9cm}
\mathbf{A}_{k} \left( \left(s_{\scriptscriptstyle{{\imb}, \, k}}\right)_{\scriptscriptstyle{\imb = 1}}^{\scriptscriptstyle{n+1}}, \left(-\tfrac{1}{2} z_{\scriptscriptstyle{{\jmb}, \, k}}\right)_{\scriptscriptstyle{\jmb = 1}}^{\scriptscriptstyle{m}}\right) \\
& \, = \, 
\left(\left(\mathbf{c}_{\scriptscriptstyle{p_{\imb}, k}} \,  - \, \boldsymbol{\pi}_{\scriptscriptstyle{p_{\imb}, k}}(p_{\imb})\right)_{\scriptscriptstyle{\imb = 1}}^{\scriptscriptstyle{n+1}}, \, 
\left(\mathbf{c}_{\scriptscriptstyle{q_{\jmb}, k}} \, - \, \boldsymbol{\pi}_{\scriptscriptstyle{q_{\jmb}, k}}(q_{\jmb})\right)_{\scriptscriptstyle{\jmb = 1}}^{\scriptscriptstyle{m}} \right) \, ,
\end{split} 
\end{equation}

\begin{remark}
\label{Obs_DependenciaPi}
Let us consider the functional maps $\boldsymbol{\pi}_{\scriptscriptstyle{p_{i}\, , k}}$, $\boldsymbol{\pi}_{\scriptscriptstyle{q_{j}\, , k}}$ stated in Proposition \ref{Propo_Igualdad_Curvas}. Since they are identically zero for $k=1$, we focus on the case $2 \leqslant k \leqslant k_{0}$, $k_{0}\geq m+n+1$.

These functional maps $\boldsymbol{\pi}_{\scriptscriptstyle{p_{i}\, , k}}$, $\boldsymbol{\pi}_{\scriptscriptstyle{q_{j}\, , k}}$ are evaluated at complex numbers and functions. Among them, the only ones depending on the foliation pair $(\F, \G)$ are the coefficients of the $(k-1)$-jet of their analytic invariants $s_{\scriptscriptstyle{{\imb},k}}$, $z_{\scriptscriptstyle{{\jmb},k}}$, $1 \leqslant \imb \leqslant n+1$, $1 \leqslant \jmb \leqslant m$. This follows from the fact that the coefficients of the functions $\varepsilon_{\scriptscriptstyle{{\imb}}}$, $\varsigma_{\scriptscriptstyle{{\jmb}}}$ are completely determined by the foliation ${\F}_{\scriptscriptstyle \mu}$ (see (\ref{Ec_Series de Psi y Xi}) and (\ref{Ec_Series de Phi y Zeta}), and the Camacho--Sad indices $\lambda_{\scriptscriptstyle{{\imb}}}$ and the maps $g_{\scriptscriptstyle{{\jmb}}}$ (defined by the collection of involutions $\mathcal{I}$), are the same for any foliation pair in $\mathcal{N}_{\p}(\mathbf{h}) \times \mathcal{D}_{\mathsf{q}}(\mathcal{I})$.

\end{remark}

We now proceed  by induction to prove the existence of the foliation pair.

\begin{itemize}
    \item [i)] \emph{Base case.} By assumption, there exist a collection of $n+1$ complex numbers, $\mathsf{s} = (\mathsf{s}_{1}, \ldots, \mathsf{s}_{n+1}) \in \C^{n+1}$, and a collection of $m$ nonzero complex numbers $\mathsf{z} = (\mathsf{z}_{1}, \ldots, \mathsf{z}_{m}) \in (\C^{\ast})^{m}$ such that
\begin{equation*}
\begin{split}
\mathbf{A}_{1} \left(\mathsf{s}, \mathsf{z}\right)
= \left(a_{1}, a_{2}, \ldots, a_{m+n+1}\right)
= \left((\sigma_{\scriptscriptstyle{{p_{\imb}}, 1}})_{\scriptscriptstyle{{\imb} = 1}}^{\scriptscriptstyle{n+1}}, (\sigma_{\scriptscriptstyle{{q_{\jmb}}, 1}})_{\scriptscriptstyle{{\jmb} = 1}}^{\scriptscriptstyle{m}}\right) \, ,
\end{split}
\end{equation*}
The previous equalities are equivalent to the equality \eqref{Ec_TeoMatricialBis} for the case $k = 1$, by taking $s_{\scriptscriptstyle{i, 1}} := \mathsf{s}_{i} \in \C$ and $z_{\scriptscriptstyle{j, 1}} := - 2 \mathsf{z}_{j} \in \C^{\ast}$. Indeed, this follows from the fact that the constants $\boldsymbol{\pi}_{\scriptscriptstyle{p_{i}, \, 1}} (p_{i})$, $\boldsymbol{\pi}_{\scriptscriptstyle{q_{j}, \, 1}} (q_{j})$ are equal to zero, as stated in Theorem \ref{Teorema_Parametrizacion_Polar_pi_qj}.

\item[ii)] \emph{Induction step.} We prove, for $1 \leqslant i \leqslant n+1$, $1 \leqslant j \leqslant m$, the existence of complex numbers
\begin{equation*}
s_{\scriptscriptstyle{i, 1}}, s_{\scriptscriptstyle{i, 2}}, \ldots, s_{\scriptscriptstyle{i, k_{0}}} \in \C  \,  \,
\text{and} \, \,
z_{\scriptscriptstyle{j, 1}}, z_{\scriptscriptstyle{j, 2}}, \ldots, z_{\scriptscriptstyle{j, k_{0}}} \in \C  \, ,
 \, \,
\text{with} \, \,
z_{\scriptscriptstyle{j, 1}} \neq 0 \, ,
\end{equation*}
such that, for $1 \leqslant k \leqslant k_{0}$, 
\begin{equation} 
\label{Ec_PropoMatrizAk}
\begin{split}
& \hspace{1.9cm}
\mathbf{A}_{k} \left( \left(s_{\scriptscriptstyle{{\imb}, \, k}}\right)_{\scriptscriptstyle{\imb = 1}}^{\scriptscriptstyle{n+1}}, \left(-\tfrac{1}{2} z_{\scriptscriptstyle{{\jmb}, \, k}}\right)_{\scriptscriptstyle{\jmb = 1}}^{\scriptscriptstyle{m}}\right) \\
& \, = \, 
\left(\left(\sigma_{\scriptscriptstyle{p_{\imb}, k}} \,  - \, \boldsymbol{\pi}_{\scriptscriptstyle{p_{\imb}, k}}(p_{\imb})\right)_{\scriptscriptstyle{\imb = 1}}^{\scriptscriptstyle{n+1}}, \, 
\left(\sigma_{\scriptscriptstyle{q_{\jmb}, k}} \, - \, \boldsymbol{\pi}_{\scriptscriptstyle{q_{\jmb}, k}}(q_{\jmb})\right)_{\scriptscriptstyle{\jmb = 1}}^{\scriptscriptstyle{m}} \right) \, .
\end{split} 
\end{equation}
where $\boldsymbol{\pi}_{\scriptscriptstyle{p_{\imb}, k}}(p_{\imb})$, $\boldsymbol{\pi}_{\scriptscriptstyle{q_{\jmb}, k}} (q_{\jmb})$ are the complex numbers described in Theorem \ref{Teorema_Parametrizacion_Polar_pi_qj}. The proof by induction of such complex numbers is a direct consequence of the following properties:

\begin{itemize}
\item[1.]  According to Remark \ref{Obs_DependenciaPi}, the complex numbers $\boldsymbol{\pi}_{\scriptscriptstyle{p_{\imb}, k}}(p_{\imb})$, $\boldsymbol{\pi}_{\scriptscriptstyle{q_{\jmb}, k}} (q_{\jmb})$ are determined by the complex numbers $s_{\scriptscriptstyle{\imb, 1}}, \ldots, s_{\scriptscriptstyle{\imb, k-1}}$, and by the complex numbers $z_{\scriptscriptstyle{\jmb, 1}}, \ldots, z_{\scriptscriptstyle{\jmb, k-1}}$.

\item[2.] Since $(\lambda, \tau)$ belongs to de Zariski open set $\mathcal{A}$, the matrices $\mathbf{A}_{1}, \ldots, \mathbf{A}_{k_{0}}$ are invertible (see Lemma \ref{lem: abierto Zariski A}).

\end{itemize}

Therefore, given these complex numbers, we define holomorphic transformations  $s_{i}, z_{j} \colon (\C, 0) \rightarrow (\C, 0)$ having $k_{0}$-jet equal to 
\begin{equation*}
\textnormal{j}^{k_{0}} (s_{i}) 
= s_{\scriptscriptstyle{i, 1}} x + s_{\scriptscriptstyle{i, 2}} x^{2} + \ldots + s_{\scriptscriptstyle{i, k_{0}}} x^{k_{0}} \, ,
\end{equation*}
\begin{equation*}
\textnormal{j}^{k_{0}} (z_{j}) 
= z_{\scriptscriptstyle{i, 1}} x + z_{\scriptscriptstyle{i, 2}} x^{2} + \ldots + z_{\scriptscriptstyle{i, k_{0}}} x^{k_{0}} \, ,
\end{equation*}
for $1 \leqslant i \leqslant n+1$, $1 \leqslant j \leqslant m$. From these maps we define the holomorphic foliation $\G_{p_{i}}$ in $(\C^{2}, D_{p_{i}})$ having first integral $u + s_{i} (x)$, and the holomorphic foliation $\G_{q_{j}}$ in $(\C^{2}, D_{q_{j}})$ having first integral $g_{j} (u) + z_{j} (x)$. As a consequence of Theorem \ref{teo:Construccion de PdF a partir de PL}, on realization of local models and normalizing transformations, there exists a foliation pair $(\F, \G)$, where $\F \in \mathcal{N}_{\p}(\mathbf{h})$ and $ \G \in  \mathcal{D}_{\mathsf{q}}(\mathcal{I})$, such that $\left(\mathcal{F}_{p_{i}}^l, \G_{p_{i}}\right)$ is the local model at the singularity $(0, p_{i})$, and $\left((x = \text{ct}), \G_{q_{j}}\right)$ is the local model at the tangency point $(0,q_{j})$. That is, the maps $s_{i}$, $z_{j}$ are the \textit{analytic invariants of the foliation pair $(\F, \G)$} (see Theorem \ref{Teo_Invariantes de clasificacion}).

Except for an analytic change of coordinates applied to the pair $(\F, \G)$, we may assume that the $y$-axis is an invariant branch of the dicritical foliation $\G$. In this way, the normalizing transformations of the pair $(\F, \G)$, except for an analytic change of coordinates, have power series with $k_{0}$-jet as in $(\mathbf{nt_{k_{0}}})$ in Theorem  \ref{Teorema_Limpieza de TransNorm_Intro_1}. That is, the foliation pair $(\F, \G)$ satisfies the property $a)$ stated at the beginning of the proof. The fact that the property $b)$  is also satisfied, follows directly from Remark \ref{Obs_TeoMatricialBis}, since the coefficients of the analytic invariants $s_{i}$, $z_{j}$ of the pair $(\F, \G)$ satisfy the equalities \eqref{Ec_PropoMatrizAk}.
\end{itemize}

  
\subsection{Description of the Polynomial $\mathcal{P}$ determining the Zariski open $\mathcal{A}$} \label{Descripcion polinomio cal P}

Recall the description of the polynomial (\ref{ec: mathcal P}) given in Remark \ref{obs: abierto Zariski A y polinomio P}.
\begin{equation*}
\mathcal{P}(k_{0}, \lambda, \tau ):=\mathbf{P}_{k_{0}} (\tau_{1}, \tau_{2}, \ldots, \tau_{m}) \, \cdot\prod_{\substack{1 \leqslant k \leqslant k_{0} \\ 1 \leqslant i \leqslant n+1 }} \, \bigl(1 \, - \, k\lambda_{i}\bigl) \, .
\end{equation*}

To prove the existence of this polynomial satisfying the properties a) and b) in Lemma \ref{lem: abierto Zariski A} we focus our attention in the polynomial  $\mathbf{P}_{k_{0}}$. We will define it as a product $\mathbf{P}_{k_{0}} = \mathbf{R} \, \mathbf{Q}_{k_{0}} $ of polymonials in $\tau_{1}, \tau_{2}, \ldots, \tau_{m}$, with coefficients given by rational functions in $q_{j} - q_{l}$, $j \neq l$.

\emph{Requirements to define the polynomials $\mathbf{Q}_{k_{0}}$ and $\mathbf{R}$ in order to satisfy properties a) and b) in Lemma \ref{lem: abierto Zariski A}}.

\emph{About property a)}. Let $\mathbf{A}_{k} = \mathbf{A}_{k} (\mathsf{p}, \mathsf{q}; \lambda, \tau)$ be the square matrix of dimension $m + n +1$   defined in \eqref{Ec_MatrizAk}, for $  1 \leqslant k \leqslant k_{0}$. Its determinant is expressed by the relation 
\begin{equation*}
\text{det} (\mathbf{A}_{k}) = \text{det} (\mathbf{\tilde{A}}_{k})
\prod_{i = 1}^{n+1} (1 - k \lambda_{i}) \, ,
\end{equation*}
where $\mathbf{\tilde{A}}_{k} = \left[\alpha_{j \, l}\right]$ is the square matrix of dimension $m$, such that $\alpha_{j \, l} = \tfrac{1}{q_{j} - q_{l}}$, for $j \neq l$, where $\alpha_{j \, j} = \theta_{\scriptscriptstyle{j, k}} = - \tfrac{3}{2} \tau_{j} + k (\varsigma_{\scriptscriptstyle{j, 1}})^{(1)} (q_{j})$. By the properties of the determinants, it can be proved that $\text{det} (\mathbf{\tilde{A}}_{k})$ is a non zero polynomial in $\tau_{1}, \tau_{2}, \ldots, \tau_{m}$. Namely,
\begin{equation*}
\text{det} (\mathbf{\tilde{A}}_{k}) 
= \left(- \tfrac{3}{2}\right)^{m} \, \tau_{1} \,  \tau_{2}  \cdots \tau_{m} \, + \, \cdots \, ,
\end{equation*}
where the multiple dots represent a polynomial in 
 $\tau_{1}, \tau_{2}, \ldots, \tau_{m}$ of degree less or equal to   $m-1$ and simple factors. The coefficients are rational functions in the coordinates $q_{j}$ of $\q$, with denominators given by products of the differences $q_{j} - q_{l}$, $j \neq l$.

We define the polynomial
\begin{equation}
\label{Ec_Polinomio_Qk}
\mathbf{Q}_{k_{0}} \left(\tau_{1}, \tau_{2}, \ldots, \tau_{m}\right) \, := \,
\textnormal{det} (\mathbf{A}_{1}) \, \textnormal{det} (\mathbf{A}_{2}) \, \cdots \, \textnormal{det} (\mathbf{A}_{k_{0}}) \, .
\end{equation}
Note that choosing $\tau_{1}, \tau_{2}, \ldots, \tau_{m} \in \C$ such that  $\mathbf{Q}_{k_{0}}$ is noy identically zero, the property a) takes place.

\emph{About property b)}. We begin with the following remark
\begin{remark}
 It is enough to prove that \begin{equation}
\label{Ec_Propiedad_ii}
\mathbf{A}_{1} \left(\C^{n+1} \times \C_{j}^{m}\right) \, + \, \textnormal{Im} \, (\mathbf{V}) 
\, = \, 
\C^{n+1} \times \C^{m} \,,
\end{equation}
for $1 \leqslant j \leqslant m$, where $\C^{m}_{j}$ consists of the elements $(\mathsf{z}_{1}, \ldots, \mathsf{z}_{m} ) \in \C^{m}$ whose $j$-th coordinate is zero, i.e., $\mathsf{z}_{j} = 0$. In fact, since  $\mathbf{A}_{1} \left(\C^{n+1} \times \C_{j}^{m}\right)$ is a subespace of $\C^{n+1} \times \C^{m}$ having dimension at most $m + n$, then the equality (\ref{Ec_Propiedad_ii}) implies that  $\text{Im} (\mathbf{V})$ is not contained in  $\mathbf{A}_{1} \left(\C^{n+1} \times \C_{j}^{m}\right)$.  Hence, $\textnormal{Im} \, (\mathbf{V})$ satisfies 
\begin{equation*}
\textnormal{Im} \, (\mathbf{V}) \, 
\nsubseteq  
\, \bigcup_{j = 1}^{m} \, 
\mathbf{A}_{1} \left(\C^{n+1} \times \C_{j}^{m}\right)\,.
\end{equation*}
Thus, for any $a \in \C^{n+1} \times \C^{m}$, 
\begin{equation*}
a \, + \, \text{Im} (\mathbf{V}) \,  
\nsubseteq
\, 
\bigcup_{j=1}^{m} \, \mathbf{A}_{1} (\C^{n+1} \times \C^{m}_{j}) \, .
\end{equation*}
\end{remark}

\emph{Proof of equality \eqref{Ec_Propiedad_ii}}.
Let $\mathbf{\Lambda} = \mathbf{\Lambda} (\q; \tau)$ be the $m\times (m+4)$ matrix, with $m$ rows $\mathbf{r}_k$ and $m+4$ columns $\mathbf{c}_l$

\begin{equation*}
\mathbf{\Lambda} = \mathbf{\Lambda} (\q; \tau)=\left[
\begin{array}{c|c}
\begin{matrix}
\mbox{\footnotesize $\theta_{\scriptscriptstyle{1, 1}}$} 
& \tfrac{1}{q_{1} - q_{2}} & \cdots 
& \tfrac{1}{q_{1} - q_{m}}
\\
\tfrac{1}{q_{2} - q_{1}} 
& \mbox{\footnotesize $\theta_{\scriptscriptstyle{2, 1}}$} & \cdots 
& \tfrac{1}{q_{2} - q_{m}} \\
\vdots & \vdots & \ddots & \vdots \\
\tfrac{1}{q_{m} - q_{1}} 
& \tfrac{1}{q_{m} - q_{2}} & \cdots 
& \mbox{\footnotesize $\theta_{\scriptscriptstyle{m, 1}}$}
\end{matrix} 
&
\begin{matrix}
1 & q_{1} & q_{1}^{2}  & q_{1}^{3} \\
1 & q_{2} & q_{2}^{2}  & q_{2}^{3} \\
\vdots & \vdots & \ddots & \vdots \\ 
1 & q_{m} & q_{m}^{2}  & q_{m}^{3}
\end{matrix}
\end{array}
\right] \,.
\end{equation*}
We stress that for $\lambda_{i} \neq 1$, $1 \leqslant i \leqslant n+1$ equality \eqref{Ec_Propiedad_ii} takes place, if and only if, 

\begin{equation}
\label{Ec_Propiedad_ii_Bis}
\mathbf{\Lambda} \left(\C^{m}_{j} \times \C^{4}\right)
\, = \, 
\C^{m} \, .
\end{equation}

For $m \leqslant 4$ the equality \eqref{Ec_Propiedad_ii_Bis} takes place since Vandermonde $\left[q_{r}^{s}\right]$ matrix, having $m$ rows, $1 \leqslant r \leqslant m$, and  $4$ columns, $0 \leqslant s \leqslant 3$, is surjective.  

For $m \geqslant 5$. Let $\mathcal{J} = \{j_{1}, j_{2}, j_{3}, j_{4}\, \, |\, j_{1}<j_{2} < j_{3} < j_{4}\} \, \subset \, \{1, 2, \ldots, m\}$. We define the $m\times m$ matrix $\mathbf{\Lambda}_{\mathcal{J}}$ by removing from matrix $\mathbf{\Lambda}$ the columns $\mathbf{c}_k$ corresponding to the indices $j_k\in \mathcal{J}$:  $\mathbf{c}_{\scriptscriptstyle{j_{1}}}$, $\mathbf{c}_{\scriptscriptstyle{j_{2}}}$, $\mathbf{c}_{\scriptscriptstyle{j_{3}}}$, $\mathbf{c}_{\scriptscriptstyle{j_{4}}}$.

Note that
\begin{equation*}
\textnormal{Im} \, \left(\mathbf{\Lambda}_{\mathcal{J}}\right) 
\, \subseteq \, 
\mathbf{\Lambda} \left(\C^{m}_{j_{i}} \times \C^{4}\right) \, ,
\hspace{0.5cm}
\textnormal{for}
\hspace{0.3cm}
i = 1, 2, 3, 4 \, .
\end{equation*}

If we rearrange the rows $j_{1}, j_{2}, j_{3}, j_{4}$ of the matrix $\mathbf{\Lambda}_{\mathcal{J}}$ so that they occupy, respectively, the rows $m-3, m-2, m-1, m $, the resulting matrix has the form and properties stated in the following lemma. 

\begin{lemma}
\label{Lema_Antisimetrica_Vandermonde}
Let $y_{1}, y_2, \ldots, y_{s+4}$ be $s+4$ pairwise distinct complex numbers. Let $\mathbf{\tilde{\Lambda}} = \mathbf{\tilde{\Lambda}} (y_{i})_{i}$ be the  $(s+4)\times (s+4)$ matrix,
\begin{equation*}
\mathbf{\tilde{\Lambda}} = \mathbf{\tilde{\Lambda}} (y_{i})_{i}=\left[ 
\begin{array}{c|c}
\begin{matrix}
\theta_{1} & 
\tfrac{1}{y_{1} - y_{2}} & 
\cdots & \tfrac{1}{y_{1} - y_{s}}
\\
\tfrac{1}{y_{2} - y_{1}} 
& \theta_{2} & 
\cdots & \tfrac{1}{y_{2} - y_{s}}
\\
\vdots & \vdots & \ddots & \vdots
\\
\, \, \, \tfrac{1}{y_{s} - y_{1}} \, \, \,  & 
\, \, \, \tfrac{1}{y_{s} - y_{2}} \, \, \,  & 
\cdots 
& \, \, \, \theta_{s} \, \, \, 
\end{matrix}
&
\begin{matrix}
1 & y_{1} & y_{1}^{2} & y_{1}^{3}
\\
1 & y_{2} & y_{2}^{2} & y_{2}^{3}
\\
\vdots & \vdots & \vdots & \vdots
\\
\, \, \, 1 \, \, \, 
& \, \, \, y_{s} \, \, \,
& \, \, \, y_{s}^{2} \, \, \, 
& \, \, \, y_{s}^{3} \, \, \,
\end{matrix} 
\\ \midrule 
\begin{matrix}
\tfrac{1}{y_{s+1} - y_{1}} & 
\tfrac{1}{y_{s+1} - y_{2}} & 
\cdots & 
\tfrac{1}{y_{s+1} - y_{s}} 
\\
\vdots & \vdots & \ddots & \vdots
\\
\tfrac{1}{y_{s+4} - y_{1}} & 
\tfrac{1}{y_{s+4} - y_{2}} & 
\cdots & 
\tfrac{1}{y_{s+4} - y_{s}}
\end{matrix}
&
\begin{matrix}
1 & y_{s+1} & y_{s+1}^{2} & y_{s+1}^{3}
\\
\vdots & \vdots & \vdots & \vdots
\\
1 & y_{s+4} & y_{s+4}^{2} & y_{s+4}^{3}
\end{matrix}
\end{array}
\right].
\end{equation*}

Then the determinant of matrix  $\mathbf{\tilde{\Lambda}}$ is given by 
 
\begin{equation*}
\det \,(\mathbf{\tilde{\Lambda}})=\prod_{s+1 \leqslant i < j \leqslant s+4} (y_{j} - y_{i}) \, 
\theta_{1} \cdot \theta_{2} \cdots \theta_{s} \, 
+ \, \cdots 
\end{equation*}
where the multiple points represent a polynomial of degree less than or equal to $s-1$ in $\theta_{1}, \theta_{2},  \ldots, \theta_{s}$, with monomials  
\begin{equation*}
\theta_{j_{1}} \cdot \theta_{j_{2}} \cdots \theta_{j_{k}} \, ,
\hspace{0.5cm}
\hspace{0.3cm}
1 \leqslant j_{1} < j_{2} < \cdots < j_{k} \leqslant s \, ,
\hspace{0.3cm}
0 \leqslant k < s \, . 
\end{equation*}
The coefficients are rational functions on   $y_{1}, y_{2}, \ldots, y_{s+4}$, whose denominators are products of the differences $y_{j} - y_{i}$,  $1 \leqslant i, j \leqslant s+4$, $i \neq j$.

\end{lemma}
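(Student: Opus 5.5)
The plan is to view $\det(\mathbf{\tilde{\Lambda}})$ as a polynomial in the independent indeterminates $\theta_{1},\ldots,\theta_{s}$, with coefficients in the field of rational functions in $y_{1},\ldots,y_{s+4}$. Since each $\theta_{i}$ occurs in exactly one entry, namely the diagonal entry $(i,i)$ with $1\leqslant i\leqslant s$, multilinearity of the determinant in the rows shows the determinant is affine in each $\theta_{i}$ separately. Hence it is a sum of squarefree monomials $\theta_{j_{1}}\cdots\theta_{j_{k}}$ with $0\leqslant k\leqslant s$. Moreover, the coefficient of $\theta_{J}:=\prod_{i\in J}\theta_{i}$ is obtained as follows: expand along the rows indexed by $J$, keep only the diagonal entries there, and set $\theta_{i}=0$ for $i\notin J$. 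This gives the principal-complement minor
\[
\det\bigl(\mathbf{\tilde{\Lambda}}\bigr)_{\widehat{J},\widehat{J}}\Big|_{\theta=0},
\]
where the rows and columns indexed by $J$ are deleted. This structural step is routine and already gives the stated shape: the multiple dots are a polynomial of degree at most $s-1$ in squarefree monomials of the $\theta$'s.

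Next I would compute the top coefficient, corresponding to $J=\{1,\ldots,s\}$. Deleting the first $s$ rows and columns leaves the $4\times 4$ block
\[
\bigl[\,y_{s+a}^{\,b}\,\bigr]_{1\leqslant a\leqslant 4,\;0\leqslant b\leqslant 3},
\]
which is a Vandermonde matrix. Its determinant is $\prod_{s+1\leqslant i<j\leqslant s+4}(y_{j}-y_{i})$, and this is nonzero because the $y$'s are pairwise distinct. This gives the leading term claimed in the statement.

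For the lower coefficients, I would observe that every remaining entry is either a polynomial in the $y$'s or a fraction $1/(y_{i}-y_{l})$ with $i\neq l$. Therefore each minor is a polynomial, with integer coefficients, in the $y$'s and the reciprocals $(y_{i}-y_{l})^{-1}$. Clearing denominators, each coefficient is a rational function in $y_{1},\ldots,y_{s+4}$ whose denominator is a product of differences $y_{j}-y_{i}$, as required.

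The only point needing care is the bookkeeping of the Leibniz expansion. A permutation contributes a factor $\theta_{i}$ only when it fixes $i$. Consequently, the coefficient of $\theta_{J}$ is exactly the sum over permutations fixing $J$ pointwise with no further $\theta$ factors, and that sum is precisely the complementary minor evaluated at $\theta=0$. I expect no real obstacle here; the argument is a direct expansion.
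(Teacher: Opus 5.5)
Your proposal is correct and takes essentially the paper's approach: the paper's proof is a single sentence invoking the Leibniz formula and the Vandermonde determinant. You carry this out in detail. Each $\theta_i$ occurs only in the diagonal entry $(i,i)$, so the determinant is affine in each $\theta_i$, and the coefficient of $\theta_J$ is the complementary principal minor with $\theta=0$. The top coefficient is then the $4\times 4$ Vandermonde determinant $\prod_{s+1\leqslant i<j\leqslant s+4}(y_j-y_i)$.
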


\begin{proof}
    
The proof of Lemma \ref{Lema_Antisimetrica_Vandermonde} relies on the  Leibniz formula for determinants and on the explicit expression of the determinants of Vandermonde square matrices.

  By Lemma \ref{Lema_Antisimetrica_Vandermonde} we get that    $\det(\mathbf{\Lambda}_{\mathcal{J}})$, is a polynomial of degree $m-4$ in $\tau_{1}, \tau_{2}, \ldots, \tau_{m}$ variables, except for  $\tau_{j_{1}}, \tau_{j_{2}}, \tau_{j_{3}}, \tau_{j_{4}}$, where the monomials have simple factors. Namely,
  
  \begin{equation*}
\textnormal{det} (\mathbf{\Lambda}_{\mathcal{J}})
\, = \, 
\mathbf{a}_{\mathcal{J}} \, \left(- \tfrac{3}{2}\right)^{m-4} \tfrac{\tau_{1} \, \tau_{2} \, \tau_{3} \, \ldots \, \tau_{m}}{\tau_{j_{1}} \tau_{j_{2}} \tau_{j_{3}} \tau_{j_{4}}} 
\, + \, 
\cdots \,,
\end{equation*}
\\
where $\mathbf{a}_{\mathcal{J}}$ equals to either $\prod_{1 \leqslant i < l \leqslant 4} \left(q_{j_{l}} - q_{j_{i}}\right)$, or 
$-\prod_{1 \leqslant i < l \leqslant 4} \left(q_{j_{l}} - q_{j_{i}}\right)$, and the multiple points denote a polynomial of degree less or equal to $m-5$ whose coefficients are rational functions in the  constants $q_{1},\cdots, q_{m}$, and with denominators equal to the product of the differences $q_{j} - q_{l}$, $j \neq l$.


Let   $1 \leqslant s_{0}$ and $0 \leqslant r_{0} < 4$ be such that $m = 4 s_{0} + r_{0}$. For $1 \leqslant s \leqslant s_{0}$ we define the subset   
\begin{equation*}
\mathcal{J}_{s} := \{ 4s - 3, 4s - 2, 4s - 1, 4s\} \, \subset \, \{1, 2, \ldots, m\}\,,
\end{equation*}
and
\begin{equation*}
    \mathcal{J}_{s_{0}+1} := \{ m - 3,  m - 2, m - 1, m\}\,.
\end{equation*}
 Since for any $j \in \{1, 2, \ldots, m\}$ there exists a natural number $1 \leqslant s \leqslant s_{0} + 1$ such that $j \in \mathcal{J}_{s}$.
Hence, 
\begin{equation} 
\label{Ec_Propiedad_ii_Prima}
\textnormal{Im} \, \left(\mathbf{\Lambda}_{\mathcal{J}_{s}}\right) 
\, \subseteq \, 
\mathbf{\Lambda} \left(\C^{m}_{j} \times \C^{4}\right) \, .
\end{equation}
We define the polynomial $\mathbf{R}$ in  the variables $\tau_{1}, \tau_{2}, \ldots, \tau_{m}$ by 
\begin{equation}
\label{Ec_Polinomio_R}
\mathbf{R} \left(\tau_{1}, \tau_{2}, \ldots, \tau_{m}\right) \, := \, 
\textnormal{det} (\mathbf{\Lambda}_{\mathcal{J}_{1}}) \,
\textnormal{det} (\mathbf{\Lambda}_{\mathcal{J}_{2}}) \, \cdots \,
\textnormal{det} (\mathbf{\Lambda}_{\mathcal{J}_{s_{0}}}) \, 
\textnormal{det} (\mathbf{\Lambda}_{\mathcal{J}_{s_{0} + 1}}) \, . 
\end{equation}
Its coefficients are rational functions in the coordinates  $q_{j}$ of $\q$, whose denominators given by products of the differences $q_{j} - q_{l}$, $j \neq l$. 

Assume that $\tau_{1}, \tau_{2}, \ldots, \tau_{m} \in \C$ are chosen in such a way that the polynomial $\mathbf{R}$ does not vanish. Then, for every $1 \leqslant s \leqslant s_{0} + 1$, the matrix $\mathbf{\Lambda}_{\mathcal{J}_{s}}$ is invertible. Together with the inclusion \eqref{Ec_Propiedad_ii_Prima}, implies the property  \eqref{Ec_Propiedad_ii_Bis}.
 
The existence of the Polynomial $\mathcal{P}$ determining the Zariski open $\mathcal{A}$ is proved.
\end{proof}

\subsubsection*{Conclusion}
In this work we gave an exhaustive description of the curves of tangencies associated with pairs of foliations determined by germs of dicritical and non dicritical vector fields satisfying some genericity assumptions. To this purpose we used local models and analytic normalizing transformations.  Moreover, for each natural number $k$ we gave  \emph{k-normal forms} for the normalizing transformations. These normal forms were used to give parametrizations, up to a finite jet, of the branches of the curves of tangencies.  We proved as well, that under genericity assumptions on the classes of non dicritical and dicritical foliations, any germ of analytic curve having pairwise transversal smooth branches may be realized as curve of tangencies of a  --\emph{non dicritical and dicritical}-- foliation pair.

In a work in progress we relate these results to Thom's analytical classification invariants given in \cite{[ORV 1]} and \cite{[ORV 2]}. Namely, using the collection of curves of tangency we give a geometric interpretation of the finite collection of parametric analytical invariants appearing in the formal normal forms of the non dicritical and dicritical cases, respectively. 

Furthermore, we are interested in using these results in the study of foliations with singular points with greater degeneration.

We declare that our investigations are original and have not been published or submitted elsewhere. We have no conflicts of interest to disclose.

\subsection*{Acknowledgements}
The authors express their gratitude to L. Rosales-Ortiz for the careful elaboration of the figures in this text.

\begin{thebibliography}{CCD13}

\bibitem[CS82]{[Cam-Sad]} C. Camacho, P. Sad, \textit{Invariant Varieties through Singularities of Holomorphic Vector Fields}, Ann. of Math. 115 (3); 1982; p. 579-595.

\bibitem[GO]{[G-O]} X.Gomez-Mont, L.Ortiz-Bobadilla, \textit{Sistemas din\'amicos holomorfos en superficies}, Aportaciones Matem\'aticas, Investigaci\'on 3, segunda edici\'on, UNAM, 2004.


\bibitem[Gra]{[Gra]} J.-M. Granger, \textit{Sur un espace de modules de germe de courbe plane}, Bull. Sc. Math., $2^{e}$ s\'erie, $103$; 1979; pp. 3-16.
 

\bibitem[Gr62]{[Grauert]} H. Grauert, \textit{\"Uber Modifikationen und exzeptionelle anlytische Mengen}, Math. Ann., $146$; 1962; p.331-368.

\bibitem[JOV]{[JOV]} Jaurez-Rosas J., Ortiz-Bobadilla L., Voronin S.M., \textit{Local models and curves of tangencies of  foliation pairs}, preprint.

\bibitem[KP02]{[Krantz-Parks]} S. Krantz, H. Parks, \textit{A Primer of Real Analytic Functions}, second edition, Birkh\"auser, 2002.

\bibitem[Li87]{[LinsNeto]} Lins-Neto A., \textit{Construction of singular holomorphic vector fields and foliations in dimension two}. J. Differential Geom. $26$, Number $1$; 1987; pp. 1–31. 


\bibitem[ORV 1]{[ORV 1]} L. Ortiz-Bobadilla, E. Rosales-Gonz\'alez, S. M. Voronin, \textit{Rigidity Theorems for Generic Holomorphic Germs of Dicritic Foliations and Vector Fields in $(\C^{2}, 0)$}, Moscow Mathematical Journal, Vol. $5$, Number $1$; 2005; pp. 171-206.

\bibitem[ORV 2]{[ORV 2]} Ortiz-Bobadilla L., Rosales-Gonz\'alez E.,   Voronin S.M., \textit{Thom's Problem for the Orbital Analytic Classification of Degenerate Singular Points of Holomorphic Vector Fields in the Plane},
 Moscow Mathematical Journal, Vol. $12$, Number $4$; 2012; pp. 825-862.

\bibitem[P]{[P]} Poincaré, H, \emph{Sur les propriétés des fonctions définies par les équations aux différences partielles}, Thèses présentées à la Faculté des sciences de Paris, 1er août $1879$, Paris, Gauthier-Villars, $93$ pages. Œuvres, tome I, pp. \textsc{XLIX-CXXXI}.

\bibitem[V]{[V]} Voronin, S.M. {\it Orbital analytic equivalence of degenerate singular points of holomorphic vector fields on the complex
plane}, Tr. Mat. Inst. Steklova {\bf 213} (1997), Differ. Uravn.
  s Veshchestv. i Kompleks. Vrem., 35--55 (Russian).
\end{thebibliography}
\end{document}